\newcommand{\pr}[1]{\mathbb{P}\!\left(#1\right)}
\newcommand{\E}[1]{\mathbb{E}\!\left[#1\right]}
\newcommand{\Var}[1]{\text{Var}\!\left(#1\right)}
\newcommand{\prstart}[2]{\mathbb{P}_{#2}\!\left(#1\right)}
\newcommand{\prcond}[3]{\mathbb{P}_{#3}\!\left(#1\;\middle\vert\;#2\right)}
\newcommand{\econd}[2]{\mathbb{E}\!\left[#1\;\middle\vert\;#2\right]}
\newcommand{\Pb}{\mathbb{P}}
\newcommand{\Pbt}{\tilde{P}}
\newcommand{\prb}[1]{\mathbf{P}\!\left(#1\right)}
\newcommand{\Eb}[1]{\mathbf{E}\!\left[#1\right]}
\newcommand{\pt}[1]{\tilde{P}\!\left(#1\right)}
\newcommand{\ptstart}[2]{\tilde{P}_{#2}\!\left(#1\right)}
\newcommand{\estartpt}[2]{\tilde{E}_{#2}\!\left[#1\right]}
\newcommand{\dGH}[1]{d_{GH}\!\left(#1\right)}
\newcommand{\dGHP}[1]{d_{GHP}\!\left(#1\right)}
\newtheorem{theorem}{Theorem}[section]
\newtheorem{lemma}[theorem]{Lemma}
\newtheorem{prop}[theorem]{Proposition}
\newtheorem{corollary}[theorem]{Corollary}
\newtheorem{defn}[theorem]{Definition}
\newtheorem{remark}[theorem]{Remark}
\newtheorem{claim}[theorem]{Claim}
\def\N{\mathbb{N}}
\def\Z{\mathbb{Z}}
\def\R{\mathbb{R}}
\def\bP{\mathbb{P}}
\def\bPb{\mathbf{P}}
\def\F{\mathcal{F}}
\renewcommand{\epsilon}{\varepsilon}
\def\T{\mathcal{T}}
\def\Levy{L\'{e}vy }
\def\Ito{It\^o }
\def\cadlag{c\`{a}dl\`{a}g }
\def\d{\tilde{d}}
\def\diam{\textsf{Diam}}
\def\Sr2{S^{(\frac{1}{2}r)}_{\sigma}}
\def\Xi{X^{\infty}}
\def\dGHP{d_{GHP}}
\def\Height{\textsf{Height}}
\def\dt{d}
\def\Tgc{\T_{\gamma}^c}
\def\dbt{d^{(\alpha)}}
\def\dbtn{d_n^{(\alpha)}}
\def\dbztn{d_n^{(0)}}
\def\dzt{d^{(1)}}
\def\phib{\phi^{(\alpha)}}
\def\phiz{\phi^{(1)}}
\def\Bphi{B_{\phi}}
\def\Rphi{R_{\phi}}
\def\Mphi{M_{\phi}}
\def\nuphi{\nu_{\phi}}
\def\he{\hat{\epsilon}}
\def\al{\alpha}
\def\d1i{d_{i-1}}
\def\h1i{H_{i-1}}
\def\T{\mathcal{T}}
\def\Tgphi{\T_{\gamma, \alpha}}
\def\Tgphiz{\T_{\gamma, 0}}
\def\Tg{\T_{\gamma}^{\infty}}
\newcommand{\Ti}{T_{\infty}}
\newcommand{\AD}{A_{\Delta}}
\newcommand{\BD}{B_{\Delta}}
\newcommand{\mgood}{m-\text{good} }
\newcommand{\mgoodd}{m-\text{good}}
\newcommand{\mbad}{m-\text{bad} }
\newcommand{\typical}{$(m,r, c, C, \lambda)$-{typical} }
\begin{document}

\title{Scaling limit of linearly edge-reinforced random walks on critical Galton-Watson trees}
\author{George Andriopoulos\thanks{NYU-ECNU Institute of Mathematical Sciences, NYU Shanghai, Shanghai, 200062, China. Email: {ga73@nyu.edu}} \hspace{1cm} Eleanor Archer\thanks{Department of Mathematical Sciences, Tel Aviv University, Tel Aviv 69978, Israel. Email: {eleanora@mail.tau.ac.il}.}}
\maketitle
\begin{abstract}
    We prove an invariance principle for linearly edge reinforced random walks on $\gamma$-stable critical Galton-Watson trees, where $\gamma \in (1,2]$ and where the edge joining $x$ to its parent has rescaled initial weight $d(O, x)^{\alpha}$ for some $\alpha \leq 1$. This corresponds to the recurrent regime of initial weights. We then establish fine asymptotics for the limit process. In the transient regime, we also give an upper bound on the random walk displacement in the discrete setting, showing that the edge reinforced random walk never has positive speed, even when the initial edge weights are strongly biased away from the root.
\end{abstract}

\textbf{Keywords and phrases:} random walk in random environment, Dirichlet distribution, reinforced random walks, Galton–Watson trees, diffusion in random environment, slow movement.
\\
\textbf{AMS 2010 Mathematics Subject Classification:} 60F17, 60K37, 60K50, 60J60.


\section{Introduction}


Linearly edge-reinforced random walks (LERRW) are a classical model of self-interacting processes introduced by Coppersmith and Diaconis in 1986 \cite{CoppersmithDi87} (for a generalisation, we refer to the recent work of \cite{bacallado2021edge}). Given a rooted graph $G$ endowed with initial edge weights, and a \textit{reinforcement parameter} $\Delta > 0$, the model is defined as follows. The process $(X_n)_{n \geq 0}$ on $G$ is started at the root and given $X_n = v$, the next edge traversed by $X$ is chosen from the edges incident to $v$ with probability proportional to their weights. After $X$ has crossed the chosen edge, its weight is subsequently increased by $\Delta$ and the process repeats with the updated edge weights.


The non-Markovian nature of the LERRW makes it difficult to analyse. However, a remarkable result of Diaconis and Freedman \cite{DiaconisFreedman1980} (for the recurrent case) and then Merkl and Rolles \cite{merklrolles2007} (extension to the transient case) shows that the LERRW can be represented as a random walk in random environment (RWRE). This makes LERRW more tractable and for example leads to applications in Bayesian statistics \cite{diaconis2006bayes, bacallado2009bayesian, bacallado2011bayesian, bacallado2016bayesian}. On trees this RWRE representation can be understood by noticing that the weights of the edges incident to each of the vertices evolve according to a P\'olya urn model, independently for each vertex, see \cite{robin1988phase}. We will make this connection precise in Section \ref{sctn:LERRW and RWRE connection trees}.




The aim of this paper is to study LERRW on critical Galton-Watson trees by constructing its scaling limit for a range of initial weights and obtaining almost sure asymptotics for the limiting diffusion. Since critical Galton-Watson trees have a rich geometry, in particular having fractal properties, unbounded degrees, and non-uniform volume growth, we hope that these results are interesting in their own right as well as offering insight into the behaviour of LERRW on related critical random graphs such as uniform planar triangulations, high dimensional uniform spanning trees and critical percolation clusters.

Our first result, an invariance principle, extends an earlier work of Lupu, Sabot and Tarr\`es, who constructed the scaling limit of LERRW on $\Z$ as a diffusion on $\R$ \cite{lupu2018scaling}. In the Galton-Watson tree setting, in the special case where the initial weights are constant and identical for all edges and the critical Galton-Watson tree has finite variance, the scaling limit was previously constructed in \cite{andriopoulos2021invariance}.


We assume that the underlying Galton-Watson tree is critical with offspring distribution in the domain of attraction of a $\gamma$-stable law for some $\gamma \in (1,2]$. In the case $\gamma < 2$ this immediately entails that the offspring distribution $\xi$ satisfies $\xi ([k, \infty)) =k^{-\gamma} L(k)$ for some slowly-varying function $L$. If such a $\xi$ is fixed and $T_n$ is a $\xi$-Galton-Watson tree with $n$ vertices endowed with uniform mass measure, it is well-known that there exists a sequence $(a_n)_{n=1}^{\infty}$ such that $n^{-1}a_nT_n$ converges in distribution to the \textit{stable tree} of Le Gall and Le Jan \cite{galljan1998branching} (see also \cite[Chapter 1]{duquesne2002asterisque}). We denote the compact stable tree by $\Tgc$ and its root by $O$. In the case $\gamma = 2$ this is simply Aldous' Continuum Random Tree (see \cite{aldous1991tree} and \cite{aldous1993continuum}). We give the formal constructions in Section \ref{sctn:stable trees bckgrnd}.  


To define our LERRW model, we fix a reinforcement parameter $\Delta > 0$, label the root of our Galton-Watson tree $O_n$ and consider a class of initial weights $(\alpha_e^{(n)})_{e \in E(T_n)}$ parametrised by $\alpha \le 1$ whereby, if $\overleftarrow{x}$ denotes the parent vertex of $x$, 
\begin{equation}\label{scheme2}
\alpha^{(n)}_{\{\overleftarrow{x},x\}}=
(n a_n^{-1})^{1-\alpha} (d_n(O_n, x)+n a_n^{-1})^{\alpha}
\end{equation}
where $d_n$ denotes the graph distance on $T_n$, cf. \cite[Section 2.3]{takei2020almost}. Note that $na_n^{-1}$ is the natural scale for branch lengths in $\Tgc$; the addition of of this term in is just to ensure that the weights and inverse weights are integrable at $0$. Our first result is an invariance principle for this LERRW process.

\begin{theorem}\label{thm:scaling lim intro}
Let $X^{(n)}$ be a discrete-time LERRW on $T_n$, started at $O_n$, with initial weights as in \eqref{scheme2} and reinforcement parameter $\Delta$. Denote its law by $P_{O_n}^{(\alpha^{(n)})}$. For every $\alpha \leq 1$, there exists a stochastic process $X$ defined on $\Tgc$ and started at $O$, with law $P_{O}$, such that the following convergence holds jointly with respect to the Gromov-Hausdorff topology, and the topology of weak convergence of measures on the \cadlag path space on $T_n$ (itself endowed with the Skorohod-$J_1$ topology):
\[
\left(a_nn^{-1} T_n, P_{O_n}^{(\alpha^{(n)})}\left(\left(n^{-1} a_n  X^{(n)}_{\lfloor 2n^2 a_n^{-1} t \rfloor}\right)_{t\ge 0}\in \cdot \right)\right) \xrightarrow{(d)} \left(\Tgc,  P_{O}\left(\left(X_t\right)_{t\ge 0}\in \cdot\right)\right). 
\]
\end{theorem}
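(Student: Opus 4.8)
The plan is to follow the standard strategy for RWRE scaling limits: first realise the discrete LERRW on $T_n$ as a RWRE via the Diaconis--Freedman/Merkl--Rolles P\'olya urn representation (as promised in Section \ref{sctn:LERRW and RWRE connection trees}), then pass to the limit environment on $\Tgc$, and finally show the walks converge. Concretely, conditionally on $T_n$ the LERRW is a random walk in a random conductance environment $(c^{(n)}_e)_{e \in E(T_n)}$, where the conductances along each branch are built from independent Beta/Dirichlet variables coming from the urns at each vertex, with parameters governed by the initial weights \eqref{scheme2} and $\Delta$. The first step is therefore to describe the scaling limit of this random environment: under the rescaling $a_n n^{-1}$ of lengths, the product of urn ratios along a branch of $T_n$ should converge to a multiplicative (exponential-functional-of-a-subordinator-type) process indexed by the branch, yielding in the limit a random measure $\nu_\phi$ and a speed/scale pair on $\Tgc$ defining a diffusion $X$ via its Dirichlet form $\mathcal{E}_\infty$ (or equivalently via its speed measure and the natural metric/effective resistance on the tree). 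The exponent $\alpha$ enters only through the drift of this environment along branches, so the construction is uniform in $\alpha \le 1$, with $\alpha = 1$ being the borderline integrable case.

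Next I would set up the coupling. Embed $T_n$ and $\Tgc$ in a common space realising the Gromov--Hausdorff(--Prokhorov) convergence $a_n n^{-1} T_n \to \Tgc$ (Skorokhod representation), and simultaneously couple the discrete urn environments with the limit environment so that, along any fixed finite collection of branches, the rescaled discrete conductances converge to the continuum ones. The key analytic input is a convergence of the associated Dirichlet forms / resistance metrics: one shows that the rescaled discrete resistance metric on $T_n$ converges to the continuum resistance metric on $\Tgc$ attached to the limit environment, and that the rescaled speed measures (counting measure weighted by local time normalisation $n^{-2} a_n$, matching the $\lfloor n^2 a_n^{-1} t\rfloor$ time change) converge weakly to the continuum speed measure. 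Given such joint convergence of (metric space, measure, resistance form), one invokes a general principle — as in Croydon's work on scaling limits of random walks on trees/graphs via resistance forms, and as used in \cite{andriopoulos2021invariance} — that the associated (time-changed) processes converge in the sense of finite-dimensional distributions, and then upgrade to uniform-on-compacts convergence of \cadlag paths using tightness. Tightness follows from equicontinuity estimates: using the resistance metric one bounds the modulus of continuity of $X^{(n)}$ in terms of volume and resistance fluctuations on balls of $\Tgc$, which are controlled by the known fractal/volume estimates for stable trees together with tail bounds on the urn products.

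The main obstacle, I expect, is the control of the random environment at small scales — i.e., showing that the rescaled conductances do not degenerate and that the resistance metric genuinely converges rather than merely being bounded. Along a branch the conductance is a product of urn ratios, and one must show that fluctuations of $\log$(conductance) over a branch of rescaled length $\delta$ are uniformly (in $n$) of order $o(1)$ as $\delta \to 0$, with good enough tail bounds to survive the union over the (infinitely many, in the limit) branches of $\Tgc$; the heavy-tailed offspring distribution for $\gamma < 2$ makes the number and length distribution of branches delicate, and the $\alpha = 1$ case sits exactly at the integrability threshold where the invariant measure of the urn is barely defined, so the estimates there are tightest. A secondary technical point is establishing the convergence \emph{jointly} with the GHP convergence of the trees and proving the limit process $X$ is well-defined (does not explode, is conservative) on the non-locally-compact random tree $\Tgc$; this is handled by checking the standard conditions (finite total resistance from $\rho$ to the boundary on compacts, finite speed measure) for the limiting resistance form, which in turn reduces to moment estimates for exponential functionals of the branch subordinators.
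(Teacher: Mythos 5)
Your proposal follows essentially the same route as the paper: represent the LERRW as a random walk in an independent Dirichlet environment via the P\'olya-urn decoupling at each vertex, encode the environment in a resistance metric and speed measure on $T_n$, prove joint Gromov--Hausdorff--Prokhorov convergence of $(T_n,R_n,\nu_n)$ to a distorted stable tree $(\Tgc,\Rphi,\nuphi)$ on a common Skorokhod-coupled embedding, and then invoke Croydon's resistance-form convergence theorem to transfer this to convergence of the processes (which also dispenses with a separate tightness argument). You correctly identify the main difficulty as uniform control of the log-conductance fluctuations along branches, including the borderline $\alpha=1$ case.

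One point is off, and it is the central identification in the paper's argument: the limiting environment is \emph{not} of exponential-functional-of-a-subordinator type. The paper shows, via a functional martingale central limit theorem applied to the centred sums $\sum_{v\preceq x}(\log\rho_v-\E{\log\rho_v})$ of the independent beta-prime ratios, that the log-potential converges to a tree-indexed \emph{Gaussian} process (a Brownian snake on $\Tgc$ run in the reparametrised metric $\dbt$) plus a deterministic drift $\frac{\Delta}{1-\alpha}\dt(\rho,\cdot)^{1-\alpha}$ (resp.\ $(\Delta-1)\log(\dt(\rho,\cdot)+1)$ for $\alpha=1$), together with a diverging $-\alpha\log\dt(\rho,\cdot)$ correction that must be isolated by splitting the potential into a near-root piece and a branch piece. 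Without pinning down this Gaussian limit (and the variance computation $\textnormal{Var}\approx 4\Delta_n|x|^{-\alpha}$ that fixes the constant $\sqrt{4\Delta/(1-\alpha)}$), the limit process $X$ in the theorem is not actually constructed, so this step needs to be carried out rather than asserted. A minor slip: $\Tgc$ is compact, so no non-local-compactness or non-explosion issue arises in the compact-tree statement.
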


The law of $X$ will be defined explicitly in Section \ref{sctn:scaling limit candidate}. 

To establish scaling limits for tree-indexed random walks, one generally needs tighter control on the input laws compared to the $\Z^d$ case, cf. \cite[Theorem 2]{janson2005convergence} and \cite[Theorem 1]{marzouk2018snake} which require finiteness of higher moments in order to prove convergence of tree-indexed random walks to tree-indexed Brownian motion (known as snake processes). This is because all the extra branches present in the tree present more opportunities for large displacements of the random walk, leading to larger fluctuations without the higher moment assumption. However, we will see in the LERRW setting that the random environment becomes increasingly concentrated  as $n \to \infty$ without making any higher moment assumptions.

The analogous result previously proved by Lupu, Sabot and Tarr\`es for the LERRW in dimension one \cite{lupu2018scaling} used a high-level representation of a LERRW as a RWRE that was established in \cite{davis2002}. They encode the corresponding random environment in a random walk process and show that this converges to a time-changed Brownian motion on $\R$ under the appropriate rescaling. The law of the limiting LERRW diffusion is analogously encoded by this Brownian motion. We will use the same strategy to prove Theorem \ref{thm:scaling lim intro}. In contrast to \cite{lupu2018scaling} and \cite{andriopoulos2021invariance}, we directly use the observation of Pemantle that the random environment can be represented by independent Dirichlet random variables at each vertex \cite{robin1988phase}, rather than the (more complicated) mixing measure obtained in \cite{davis2002}, which makes our proof more elementary.

In the second half of this paper, we obtain asymptotics for the limiting diffusion appearing in Theorem \ref{thm:scaling lim intro}. To understand the long-time asymptotics of LERRW and its scaling limit it is more natural to study these processes on Galton-Watson trees conditioned to survive and their continuum counterparts, rather than on compact trees. In the discrete setting Kesten \cite{kesten1986limit} showed that this conditioning can be achieved by conditioning a Galton-Watson tree to have a single branch that survives to infinity, known as the backbone, and attaching finite Galton-Watson trees to the backbone. We denote this tree by $T_{\infty}$. This construction also has a natural analogue in the continuum known as sin-trees. These were constructed by Aldous in the case $\gamma = 2$ \cite[Section 2.5]{aldous1991continuum} and Duquesne in the case $\gamma < 2$ \cite{DuqSinTree}. We denote the infinite stable tree by $\Tg$. We give the formal constructions in Sections \ref{sctn:infinite trees bckgrnd} and \ref{sctn:Tgamma props}.

It seems intuitively clear that increasing $\Delta$ should make the LERRW ``more recurrent'' whereas increasing $\alpha$ should make it ``more transient''. Since the infinite tree $T_{\infty}$ contains a unique path to infinity, it follows directly from the corresponding result of Takeshima \cite{takeshima2000behavior} (see also \cite{akahori2019phase}) for LERRW on the infinite half-line that the LERRW is almost surely recurrent if $\sum_{n \geq 1} n^{-\alpha} = \infty$, and almost surely transient otherwise. However, although the value of $\Delta$ cannot change the transience and recurrence properties of $X$, we will see in our next theorems that both $\alpha$ and $\Delta$ affect the time it takes for $X$ to escape a ball. In the case $\alpha < 1$, which we refer to as ``strongly recurrent'', the reinforcement gives an exponential bias towards the root, so that $X$ only moves away from the root at logarithmic speed. In what follows, we denote by $\mathbf{P}$ the probability measure for Kesten's tree $\Ti$ and for the infinite stable tree $\Tg$.

\begin{theorem}\label{thm:almost sure limsup}[Strongly recurrent regime $\alpha < 1$].
Let $(X_t)_{t \geq 0}$ be the limiting diffusion of \cref{thm:scaling lim intro} on the infinite stable tree $\Tg$ when $\alpha < 1$. If $\Delta>0$, we have that $\mathbf{P}\times P_{O}$-almost surely, 
\[
\limsup_{t \rightarrow \infty} \frac{d(O, X_t)}{(\log t)^{\frac{1}{1-\alpha}}} = \left(\frac{1-\alpha}{\Delta}\right)^{\frac{1}{1-\alpha}},
\]
where by $d$ we denote the tree metric on $\Tg$.
\end{theorem}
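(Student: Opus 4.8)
The plan is to transfer the statement to a law of large numbers for the times at which $X$ hits spheres around $\rho$, and then to read off the constant from an explicit computation with the Dirichlet weights. First I would make the reduction to hitting times. Set $\tau_r:=\inf\{t\ge 0:\ d(\rho,X_t)\ge r\}$. Since $X$ is recurrent on $\Tg$, the path $t\mapsto d(\rho,X_t)$ is continuous, returns to $0$ infinitely often, and attains each level $r$ exactly at time $\tau_r<\infty$ while reaching arbitrarily large levels; this yields the pathwise identity
\[
\limsup_{t\to\infty}\frac{d(\rho,X_t)}{(\log t)^{1/(1-\alpha)}}
=\limsup_{r\to\infty}\frac{r}{(\log\tau_r)^{1/(1-\alpha)}}
=\Big(\liminf_{r\to\infty}\frac{\log\tau_r}{r^{1-\alpha}}\Big)^{-1/(1-\alpha)},
\]
so that the theorem is equivalent to the almost sure statement $\liminf_{r\to\infty}r^{-(1-\alpha)}\log\tau_r=\Delta/(1-\alpha)$.

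Second, I would obtain a quenched description of $\tau_r$. Conditionally on the environment — the tree $\Tg$ together with the vertex Dirichlet vectors, equivalently the conductances built from them as in Pemantle's representation — $X$ is a time-changed Brownian motion on a weighted $\R$-tree, so $\tau_r$ has the usual Green-function / commute-time expression in terms of effective resistances and the speed measure. From this I would extract a deterministic (given the environment) functional $\mathcal V_r$, morally the height of the effective potential along the spine up to level $r$, and show that, almost surely, $\log\tau_r=\mathcal V_r+o(r^{1-\alpha})$: the upper bound on $\mathbb E[\tau_r\mid\text{environment}]$ follows from the explicit formula, the matching lower bound from a ``the deepest barrier must be crossed'' estimate together with a second-moment (Paley--Zygmund) argument, and the passage from $\mathbb E[\tau_r\mid\text{environment}]$ to $\tau_r$ from standard concentration of hitting times of one-dimensional diffusions. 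A genuine point here is that the excursions of $X$ into the (countably many, Poissonian) subtrees grafted along the spine must be shown to contribute only at order $o(r^{1-\alpha})$: although such subtrees can have height comparable to $r$, the time needed to penetrate one of them to any relevant depth is only $\exp(o(r^{1-\alpha}))$, because inside a subtree the environment produces only diffusive fluctuations on the scale of the local edge weight, and a Borel--Cantelli argument over the family of subtrees controls all of them simultaneously.

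Third — and this is where the constant is pinned down — I would prove the law of large numbers $\mathcal V_r/r^{1-\alpha}\to\Delta/(1-\alpha)$, $\mathbf P$-almost surely. The functional $\mathcal V_r$ decomposes along the spine into a sum of contributions of consecutive blocks (the pieces of spine between successive branch points together with the subtrees attached there), and by the spine decomposition of $\Tg$ together with the Dirichlet representation these blocks are, once one conditions on their spatial location, independent. Because the edge joining a vertex at spine-distance $\approx s$ from $\rho$ to its parent has rescaled initial weight $\approx s^{\alpha}$, the block located near $s$ contributes in expectation $\approx\Delta\,s^{-\alpha}$, via a short moment identity for the relevant function of a $\mathrm{Beta}(s^{\alpha}/\Delta,\cdot)$ variable; summing, $\mathbb E[\mathcal V_r]=\Delta\int_0^r s^{-\alpha}\,ds\,(1+o(1))=\tfrac{\Delta}{1-\alpha}r^{1-\alpha}(1+o(1))$. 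Since the block contributions have light tails for large $s$, their sum concentrates, and a subadditivity / Borel--Cantelli argument upgrades this to the almost sure limit. Combining the three steps gives the theorem.

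I expect Step three to be the main obstacle: isolating the correct functional $\mathcal V_r$ and carrying out the block decomposition of $\Tg$ precisely enough that the per-block expectation can be computed exactly while all the error terms — fluctuations of the spine weights, heavy-tailed subtree masses, dependence between neighbouring blocks — are controlled uniformly in $r$. It is this bookkeeping, rather than any single estimate, that produces the exact value $\big(\tfrac{1-\alpha}{\Delta}\big)^{1/(1-\alpha)}$; the subtree accounting in Step two is the second most delicate ingredient.
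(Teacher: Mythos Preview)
Your proposal confuses the discrete and continuum frameworks. The theorem concerns the \emph{limiting diffusion} $X$ on $\Tg$, whose quenched law is defined via the resistance metric $R_\phi$ and speed measure $\nu_\phi$ of \eqref{eqn:distorted res def}--\eqref{eqn:distorted meas def}. In that setting there are no Dirichlet vectors, no $\mathrm{Beta}(s^\alpha/\Delta,\cdot)$ variables, and no Pemantle-type environment at vertices: the environment is the pair $(\Tg,\phib)$, with $\phib$ the Gaussian snake of Definition~\ref{def:snake}. Your Step~3, which tries to recover $\Delta/(1-\alpha)$ from Beta moment identities along the spine, is therefore aimed at the wrong object. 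That computation belongs to the scaling-limit argument (Section~\ref{sctn:scaling lim}), which has already been carried out and is precisely what \emph{produces} the coefficient $A_\Delta=\Delta/(1-\alpha)$ sitting in the exponent of $R_\phi$. Trying to rederive it at this stage amounts to redoing the invariance principle inside the proof of the asymptotics.

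Once one works in the continuum, the paper's route is much shorter. The drift term $A_\Delta\, d(\rho,z)^{1-\alpha}$ is present deterministically in $\log R_\phi$, and the Gaussian term $\BD\phib(z)$ is almost surely $o(d(\rho,z)^{1-\alpha})$ by Proposition~\ref{prop:phi d comp 2}. This yields the ball comparison $\Bphi(\rho,e^{(1-\epsilon)A_\Delta r^{1-\alpha}})\subset B(\rho,r)\subset\Bphi(\rho,e^{(1+\epsilon)A_\Delta r^{1-\alpha}})$ (Proposition~\ref{prop:ball comparison}) directly, with no law of large numbers required. From there the exit-time bounds are standard resistance-form estimates: $\Rphi(\rho,\Bphi(\rho,r)^c)$ is essentially $r$ (Proposition~\ref{prop:Reff}, a cutset argument handling the branching), and $\nu_\phi(\Tg)<\infty$ (Proposition~\ref{prop:tree vol bound}). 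The upper bound on $d(\rho,X_t)$ then comes from a geometric number of excursions from $\rho$ before exit plus Borel--Cantelli; the lower bound is Markov's inequality on $E_\rho[T_{B(\rho,r)}]$ followed by Fatou. No block decomposition is needed, and the ``subtree accounting'' you flag as delicate is absorbed into the finiteness of $\nu_\phi(\Tg)$ and the cutset bound, both of which are short once the Gaussian is known to be subdominant.
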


\begin{remark}
Takei \cite{takei2020almost} also proved similar results for the discrete-time LERRW on $\Z$. Lupu, Sabot and Tarr\`es also proved a result analogous to Theorem \ref{thm:almost sure limsup} in the case $\alpha=0$ for the limiting diffusion on $\R$ \cite[Proposition 4.9]{lupu2018scaling}; this corresponds to taking the initial occupation profile function $L_0$ equal to 1 everywhere, except possibly a compact interval. Our long-time asymptotics results cover a larger class of initial occupation profiles, and we additionally have to take care of fluctuations in the underlying tree $\Tg$.
\end{remark}

In the critical case $\alpha = 1$, the slow-down effect from the reinforcement is almost balanced by the increasing initial edge weights, so we lose the exponential attraction towards the root and no longer see the slow movement. Instead the attraction is of polynomial order and this is reflected in the results of \cref{thm:almost sure limsup critical}. Because the reinforcement effect and the volumes of balls in the tree both grow on a polynomial scale, we see that there are two regimes depending on which one has the dominant effect.

\begin{theorem}\label{thm:almost sure limsup critical}[Critical regime $\alpha = 1$].
Let $(X_t)_{t \geq 0}$ be the limiting diffusion of \cref{thm:scaling lim intro} on the infinite stable tree $\Tg$ when $\alpha =1$.
\begin{enumerate}[(i)]
\item If $\Delta \leq \frac{2 \gamma-1}{\gamma-1}$, then for any $\beta > \frac{1}{2}$, we have that $\mathbf{P}\times P_{O}$-almost surely, 
\[
0 < \limsup_{t \rightarrow \infty} \frac{d(O, X_t)}{t^{\frac{\gamma - 1}{2\gamma - 1}}e^{-(\log t)^{\beta}}} \qquad \text{and} \qquad \limsup_{t \rightarrow \infty} \frac{d(O, X_t)}{t^{\frac{\gamma - 1}{2\gamma - 1}}e^{(\log t)^{\beta}}} < \infty.
\]
\item If $\Delta > \frac{2 \gamma-1}{\gamma-1}$, then for any $\beta > \frac{1}{2}$, we have that $\mathbf{P}\times P_{O}$-almost surely, 
\[
0 < \limsup_{t \rightarrow \infty} \frac{d(O, X_t)}{t^{1/\Delta}e^{-(\log t)^{\beta}}}  \qquad \text{and} \qquad  \limsup_{t \rightarrow \infty} \frac{d(O, X_t)}{t^{1/\Delta}e^{(\log t)^{\beta}}} < \infty.
\]
\end{enumerate}
\end{theorem}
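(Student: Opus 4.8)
The plan is to reduce the statement to a two–sided estimate on the hitting times of spheres, deduce that estimate from the scaling of effective resistance and of the speed measure on $\Tg$, and upgrade everything to an almost sure statement by Borel--Cantelli over dyadic scales. Write $\tau_r := \inf\{t\ge 0: d(\rho, X_t)\ge r\}$ and $d_w := \tfrac{2\gamma-1}{\gamma-1}$. Recall from Section~\ref{sctn:scaling limit candidate} that, for $\mathbf{P}$-a.e.\ realisation of $\Tg$ with its limiting environment, $X$ is the associated recurrent, $\mu$-symmetric diffusion on $\Tg$ (equivalently a time change of Brownian motion on $\Tg$), with $\mu$ the limiting speed measure; recurrence is inherited from the half-line result of Takeshima as explained above. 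Since $X$ has continuous paths, $d(\rho, X_{\tau_r}) = r$ and $\sup_{s\le t} d(\rho, X_s) = \sup\{r : \tau_r\le t\}$, so the whole theorem follows once one shows that $\mathbf{P}\times P_\rho$-a.s., for every $\beta' > \tfrac12$ and all sufficiently large $r$,
\[
r^{\max(d_w,\Delta)} e^{-(\log r)^{\beta'}} \ \le\ \tau_r\ \le\ r^{\max(d_w,\Delta)} e^{(\log r)^{\beta'}}.
\]
Indeed, evaluating the upper bound at $t = \tau_r$ forces $d(\rho, X_t)$ to be large along a subsequence of times of rate $t^{1/\max(d_w,\Delta)}$, which gives the lower bounds on $\limsup$; the lower bound on $\tau_r$ gives $\sup_{s\le t} d(\rho, X_s)\le t^{1/\max(d_w,\Delta)} e^{(\log t)^{\beta'}}$ for large $t$, which gives the upper bounds on $\limsup$; in both cases one uses $\log\tau_r\asymp\log r$ and passes from $\beta'$ to an arbitrary $\beta > \tfrac12$. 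Note that in case (i) $\max(d_w,\Delta) = d_w$, so the rate is $t^{(\gamma-1)/(2\gamma-1)}$, and in case (ii) $\max(d_w,\Delta) = \Delta$, so the rate is $t^{1/\Delta}$.

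\emph{Hitting times via resistance and volume.} The next step is to show that $\mathbf{P}\times P_\rho$-a.s.\ and for all large $r$,
\[
R_{\mathrm{eff}}(\rho,\partial B(\rho,r))\,\mu(B(\rho,r))\, e^{-(\log r)^{\beta'}}\ \le\ \tau_r\ \le\ R_{\mathrm{eff}}(\rho,\partial B(\rho,r))\,\mu(B(\rho,r))\, e^{(\log r)^{\beta'}} .
\]
For the upper bound one uses the standard estimate $\mathbf{E}^\omega_\rho[\tau_r]\le C\, R_{\mathrm{eff}}(\rho,\partial B(\rho,r))\,\mu(B(\rho,r))$ for diffusions on trees (the sum, over the edges separating $\rho$ from $\partial B(\rho,r)$, of the resistance of the edge times the $\mu$-mass it cuts off), and then Markov's inequality, which gives $\mathbf{P}^\omega_\rho(\tau_r > e^{(\log r)^{\beta'}}\mathbf{E}^\omega_\rho[\tau_r])\le e^{-(\log r)^{\beta'}}$, summable along $r = 2^n$; Borel--Cantelli and monotonicity in $r$ finish. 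For the lower bound one observes that reaching $\partial B(\rho,r)$ forces $X$ to reach backbone-distance $r^{1-o(1)}$ from $\rho$ --- the finite trees grafted to the backbone that reach tree-distance of order $r$ from $\rho$ are, $\mathbf{P}$-a.s.\ eventually, grafted at backbone-distance $r^{1-o(1)}$, which follows from the tails of the height and of the degree of a stable Galton--Watson tree together with the $r^{1/(\gamma-1)+o(1)}$ bushes present within backbone-distance $r$ --- and that the quenched probability of accomplishing this within a time budget $T$ is $\lesssim T/(R_{\mathrm{eff}}(\rho,\partial B(\rho,r))\,\mu(B(\rho,r/2)))$ by a commute-time/flow argument bounding the expected number of traversals of the backbone segment $[\lceil r/2\rceil,\lceil r\rceil]$ before time $T$ (each traversal taking positive time); taking $T$ equal to the claimed lower bound and applying Borel--Cantelli along $r = 2^n$ again completes the step.

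\emph{Volume and resistance asymptotics; conclusion.} The remaining input, to be extracted from the explicit description of the limiting environment, is that $\mathbf{P}$-a.s.\ and for all large $r$,
\[
\mu(B(\rho,r)) = r^{\frac{\gamma}{\gamma-1}+o(1)}, \qquad R_{\mathrm{eff}}(\rho,\partial B(\rho,r)) = r^{\max(1,\,\Delta-\frac{\gamma}{\gamma-1})+o(1)},
\]
with each $o(1)$ of order $(\log r)^{-\frac12+\varepsilon}$; multiplying these and inserting into the second step yields $\tau_r = r^{\max(d_w,\Delta)+o(1)}$ (the threshold $\Delta = \tfrac{2\gamma-1}{\gamma-1}$ being precisely the value at which $\Delta - \tfrac{\gamma}{\gamma-1}$ overtakes $1$), and then the first step gives both cases of the theorem. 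The volume estimate follows from the volume growth of $\Tg$ --- the number of bushes within backbone-distance $r$ is $r^{1/(\gamma-1)+o(1)}$ and a bush has size-tail exponent $1/\gamma$, giving total $\mu$-mass $r^{\gamma/(\gamma-1)+o(1)}$ --- together with the fact that the reinforcement changes the speed measure from the counting measure only by sub-polynomial multiplicative factors, since the Dirichlet weights concentrate as the initial weights diverge. The resistance estimate reduces $R_{\mathrm{eff}}(\rho,\partial B(\rho,r))$ to $\sum_{k\le r} C_k^{-1}$, where $C_k$ is the conductance of the $k$-th backbone edge and $\log C_k$ is a sum of $\asymp k$ independent log-Beta increments: its mean produces the polynomial exponent and a Freedman-type martingale inequality, combined with Borel--Cantelli over dyadic scales, controls the fluctuation by $e^{\pm(\log r)^{\beta'}}$; one also checks that the grafted bushes cannot short-circuit the backbone, their conductances inheriting the backbone scale up to sub-polynomial internal fluctuations.

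\emph{The main obstacle.} The crux is the resistance asymptotic $R_{\mathrm{eff}}(\rho,\partial B(\rho,r)) = r^{\max(1,\,\Delta-\gamma/(\gamma-1))+o(1)}$: one must pin down how the reinforcement parameter, filtered through the heavy-tailed, size-biased degrees along the backbone of $\Tg$, turns into a polynomial drift in the log-conductance that appears only above $\Delta = \tfrac{2\gamma-1}{\gamma-1}$, and must control the attendant fluctuations at the precise scale $e^{(\log r)^{1/2+\varepsilon}}$ --- which is exactly what forces ``$\beta > \tfrac12$'' --- all while doing the Borel--Cantelli bookkeeping that makes the many sub-polynomial error terms hold simultaneously for all large $r$ almost surely. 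A secondary technical point is that $d(\rho, X_t)$ is not literally one-dimensional, since $X$ can sit deep inside a grafted bush; but this inflates $d(\rho, X_t)$ over the current backbone level by at most a sub-polynomial factor, so it does not affect any of the exponents.
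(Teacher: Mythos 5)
Your overall architecture coincides with the paper's: identify the exit time of $B(\rho,r)$ as (effective resistance) $\times$ (speed-measure volume) up to $e^{\pm(\log r)^{1/2+\epsilon}}$ factors, and run Borel--Cantelli over dyadic scales (the paper gets the upper tail from the Green's function bound $\estart{T_{B(\rho,r)}}{\rho}\le \Rphi(\rho,B(\rho,r)^c)\,\nuphi(B(\rho,r))$ plus Markov, and the lower tail from \cite[Lemma 4.2(b)]{croydon2016scaling}). The final exponents you announce are also correct. However, both of your key quantitative inputs are individually wrong, and the derivations you sketch for them would not produce them.

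First, the speed measure of the limiting diffusion is $\nuphi$, whose density with respect to $\mu$ is $e^{-[\sqrt{4\Delta}\,\phiz(x)+(\Delta-1)\log(\dt(\rho,x)+1)]}$. The reinforcement therefore distorts the uniform measure by the \emph{polynomial} factor $(\dt(\rho,x)+1)^{-(\Delta-1)}$, not a sub-polynomial one. Consequently $\nuphi(B(\rho,r))\asymp r^{\frac{\gamma}{\gamma-1}-(\Delta-1)}$ when $\Delta-1\le\frac{\gamma}{\gamma-1}$, and $\nuphi(\Tg)<\infty$ when $\Delta-1>\frac{\gamma}{\gamma-1}$ (Proposition \ref{prop:tree vol bound 0}), not $r^{\gamma/(\gamma-1)+o(1)}$. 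Second, your own prescription for the resistance --- sum the backbone edge resistances $C_k^{-1}=e^{V(b_k)}$ with $\E{V(b_k)}\sim(\Delta-1)\log k$ --- yields $\sum_{k\le r}k^{\Delta-1}\asymp r^{\Delta}$ for \emph{every} $\Delta>0$, and the cutset argument of Proposition \ref{prop:Reff 0} shows parallel paths lower this only by sub-polynomial factors. So $R_{\mathrm{eff}}(\rho,B(\rho,r)^c)=r^{\Delta+o(1)}$, not $r^{\max(1,\,\Delta-\gamma/(\gamma-1))+o(1)}$: there is no change of behaviour of the log-conductance drift at $\Delta=\tfrac{2\gamma-1}{\gamma-1}$. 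The threshold in the theorem is a \emph{volume} phenomenon ($\nuphi$ becoming a finite measure), not a resistance one.

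Your two errors transfer a factor of exactly $r^{\min(\Delta-1,\,\gamma/(\gamma-1))}$ from the resistance to the volume, so the product --- and hence the exit-time exponent $\max\!\left(\tfrac{2\gamma-1}{\gamma-1},\Delta\right)$ --- comes out right by cancellation. But since the justifications you give for each factor are false, carrying out the programme as written would break down: the backbone computation returns $r^{\Delta}$, contradicting your own resistance formula, and the volume step has no valid derivation. Replacing both inputs by the correct asymptotics above (Propositions \ref{prop:ball comparison 0}--\ref{prop:tree vol bound 0}), the rest of your argument goes through essentially as in the paper.
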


In the transient regime when $\alpha > 1$, the resistance techniques used to prove Theorem \ref{thm:scaling lim intro} no longer provide the machinery to identify the LERRW scaling limit since the rescaled resistances degenerate in this regime. However, we can work in the discrete setting and obtain an upper bound on the displacement of the random walk. 

\begin{theorem}\label{thm:almost sure limsup transient}[Transient regime $\alpha > 1$].
Let $(Y_n)_{n\ge 0}$ be a discrete-time LERRW on Kesten's tree $\Ti$ with offspring distribution in the domain of attraction of a $\gamma$-stable law, with initial weights $\alpha_{\{\overleftarrow{x},x\}}= d_{\Ti}(O_{\infty}, x)^{\alpha}$. Then, almost surely, for any $A > \frac{1}{2}$,
\[
d_{\Ti}(O_{\infty}, Y_n) \leq n^{\frac{2\gamma - 1}{2\gamma}} e^{(\log n)^{A}},
\]
for all sufficiently large $n$. Consequently, $Y$ does not have positive speed.
\end{theorem}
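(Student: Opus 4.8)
The strategy is to reduce the walk on Kesten's tree to a one-dimensional problem along the backbone, and then to bound from above how quickly the backbone coordinate can grow. Let $\phi : \Ti \to \Z_{\ge 0}$ be the projection sending a vertex to the index of its most recent ancestor on the backbone, so that $d_{\Ti}(\rho, Y_t)$ is controlled (up to the diameter of the finite bush hanging off that backbone vertex, which we will handle separately) by $\phi(Y_t)$. Using the RWRE representation from Section \ref{sctn:LERRW and RWRE connection trees} — namely that conditionally on the environment the edge weights around each vertex follow independent P\'olya urns / Dirichlet laws — the walk $Y$ restricted to the backbone behaves like a birth-and-death chain on $\Z_{\ge 0}$ whose transition probabilities are determined by effective resistances, with the finite subtrees acting as traps that only slow the walk down. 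So an upper bound on the displacement of this birth-and-death chain, plus a bound on the diameters of the first few bushes, gives the theorem.

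First I would set up the backbone chain: let $R_k$ denote the effective resistance (under the random conductances given by the reinforced/Dirichlet environment) between backbone vertex $k$ and the root, computed in the subtree rooted at $k$ with all of $\Ti$ beyond it collapsed appropriately. Because the initial edge weights grow like $d(\rho,x)^{\alpha}$ with $\alpha > 1$, the series $\sum_k R_k$ behaves like $\sum_k k^{-\alpha} < \infty$, confirming transience, but more importantly the partial sums and the commute-time identity give quantitative control: the expected time for the backbone coordinate to go from $0$ to $m$ is, up to constants and slowly-varying corrections, of order $m \cdot \sum_{k \le m}(\text{volume of bush } k) \cdot (\text{resistance across edge }k)$. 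Second, I would estimate the volume contributions: the bushes hanging off the first $m$ backbone vertices are i.i.d. critical Galton-Watson trees, and by the stable-law tail assumption their total volume up to level $m$ is of order $m^{\gamma/(\gamma-1)}$ with slowly-varying fluctuations (this is the standard scaling $a_m \asymp m^{1/\gamma}$, $n \asymp m$, giving volume $\asymp m^{1/(\gamma-1)} \cdot m$... more precisely one gets the exponent matching $t^{(2\gamma-1)/(2\gamma)}$ after inverting). Combining, the hitting time $\tau_m$ of backbone level $m$ satisfies $\tau_m \asymp m^{2\gamma/(2\gamma-1)}$ up to sub-polynomial factors, and inverting yields $\phi(Y_t) \lesssim t^{(2\gamma-1)/(2\gamma)}$ with a correction absorbed into $e^{(\log t)^A}$.

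Third, I would make the heuristic rigorous via a Borel–Cantelli argument along a geometric subsequence $t_j = 2^j$ (or $m_j = 2^j$): show $\mathbf{P}(\tau_{m_j} \le m_j^{2\gamma/(2\gamma-1)} e^{-(\log m_j)^{A'}}) $ is summable using a lower-tail estimate on $\tau_{m_j}$ (a sum of roughly independent traversal times, each bounded below by a geometric/Dirichlet variable), then interpolate between consecutive subsequence points using monotonicity of $t \mapsto \sup_{s \le t}\phi(Y_s)$ together with the fact that $\phi$ is non-decreasing in the maximum and the walk cannot leap levels. The extra bush-diameter term $\max_{k \le \phi(Y_t)} \diam(\text{bush}_k)$ is at most polylogarithmic in $m$ with overwhelming probability (finite-variance-type tail for the height of a critical GW tree conditioned small, or just a crude union bound using $\mathbf{P}(\Height > h) \lesssim h^{-1}$), hence is also swallowed by $e^{(\log t)^A}$. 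The ``consequently, $Y$ does not have positive speed'' is then immediate since $(2\gamma-1)/(2\gamma) < 1$.

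The main obstacle I expect is the lower-tail estimate on the traversal times $\tau_m$ — i.e. proving that the walk really does take at least $\approx m^{2\gamma/(2\gamma-1)}$ steps and cannot get lucky and shoot out faster. The traversal time of the $k$-th backbone edge is governed by both the small Dirichlet/reinforcement weight on that edge (which can occasionally be atypically large, speeding the walk) and the amount of mass in the bush at $k$ that the walk must sweep; controlling the joint upper deviations of the edge weights and lower deviations of the bush volumes simultaneously, and showing their product still forces enough time with high enough probability to be Borel–Cantelli summable, is the delicate part. Relatedly, one must be careful that the reinforcement $\Delta > 0$ genuinely only helps (makes the walk more recurrent / slower) rather than hurting the bound — this should follow from a monotonicity/coupling argument comparing the Dirichlet environment with reinforcement to the $\Delta \to 0$ quenched conductance environment, but it needs to be stated cleanly.
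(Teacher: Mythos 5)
Your overall strategy matches the paper's: decompose the passage to backbone level $r$ into many traversal times, prove a lower-tail estimate for each, sum them via a chaining argument, apply Borel--Cantelli along $r_n=2^n$, and control the off-backbone bush diameters separately (the paper's Proposition \ref{prop:A_r diam bound} gives $\diam(\Ti\setminus T^{r})\lesssim r(\log r)^{\gamma+\epsilon}$, which is one source of the $e^{(\log t)^A}$ correction). However, the step you flag as ``the main obstacle'' --- the lower-tail estimate for an individual traversal time --- is precisely the content of the proof, and your proposal does not supply it. The paper's mechanism is: (1) restrict to ``$m$-good'' vertices, where the potential $V$ stays within $(\log|v|)^{1/2+\delta}$ of $-\alpha\log|v|$ along the ancestral line (Lemma \ref{lem:transient potential control}), and to ``typical'' annuli where a positive proportion of $B_{T^{r_i}}(b_{r_i},\tilde r)$ is good and its cardinality has order $(r/\lambda)^{\gamma/(\gamma-1)}$ --- this is exactly the simultaneous control of potential deviations and bush volumes that you correctly identify as necessary but do not carry out; (2) on a typical annulus, Green's function comparisons give matching upper and lower bounds on $\estart{\hat\tau_i}{b_{r_i}}$ of order $r^{(2\gamma-1)/(\gamma-1)}\lambda^{-(2\gamma-1)/(\gamma-1)}$ up to $e^{\pm(\log r)^{A'}}$ factors; (3) the elementary Markov-property inequality $\estart{\hat\tau_i}{b_{r_i}}\le s+\prstart{\hat\tau_i<s}{b_{r_i}}\sup_z\estart{\hat\tau_i}{z}$ converts the ratio of these two bounds into the required quantitative lower tail, after which \cite[Lemma 3.14]{barlow1998fractals} sums the $\ge\lambda/2$ typical traversal times into an exponential bound. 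Without steps (2)--(3) or a substitute, your Borel--Cantelli argument has nothing to apply to.

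Two secondary issues. First, your exponent bookkeeping is not a derivation: the level-by-level computation (resistance across an annulus $\asymp r^{1-\alpha}\lambda^{-1}$, measure density $\asymp r^{\alpha}$, number of good vertices $\asymp(r/\lambda)^{\gamma/(\gamma-1)}$ --- note the $\alpha$'s cancel) gives an expected traversal time of order $r^{(2\gamma-1)/(\gamma-1)}$ for the whole ball, not $r^{2\gamma/(2\gamma-1)}$; the theorem's exponent is the weaker one actually needed, and you obtain it by reading it off the statement. Second, the bushes are not ``traps that only slow the walk down'' and can hence be set aside: an upper bound on displacement requires a lower bound on exit times, and the bush volumes are the dominant contribution to that lower bound, so they must be bounded \emph{below} uniformly over many annuli --- which is exactly what the ``typical'' condition encodes and what a crude ``traps only help'' monotonicity argument cannot deliver.
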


It is somewhat more delicate to obtain lower bounds on the displacement since this requires uniform control of the environment. However, we believe that it should also be possible to get a comparable lower bound using discrete analogues of the arguments used to obtain this control in the recurrent case.

Despite the bias of our initial weights away from the root, the result of Theorem \ref{thm:almost sure limsup transient} contrasts strongly to those obtained on $b$-regular trees with constant initial weights by Collevecchio \cite{collevecchio2006limit}, who proved that the speed is positive when $b$ is large enough or under an appropriate moment condition for the LERRW. This was later extended to all $b \geq 2$ by Aidekon \cite{Aidekon2008TransientRW}. In our case, Theorem \ref{thm:almost sure limsup transient} holds because the LERRW is slowed down by the time spent in dead ends of the tree; this effect is particularly pronounced because of the fractal properties of the tree (i.e. many dead ends en route to and from other dead ends). By contrast, in regular trees there are no dead ends, and exponential volume growth, which gives a much stronger bias away from the root.

In the same spirit, due to the uniqueness of the path to infinity in $\Ti$, the LERRW on $\Ti$ can be viewed as a LERRW on $\Z^+$ slowed down by extra excursions into subtrees hanging off this path. This can be quantified: on $\Z^+$, with the same initial weights, it is possible to show that $\sup_{m \leq n} |Y_m| \geq n^{\frac{1}{2} - o(1)}$ (see Remark \ref{remark:result on Z}). Since $\gamma > 1$, the LERRW is really slower on the trees we consider (note however that it is not true that $\Ti \to \Z^+$ in any sense as $\gamma \downarrow 1$, in fact due to the heavy tails, the Galton-Watson trees undergo a condensation phenomenon, e.g. see \cite{KortCondensation}).

Here we briefly record some notation that we will use throughout the paper.

\begin{center}
\begin{tabular}{ |c|c|c|c| } 
\hline
$T_n$ & critical Galton-Watson tree with offspring distribution satisfying \eqref{eqn:dom of att def} \\ 
$d_n$ & graph distance on $T_n$ \\
$\mu_n$ & uniform probability measure on the vertices of $T_n$\\
$R_n$ & distorted graph distance on $T_n$ \eqref{eqn:discrete V, res, mu def}\\
$\nu_n$ & distorted measure on the vertices of $T_n$ \eqref{eqn:discrete V, res, mu def}\\
$O_n$ & root of $T_n$\\
$\Ti$ & critical Galton-Watson tree conditioned to survive, with offspring distribution satisfying \eqref{eqn:dom of att def} \\
$d_{\Ti}$ & graph distance on $\Ti$\\
$R$ & distorted graph distance on $\Ti$\\
$\nu$ & distorted measure on the vertices of $\Ti$\\
$O_{\infty}$ & root of $\Ti$\\
$\Tgc$ & compact stable tree \\
$\Tg$ & infinite stable tree \\
$\mu$ & canonical measure on $\Tgc$ or $\Tg$ \eqref{eqn:natmeas2}\\
$\nuphi$ & distorted measure on $\Tgc$ or $\Tg \eqref{eqn:distorted meas def}$ \\
$d$ & canonical metric on $\Tgc$ or $\Tg$ \eqref{stablemet}\\
$\Rphi$ & distorted metric on $\Tgc$ or $\Tg$
\eqref{eqn:distorted res def}\\
$O$ & root of $\Tgc$ or $\Tg$\\
$a_n$ & scaling factor for $T_n: n^{-1}a_n T_n \overset{(d)}{\to}\Tgc$ \\
$\mathbf{P}$ & probability measure for the mm-spaces $T_n, \Ti, \Tgc, \Tg$ \\
$\alpha^{(n)}_e$ & initial weight for LERRW for $e \in E(T_n)$ \\
$\Delta$ & reinforcement parameter for LERRW on $T_n$ \\
$\mathbf{b}^{(n)}_v$ & parameter for RWDE at $v \in T_n$ \\
$\mathbb{P}^{(\mathbf{b})}$ & law of Dirichlet random environment on $T_n$ with (vector) parameter $\mathbf{b}$\\
$X^{(n)}$ & discrete-time LERRW on $T_n$ \\
$P_{O_n}^{(\alpha)}$ & quenched law of $X^{(n)}$ with initial weights $\alpha$ \\
$Z^{(n)}$ & continuous-time constant speed RWDE on $T_n$ \\
$\tilde{P}_{O_n,\omega}$ & quenched law of $Z^{(n)}$ \\
$\tilde{P}_{O_n}^{(\mathbf{b})}$ & annealed law of $Z^{(n)}$ \\
$X$ & scaling limit of $X^{(n)}$ as in Theorem  \ref{thm:scaling lim intro}\\
$\tilde{P}_{O,\phi}$ & quenched law of $X$ (over the environment) \\
${P}_{O}$ & annealed law of $X$ (over the environment) \\
$Y$ & discrete-time LERRW on $\Ti$\\
$\dbt(s,t)$ & distance defined on $\R$ equal to $\begin{cases}
\dt(0,s)^{1-\alpha} + \dt(0,t)^{1-\alpha} - 2\dt(0, s\wedge t)^{1-\alpha}, &\text{ if } \alpha < 1, \\
\log (\dt(0,s)+1) + \log (\dt(0,t)+1) - 2\log (\dt(0, s \wedge t)+1), &\text{ if } \alpha = 1,
\end{cases}$\\
$\phib$ & Gaussian process defined such that $\E{|\phib(s) - \phib(t)|^2} = \dbt(s,t)$ \\
$\Pb$ & law of $\phib$ \\
\hline
\end{tabular}
\end{center}

\subsection{Organisation}
The paper is organised as follows. In \cref{sctn:tree background} we give background on critical Galton-Watson trees and the topologies used in this paper. In \cref{sctnP:LERRW RWRE connection} we make the connection between LERRW and RWRE precise, and construct a candidate for the scaling limit of LERRW. In \cref{sctn:scaling lim} we prove that the resistance metrics and measures characterising the associated RWRE converge to those characterising the aforementioned limit candidate, which proves \cref{thm:scaling lim intro}. The second half of the paper is devoted to the proofs of Theorems \ref{thm:almost sure limsup} - \ref{thm:almost sure limsup transient}: in \cref{sctn:Tgamma props} we establish some properties of $\Tg$ and its associated Gaussian potential, then in Sections \ref{sctn:limsup asymp recurrent} - \ref{sctn:trans results} we respectively prove Theorems \ref{thm:almost sure limsup} - \ref{thm:almost sure limsup transient}.

\subsection{Acknowledgements}
We would like to thank David Croydon for introducing us to this topic, and Andrea Collevecchio for useful comments on the manuscript. The research of EA was supported by JSPS and ERC starting grant 676970 RANDGEOM.

\section{Preliminaries}\label{sctn:tree background}
\subsection{Critical Galton-Watson trees}
To prove the convergence we will view our Galton-Watson trees as plane trees using the following Ulam-Harris labelling notation for discrete trees. To define these, we follow \cite{Neveu} and first introduce the set
\[
\mathcal{U}=\bigcup_{n=0}^{\infty} {\N}^n.
\]
By convention, ${\N}^0=\{ \emptyset \}$. If $u=(u_1, \ldots, u_n)$ and $v=(v_1, \ldots, v_m) \in \mathcal{U}$, we let $uv= (u_1, \ldots, u_n, v_1, \ldots, v_m)$ denote the concatenation of $u$ and $v$.

\begin{defn} A plane tree $T$ is a finite subset of $\mathcal{U}$ such that
\begin{enumerate}[(i)]
\item $\emptyset \in T$.
\item If $v \in T$ and $v=uj$ for some $j \in \N$, then $u \in T$.
\item For every $u \in T$, there exists a number $k_u(T) \geq 0$ such that $uj \in T$ if and only if $1 \leq j \leq k_u(T)$.
\end{enumerate}
\end{defn}

We let $\mathbb{T}$ denote the set of all plane trees. If $T \in \mathbb{T}$ and $u \in T$ we also let $\tau_u(T)=\{v \in \mathcal{U}: uv \in T \}$ denote the subtree emanating from $u$. In this paper we are interested in random plane trees, more specifically \textit{Galton-Watson} trees. To define these, first let $\xi$ be a probability measure on $\Z^{\geq 0}$. We will refer to $\xi$ as the \textit{offspring distribution}.

\begin{defn}\label{def:GW tree}
A Galton-Watson tree with offspring distribution $\xi$ is a plane tree $T$ with law $\Pb_{\xi}$ satisfying the following properties:
\begin{enumerate}[(i)]
\item $\prstart{k_{\emptyset}=j}{\xi}=\xi(j)$ for all $j \in \Z^{\geq 0}$.
\item For every $j \geq 1$ with $\xi(j)>0$, the shifted trees $\tau_1(T), \ldots, \tau_j(T)$ are independent under the conditional probability $\prcond{\cdot}{k_{\emptyset}=j}{\xi}$, with law $\bP_{\xi}$.
\end{enumerate}
\end{defn}

In other words, a Galton-Watson tree with offspring distribution $\xi$ is a branching process with a single root $\emptyset$, where the trees emanating from each vertex are independently distributed according to $\bP_{\xi}$. It is shown in \cite[Section 3]{Neveu} that for any probability measure $\xi$ on $\Z^{\geq 0}$, there is a unique probability measure $\bP_{\xi}$ on $\mathbbm{T}$ satisfying the above two properties. 

We will restrict to Galton-Watson trees with a critical aperiodic offspring distribution in the domain of attraction of a $\gamma$-stable law for some $\gamma \in (1,2]$, by which we mean that $\sum_{k=0}^{\infty} k \xi(k) = 1$ and there exists an increasing sequence $a_n \uparrow \infty$ such that, if $(\xi^{(i)})_{i=1}^{\infty}$ are i.i.d. copies of $\xi$, then
\begin{equation}\label{eqn:dom of att def}
\frac{\sum_{i=1}^n \xi^{(i)} - n}{a_n} \xrightarrow{(d)} Z_{\gamma},
\end{equation}
as $n \rightarrow \infty$, where $Z_{\gamma}$ is a $\gamma$-stable random variable, i.e. can be normalised so that $\E{e^{-\lambda Z_{\gamma}}} = e^{-\lambda^{\gamma}}$ for all $\lambda > 0$. In the finite variance case we always have $\gamma=2$. It is shown in \cite[Section 8.3.2]{bingham1989variation} that necessarily $a_n = n^{\frac{1}{\gamma}}L(n)$ for some slowly-varying function $L$. Equivalently, $\xi([n, \infty)) = n^{-\gamma} L(n)$ (but not necessarily with the same $L$). Throughout the paper we will make the assumption that $\gamma \in (1,2]$, and let $(a_n)_{n=1}^{\infty}$ be the sequence appearing in (\ref{eqn:dom of att def}).

A Galton-Watson tree $T$ conditioned to have $n$ vertices can be coded by a random walk $(W^{T}_m)_{0 \leq m \leq n}$ called the Lukasiewicz path; this is defined by setting $W^{T}_0 = 0$, then for $m \geq 1$ listing the vertices $u_0, u_1, \ldots, u_{n-1}$ in lexicographical order and setting $W^T_{m+1} = W^T_m + k_{u_m}(T)-1$. It is not hard to see that $W^{T}_m \geq 0$ for all $0 \leq m \leq n-1$, and $W^T_n = -1$.

A discrete tree $T$ conditioned to have $n$ vertices can be also characterised by its contour function. To introduce the contour function, we imagine that the tree is embedded in the plane in such a way that each edge has length one. Consider a particle that is placed at the root of the tree at time $i=0$ and then traverses the tree, moving continuously along the edges at unit speed (respecting the left-right order of the vertices), until all vertices have been explored and the particle has returned back to the root. Then, we denote by $C^T(i)$ the distance to the root of the position of the particle at time $i$. More specifically, letting $x_i$ denote the $i$-th visited vertex by the particle, set
\[
C^T(i)=d_T(x_0,x_i), \qquad 0\le i< 2 n.
\]
By convention, we also set $x_{2n}=x_0$, and $C^T(2n)=0$. Naturally, we extend $C^T$ by linear interpolation between integer times. Note that the particle visits every vertex apart from the root a number of times given by its degree.

\subsection{Gromov-Hausdorff topology and correspondences}\label{sctn:GHP conv background}
In this paper we will be interested in pointed Gromov-Hausdorff-Prokhorov (GHP) convergence between pointed compact metric measure (mm) spaces. Accordingly, let $\mathbb{K}^c$ denote the set of quadruples $(X,d,\mu,O)$ such that $(X,d)$ is a compact metric space, $\mu$ is a finite Borel measure of full support, and $O$ is a distinguished point, which will play the role of the root. Given two pointed mm-spaces $(X,d,\mu,O)$ and $(X',d',\mu',O')$, the pointed GHP distance between them is defined as
\begin{equation}\label{eqn:GHP def}
\dGHP((X,d,\mu,O),(X',d',\mu',O')) = \inf \left\{d_H^F (\phi(X), \phi'(X')) \vee d_P^F(\phi_* \mu, \phi_*' \mu')\vee \delta(\phi(O),\phi'(O')) \right\},
\end{equation}
where the infimum is taken over all isometric embeddings $\phi: X \rightarrow F$, $\phi': X' \rightarrow F$ into some common metric space $(F, \delta)$, where $d_H^F$ denotes the Hausdorff distance between two subsets of $F$ and where $d_P$ denotes the Prokhorov distance between finite Borel measures on $F$. For a definition of the latter, see \cite[Chapter 1]{billingsley2013convergence}. If the second term is omitted from the right-hand side of \eqref{eqn:GHP def}, this is known simply as the pointed Gromov-Hausdorff (GH) distance, and is denoted by $d_{GH}$. We say that two spaces $(X,d,\mu,O)$ and $(X',d',\mu',O')$ in $\mathbb{K}^c$ are equivalent if there exists a measure and a root-preserving isometry between them. It is well-known, see \cite[Theorem 2.3]{abraham2013note}, that the pointed GHP distance defines a metric on the space of equivalence classes of $\mathbb{K}^c$, and that this is a Polish metric space.

To prove Theorem \ref{thm:scaling lim intro}, we will use the helpful notion of \textit{correspondences}. A correspondence $\mathcal{R}$ between two metric spaces $(M,R)$ and $(M',R')$ is a subset of $M \times M'$ such that for every $x \in M$, there exists $y \in M'$ with $(x,y) \in \mathcal{R}$, and similarly for every $y \in M'$, there exists $x \in M$ with $(x,y) \in \mathcal{R}$. 
%
It is straightforward to show (e.g. see \cite[Theorem 7.3.25]{burago2001course}) that
\begin{align*}
\dGH{(M,R,O), (M',R',O')} = \frac{1}{2} \inf_{\mathcal{R}} \sup_{(x,x'), (y, y') \in \mathcal{R}} |R(x,y) - R'(x',y')|,
\end{align*}
where the infimum is taken over all correspondences $\mathcal{R}$ between $(M,R)$ and $(M',R')$ that contain the pair of distinguished points $(O,O')$. The quantity on the right-hand side above is known as the distortion of $\mathcal{R}$, and is denoted by $\textsf{dis} (\mathcal{R})$. 

In this paper, we will prove GHP convergence by first proving GH convergence using correspondences, and then prove Prokhorov convergence between the measures on the canonical embedding induced by the correspondence. 

\subsection{Stable trees} \label{sctn:stable trees bckgrnd}

If $T_n$ denotes a discrete Galton-Watson tree conditioned to have $n$ vertices with critical aperiodic offspring distribution $\xi$ in the domain of attraction of a $\gamma$-stable law, endowed with the graph metric $d_n$, it is well-known that it admits a compact metric space scaling limit, known as the \textit{$\gamma$-stable tree}. We denote this by $\T_{\gamma}^c$. More precisely, it is shown in \cite[Theorem 3.1]{duq2003limit} that
\begin{equation}\label{eqn:stable tree scaling limit def}
n^{-1} a_n T_n \rightarrow \T_{\gamma}^c
\end{equation}
in the GH topology as $n \rightarrow \infty$, where $a_n$ is defined as in (\ref{eqn:dom of att def}). The limiting space $\T_{\gamma}^c$ can be formally defined from a $\gamma$-stable \Levy excursion which plays the role of a continuum Lukasiewicz path (e.g. see \cite{duquesne2005probabilistic} for details). Given a spectrally positive $\gamma$-stable \Levy excursion $X^{(\gamma)}$, we define the height function $H^{(\gamma)}$ to be the continuous modification of the process
\[
H^{(\gamma)}(t)=\lim_{\epsilon\to 0} \frac{1}{\epsilon} \int_0^t \mathbbm{1} \{X_s^{(\gamma)}<\inf_{r\in [s,t]} X_r^{(\gamma)}+\epsilon \} ds.
\]
The limit above exists in probability, see \cite[Lemma 1.1.3]{duquesne2002asterisque}. 

For $s, t\in [0,1]$, let $m_{H^{(\gamma)}}(s,t)=\inf_{r\in [s\wedge t,s\vee t]} H^{(\gamma)}(r)$ and $d: [0,1]\times [0,1]\to \mathbb{R}_{+}$ be defined by
\begin{equation} \label{stablemet}
\dt(s,t)=H^{(\gamma)}(s)+H^{(\gamma)}(t)-2 m_{H^{(\gamma)}}(s,t).
\end{equation}
It is obvious that $\dt$ is symmetric and satisfies the triangle inequality. One can introduce the equivalence relation $s\sim t$ if and only if $H^{(\gamma)}(s)=H^{(\gamma)}(t)=m_{H^{(\gamma)}}(s,t)$, which is equivalent to $\dt(s,t)=0$. The quotient space $([0,1]/\sim,\dt)$ is the $\gamma$-stable tree $\mathcal{T}_{\gamma}^c$, which can be proven to be almost surely a compact metric space \cite[Theorem 2.1]{duquesne2005probabilistic}. 
In addition, this construction provides a natural way to define a canonical (non-atomic) probability measure associated with it, $\mu$, which has full support.  
Denote by $p_{H^{(\gamma)}}: [0,1]\to \mathcal{T}_{\gamma}^c$ the canonical projection. For every $A\in \mathcal{B}(\mathcal{T}^c_{\gamma})$, we let
\begin{equation}\label{eqn:natmeas2}
\mu(A)=\ell(\{t\in [0,1]: p_{H^{(\gamma)}}(t)\in A\})
\end{equation}
denote the image measure on $\mathcal{T}^c_{\gamma}$ of Lebesgue measure $\ell$ on $\mathbb{R}$ by the canonical projection $p_{H^{(\gamma)}}$. 

 We briefly outline how to prove \eqref{eqn:stable tree scaling limit def} since we will use a similar strategy in Section \ref{sctn:scaling lim}. Using Skorohod's Representation theorem, we can assume that we are working on a probability space under which the contour function of $T_n$, which is normalised by setting:
\[
C^{(n)}(t)=\left(n^{-1} a_n C^n(2 n t); 0\le t\le 1\right)
\]
where $C^n$ is the contour function of $T_n$, converges almost surely (with respect to the uniform norm) to the height function constructed by a spectrally positive $\gamma$-stable \Levy excursion. The convergence in distribution 
\begin{equation} \label{contourconv}
C^{(n)}\xrightarrow{(d)} H^{(\gamma)}
\end{equation}
was originally shown by Duquesne in \cite[Theorem 3.1]{duquesne2003cont}. We construct the related correspondence
\begin{equation} \label{corresp}
\mathcal{R}_n=\left\{(x_{\lfloor 2 n t\rfloor},p_{H^{(\gamma)}}(t)); 0\le t\le 1\right\},
\end{equation}
where $x_i$ is the $i$-th visited vertex in the exploration of the outline of $T_n$ and $p_{H^{(\gamma)}}(t)$ is the equivalence class of $t$ with respect to the relation $\sim$. It is straightforward to show that the distortion of this correspondence between $n^{-1} a_n T_n$ and $\Tgc$ is upper bounded by $2 ||C^{(n)}-H^{(\gamma)}||$, where $||C^{(n)}-H^{(\gamma)}||$ stands for the uniform norm of $C^{(n)}-H^{(\gamma)}$. Since \eqref{contourconv} holds, the convergence of the metric spaces follows. In fact, if $\mu_n$ denotes the uniform probability measure on its vertices, it is the case that
\begin{equation} \label{eqn:stable tree scaling limit def 1}
(T_n,n^{-1} a_n d_n,\mu_n)\to (\mathcal{T}^c_{\gamma},d,\mu)
\end{equation}
in distribution with respect to the GHP distance between compact mm-spaces, see \cite[Theorem 4.2]{gall2006trees}, which is a corollary of the result originally proved in \cite{duq2003limit}. Although the uniform probability measure $\mu_n$ on the vertices of $T_n$ was not considered in \cite{duq2003limit}, it is not difficult to extend the result in \eqref{eqn:stable tree scaling limit def} to include it since the work regarding the convergence in \eqref{eqn:stable tree scaling limit def 1} has already been done.

\subsection{Infinite critical trees}\label{sctn:infinite trees bckgrnd}
A critical Galton-Watson tree is almost surely finite, so to study the long-time asymptotics of LERRW on critical trees and more specifically to prove Theorems \ref{thm:almost sure limsup} - \ref{thm:almost sure limsup transient}, we will instead consider the model on a Galton-Watson tree \textit{conditioned to survive.} In the discrete setting such a model is naturally described by \textit{Kesten's tree}, denoted $T_{\infty}$ and defined as follows.

\begin{defn}\label{def:Kesten's tree}\cite{KestenIICtree}.
Let $\xi$ be a critical offspring distribution, and define its size-biased version $\xi^*$ by
\[
\xi^*(n) = {n \xi (n)}.
\]
The \textbf{Kesten's tree} $T_{\infty}$ associated to the probability distribution $\xi$ is a two-type Galton-Watson tree distributed as follows:
\begin{itemize}
\item Individuals are either normal or special.
\item The root of $T_{\infty}$ is special.
\item A normal individual produces only normal individuals according to $\xi$.
\item A special individual produces individuals according to the size-biased distribution $\xi^*$. Of these, one of them is chosen uniformly at random to be special, and the rest are normal.
\end{itemize}
\end{defn}
Almost surely, the special vertices form a unique one-ended infinite backbone of $T_{\infty}$. Aldous in \cite{AldousFringeSinTree} coined the term \textit{sin-trees} for such trees, since they have a single infinite spine. Although a critical Galton-Watson tree is almost surely finite, Kesten \cite[Lemma 1.14]{KestenIICtree} showed that $T_{\infty}$ arises as the local limit of a critical Galton-Watson tree with offspring distribution $\xi$ as its height goes to infinity.

Kesten's construction has been imitated in the continuum by Duquesne in \cite{DuqSinTree}, who constructs continuum sin-trees and shows that these arise as the appropriate local limit of compact continuum trees conditioned on being large. Duquesne's construction involves defining two height functions from two independent \Levy processes in the same way as done for the compact case. In the stable case, we denote the infinite stable sin-tree by $\Tg$. It is also possible to show that $\Tg$ is the scaling limit of a discrete tree constructed from a critical aperiodic $\gamma$-stable offspring distribution as constructed in Definition \ref{def:Kesten's tree} (e.g. by following a similar strategy to \cite{archer2020infinite}). As in the discrete case, $\Tg$ has a single infinite path to infinity, to which compact stable trees are grafted.

\section{Connection between LERRW and a RWRE on trees}\label{sctnP:LERRW RWRE connection}

The fine mesh limit of the LERRW in dimension one, which was called linearly reinforced motion (LRM), was introduced in \cite{lupu2018scaling}. In fact, the authors construct the continuous space limit of the vertex-reinforced jump process (VRJP) out of a convergent Bass-Burdzy flow \cite{bass1999flow}. To obtain LRM as the continuous space limit of the LERRW, they then use the close relation between the VRJP and the LERRW that was established in \cite{sabot2015edge}, namely that the LERRW can be represented in terms of a VRJP with independent gamma conductances. The principles used in \cite{lupu2018scaling} are broadly the same as those used in this paper. 

In the setting of \cite{lupu2018scaling} an appropriate potential related to the VRJP converges. This yields a characterisation of the limiting LRM as a diffusion in random potential that contains a Wiener term and a drift (the motion drifts towards the places it has already visited many times), or equivalently describes the LRM as a scale-transformed and time-changed diffusion in a random environment, cf. \cite{sinai1982limit, brox1986one, seignourel2000discrete, carmona1997mean, pacheco2016sinai}. 

In our setting, we work directly on the trees and view them as electrical networks equipped with a so-called resistance metric and a measure that we will specify in Section \ref{sctn:RWRE conductances}. The latter provide the natural scale functions and speed measures of a RWRE associated with the LERRW in a representation of Pemantle, see Section \ref{sctn:LERRW and RWRE connection trees}. Then we are able to use the main result of \cite{croydon2016scaling} to yield convergence of the RWRE as a consequence of the convergence of the resistance metric and the speed measure. Since the resistance metric and the speed measure are expressed in terms of the potential of the RWRE, their convergence can be deduced from the convergence of the aforementioned potential, see Sections \ref{sctn:potential convergence} and \ref{sctn:GHP convergence}.

The limiting resistance metric and speed measure on the limiting stable tree are distortions of its canonical metric and uniform mass measure. These distortions are expressed in terms of an exponential that includes a tree-indexed Gaussian term and a drift. The Gaussian term corresponds to a tree-indexed process that is essentially a Brownian snake on the stable tree. This yields a characterisation of the limiting diffusion $X$ as a diffusion in random potential on the stable tree, see Section \ref{sctn:scaling limit candidate}. We note here that the notion of a ``scale change'' on general separable real trees was formalised in \cite{athreya2013brownian}.

\subsection{Random walk in a random environment}\label{sctn:RWRE conductances}

In order to study a LERRW on $T_n$ we will use a representation of a LERRW as a RWRE. We briefly introduce the formalism of a RWRE here.

In a rooted metric tree $(T,d_T,O)$, we define for $u,v\in T$ the path intervals
\[
[u,v]=\{z\in T: d_T(u,z)+d_T(z,v)=d_T(u,v)\},
\]
and
\[
(u,v)=[u,v]\setminus \{u,v\}, \quad [u,v)=[u,v]\setminus \{v\}, \quad (u,v]=[u,v]\setminus \{u\}.
\]
We say that $u$ and $v$ are connected by an edge, denoted $u\sim v$, if and only if
$
u\neq v \text{ and } [u,v]=\{u,v\}.
$
We also define a partial order on $T$ by setting $u\preceq v$ ($u$ is an ancestor of $v$) if and only if $u\in [O,v]$. We write $u \prec  v$ if $u \preceq v$ and $u \neq v$. Finally, the unique $z\in T$ for which $[O,u]\cap [O,v]=[O,z]$ is written $u\wedge v$. We call $u\wedge v$ the most recent common ancestor to $u$ and $v$. 

For a fixed tree $T$ with root $O$, a RWRE on $T$ can be constructed by assigning each edge $e \in E(T)$ a random conductance. At each time, if a random walk is currently at vertex $v$ it moves to one of the neighbours of $v$ with a probability proportional to the conductance of the edge joined to $v$. Rather than defining the full set of conductances $(c(e))_{e \in E(T)}$, we can equivalently assign a single conductance to a single special ``root edge'' $e_{\text{root}}$ and then sample a sequence of random variables $(\mathbf{W}_v)_{v \in T}$, so that 
\[
\mathbf{W}_v = (W_v^u)_{u \sim v}
\]
gives the ratios of the conductances of edges emanating from $v$. We will follow this approach as it allows us to formalise the connection to P\'olya urns outlined in the introduction.

We now assume that an environment $\omega = \left(c(e_{\text{root}}), \mathbf{W} = \left(\mathbf{W}_v\right)_{v \in T}\right)$ is fixed. For each non-root vertex $v$, let $\overleftarrow{v}$ denote the parent of $v$. Let  
\begin{align}\label{eqn:rhov def}
\rho_v(\omega)=\frac{W_{\overleftarrow{v}}^{\overleftarrow{\overleftarrow{v}}}}{W_{\overleftarrow{v}}^v},
\end{align}
which in our case represents the transition probability from $\overleftarrow{v}$ to the ancestor $\overleftarrow{\overleftarrow{v}}$ of $\overleftarrow{v}$, divided by the transition probability from $\overleftarrow{v}$ to the descendant $v$ of $\overleftarrow{v}$. 

We add a new vertex $\overleftarrow{O}$, which we call the base and attach it to the root with a new edge. This edge will play the role of $e_{\text{root}}$ and we call the new tree the planted tree. To keep the notation simple, even if the statements are expressed in terms of this new planted tree, say $T^*$, we still phrase them in terms of $T$. It follows from \eqref{eqn:rhov def} that the conductance of an edge $\{\overleftarrow{v},v\}$ is given by
\begin{equation}\label{eqn:conductance from rho}
 c(\{\overleftarrow{v},v\}) = c(\{\overleftarrow{O}, O\}) \cdot e^{- \sum_{O\prec u\preceq v} \log \rho_u(\omega)}.
\end{equation}
This motivates the definition of the potential on $T$ as 
\[
V_{\omega}(x)=
\begin{cases}
\displaystyle
\sum_{O\prec v\preceq x} \log \rho_v(\omega), &\text{ for } x\neq O,
\\
0, &\text{ for } x=O.
\end{cases}
\]
By defining the RWRE from conductances in this way, we have in fact defined it via an electrical network and can therefore take advantage of electrical network theory (e.g. see \cite[Chapter 9]{levin2017markov} for an introduction). The conductance of an edge $\{\overleftarrow{v}, v\}$ is given by \eqref{eqn:conductance from rho}, and consequently its electrical resistance is given by
\begin{equation*}
R(\{\overleftarrow{v},v\}) = c(\{\overleftarrow{v},v\})^{-1} = c(\{\overleftarrow{O}, O\})^{-1} \cdot e^{V_{\omega}(v)}.
\end{equation*}
Moreover, since our graph is a tree, the electrical resistance between any two vertices is given by
\begin{equation}\label{eqn:resistance from rho}
R(u,v) = \sum_{e \in E_{u,v}}R(e) = c(\{\overleftarrow{O}, O\})^{-1} \cdot \sum_{x \in [u, v] \setminus \{u \wedge v\} }e^{V_{\omega}(x)},
\end{equation}
where in the first sum, $E_{u,v}$ is the set of edges contained in $[u,v]$. Clearly, $R$ defines a metric on $T$ (this also follows from a more general result of Tetali \cite{Tetali}). We will take the measure
\begin{equation}\label{eqn:measure def}
\mu(x)= 
\begin{cases}
\displaystyle c(\{\overleftarrow{O}, O\}) \cdot \left( e^{-V_{\omega}(x)} \mathbbm{1}_{\{x\neq O\}}+\sum_{y \sim x: \overleftarrow{y}=x} e^{-V_{\omega}(y)} \right), &\text{ for } x\neq \overleftarrow{O},
\\
0, &\text{ for } x=\overleftarrow{O}.
\end{cases}
\end{equation}
This is the stationary reversible measure for a stochastic process associated with $(T,R,O)$. This stochastic process has generator which acts on $L^2(T,\mu)$ as follows:
\[
(\Delta f)(x)=
\frac{1}{\mu(x)} \sum_{y\in T: y\sim x} c(\{x,y\}) (f(y)-f(x))
\]
(see \cite[Section 2]{croydon2018introduction} for more details on the correspondence between graphs equipped with edge conductances and a measure, and stochastic processes). This stochastic process is a continuous-time random walk $Z$ having $\textsf{exp}(1)$ holding time at each non-root vertex, and at each jump time, the random walk traverses an edge incident to its previous state, chosen with probability proportional to the conductance of each of the available edges. At the base, $Z$ jumps from $\overleftarrow{O}$ to $O$ with rate $c(\{\overleftarrow{O},O\})/\mu(\overleftarrow{O})=\infty$, i.e. the expected holding time at $\overleftarrow{O}$ is 0. The random walk transitions from the base to the root with probability 1. Moreover, the overall rate at which $Z$ jumps from $O$ to its collection of children is $1$.

\begin{remark}
The electrical resistance between $x$ and $y$ can be alternatively defined by
\[
R(x,y)^{-1}=\inf \{\mathcal{E}(f,f): f(x)=0,f(y)=1\},
\]
where 
\[
\mathcal{E}(f,g)=\frac{1}{2} \sum_{\substack{x, y\in T: \\ x\sim y}} c(\{x,y\}) (f(x)-f(y))(g(x)-g(y))
\]
is a quadratic form on $T$. In fact, $\mathcal{E}$ is a regular Dirichlet form on $L^2(T, \mu)$, and corresponds to electrical energy. For such functions $f$ and $g$, we note that 
\[
\mathcal{E}(f,g)=-\sum_{x\in T} (\Delta f)(x) g(x) \mu(x).
\]
We will not directly use the machinery of Dirichlet forms in this paper.
\end{remark}

The RWRE that we use in this paper will be defined by a collection of Dirichlet random variables taking the role of $(\mathbf{W}_v)_{v \in T}$.

\begin{defn}[Dirichlet distribution]\label{def:Dirichlet dist}
For a finite set $I$ and positive real numbers $(b_i)_{i\in I}\in (0,+\infty)^{I}$, the Dirichlet distribution $\mathcal{D}((b_i)_{i\in I})$ on
\[
\Sigma_I=\bigg\{(u_i)_{i\in I}\in (0,1)^{|I|}: \sum_{i\in I} u_i=1\bigg\}
\]
has density
\[
\frac{\Gamma(\sum_{i\in I} b_i)}{\prod_{i\in I} \Gamma(b_i)} \prod_{i\in I} u_i^{b_i-1} \mathbbm{1}_{\Sigma_I}((u_i)_{i\in I}) \prod_{j \in I \setminus \{j_0\}} du_j,
\]
for any choice of $j_0 \in I$.  
\end{defn} 
The Dirichlet distribution can alternatively be defined as the law of a normalised Gamma vector. One can indeed check that the following lemma holds.

\begin{lemma} \label{gammadir}
\cite[Section 0.3.2]{pitman2006combinatorial}.
Let $(W_i)_{i\in I}$ be independent random variables such that, for $i\in I$,
\begin{equation} \label{eqn:Gamma weights}
W_i\sim \Gamma(b_i,1), \text{ i.e. with density } \frac{1}{\Gamma(b_i)} w^{b_i-1} e^{-w} \mathbbm{1}_{(0,+\infty)}(w) dw.
\end{equation}
Then,
\begin{equation} \label{eqn:normalised D weights}
(U_i)_{i\in I}=\frac{1}{\sum_{i\in I} W_i}\cdot (W_i)_{i\in I}\sim \mathcal{D}((b_i)_{i\in I}).
\end{equation}
Furthermore, $(U_i)_{i\in I}$ is independent of $\sum_{i\in I} W_i$.
\end{lemma}

\subsection{Representation of LERRW as a random walk in random conductances}\label{sctn:LERRW and RWRE connection trees}
 
In this section we outline the connection between LERRW, P\'olya urns and Dirichlet random variables on trees. This connection was first used in the context of (non-directed) edge reinforced random walk on trees by Pemantle \cite{robin1988phase} where, due to the absence of cycles, the process evolves independently at each vertex.

Given a tree $T$, let $\alpha=(\alpha_e)_{e\in E(T)}\in (0,+\infty)^{E(T)}$ be a sequence of positive initial weights on the edges, and let $O$ denote the root of $T$. We consider edges to be undirected, so that $e = \{x,y\} = \{y,x\}$ denotes the edge joining two vertices $x$ and $y$.

\begin{defn}\label{def:LERRW def}
The discrete-time linearly edge reinforced random walk (LERRW) on $T$ with initial weights $\alpha$ and starting at $O$ is the process on $T$ with law $P_{O}^{(\alpha)}$ defined by: $P_{O}^{(\alpha)}$-a.s., $X_0=O$ and, for all $n\ge 0$, for all edges $e\in E(T)$,
\[
P_{O}^{(\alpha)}\left(X_{n+1}=v|X_0,...,X_n\right)=\frac{N_{\{w, v\}}(n)}{\sum_{y:y \sim w} N_{\{w, y\}}(n)} \mathbbm{1}_{\{X_n=w\}},
\]
where $N_e(n)=\alpha_e+\#\{0\le k\le n-1: (X_k,X_{k+1})=e\}\Delta$. We call $\Delta > 0$ the \textbf{reinforcement parameter}. 
\end{defn} 

In other words, at time $n$ the walk jumps along a neighbouring edge $e$ chosen with probability proportional to its current weight $N_e(n)$. This weight is initially equal to $\alpha_e$ and then increases by $\Delta$ each time the edge $e$ is traversed.

Consider a vertex $v \in T$ and suppose $v$ has $\# v$ offspring in $T$. Let $e_0(v)$ denote the edge joining $v$ to its parent, and ${e_1(v)},...,{e_{\# v}(v)}$ denote the edges joining $v$ to each of its offspring. When the LERRW arrives at $v$ for the first time, it must have arrived via the parent, so the weights of these edges will be respectively given by the components of the vector
\[
(\al_{e_0(v)}+\Delta,\al_{e_1(v)},...,\al_{e_{\# v}(v)}).
\]
Moreover, since $T$ is a tree, if the LERRW exits $v$ by edge $e_i(v)$, it must also return to $v$ via $e_i(v)$, so that at the next visit to $v$ the weight of $e_i(v)$ will have increased by $2\Delta$ and all other weights will have remained the same. Moreover, this holds independently of what happens to the LERRW between its successive traversals of $e_i(v)$. The same logic applies on subsequent visits to $v$, with the weights updating each time.

It follows that the decisions of the LERRW process are ruled by independent P\'olya urns, one per vertex, where outgoing edges play the role of colours, and $\alpha_{e_i(v)}$ determines the initial number of balls of colour $i$. Since the asymptotic proportion of colours in such an urn model follows the Dirichlet distribution, conditional on which the draws are independent and identically distributed as Bernoulli random variables with success probability given by the asymptotic proportion of the drawn colour, the reinforced walk may equivalently be obtained by assigning a Dirichlet random variable $\omega_{(x,\cdot)}$ to each vertex $x$, which then defines the transition probabilities of the walk every time it is at $x$. This is the definition of the random walk in Dirichlet random environment (RWDE) that we define below. To get this equivalent representation of LERRW we have to average over all the Dirichlet random variables.

Given $v \in T$, let the positive initial weights from $v$ to its neighbouring vertices $\overleftarrow{v},v_1,...,v_{\# v}$ be denoted by the vector 
\[
\al_v=(\al_{e_0(v)},\al_{e_1(v)},...,\al_{e_{\# v}(v)}),
\]
with $\al_{e_0(v)}$ being the positive weight to the parent of $v$. 
We define the set of environments on $T$ as
\[
\Omega_T=\bigg\{\left(\mathbf{\omega}_v = (\omega_v^e)_{v\in e}\right)_{v \in T}\in \prod_{v \in T} \Sigma_{\deg v}\bigg\}, \hspace{5mm} \text{ where } \Sigma_j=\bigg\{(u_i)_{i \leq j}\in (0,1)^{j}: \sum_{i\leq j} u_i=1\bigg\} \text{ for } j \geq 1.
\]
(So for $v \in T$, $\Sigma_{\deg v}$ is the ($\deg v$)-dimensional simplex). We shall denote by $\omega$ a random environment sampled from $\Omega_T$.

\begin{defn}
For $\omega = (\omega_v)_{v \in T} \in \Omega_T$, the quenched random walk in random environment $\omega$ starting at $O$ is the Markov chain on $T$ starting at $O$ and with transition probabilities $(\omega_v)_{v \in T}$. We denote its law by $\tilde{P}_{O,\omega}$. Thus, $\tilde{P}_{O,\omega}$-a.s., $X_0=O$ and for all $n\ge 0$, for all edges $e\in E(T)$,
\[
\tilde{P}_{O,\omega}((X_n,X_{n+1})=e|X_0,...,X_n)=\omega_v^e \mathbbm{1}_{\{X_n=v\}}.
\]
\end{defn}

We define the Dirichlet distribution on $T$ with parameters $\left(\mathbf{b}_v = (b_v^e)_{v\in e}\right)_{v \in T}$ as the product distribution:
\[
\mathbb{P}^{(\mathbf{b})}=\prod_{v\in T} \mathcal{D}\left(\mathbf{b}_v\right).
\]
where each $\mathcal{D}\left(\mathbf{b}_v\right)$ is as in Definition \ref{def:Dirichlet dist}.

Now let us consider the joint law $\tilde{P}_{O}^{(\mathbf{b})}$ of $(\omega,X)$ on $\Omega_T\times V(T)^{\mathbb{N}}$ such that $\omega\sim \mathbb{P}^{(\mathbf{b})}$ and the conditional distribution of $X$ given $\omega$ is $\tilde{P}_{O,\omega}$. Its law is thus
\[
\tilde{P}_{O}^{(\mathbf{b})}\left(X\in \cdot\right)=\int \tilde{P}_{O,\omega}(X\in \cdot) \mathbb{P}^{(\mathbf{b})}(d \omega).
\]

Recall from Definition \ref{def:LERRW def} that $P_{O}^{(\mathbf{\al})}$ denotes the law of a LERRW started at $O$ with initial edge weights $\mathbf{\al} = (\al_e)_{e \in E(T)}$. The next result can be found in \cite[Section 3]{robin1988phase} and formalises the representation via P\'olya urns.

\begin{lemma}\cite[Lemma 2]{robin1988phase}.\label{lem:Dirichlet rep of LERRW}
Fix a tree $T$. If $\mathbf{\al}=(\al_e)_{e\in E(T)}$ is a collection of positive initial weights for a LERRW, then for any $k\ge 0$ and any $O=x_0, x_1,...,x_k\in T$, we have that
\[
P_{O}^{(\mathbf{\al})}(X_1=x_1,...,X_k=x_k)
= \tilde{P}_{O}^{(\mathbf{b})}(X_1=x_1,...,X_k=x_k),
\]
where $\mathbf{b}=(\mathbf{b}_v)_{v\in T}$ is a collection of vectors of positive weights such that if $v \in T$ with $\# v$ offspring,
\[
\mathbf{b}_v=\left((\al_{e_0(v)}+\Delta)/(2 \Delta),\al_{e_1(v)}/(2 \Delta),\cdot \cdot \cdot,\al_{e_{\# v}(v)}/(2 \Delta)\right).
\] 
\end{lemma}

\begin{remark}
On general (non-tree) graphs this representation of non-directed LERRW is not available since a random walk may return to a vertex $v$ via a different edge from that which it used to leave $v$. Therefore, the urn models at each vertex do not update independently of each other. However, on general graphs, non-directed LERRWs can still be seen as random walks in an explicit correlated random environment. For instance, see \cite{sabot2015edge}. Additionally, it is possible to represent a directed LERRW on general graphs as a RWDE, see for example \cite[Lemma 2]{sabot2017overview}.
\end{remark}

In this paper we want to consider a LERRW on the \textit{random} tree $T_n$. We consider the quenched law of LERRW on $T_n$, by which we mean that we first sample $T_n$ and then run a LERRW on $T_n$ started at $\overleftarrow{O_n}$ according to Definition \ref{def:LERRW def}. In order to obtain a non-trivial scaling limit, we will consider a LERRW on $T_n$ with initial weights given by \eqref{scheme2} and reinforcement parameter $\Delta$. In light of Lemma \ref{lem:Dirichlet rep of LERRW}, we will construct the scaling limit of LERRW on $T_n$ by instead constructing the scaling limit of the corresponding RWDE on $T_n$, which can be achieved by representing it as an electrical network endowed with a resistance metric and a measure as outlined in Section \ref{sctn:RWRE conductances}. This is equivalent to sampling a random environment $\omega = (c(e_{\text{root}}), \mathbf{W}^{(n)})$ and defining a resistance metric $R_n$ and a measure $\nu_n$ from $\omega$ as in \eqref{eqn:resistance from rho} and \eqref{eqn:measure def}. The edge $\{\overleftarrow{O_n}, O_n\}$ will play the role of $e_{\text{root}}$.

To sample the random environment, we therefore define the parameters
 $\mathbf{b}^{(n)}=(\mathbf{b}^{(n)}_v)_{v\in T_n}$ such that, if $v \in T_n$ has $\# v$ offspring,
\begin{equation}\label{eqn:rescaled Dirichlet parameters}
\mathbf{b}^{(n)}_v=\left((\al^{(n)}_{e_0(v)}+\Delta)/(2 \Delta), \al^{(n)}_{e_1(v)}/(2 \Delta), \ldots, \al^{(n)}_{e_{\# v}(v)}/(2 \Delta)\right).
\end{equation}
We will sample the Dirichlet distribution with parameter $\mathbf{b}_v^{(n)}$ using Lemma \ref{gammadir}. Since we are only interested in the ratios of the Dirichlet weights, we can work directly with the non-normalised Gamma weights, so we assume that our random environment $\omega_n = (c(\{\overleftarrow{O_n}, O_n\}), \mathbf{W}^{(n)})$ is obtained by sampling $(\mathbf{W}^{(n)}_v)_{v \in T_n}$ according to \eqref{eqn:Gamma weights}, and without loss of generality take $c(\{\overleftarrow{O_n}, O_n\})=1$ for simplicity.

In the setup of Section \ref{sctn:RWRE conductances}, we therefore have that
\begin{align}\label{eqn:discrete V, res, mu def}
&\rho_v(\omega_n) =\frac{W_{\overleftarrow{v}}^{\overleftarrow{\overleftarrow{v}}}}{W_{\overleftarrow{v}}^v}, 
\hspace{10mm}
V_{\omega_n}(x)=
\sum_{O_n\prec v\preceq x} \log \rho_v(\omega_n) \mathbbm{1}\{x\neq O_n, \overleftarrow{O_n}\},
\nonumber \\
&R^{\omega_n}(u,v)= \sum_{x \in [u, v] \setminus \{u \wedge v\}} e^{V_{\omega_n}(x)},
\nonumber \\ \nonumber \\
&\nu^{\omega_n}(x)=
\begin{cases}
e^{-V_{\omega_n}(x)} \mathbbm{1}\{x\neq O_n\}+ \displaystyle \sum_{y \sim x: \overleftarrow{y}=x} e^{-V_{\omega_n}(y)} &\text{ if } x\neq \overleftarrow{O_n},
\\ 
0 &\text{ if } x=\overleftarrow{O_n}.
\end{cases}
\end{align}
For simplicity, we henceforth suppress the dependence on $\omega_n$ and write $\rho_v, V^{(n)}, R_n$ and $\nu_n$ instead of $\rho_v(\omega_n)$, $V_{\omega_n}$, $R^{\omega_n}$ and $\nu^{\omega_n}$ respectively. Moreover, we let $Z^{(n, \text{pl})}$ denote the stochastic process associated with the triple $(T_n^*, R_n, \nu_n)$, which was called the planted tree, see Section \ref{sctn:RWRE conductances}. Since $Z^{(n, \text{pl})}$ is defined on the planted tree, we define $Z^{(n)}$ to be its trace on the unplanted tree $T_n$; that is, we set 
\[
Z^{(n)}(t) = \begin{cases}
Z^{(n, \text{pl})}(t), &\text{ if } Z^{(n, \text{pl})}(t) \neq \overleftarrow{O_n}, \\
O_n, &\text{ if } Z^{(n, \text{pl})}(t) = \overleftarrow{O_n}.
\end{cases}
\]
It follows from our definitions that $Z^{(n)}(t)$ is a continuous-time random walk on $T_n$, with $\textsf{exp}(1)$ holding times at each vertex, and that at each jump time, the transition probabilities from a vertex are given by the Dirichlet weights of \eqref{eqn:normalised D weights}.

Finally, in light of Lemma \ref{lem:Dirichlet rep of LERRW}, in order to connect this with LERRW we need to consider the law of $Z^{(n)}$ annealed over the Dirichlet weights. We denote this process $Z^{(n)}$ and its law $\tilde{P}_{O_n}^{(\mathbf{b}^{(n)})}$, so that
\begin{equation} \label{lerrwannealed}
\tilde{P}^{(\mathbf{b}^{(n)})}_{O_n}\left(Z^{(n)}\in \cdot\right)=\int \tilde{P}_{O_n,\omega_n}({Z}^{(n)}\in \cdot) \mathbb{P}^{(\mathbf{b}^{(n)})}(d \omega_n).
\end{equation}
It follows from Lemma \ref{lem:Dirichlet rep of LERRW} that under $\tilde{P}^{(\mathbf{b}^{(n)})}_{O_n}$, $Z^{(n)}$ has the law of a \textit{quenched} (with respect to the randomness of $T_n$) continuous-time LERRW on $T_n$, with initial weights \eqref{scheme2} and reinforcement parameter $\Delta$.

\subsection{Candidate for the scaling limit} \label{sctn:scaling limit candidate}

To construct the scaling limit of $(T_n, R_n, \nu_n)$ we need to imitate the definitions of \eqref{eqn:discrete V, res, mu def} in the continuum.  
To this end, we take $\Tgc$ as in Section \ref{sctn:stable trees bckgrnd}, denote its root by $O$ and for all $\sigma, \sigma' \in \Tgc$ we set (cf. \eqref{stablemet}):
\[
\dbt(\sigma,\sigma') =
\begin{cases}
(\dt(O, \sigma)+1)^{1-\alpha} + (\dt(O,\sigma')+1)^{1-\alpha} - 2(\dt(O, \sigma\wedge \sigma')+1)^{1-\alpha}, &\text{ if } \alpha < 1, \\
\log (\dt(O, \sigma)+1) + \log (\dt(O, \sigma')+1) - 2\log (\dt(O, \sigma \wedge \sigma')+1), &\text{ if } \alpha = 1,
\end{cases}
\]
where $\sigma\wedge \sigma'$ denotes the most recent common ancestor of $\sigma$ and $\sigma'$. This is a metric on $\Tgc$.
{\begin{defn}[Snake process]\label{def:snake}
Let $(\phib (\sigma))_{\sigma\in \Tgc}$ be the $\mathbb{R}$-valued Gaussian process with law $\mathbb{P}$ whose distribution, given $\Tgc$ is characterised by 
\begin{equation} \label{snake}
\mathbb{E} \phib(\sigma)=0, \qquad \textnormal{Cov}(\phib(\sigma),\phib(\sigma'))=\dbt(O,\sigma\wedge \sigma').
\end{equation}
\end{defn}}

Since 
\[
\mathbb{E} {|\phib(\sigma) - \phib(\sigma')|^2} =\dbt(\sigma,\sigma'),
\]
which is a metric on $\Tgc$, it can be seen by the same arguments as in \cite[Section 6]{duquesne2005probabilistic} that such a process $\phib$ exists, and that it has a continuous modification, with bounded sample paths, $\bPb \times \mathbb{P}$-almost surely. In the continuum, $\phib$ will play the same role as the potential $V_{\omega_n}$ in \eqref{eqn:discrete V, res, mu def}.

To capture that our diffusions are processes on ``natural scale'' it is desirable to introduce the notion of a length measure on $\Tgc$ which extends Lebesgue measure on $\mathbb{R}$. For real trees it was first presented in \cite{evans2006rayleigh} and later extended to any separable 0-hyperbolic metric space in \cite{athreya2017invariance}. Denote the skeleton of $(\Tgc,d)$ by
\[
(\Tgc)^0=\bigcup_{x\in \Tgc} (O,x).
\]
Recalling that $\Tgc$ is a separable pointed metric space, observe that if $D\subset \Tgc$ is a dense countable subset, then the previous definition is still the same when the union is taken over points in $D$. In particular, $(\Tgc)^0\in \mathcal{B}(\Tgc)$ and $\mathcal{B}(\Tgc)|_{(\Tgc)^0}=\sigma(\{(x,y); x, y\in D\})$. Therefore, by Carath\'eodory's Extension Theorem, there exists a unique $\sigma$-finite measure $\lambda^{\gamma}$ on $\Tgc$, called the length measure, such that $\lambda^{\gamma}(\Tgc\setminus (\Tgc)^0)=0$ and
\begin{equation}\label{eqn:length measure prop}
\lambda^{\gamma}((O,x])=\dt(O,x), \qquad x\in \Tgc.
\end{equation}
Alternatively, the length measure is the trace onto the skeleton of the $\gamma$-stable tree of the one-dimensional Hausdorff measure on it. 

Now fix the reinforcement parameter $\Delta>0$ and let $\mathcal{W}$ be the space of continuous functions $\Tgc \to \R$ vanishing at the root, and let $\Omega$ be the space of processes $\R_+ \to \Tgc$. Given a realisation $\phib \in \mathcal{W}$, we firstly define
\begin{equation}\label{eqn:distorted res def}
R_{\phi}(x,y)=\begin{cases}
\displaystyle \int_{[x,y]} (\dt(O,z)+1)^{-\alpha} e^{\sqrt{\frac{4 \Delta}{1-\alpha}} \phib(z)+\frac{\Delta }{1-\alpha} [(\dt(O,z)+1)^{1-\alpha}-1]} \lambda^{\gamma}(\text{d} z), &\text{ if } \alpha <1, \\
\displaystyle \int_{[x,y]} e^{\sqrt{4 \Delta} \phiz(z)+(\Delta-1) \log(\dt(O,z)+1)} \lambda^{\gamma}(\text{d} z), &\text{ if } \alpha = 1,
\end{cases}
\end{equation}
for all $x, y\in \Tgc$, and secondly define $\nu_{\phi}$ to be the measure which is absolutely continuous with respect to the measure $\mu$ defined in \eqref{eqn:natmeas2} with density given by

\begin{equation}\label{eqn:distorted meas def}
(\text{d} \nu_{\phi}/\text{d} \mu)(x)= \begin{cases}
 \displaystyle (\dt(O,x)+1)^{\alpha} e^{-\left[\sqrt{\frac{4 \Delta}{1-\alpha}} \phib(x)+\frac{\Delta}{1-\alpha}[(\dt(O,x)+1)^{1-\alpha}-1]\right]},  &\text{ if } \alpha < 1,
 \\
 \displaystyle e^{-\left[\sqrt{4 \Delta}\phiz(x)+(\Delta-1) \log(\dt(O,x)+1) \right]},  &\text{ if } \alpha = 1.
\end{cases}
\end{equation}
We let $\Tgphi$ denote the random mm-space $(\Tgc, \Rphi, \nuphi)$, let $X=(X_t)_{t\ge 0}$ denote the process canonically associated with it via the resistance form of \cite[Definition 2.1]{croydon2016scaling}, and let $\tilde{P}_{O,\phi}$ denote the quenched law of $X$ when started from $O$. Finally, given $\Tgc$, we denote by $P_{O}$ the corresponding annealed probability measure of the process on $\mathcal{W}\times \Omega$ defined by 
\begin{equation}\label{eqn:annealed X}
P_{O} \left( (X_t)_{t \geq 0} \in \cdot \right)=\int \tilde{P}_{O,\phi} \left( (X_t)_{t \geq 0} \in \cdot \right) \mathbb{P}(d \phib).
\end{equation}

\begin{remark}
We can also view $((X_t)_{t\ge 0},\tilde{P}_{O,\phi})$ as an $\nu_{\phi}$-Brownian motion on $(\Tgc,\Rphi)$ as characterised in \cite[Proposition 1.9]{athreya2013brownian}. In this setting, $(X_t)_{t\ge 0}$ is a diffusion process with regular Dirichlet form 
\[
\mathcal{E}_{\phi}(u,v)=
\begin{cases}
\displaystyle \frac{1}{2} \int_{\Tgc} \nabla_{\Rphi} u(z)\cdot \nabla_{\Rphi} v(z) (\dt(O,z)+1)^{-\alpha} e^{\sqrt{\frac{4 \Delta}{1-\alpha}} \phib(z)+\frac{\Delta}{1-\alpha} [(\dt(O,x)+1)^{1-\alpha}-1]} \lambda^{\gamma}(\textnormal{d} z), &\text{ if } \alpha<1,
\\
\displaystyle \frac{1}{2} \int_{\Tgc} \nabla_{\Rphi} u(z)\cdot \nabla_{\Rphi} v(z) e^{\sqrt{4 \Delta} \phiz(z)+(\Delta-1) \log(\dt(O,z)+1)} \lambda^{\gamma}(\textnormal{d} z), &\text{ if } \alpha=1,
\end{cases}
\]
for every $u, v\in \mathcal{D}(\mathcal{E}_{\phi})$, 
where $\mathcal{D}(\mathcal{E}_{\phi})=\{u\in \mathcal{A}: \nabla u\in L^2(\lambda)\}\cap L^2(\nu_{\phi})\cap \mathcal{C}_{\infty}(\Tgc)$. As usual, by $\mathcal{C}_{\infty}(\Tgc)$ we refer to the space of continuous functions which vanish at infinity. A function $u$ belongs to $\mathcal{A}$ if and only if $u$ is locally absolutely continuous. We need to stress that the gradients of $u$ and $v$ correspond to gradients of functions from $(\Tgc,\Rphi)$, i.e. $\nabla_{\Rphi} f$, of $f\in \mathcal{D}(\mathcal{E}_{\phi})$, is the function, which is unique up to 
\[
\lambda^{(\Tgc,\Rphi)}(\textnormal{d}z)=
\begin{cases}
(\dt(O,x)+1)^{-\alpha} e^{\sqrt{\frac{4 \Delta}{1-\alpha}}\phib(z)+\frac{\Delta}{1-\alpha} [(\dt(O,x)+1)^{1-\alpha}-1]} \lambda^{\gamma}(\textnormal{d}z),
&\text{ if } \alpha<1,
\\
e^{\sqrt{4 \Delta} \phiz(z)+(\Delta-1) \log(\dt(O,z)+1)} \lambda^{\gamma}(\textnormal{d}z), &\text{ if } \alpha=1,
\end{cases}
\]
-zero sets that satisfies
\[
\int_{[x_1,x_2]} \nabla_{\Rphi} f(z) \lambda^{(\Tgc,\Rphi)}(\textnormal {d}z)=f(x_2)-f(x_1), \qquad x_1, x_2\in \Tgc, \qquad f\in \mathcal{D}(\mathcal{E}_{\phi}).
\]

We will not directly use the theory of Dirichlet forms in this paper (though it is hidden behind the result we apply from \cite{croydon2016scaling}).
\end{remark}

\begin{remark}[Connection to 1d results]

The analogue of \cite[Proposition 1.9]{lupu2018scaling} (which is written with a scaling factor of $2^n$ rather than $na_n^{-1}$) equally applies to the LERRW on a single branch of $n^{-1}a_n T_n$ with initial weights 
\[
w_0^{(n)}(x, x-n^{-1}a_n) = \frac{1}{2} na_n^{-1} L_0(x) L_0(x-n^{-1}a_n).
\]
Comparing with \eqref{scheme2}, we see that we can formulate our process in the setting of \cite[Proposition 1.9]{lupu2018scaling} by taking $L_0(x)^2 \approx 2\Delta^{-1} (|x|+1)^{\alpha}$. In this case, $S_0(x)$ as defined by \cite[Equation (1.5)]{lupu2018scaling} satisfies $S_0(x) \approx \frac{\Delta [(|x|+1)^{1-\alpha}-1]}{2(1-\alpha)}$ if $\alpha < 1$, and $S_0(x) \approx \frac{\Delta \log (|x|+1)}{2}$ if $\alpha=1$. 

According to \cite[Proposition 1.9]{lupu2018scaling}, such a rescaled LERRW on $\mathbb{Z}$ converges to a diffusion in the random potential
\[
2\sqrt{2} W(S_0(x))+2 |S_0(x)|-2 \log (L_0(x)) - \log \Delta,
\]
cf. \cite[Equation (1.8)]{lupu2018scaling}, where $W$ is a standard Brownian motion on $\R$ (the final term of $\log \Delta$ above appears since we have rescaled everything by $\Delta$ to fit into the framework of \cite{lupu2018scaling}). In particular, we can identify $W((|x|+1)^{1-\alpha}-1)$ with $\phib(|x|)$ and $W(\log (|x|+1))$ with $\phiz(|x|)$, with a slight abuse of notation. Substituting the above values of $L_0(x)$ and $S_0(x)$ we obtain a limiting potential of the form
\begin{align*}
&2\sqrt{2} W\left(\frac{\Delta[(|x|+1)^{1-\alpha}-1]}{2(1-\alpha)}\right)+\frac{\Delta [(|x|+1)^{1-\alpha}-1]}{1-\alpha} -\log (2(|x|+1)^{\alpha}) \\
&\qquad \qquad = \sqrt{\frac{4\Delta}{1-\alpha}}\phib(|x|)+\frac{\Delta[(|x|+1)^{1-\alpha}-1]}{1-\alpha}-\alpha \log (|x|+1) - \log 2
\end{align*}
when $\alpha<1$, and
\begin{align*}
2\sqrt{2} W\left(\frac{\Delta\log (|x|+1)}{2} \right) + \Delta \log (|x|+1) - \log (2(|x|+1))  &= \sqrt{4\Delta}\phiz(|x|) + (\Delta - 1) \log (|x|+1) - \log 2
\end{align*}
when $\alpha = 1$, which is consistent with \eqref{eqn:distorted res def} up to the constant $\log 2$ (but note that the effect of adding a constant cancels out when inserted into both \eqref{eqn:distorted res def} and \eqref{eqn:distorted meas def}).
\end{remark}

\section{Scaling limit of $(T_n, R_n, \nu_n)$ and the LERRW $X^{(n)}$} \label{sctn:scaling lim}

Our goal in this section is to prove the following proposition. The space $(T_n, R_n, \nu_n)$ is as defined just below \eqref{eqn:discrete V, res, mu def}.

\begin{prop} \label{prop:mm space limit}
Under the joint law $\bPb \times \Pb$ and with initial weights as in \eqref{scheme2}, the following convergence holds with respect to the GHP topology as $n \to \infty$:

\[
(T_n, (n a_n^{-1})^{-1} R_n, (2 n)^{-1} \nu_n,O_n) \overset{(d)}{\to} (\Tgc, \Rphi,\nu_{\phi},O).
\]

\end{prop}

Throughout the section, we will work pointwise on the probability space $(\mathbf{\Omega}, \mathbf{\F}, \bPb)$ (on which we defined $(T_n, d_n, \mu_n)$), and where the convergence of \eqref{eqn:stable tree scaling limit def} holds almost surely. This means that most of the statements that follow should be written conditionally on $T_n$ or on $\Tgc$. To make the arguments clearer to follow, we have not written this explicitly in the statements or the proofs, and instead ask the reader to keep this in mind throughout. There is one specific case (Proposition \ref{prop:Kolmogorov}) where we need to restrict to a certain set $A_{n, \epsilon} \subset \mathbf{\Omega}$, and in this case we make it explicit.

We also let $V^{(n, \alpha)}( 2nt) = V_{\omega_n}(x_{\lfloor 2nt \rfloor})$, where $V_{\omega_n}$ is as defined in \eqref{eqn:discrete V, res, mu def}. At times we will abuse notation and write $V^{(n,\alpha)}(x)$ in place of $V^{(n,\alpha)}(2nt)$, where $t$ is such that $x = x_{\lfloor 2nt \rfloor}$.

We will prove Proposition \ref{prop:mm space limit} in two main steps. We first apply Skorohod Representation and assume that \eqref{contourconv} holds almost surely on $(\mathbf{\Omega}, \mathbf{\F}, \bPb)$. On this space, the set of all pairs $(x_{\lfloor 2nt \rfloor},p_{H_{(\gamma)}}(t))$ defines a correspondence between $n^{-1} a_n T_n$ and $\Tgc$ with distortion going to $0$; see \eqref{corresp} and Section \ref{sctn:GHP conv background} for details.

We first prove the following claim.
    \begin{claim} \label{claim:Vbranch} 
    Take $\alpha\le 1$. We have for almost every $\omega \in \mathbf{\Omega}$ that
    \begin{align} \label{wlimit}
    \left( V^{(n,\alpha)}( 2nt ) \right)_{t \in [0,1]} \overset{(d)}{\to} 
    \begin{cases}
     \left( \frac{\Delta [(\dt(O,t)+1)^{1-\alpha}-1]}{1-\alpha} - \alpha \log \left(\dt(O, t) + 1\right)  +  \sqrt{\frac{4\Delta}{1-\alpha}} \phib (t)\right)_{t \in [0,1]} &\text{ if } \alpha<1,
    \\ \\
    \left((\Delta-1) \log(\dt(O,t)+1)+\sqrt{4 \Delta} \phiz(t)\right)_{t\in [0,1]} &\text{ if } \alpha=1,
    \end{cases}
    \end{align}
    with respect to the topology of uniform convergence on $C([0,1])$.
    \end{claim}
    
    Then, we apply Skorohod Representation a second time to work on a probability space where the convergence of \eqref{wlimit} also holds almost surely. On this space, we show that the rescaled resistances and measures defined in \eqref{eqn:discrete V, res, mu def} converge to the limit candidates suggested in \eqref{eqn:distorted res def} and \eqref{eqn:distorted meas def}.

\subsection{The limiting potential} \label{sctn:potential convergence}

In this subsection we establish \eqref{wlimit} above. 

\subsubsection{Proof of Claim \ref{claim:Vbranch}}

Throughout this section, recall that $(\mathbf{\Omega}, \mathbf{\F}, \bPb)$ is a probability space where the convergence of \eqref{contourconv} holds almost surely. This also implies that the convergence of \eqref{eqn:stable tree scaling limit def} holds almost surely (see \cite[Theorem 4.2]{gall2006trees}). For $n \geq 1$ and $x \in T_n$ we define the shorthand
\[
\Delta_n=
\begin{cases}
\Delta (n a_n^{-1})^{-(1-\alpha)} &\text{ if } \alpha<1,
\\
\Delta &\text{ if } \alpha=1,
\end{cases} 
\qquad |x|=d_n(O_n,x).
\]

Note though that the reinforcement parameter for the process defined in Section \ref{sctn:LERRW and RWRE connection trees} is still $\Delta$, not $\Delta_n$. If $x\in T_n$, then the Dirichlet weights for the model have parameters given by \eqref{eqn:rescaled Dirichlet parameters}, so that by \eqref{scheme2} the weight at $x$ is given by
\[
\mathbf{b}_x^{(n)}=
\displaystyle \left( \frac{(|x| + n a_n^{-1})^{\alpha} + \Delta_n}{2\Delta_n},  \frac{(|x|+1+n a_n^{-1})^{\alpha}}{2\Delta_n}, \frac{(|x|+1+n a_n^{-1})^{\alpha}}{2\Delta_n}, \ldots, \frac{(|x|+1+n a_n^{-1})^{\alpha}}{2\Delta_n} \right).
\]  
For each $x \in T_n\setminus \{O_n\}$, as the ratio of two independent Gamma random variables, we observe that
\[
\rho_{x} :=\frac{W_{\overleftarrow{x}}^{\overleftarrow{\overleftarrow{x}}}}{W_{\overleftarrow{x}}^x}\sim \frac{\Gamma(b_{\overleftarrow{x}}^{(\overleftarrow{x},\overleftarrow{\overleftarrow{x}})},1)}{\Gamma(b_{\overleftarrow{x}}^{(\overleftarrow{x},x)},1)}
\sim 
\displaystyle \beta' \left( \frac{\left((|x|-1) + n a_n^{-1} \right)^{\alpha} + \Delta_n}{2\Delta_n},  \frac{(|x|+n a_n^{-1})^{\alpha}}{2\Delta_n} \right), 
\]
and $(\rho_{x})_{x \in T_n\setminus \{O_n\}}$ is a sequence of independent random variables. Here $\beta'(a,b)$ denotes the beta prime distribution with positive parameters $a$ and $b$, and has probability density function
\[
x^{a-1}(1+x)^{-a-b}/B(a,b), \qquad x>0,
\]
where $B$ is the beta function. 

We start by giving some preliminary claims about the expectation and variance of the quantities $\log \rho_{x}$ summed along a branch. The proofs are elementary but are included in the appendix.

\begin{claim}\label{claim:log sum exp}
\begin{enumerate}[(i)]
    \item For all $x \in T_n$ we have that
    $|\E{\log \rho_x}| \leq \frac{2\alpha}{\left(|x| + n a_n^{-1} \right)} + \frac{4\Delta_n}{\left(|x| + n a_n^{-1} \right)^{\alpha}}$.
    \item For almost every $\omega\in \mathbf{\Omega}$, it holds uniformly over $t \in [0,1]$ as $n \to \infty$ that:
    \begin{align*}
\sum_{O_n\prec x \preceq x_{\lfloor 2nt \rfloor}} &\E{\log \rho_{x} }\to \frac{ \Delta [(\dt(O,t)+1)^{1-\alpha}-1]}{1-\alpha} - \alpha \log \left(\dt(O, t) + 1\right) \hspace{1cm} &\text{if }  \alpha<1,
\\
\sum_{O_n\prec x \preceq x_{\lfloor 2nt \rfloor}} &\E{\log \rho_{x}}\to (\Delta-1) \log(\dt(O,t)+1),  \hspace{1cm} &\text{if } \alpha = 1.
\end{align*}
\end{enumerate}
\end{claim}
\begin{proof}
See Lemma \ref{lem:log sum exp app} in the Appendix.
\end{proof}

\begin{claim}\label{claim:log var}
For almost every $\omega \in \mathbf{\Omega}$, it holds uniformly over $t \in [0,1]$ as $n\to \infty$ that:
\begin{align*}
\sum_{O_n\prec x \preceq x_{\lfloor 2nt \rfloor}} &\textnormal{Var} (\log \rho_{x}) \to \frac{4 \Delta [(\dt(O,t)+1)^{1-\alpha}-1]}{1-\alpha} \hspace{1cm} &\text{if } \alpha < 1,
\\
\sum_{O_n\prec x \preceq x_{\lfloor 2nt \rfloor}} &\textnormal{Var} (\log \rho_{x}) \to 4 \Delta \log(\dt(O,t)+1) \hspace{1cm} &\text{if } \alpha = 1.
\end{align*}
\end{claim}
\begin{proof}
See Lemma \ref{cor:log var app} in the Appendix.
\end{proof}

\begin{claim}\label{claim:log mgf}
For all $\alpha \leq 1$ and almost every $\omega \in \mathbf{\Omega}$, it holds for all $x \in T_n$ and all $1 \leq k \leq (na_n^{-1})^{1/2}$ that
\begin{align*}
    \E{e^{k\log \rho_{x}}}  
    \leq \exp \left\{\left(1+3\Delta\right) \left( \frac{2k\alpha}{|x|+n a_n^{-1}} + \frac{3k^2\Delta_n}{(|x|+n a_n^{-1})^{\alpha}} \right)  \right\}.
\end{align*}
\begin{align*}
    \E{e^{k\log (\rho_{x}^{-1})}}  
    \leq \exp \left\{\left(1+3\Delta\right) \left( \frac{2k\alpha}{|x|+n a_n^{-1}} + \frac{3k^2\Delta_n}{(|x|+n a_n^{-1})^{\alpha}} \right)  \right\}.
\end{align*}
\end{claim}
\begin{proof}
See Lemma \ref{claim:log mgf app} in the Appendix.
\end{proof}

We will prove the convergence of Claim \ref{claim:Vbranch} in two steps. Firstly we consider the recentred processes defined by
    \begin{equation} \label{Mdiff}
    M_i^{(n,\alpha)} = 
    \sum_{O_n\prec x \preceq x_i} \left( \log \rho_x - \E{ \log \rho_x}\right)
    \end{equation}
for $0 \leq i \leq 2n$, and extended to non-integer time indices by interpolation. We then prove that $\left( M_{2nt}^{(n,\alpha)}\right)_{t \in [0,1]}$ converges to an appropriate snake process as follows. Firstly, by applying a martingale CLT along finitely many branches of $T_n$, we deduce that the finite dimensional marginals of $M^{(n,\alpha)}$ converge to those of the snake process. For this part of the proof, the martingales in question are indexed by the branch lengths, and not by the time interval $[0,1]$. Then, to extend this finite-dimensional convergence to full convergence in $C([0,1])$, we verify Kolmogorov's tightness condition, this time considering the whole process indexed by the time interval $[0,1]$.

We also set 
\[
A(t) =     \begin{cases}
    \displaystyle \frac{4 \Delta [(\dt(O,t)+1)^{1-\alpha}-1]}{1-\alpha}
    &\text{ if } \alpha<1,
    \\
    \displaystyle 4 \Delta \log(\dt(O,t)+1) &\text{ if } \alpha=1.
    \end{cases}
\]

\begin{prop}[Finite dimensional convergence]
For each $n \geq 1$, let $(M_m^{(n, \alpha)})_{m\ge 0}$ be as in \eqref{Mdiff}, and for each $1 \leq m \leq 2n$ let 
$Z_m^{(n, \alpha)} = M_m^{(n,\alpha)} - M_{m-1}^{(n,\alpha)}$.
\begin{enumerate}
    \item Almost surely on $\mathbf{\Omega}$, it holds for each $t\in (0,1)$ that
    \[
 \sum_{m:x_m \preceq x_{\lfloor 2nt \rfloor}}  \E{(Z_m^{(n,\alpha)})^2} \to
A(t),
    \]
    as $n \to \infty$.
    \item Almost surely on $\mathbf{\Omega}$, it holds $\forall \varepsilon>0, \forall t \in (0,1)$, 
    \[
    \sum_{m:x_m \preceq x_{\lfloor 2nt \rfloor}} \E{(Z_m^{(n,\alpha)})^2 \mathbbm{1}\{|Z_m^{(n,\alpha)}|>\varepsilon\}}\to 0,
    \]
    as $n\to \infty$.
    \item Almost surely on $\mathbf{\Omega}$, for any $k \geq 1$ and any sequence $0 < t_1 < \ldots < t_k < 1$, it holds that
    \begin{equation}\label{eqn:fd conv}
    (    M_{2nt_i}^{(n,\alpha)})_{1 \leq i \leq k} \to \begin{cases} \left(\sqrt{\frac{4\Delta}{1-\alpha}} \phib (t_i)\right)_{1 \leq i \leq k} &\text{ if } \alpha < 1, \\
    \left(\sqrt{4\Delta} \phiz (t_i)\right)_{1 \leq i \leq k} &\text{ if } \alpha = 1,
    \end{cases}
    \end{equation}
    jointly in distribution as $n \to \infty$.
\end{enumerate}
\end{prop}
\begin{proof}
\begin{enumerate}
\item This is \cref{claim:log var}.
\item First note that, for any $t \in (0,1)$,
\begin{align}\label{eqn:Z squared calc}
\begin{split}
    \sum_{m:x_m \preceq x_{\lfloor 2nt \rfloor}} \E{(Z_m^{(n,\alpha)})^2 \mathbbm{1}\{|Z_m^{(n,\alpha)}|>\varepsilon\}} 
    &=
    \sum_{m:x_m \preceq x_{\lfloor 2nt \rfloor}} \int_{\epsilon^2}^{\infty} \pr{(Z_x^{(n,\alpha)})^2>y} dy
    \\
    &=
    \sum_{m:x_m \preceq x_{\lfloor 2nt \rfloor}} \int_{\epsilon^2}^{\infty} \pr{|\log \rho_{x_m} - \E{\log \rho_{x_m}}|> \sqrt{y}} dy.
    \end{split}
\end{align}
Also note that it follows from Claim \ref{claim:log sum exp}$(i)$ and Claim \ref{claim:log mgf} that almost surely on $\mathbf{\Omega}$, we have for all $1 \leq k \leq (na_n^{-1})^{1/2}$ and all $x \in T_n$,
\begin{align}\label{eqn:recentred mgf}
\E{e^{k(\log \rho_{x} - \E{\log \rho_{x}})}} \vee \E{e^{k(\log \rho_{x}^{-1} - \E{\log \rho_{x}^{-1}})}}  \leq \exp \left\{3\left(1+3\Delta\right) \left( \frac{2k\alpha}{|x|+n a_n^{-1}} + \frac{3k^2\Delta_n}{(|x|+n a_n^{-1})^{\alpha}} \right)  \right\}.
\end{align}

Taking $k = (na_n^{-1})^{1/2}$, the latter expression in \eqref{eqn:Z squared calc} is upper bounded by 
        \begin{align*}
        &n\sup_{x: |x| \geq 1} \int_{\epsilon^2}^{\infty}1 \wedge \left\{ \E{e^{(na_n^{-1})^{\frac{1}{2}} \left( \log \rho_{x} - \E{\log \rho_{x}}\right)}} e^{-(na_n^{-1})^{\frac{1}{2}}\sqrt{y}} \right\}dy 
        \\
        &\quad + n \sup_{x: |x|\ge 1} \int_{\epsilon^2}^{\infty} 1 \wedge \left\{\E{e^{(na_n^{-1})^{\frac{1}{2}} \left( \log \rho_x^{-1}-\E{\log \rho_x^{-1}}\right)}} e^{-(na_n^{-1})^{\frac{1}{2}}\sqrt{y}} \right\}dy 
        \\
        &\leq 2n \sup_{x: |x| \geq 1} \int_{\epsilon^2}^{\infty} 1 \wedge \left\{ \exp\left\{ 3\left(1+3\Delta\right) \left( \frac{2\alpha}{(n a_n^{-1})^{1/2}} + 3\Delta \right) -(na_n^{-1})^{\frac{1}{2}}\sqrt{y} \right\}  \right\}dy.
        \end{align*}
For any $\epsilon>0$, there exists $N_{\epsilon} < \infty$ such that the minimum in the final integrand is equal to the second expression for all $y \in (\epsilon^2, \infty)$. By dominated convergence, this integral therefore tends to $0$ as $n \to \infty$, as required. (In fact, for fixed $\epsilon>0$, this upper bound is also uniform over $t \in (0,1)$).

\item Without loss of generality we can assume that  $t_1, \ldots, t_k \in \frac{1}{2n}\Z$; the general case follows since we interpolate and the contribution of a single jump goes to zero in probability as $n \to \infty$ (for example by Markov's inequality). Note that, for fixed $t \in (0,1)$, parts 1 and 2 exactly verify the conditions given in \cite[Corollary 3.1 and Theorem 3.2]{hallheyde2014book} to imply that 
\begin{equation}\label{eqn:fd convergence one t}
 M_{2nt}^{(n,\alpha)} \to N(0, A(t)),
\end{equation}
as $n \to \infty$. Therefore, given such a sequence $0 < t_1 < \ldots < t_k < 1$, for each $k \geq 1$ we can define an \textit{augmented sequence} obtained by adding the time indices corresponding to all the most recent common ancestors of pairs of vertices in the original sequence. Given this augmented sequence, we can then sum the contributions along the relevant branch segments between vertices of the form $x_{t_i}$ and $x_{t_j}$, where $t_i$ and $t_j$ are such that there is no $\ell \leq k$ with $t_i \preceq t_{\ell} \preceq t_j$. Since the process evolves independently along distinct branch segments and the sum of independent Gaussians is Gaussian, the finite-dimensional result then follows from \eqref{eqn:fd convergence one t}.
\end{enumerate}
\end{proof}

In order to strengthen the above convergence, we will verify Kolmogorov's tightness condition. We first give a preliminary lemma. For this, for $\alpha < 1$ we first define $\dbtn (s,t)$ to be equal to
{\small 
\begin{align}\label{eqn:def distance discrete}
 (na_n^{-1})^{-(1-\alpha)}\left[(d_n(O_n, x_{\lfloor 2nt \rfloor})+na_n^{-1})^{1-\alpha} + (d_n(O_n, x_{\lfloor 2ns \rfloor})+na_n^{-1})^{1-\alpha} 
 - 2(d_n(O_n, x_{\lfloor 2ns \rfloor \wedge \lfloor 2nt \rfloor})+na_n^{-1})^{1-\alpha}\right].
\end{align}
}

Note that $\dbtn (s,t) = \dbt(s,t) + o(1)$, where the $o(1)$ is uniform over all $s, t \in [0,1]$ by \eqref{eqn:stable tree scaling limit def 1}. Also let $D_n = (na_n^{-1})^{-1} \diam (T_n)$. We first give a useful lemma.

\begin{lemma}\label{lem:alpha distance UB}
Almost surely on $\mathbf{\Omega}$, we have for all $n \geq 1$ and all $s,t \in [0,1]$ that
\[
\dbtn (s,t) \leq \begin{cases}
(1-\alpha) (D_n+1)^{-\alpha} \dbztn(s,t) &\text{ if } \alpha \leq 0, \\
2\dbztn(s,t)^{1-\alpha} &\text{ if } \alpha \in (0, 1).
\end{cases}
\]
\end{lemma}
\begin{proof}
By breaking at the most recent common ancestor, it is enough to prove this when $s \preceq t$. If $\alpha \leq 0$, we just write
\[
\dbtn(s,t) \leq (1-\alpha)(na_n^{-1})^{-1}d_n(x_{\lfloor 2ns \rfloor},x_{\lfloor 2nt \rfloor}) [(na_n^{-1})^{-1} d_n(O_n, x_{\lfloor 2nt \rfloor})+1]^{-\alpha} \leq (1-\alpha) (D_n+1)^{-\alpha} \dbztn(s,t).
\]
If instead $\alpha \in (0,1)$, we treat two cases:
\begin{enumerate}
    \item If $d_n(O_n, x_{\lfloor 2ns \rfloor}) \geq \frac{1}{2}d_n(O_n, x_{\lfloor 2nt \rfloor})$, then we obtain that
    \begin{align*}
\dbtn(s,t) \leq (1-\alpha) (na_n^{-1})^{-1}d_n(x_{\lfloor 2ns \rfloor},x_{\lfloor 2nt \rfloor}) [(na_n^{-1})^{-1} d_n(O_n, x_{\lfloor 2ns \rfloor})]^{-\alpha} &\leq (1-\alpha)\dbztn(s,t)^{1-\alpha}.
    \end{align*}
    \item If $d_n(O_n, x_{\lfloor 2ns \rfloor}) < \frac{1}{2}d_n(O_n, x_{\lfloor 2nt \rfloor})$, then we get
    \begin{align*}
\dbtn(s,t) \leq (na_n^{-1})^{-(1-\alpha)}d_n(O_n, x_{\lfloor 2nt \rfloor})^{1-\alpha} &\leq 2^{1-\alpha}\dbztn(s,t)^{1-\alpha}.
    \end{align*}
\end{enumerate}
\end{proof}

We are now ready to verify the tightness condition.

\begin{prop}[Kolmogorov's condition]\label{prop:Kolmogorov}
For every $\epsilon>0, n \geq 1$ there exist $p>0, q>1$, $C_{\epsilon,p,q}<\infty$ and an event $A_{n, \epsilon} \subset \mathbf{\Omega}$ with $\prb{A_{n, \epsilon}} \geq 1- \epsilon$, such that on the event $A_{n, \epsilon}$, we have for all $s,t \in [0,1]$ that
    \[
\econd{ \frac{|M_{2nt}^{(n,\alpha)} - M_{2ns}^{(n,\alpha)}|^p}{|s-t|^{q}}}{A_{n, \epsilon}} < C_{\epsilon,p,q}.
\]
In particular, Kolmogorov's tightness condition is satisfied on the event $A_{n, \epsilon}$ and for almost every $\omega \in \mathbf{\Omega}$, the convergence of \eqref{eqn:fd conv} holds in distribution on the space $C([0,1])$ equipped with the uniform topology.
\end{prop}
\begin{proof}
First note that it follows from \cite[Lemma 1.4]{marzouk2018snake} and \cite[Theorem 2]{Kortchemski2017sub} that for any $\gamma' < \frac{\gamma - 1}{\gamma}$ and $\epsilon>0$, we can choose $D_{\epsilon}, C_{\epsilon, \gamma'}<\infty$ such that with probability $1-\epsilon$, we have that
\begin{align}\label{eqn:marzouk holder result}
\begin{split}
\dbztn (s,t) = (na_n^{-1})^{-1}d_n(x_{2nt}, x_{2ns}) &\leq C_{\epsilon, \gamma'}|t-s|^{\gamma'} \text{ for all } s, t \in [0,1], \\
D_n := \sup_{t \in [0,1]} (na_n^{-1})^{-1} d_n(O_n, x_{2nt}) &\leq D_{\epsilon}-1.
\end{split}
\end{align}
(Note that \cite[Lemma 1.4]{marzouk2018snake} actually states a result using the lexicographical ordering rather than the contour ordering as we use, but since the difference in the labels of two fixed vertices can only decrease by at most a factor of $2$ in the contour ordering, this immediately implies the same result with the contour ordering and therefore really implies the first line of \eqref{eqn:marzouk holder result}).

\textbf{Case $\alpha<1$.} Assume for now that $2ns$ and $2nt$ are integers and that $x_{2ns}$ is an ancestor of $x_{2nt}$. This will then extend to the general case by breaking paths at the most recent common ancestor and since $M^{(n, \alpha)}$ is defined by interpolation. Note that by \eqref{eqn:recentred mgf}, we almost surely have for all $1 \leq k \leq (na_n^{-1})^{1/2}$ and all $x \in T_n$ that
\begin{align*}
\E{e^{k(\log \rho_{x} - \E{\log \rho_{x}})}}  &\leq \exp \left\{3\left(1+3\Delta\right) \left( \frac{2k\alpha}{|x|+n a_n^{-1}} + \frac{3k^2\Delta_n}{(|x|+n a_n^{-1})^{\alpha}} \right)  \right\}.
\end{align*}
Therefore, for all $1 \leq k \leq (na_n^{-1})^{1/2}$ we deduce that there exists $C_{\Delta} < \infty$ such that (using also that, given $T_n$, the sequence $(\rho_x)_{x \in T_n}$ is independent)
\begin{align}\label{eqn:recentred mgf*}
\begin {split}
    &\E{\exp \left\{\sum_{x_{ns} \prec x \preceq x_{nt}} (\log \rho_{x} - \E{\log \rho_{x}}) \right\}} \\
    &\qquad \leq \exp \left\{\sum_{x_{ns} \prec x \preceq x_{nt}} 3\left(1+3\Delta\right) \left(\frac{2k\alpha}{|x|+n a_n^{-1}} + \frac{3k^2\Delta_n}{(|x|+n a_n^{-1})^{\alpha}} \right)  \right\} \\
    &\qquad \leq \exp \left\{  \frac{C_{\Delta} k^2 (na_n^{-1})^{-(1-\alpha)}}{1-\alpha}\left((d_n(O_n, x_{nt})+na_n^{-1})^{1-\alpha} - (d_n(O_n, x_{ns})+na_n^{-1})^{1-\alpha}\right) \right\} \\
    &\qquad \qquad \times \exp \left\{ \frac{C_{\Delta}k\alpha}{1-\alpha} \left( \log ((d_n(O_n, x_{nt})+na_n^{-1})^{1-\alpha}) - \log ((d_n(O_n, x_{ns})+na_n^{-1})^{1-\alpha}) \right) \right\} \\
    &\qquad \le \exp \left\{ \frac{C_{\Delta }k(k+\alpha)}{1-\alpha} \dbtn(s,t)\right\},
    \end{split}
\end{align}

where $\dbtn (s,t)$ is given by \eqref{eqn:def distance discrete}.
Applying Lemma \ref{lem:alpha distance UB}, we deduce that this is upper bounded by 
\begin{align} \label{eqn:recentred mgf**}
\begin{cases}
\exp \left\{C_{\Delta} k(k+\alpha) D_n^{-\alpha} \dbztn(s,t)\right\} &\text{ if } \alpha \leq 0, \\
\exp \left\{\frac{2C_{\Delta} k (k+\alpha)}{1-\alpha} \dbztn(s,t)^{1-\alpha}\right\}   &\text{ if } \alpha\in (0, 1).
\end{cases}
\end{align}
Assume that $\dbztn(s,t) \leq 1$. Set
\[
k=\dbztn(s,t)^{-\frac{(1-\alpha) \wedge 1}{3}} \leq (na_n^{-1})^{\frac{1}{3}}.
\]


In the case $\alpha \leq 0$, we deduce from a Chernoff bound that there exists $C_{\epsilon, \Delta, \alpha} < \infty$ such that on the event $A_{n, \epsilon}$, we have for all $y > 0$ that
\begin{align}\label{eqn:short recentred mgf}
   \pr{ \sum_{x_{ns} \prec x \preceq x_{nt}} (\log \rho_{x} - \E{\log \rho_{x}}) > y} \leq e^{ C_{\Delta}k^3 D_n^{-\alpha}\dbztn(s,t) - y k } 
    \leq e^{C_{\Delta} D_n^{-\alpha}- y k}
    \leq C_{\epsilon, \Delta, \alpha} 
    e^{- y \dbztn(s,t)^{-\frac{(1-\alpha) \wedge 1}{3}} },
\end{align}
where \eqref{eqn:recentred mgf**} was used to provide the first bound in the inequality above. Moreover, by Claim \ref{claim:log mgf} the same result holds on replacing $\rho_x$ by $\rho_x^{-1}$, meaning that we can instead consider the absolute value of the sum in the bound above.

We obtain the same upper bound in the case $\alpha > 0$ (modifying the constant $C_{\epsilon, \Delta, \alpha}$ a bit if necessary). We deduce that, on the event $A_{n, \epsilon}$, for all $p>1$ there exists $C_{p, \epsilon, \Delta,\alpha} < \infty$ such that for all $s,t \in [0,1]$ with $\dbztn(s,t) \leq 1$,
\begin{align}\label{eqn:moment calc}
    \E{\left|  \sum_{x_{ns} \prec x \preceq x_{nt}} (\log \rho_{x} - \E{\log \rho_{x}}) \right|^p} &\leq \int_0^{\infty} \pr{ \sum_{x_{ns} \prec x \preceq x_{nt}} \left|\log \rho_{x} - \E{\log \rho_{x}} \right| >y^{1/p}} dy \nonumber \\
    &\leq \int_{0}^{\infty} 2 C_{\epsilon, \Delta, \alpha} e^{- y^{1/p}\dbztn(s,t)^{-\frac{(1-\alpha) \wedge 1}{3}}} dy \nonumber \\
    &\leq \int_{0}^{\infty} 2 C_{\epsilon, \Delta, \alpha}\dbztn(s,t)^{\frac{(1-\alpha) \wedge 1}{3}}pu^{p-1} e^{- u} du \nonumber \\
    &\leq 2 C_{p, \epsilon, \Delta, \alpha}\dbztn(s,t)^{\frac{(1-\alpha) \wedge 1}{3}}.
\end{align}
By replacing $C_{p,\epsilon, \Delta, \alpha}$ with $D_{\epsilon} C_{p, \epsilon, \Delta, \alpha}$ this extends to the case $\dbtn(s,t) > 1$ on the event $A_{n, \epsilon}$. Finally, this extends to the general case where $2ns$ and $2nt$ are not integers and $x_{2ns}$ is no longer an ancestor of $x_{2nt}$ by breaking paths at the most recent common ancestor and since $M^{(n, \alpha)}$ is defined by interpolation.

We now suppress the dependence on $\Delta$ and $\alpha$ since these are assumed to be fixed. We deduce that, on the event $A_{n, \epsilon}$, we have for all $\gamma' <\frac{\gamma - 1}{\gamma}$ that there exists $C_{p, \epsilon, \gamma'}$ (also applying \eqref{eqn:marzouk holder result}) such that for all $s,t \in [0,1]$,
\begin{align*}
    \E{|M_{2nt}^{(n,\alpha)} - M_{2ns}^{(n,\alpha)}|^p}
    &\leq 2 C_{p, \epsilon}\dbztn(s,t)^{\frac{(1-\alpha) \wedge 1}{3}} \leq C_{p, \epsilon, \gamma'} |s-t|^{\frac{[(1-\alpha) \wedge 1]\gamma'}{3}}.
\end{align*}
This proves the result by taking $p$ large enough.

\textbf{Case $\alpha=1$.} Again we can assume, without loss of generality, that $2ns$ and $2nt$ are integers and that $x_{2ns}$ is an ancestor of $x_{2nt}$. This time, repeating the arguments that led to \eqref{eqn:recentred mgf*} gives that there exists $C_{\Delta}<\infty$ such that
\begin{align*}
    &\E{\exp \left\{\sum_{x_{ns} \prec x \preceq x_{nt}} (\log \rho_{x} - \E{\log \rho_{x}}) \right\}} \\
    &\qquad \leq \exp \left\{\sum_{x_{ns} \prec x \preceq x_{nt}} 3\left(1+3\Delta\right) \frac{k(2+3 k \Delta)}{|x|+n a_n^{-1}}  \right\} \\
    &\qquad \leq \exp \left\{C_{\Delta} k(k+1)\left( \log ((d_n(O_n, x_{nt})+na_n^{-1})) - \log ((d_n(O_n, x_{ns})+na_n^{-1})) \right) \right\} \\
    &\qquad \leq \exp \left\{C_{\Delta} k(k+1) \dbztn(s,t) \right\}.
\end{align*}

Again we assume that $\dbztn(s,t) \leq 1$ and set
\[
k=\dbztn(s,t)^{-\frac{1}{3}} \leq (na_n^{-1})^{\frac{1}{3}}.
\]
As in \eqref{eqn:short recentred mgf}, this implies that there exists $C_{\epsilon, \Delta} < \infty$ such that for all $y>0$,
\begin{align*}
  \pr{\sum_{x_{ns} \prec x \preceq x_{nt}} (\log \rho_{x} - \E{\log \rho_{x}}) >y} \leq C_{\epsilon, \Delta} e^{- y\dbztn(s,t)^{-\frac{1}{3}}}.
\end{align*}
Again by Claim \ref{claim:log mgf} we can replace $\rho_x$ by $\rho_x^{-1}$ in this argument, so we can continue as in \eqref{eqn:moment calc} to deduce that on the event $A_{n, \epsilon}$, we have that for any $p>1$ there exists $C_{p, \epsilon, \Delta}<\infty$ such that for all $s,t \in [0,1]$ with $\dbztn(s,t) \leq 1$,
\begin{align*}
    \E{\left|\sum_{x_{ns} \prec x \preceq x_{nt}} (\log \rho_{x} - \E{\log \rho_{x}})\right|^p} 
    &\leq 2 C_{p, \epsilon, \Delta}\dbztn(s,t)^{\frac{1}{3}}.
\end{align*}
The proof then proceeds as in the case $\alpha<1$.
\end{proof}

\begin{proof}[Proof of Claim \ref{claim:Vbranch}]
We write the proof in the case $\alpha<1$. First set
\[
W^{(n, \alpha)}(2nt) = V^{(n, \alpha)}(2nt) - \frac{\Delta [(\dt(O,t)+1)^{1-\alpha}-1]}{1-\alpha} + \alpha \log \left(\dt(O, t) + 1\right).
\]
Then for every $t \in [0,1]$, 
\[
W^{(n, \alpha)}(2nt) - \E{W^{(n, \alpha)}(2nt)} = 
V^{(n, \alpha)}(2nt) - \E{V^{(n, \alpha)}(2nt)} =
M^{(n,\alpha)}_{2 n t},
\]
so it follows from \eqref{eqn:fd conv} and Proposition \ref{prop:Kolmogorov} that
\begin{equation}\label{eqn:conv summary 1}
\left(W^{(n, \alpha)}(2nt) - \E{W^{(n, \alpha)}(2nt)} \right)_{t \in [0,1]} \to \left(\sqrt{\frac{4\Delta}{1-\alpha}} \phib (t)\right)_{t \in [0,1]},
\end{equation}
uniformly on $C([0,1])$. Also, it follows from Claim \ref{claim:log sum exp} that
\begin{align}\label{eqn:conv summary 2}
\E{W^{(n, \alpha)}(2nt) } \to 0,
\end{align}
uniformly on $C([0,1])$. Claim \ref{claim:Vbranch} therefore follows on combining \eqref{eqn:conv summary 1} and \ref{eqn:conv summary 2}. 

The proof in the case $\alpha=1$ follows similarly.
\end{proof}

\subsection{Convergence of the metric measure spaces} \label{sctn:GHP convergence}

In Section \ref{sctn:potential convergence}, we showed that, almost surely on $\mathbf{\Omega}$, the relevant processes converge in distribution with respect to the Skorohod-$J_1$ topology and therefore with respect to the uniform topology since the limit process is continuous. Therefore, applying Skorohod's Representation theorem again (the space of continuous functions on $[0,1]$ is separable), we will work on a probability space $(\Omega', \mathcal{F}', \mathbb{P}')$ where these functionals additionally converge almost surely with respect to the uniform topology.

\begin{proof} [Proof of Proposition \ref{prop:mm space limit}]

\textbf{\textit{Part 1: convergence of metrics.}}
In the first part of the proof we will show that the distortion of the natural correspondence, defined by 
\begin{equation} \label{distort}
\textsf{dis} (\mathcal{R}_n)=\sup\{|n^{-1} a_n R_n(x_{\lfloor 2 n s\rfloor},x_{\lfloor 2 n t\rfloor})-\Rphi(t,s)|: (x_{\lfloor 2 n s\rfloor},p_{H^{(\gamma)}}(s)), (x_{\lfloor 2 n t\rfloor},p_{H^{(\gamma)}}(t))\in \mathcal{R}_n\},
\end{equation}
converges to 0 as $n\to \infty$, almost surely on $\Omega'$. It is enough to prove this just in the case $t=0$ since
\begin{align*}
|n^{-1} a_n R_n(x_{\lfloor 2n s\rfloor},x_{\lfloor 2 nt \rfloor})-\Rphi(p_{H^{(\gamma)}}(s),p_{H^{(\gamma)}}(t))|
&\le 2 |n^{-1} a_n R_n(x_0,x_{\lfloor 2n r\rfloor})-\Rphi(p_{H^{(\gamma)}}(0),p_{H^{(\gamma)}}(r))|
\\
&\quad +|n^{-1} a_n R_n(x_0,x_{\lfloor 2 n s\rfloor})-\Rphi(p_{H^{(\gamma)}}(0),p_{H^{(\gamma)}}(s))|
\\
&\quad +|n^{-1} a_n R_n(x_0,x_{\lfloor 2 n t\rfloor})-\Rphi(p_{H^{(\gamma)}}(0),p_{H^{(\gamma)}}(t))| \\
&\quad+ 2n^{-1} a_n R_n(x_{r_n},x_{\lfloor 2 n r\rfloor}),
\end{align*}
where $(x_{\lfloor 2nr\rfloor},p_{H^{(\gamma)}}(r))\in \mathcal{R}_n$, where $r\in [s,t]$ is any time between $s$ and $t$ at which the minimum of $H^{(\gamma)}$ is achieved, and where $r_n$ is the index of the most recent common ancestor of $x_{\lfloor 2n s\rfloor}$ and $x_{\lfloor 2 nt \rfloor}$. We will therefore first establish \eqref{distort} when $t=0$ and then show that $n^{-1} a_n R_n(x_{r_n},x_{\lfloor 2 n r\rfloor})\to 0$, uniformly over $s,t \in [0,1]$.

For the former, first note that by the definition in \eqref{eqn:discrete V, res, mu def}:
\begin{align} \label{changeofvar1}
n^{-1} a_n R_n(x_0,x_{\lfloor 2 n s\rfloor})
&=n^{-1} a_n \sum_{x_0\prec x\preceq x_{\lfloor 2 n s\rfloor}}  e^{V^{(n,\alpha)}(x)} 
=
2 a_n \int_{A_s^{(n)}} e^{V^{(n,\alpha)}(2 n r)} d \ell(r),
\end{align}
where $A^{(n)}_s=\{r < s: \inf_{u\in (\lfloor r \rfloor ,s]} C^{(n)}(u) > C^{(n)}( r ) \text{ and } x_{\lfloor 2nr \rfloor} \neq x_0\} \cup \{s\}$ (to avoid double counting repeat contour visits) and $\ell$ is Lebesgue measure on $\mathbb{R}$.

By the definition in \eqref{eqn:distorted res def}:
\begin{align} \label{changeofvar2}
\Rphi(0,s)=
\begin{cases}
\displaystyle \int_{(O, x_s]} (\dt(O,z)+1)^{-\alpha} e^{\sqrt{\frac{4 \Delta}{1-\alpha}} \phib(z)+\frac{\Delta }{1-\alpha} [(\dt(O,z)+1)^{1-\alpha}-1]} \lambda^{\gamma}(\text{d} z)  &\text{ if } \alpha<1,
\\
\displaystyle \int_{(O, x_s]} e^{\sqrt{4 \Delta} \phiz(z)+(\Delta-1) \log(\dt(O,z)+1)} \lambda^{\gamma}(\text{d} z)  &\text{ if } \alpha=1.
\end{cases}
\end{align}
For $r\in (0,1)$, let us set 
\begin{align*}
h^{(n,\alpha)}(r)&=
\displaystyle e^{V^{(n,\alpha)}(2nr)}
\\
h^{(\alpha)}(r)&=
\begin{cases}
\displaystyle (\dt(O,r)+1)^{-\alpha} e^{\sqrt{\frac{4 \Delta}{1-\alpha}} \phib(r)+\frac{\Delta }{1-\alpha} [(\dt(O,r)+1)^{1-\alpha}-1]}, &\text{ if } \alpha<1,
\\
\displaystyle e^{\sqrt{4 \Delta} \phiz(r)+(\Delta-1) \log(\dt(O,r)+1)} , &\text{ if } \alpha=1.
\end{cases}
\end{align*}
Combining \eqref{changeofvar1} and \eqref{changeofvar2}, we deduce that for any pair $(x_{\lfloor 2 n s\rfloor},p_{H^{(\gamma)}}(s))\in \mathcal{R}_n$,
\begin{align} \label{weakconv1}
\left|n^{-1} a_n R_n(x_0,x_{\lfloor 2 n s\rfloor})-R_{\phi}(0,s)\right|
&\le 
\left|2 a_n \int_{A_s^{(n)}} h^{(n,\alpha)}(r) d \ell(r)-2 a_n \int_{A_s^{(n)}} h^{(\alpha)}(r) d \ell(r)\right|
\nonumber \\
&\quad +
\left|2 a_n \int_{A_s^{(n)}} h^{(\alpha)}(r) d \ell(r)-\int_{(O, x_s]} h^{(\alpha)}(z) \lambda^{\gamma}(\text{d} z) \right|
\nonumber \\
&\le 2 a_n ||h^{(n,\alpha)}-h^{(\alpha)}|| \ell(A_s^{(n)})
\nonumber \\
&\quad +||h^{(\alpha)}|| \sup_{s\in [0,1]} \left|2 a_n \int_{A_s^{(n)}} d\ell(r)-\int_{(O, x_s]} \lambda^{\gamma}(\text{d} z)\right|.
\end{align}
There are two steps to show that almost surely on $\Omega'$, the right-hand side converges to 0 uniformly over $s\in [0,1]$. First note that $\ell(A_s^{(n)}) = \frac{1}{2n}d_n(O_n, x_{\lfloor 2ns \rfloor}) = \frac{1}{2a_n}C^{(n)}(s)$. Therefore, almost surely on $\Omega'$,
\begin{equation} \label{weakconv2}
2 a_n ||h^{(n,\alpha)}-h^{(\alpha)}|| \ell(A_s^{(n)})
=
||h^{(n,\alpha)}-h^{(\alpha)}||C^{(n)}(s)\xrightarrow{n\to \infty} 0,
\end{equation}
uniformly over $s\in [0,1]$, by \eqref{contourconv} and \eqref{wlimit} (recall that we are working on a probability space where the convergences of \eqref{contourconv} and \eqref{wlimit} hold almost surely).   Secondly, using \eqref{eqn:length measure prop} it is not hard to see that
\[
\sup_{s\in [0,1]} \left|2 a_n \int_{A_s^{(n)}} d\ell(r)-\int_{(O, x_s]} \lambda^{\gamma}(\text{d} z)\right|=||C^{(n)}-H^{(\gamma)}||,
\]
which also goes to 0 almost surely as $n\to \infty$, by \eqref{contourconv} (which holds almost surely on $\Omega'$).

This therefore shows that each individual term on the right hand side of \eqref{weakconv1} converges to 0 uniformly in $s\in [0,1]$, completing the first part of the proof about the convergence to 0 of the distortion of the correspondence $\mathcal{R}_n$ as defined in \eqref{distort} when $t=0$.

The second part of the proof is to show that
\begin{equation}\label{eqn:Rn common ancestor error}
\sup_{s,t \in [0,1]} n^{-1} a_n R_n(x_{r^{s,t} _n},x_{\lfloor 2 n r^{s,t} \rfloor}) \to 0,
\end{equation}
where $r^{s,t} = s \wedge t$ and $r^{s,t}_n$ is the index of the most recent common ancestor of $x_{\lfloor 2n s\rfloor}$ and $x_{\lfloor 2 nt \rfloor}$. However, note that (writing $r=r^{s,t}$ and $r_n = r^{s,t}_n$)
\begin{align*}
 n^{-1} a_n R_n(x_{r_n},x_{\lfloor 2 n r \rfloor}) &=n^{-1} a_n \sum_{x\in [x_{r_n} ,x_{\lfloor 2 n r\rfloor}] \setminus \{x_{r_n} \wedge x_{\lfloor 2 n r\rfloor}\}}  e^{V^{(n,\alpha)}(x)} \\
 &\leq  (n^{-1} a_n d_n(x_{r_n} ,x_{\lfloor 2 n r\rfloor}) \sup_{x \in T_n}  e^{V^{(n,\alpha)}(x)}.
\end{align*}
By \eqref{contourconv} (which holds almost surely on $\Omega'$), $n^{-1}a_nd_n(x_{r_n} ,x_{\lfloor 2 n r\rfloor})\to 0$ uniformly over $s,t \in [0,1]$, almost surely on $\Omega'$. Similarly, by \eqref{wlimit}, for almost every $\omega \in \Omega'$ we have that $\sup_{x \in T_n}  e^{V^{(n,\alpha)}(x)}$ is bounded by a constant (which may depend on $\omega$, but that can nevertheless be upper bounded on a set of probability $1-\epsilon$ for any $\epsilon > 0$). Therefore, we deduce that \eqref{eqn:Rn common ancestor error} holds almost surely on $\Omega'$, as required.

\textbf{\textit{Part 2: convergence of measures.}}

Recall from \eqref{eqn:discrete V, res, mu def} that for a non-root vertex $x \in T_n$ with $\# x$ offspring,
\begin{align*}
(2n)^{-1} \nu_n(x)=\frac{1}{2n} \sum_{i=0}^{\# x} e^{-V^{(n,\alpha)}(x_i)}
=\frac{1}{2n} \sum_{i=0}^{\# x} e^{-V^{(n,\alpha)}(2 n t_{x_i})},
\end{align*}
where $t_{x_i}$ is the minimal $t$ such that $(x_i,p_{H^{(\gamma)}}(t))\in \mathcal{R}_n$, where $x_0$ denotes the parent of $x$ and $(x_i)_{i=1}^{\# x}$ denotes its children. Therefore, for any set $A_n$ of vertices in $T_n$, we have that 
\[
(2n)^{-1} \nu_n(A_n)=\frac{1}{2n} \sum_{x\in A_n} \sum_{i=0}^{\# x} e^{-V^{(n,\alpha)}(2 n t_{x_i})}.
\]
We introduce an intermediate measure by setting:
\[
\tilde{\nu}_n(x)=\sum_{\substack{0 \le i\le 2n: \\ x_i=x}} e^{-V^{(n,\alpha)}(x_i)} = \begin{cases}
(\deg x)e^{-V^{(n,\alpha)}(x)}, \hspace{1cm} &\text{ if } x \neq O_n, \\
(\deg x + 1)e^{-V^{(n,\alpha)}(x)}, \hspace{1cm} &\text{ if } x = O_n. \\
\end{cases} 
\]
Therefore, for a subset $A_n \subset T_n$,
\[
(2n)^{-1} \tilde{\nu}_n(A_n)=
\frac{1}{2n} \sum_{x\in A_n} \sum_{\substack{0 \le i\le 2n: \\ x_i=x}} e^{-V^{(n,1)}(2 n u_x)},
\]
where $u_x$ is the minimal $u$ such that $(x,p_{H^{(\gamma)}}(u))\in \mathcal{R}_n$. We will now show that it suffices to consider $\tilde{\nu}_n$ in place of $\nu_n$ to obtain the Prokhorov limit. For $s\in (0,1)$, let us set
\[
g^{(n,\alpha)}(s)=
e^{-V^{(n,\alpha)}(2 n s)}.
\]
Note that, if $u_x$ and $t_{x_i}$ are defined as above, then 
\begin{equation}\label{eqn:Deltagn to 0}
\Delta_{g^{(n,\alpha)}} := \sup_{x \in T_n, x_i \sim x} \left\{g^{(n,\alpha)}(u_x) - g^{(n,\alpha)}(t_{x_i})\right\} \to 0,
\end{equation}
by the convergence of \eqref{wlimit} and since $[0,1]$ is compact. Therefore, if $A_n \subset T_n$, then
\begin{align*}
\left|(2n)^{-1} \nu_n(A_n)-(2n)^{-1} \tilde{\nu}(A_n)\right|&= \bigg|\frac{1}{2 n} \sum_{x\in A_n} \sum_{i=0}^{\# x} g^{(n,\alpha)}(t_{x_i})-\frac{1}{2 n} \sum_{x\in A_n} \sum_{i=0}^{\# x} g^{(n,\alpha)}(u_x)\bigg|
\\
&\leq \frac{1}{2 n} \sum_{x\in A_n} \sum_{i=0}^{\# x} \Delta_{g^{(n,\alpha)}}
\\
&=\frac{\Delta_{g^{(n,\alpha)}}}{2 n} \left( \sum_{x\in A_n} \text{deg}(x) + \mathbbm{1}\{O_n \in A_n\}\right) \leq \frac{\Delta_{g^{(n,\alpha)}}}{2 n}\cdot 2 n \to 0
\end{align*}
as $n \to \infty$. Therefore, letting
\[
r_n= \textsf{dis} (\mathcal{R}_n)
\]
as in \eqref{distort}, it is sufficient to show that under the canonical Gromov-Hausdorff embedding $F_n=T_n\sqcup \Tgc$, 
\[
d_P^{F_n}((2n)^{-1} \tilde{\nu}_n,\nu_{\phi})\le r_n,
\]
and hence converges to 0 as $n\to \infty$ (by Part 1 of this proof). We proceed as follows. First, take a subset $B\subset\Tgc$, and for $s \in (0,1)$ set
\[
g^{(\alpha)}(s)=
\begin{cases}
\displaystyle (\dt(O,s)+1)^{\alpha} e^{-\left[\sqrt{\frac{4 \Delta}{1-\alpha}} \phib(s)+\frac{\Delta}{1-\alpha}[(\dt(O,s)+1)^{1-\alpha}-1]\right]}, &\text{ if } \alpha<1,
\\
\displaystyle e^{-[\sqrt{4\Delta} \phiz(s)+(\Delta-1) \log(\dt(O,s)+1)]}, &\text{ if } \alpha=1.
\end{cases}
\]
Note that $||g^{(n,\alpha)}-g^{(\alpha)}|| \to 0$ as $n \to \infty$ by the convergence of \eqref{wlimit} and \eqref{contourconv}. Let $B'=p_{H^{(\gamma)}}^{-1}(B)$, $I_{n,i}=[\frac{i}{2 n},\frac{i+1}{2 n})$ for $0 \leq i < 2n$ and 
\[
B_n=\cup_{0 \leq i < 2n} \{x \in T_n: \exists s\in B' \text{ such that } s\in I_{n,i} \text{ and } x=x_i\},
\]
where $x_i$ corresponds to the $i$-th node of $T_n$ in contour exploration order. Clearly $B'\subset\cup_{i:x_i\in B_n} I_{n,i}$, and so
\begin{align*}
\nu_{\phi}(B)=\int_{B'} g^{(\alpha)}(s) ds&\le \sum_{i:x_i\in B_n} \int_{I_{n,i}} g^{(\alpha)}(s) ds
\\
&\le \sum_{i:x_i\in B_n} \int_{I_{n,i}} |g^{(n,\alpha)}(s)-g^{(\alpha)}(s)|ds+\sum_{i:x_i\in B_n} \int_{I_{n,i}} g^{(n,\alpha)}(s) ds
\\
&\le \sup_{s \in (0,1)} |g^{(n,\alpha)}(s)-g^{(\alpha)}(s)| +\frac{1}{2n}\sum_{x\in B_n}\sum_{0 \leq i \leq 2n} g^{(n,\alpha)}\left( \frac{i}{2n}\right)\mathbbm{1}\{x_i=x\}
\\
&=o(1)+(2n)^{-1} \tilde{\nu}(B_n).
\end{align*}
Now note that if $x \in B_n$, then there exists $i \leq 2n$ and $s\in B'$ with $s\in I_{n,i}$ so that $(x_{\lfloor 2 n s\rfloor},p_{H^{(\gamma)}(s)})\in \mathcal{R}_n$. This further entails that $B_n\subset B^{r_n}$, and so
\begin{equation}\label{eqn:dP first}
\nu_{\phi}(B)\le o(1)+(2n)^{-1} \tilde{\nu}(B^{r_n}). 
\end{equation}
We now prove the reverse statement. Let $A_n$ be a set of vertices in $T_n$, and let
\[
A_n'=\bigcup_{i: x_i\in A_n} I_{n,i}.
\]
Let $A_n''=p_{H^{(\gamma)}}(A_n')$. For any $x\in A_n''$, there exists $t\in A_n'$ with $x=p_{H^{(\gamma)}}(t)$ and $t\in I_{n,i}$ for some $i$ with $x_i\in A_n$. Moreover, this entails that $i=\lfloor 2 n t\rfloor$, and hence $(x_{\lfloor 2 n t\rfloor},p_{H^{(\gamma)}}(t))\in \mathcal{R}_n$. It follows that $x\in A_n^{r_n}$, i.e. $A_n''\subset A_n^{r_n}$; hence it follows from \eqref{eqn:Deltagn to 0} that
\begin{align*}
    (2n)^{-1} \tilde{\nu}(A_n)=\frac{1}{2n} \sum_{x\in A_n}  \sum_{\substack{0\le i\le 2n: \\ x_i=x}} g^{(n,\alpha)}(u_x)
    &=\sum_{x\in A_n}  \sum_{\substack{0 \le i\le 2n: \\ x_i=x}} \int_{I_{n,i}} g^{(n,\alpha)}\left(\frac{1}{2n}\lfloor 2 n s\rfloor\right) ds
    \\
    &=\sup_{s \in (0,1)} |g^{(n,\alpha)}(s)-g^{(\alpha)}(s)|+\int_{A_n'} g^{(\alpha)}(s) ds \\
    &=o(1)+\nu_{\phi}(A_n'')
    \le o(1)+\nu_{\phi}(A_n^{r_n}).
\end{align*}
Together with \eqref{eqn:dP first}, this entails that
\[
d_P^{F_n}((2n)^{-1} \tilde{\nu}_n,\nu_{\phi})\le r_n.
\]
The desired result follows.
\end{proof}
  
  In the following corollary, applying \cite[Theorem 7.2]{croydon2016scaling}, yields the quenched convergence of the LERRW to a mixture of diffusions, with the limit law of this process given by the annealed law $P_{O}$ as defined in \eqref{eqn:annealed X}. 

\begin{corollary}\label{cor:final RW convergence}
Let $X^{(n)}$ have the law of a discrete-time LERRW on $T_n$, with initial weights \eqref{scheme2} and reinforcement parameter $\Delta$. Let $X$ be as defined by \eqref{eqn:annealed X}. Then,
\[
P_{O_n}^{(\alpha^{(n)})}\left(\left(n^{-1} a_n X^{(n)}_{\lfloor 2n^2 a_n^{-1} t \rfloor}\right)_{t\ge 0}\in \cdot \right)\xrightarrow{(d)} P_{O}\left(\left(X_t\right)_{t\ge 0}\in \cdot\right).
\]
\end{corollary}
\begin{proof}
Recall that we are working on a probability space where the convergence of \eqref{contourconv} holds almost surely. It follows that almost surely on this probability space, there exists a canonical embedding into the common metric space $n^{-1} a_n T_n\sqcup \Tgc $ in which the GHP distance between the random elements in Proposition \ref{prop:mm space limit} goes to zero as $n \to \infty$. We work pointwise on this probability space so that we only need to consider the randomness of the Dirichlet weights.

We first consider a continuous-time LERRW with $\textsf{exp}(1)$ holding times at each vertex. By our choice of metric and measure, it follows that the stochastic process associated with the discrete space $(T_n, R_n, \nu_n)$ has the quenched law of a RWDE also with $\textsf{exp}(1)$ holding times at each vertex. Therefore, by Lemma \ref{lem:Dirichlet rep of LERRW} the continuous-time LERRW corresponds to the annealed law of this RWDE exactly as considered in \eqref{lerrwannealed}. It therefore follows directly from \cite[Theorem 7.2]{croydon2016scaling} (the non-explosion condition (39) appearing there is satisfied since all the spaces are compact) that this continuous-time LERRW converges in distribution to $X$ as defined by \eqref{eqn:annealed X}. The extension to the discrete-time LERRW in place of the continuous-time one then follows by a straightforward application of the law of large numbers on the time index.
\end{proof}

Clearly Theorem \ref{thm:scaling lim intro} follows directly from Corollary \ref{cor:final RW convergence}.

\section{Properties of $\Tg$ and its Gaussian potential}\label{sctn:Tgamma props}

In order to understand the long-time behaviour of a LERRW it is more natural to consider it on an infinite critical Galton-Watson tree $\Ti$ as introduced in Section \ref{sctn:infinite trees bckgrnd}. Analogously to \cref{sctn:LERRW and RWRE connection trees}, one can define a RWRE on $\Ti$ determined by $(\rho_v)_{v \in \Ti}, V, R$ and $\nu$ on $\Ti$, as defined in \eqref{eqn:discrete V, res, mu def}. Once again, the law of the LERRW can be represented as a RWRE as in \cref{lem:Dirichlet rep of LERRW}.

With such offspring distribution as in \eqref{eqn:dom of att def}, Kesten's tree $\Ti$ is well-known to converge to a non-compact version of a stable tree coded by two independent \Levy excursions and an appropriate immigration measure under rescaling \cite{DuqSinTree}; these are known as stable sin-trees and we will denote them by $\Tg$ for $\gamma \in (1,2)$ (in fact due to uniform re-rooting invariance one can also construct the same objects with only the \Levy processes, but we do not explore this here). Analogously to $\Tgc$, one can define a canonical metric and a measure on $\Tg$ using the canonical projection from the coding \Levy processes, and additionally define a snake process $\phib$ satisfying all the properties of \cref{def:snake}. Given these, we can define a resistance metric $\Rphi$ and a measure $\nuphi$ on $\Tg$ exactly as in \eqref{eqn:distorted res def} and \eqref{eqn:distorted meas def}. (To keep the notation light, we only write ``$\infty$'' explicitly on the spaces $\Ti$ and $\Tg$ and not on all the metrics and measures).

For non-compact mm-spaces, GHP convergence naturally extends to Gromov-Hausdorff-vague (GHv) convergence which is equivalent to GHP convergence of metric balls $B(O_n, r) \to B(O, r)$ for almost every ${r > 0}$ \cite[Definition 5.8]{athreya2016gap}. The results of \cref{prop:mm space limit} extend straightforwardly to these balls and therefore we deduce the following proposition. The proofs to extend to the infinite-dimensional setting are not illuminating, but can be carried out using the same strategy as in the proof of \cite[Theorem 1.2]{archer2020infinite}.

\begin{prop} \label{prop:mm space limit inf}
Let $\Ti$ be Kesten's tree as defined in \cref{def:Kesten's tree}, with offspring distribution satisfying \eqref{eqn:dom of att def}. As $n\to \infty$, with initial weights as in \eqref{scheme2} and reinforcement parameter $\Delta$,
\[
(\Ti, n^{-1}a_n R_n, (2n)^{-1} \nu_n,O_{\infty}) \overset{(d)}{\to} (\Tg, \Rphi, \nu_{\phi}, O)
\]
in the GHv topology.
\end{prop}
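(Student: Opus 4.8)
\textbf{Proof strategy for Proposition \ref{prop:mm space limit inf}.}

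The plan is to transfer the compact-case result of \cref{prop:mm space limit} to the infinite setting via the characterisation of GHv convergence in terms of GHP convergence of metric balls. By \cite[Definition 5.8]{athreya2016gap}, it suffices to show that for Lebesgue-almost every $r > 0$, the rescaled balls $B_{b_n R_n}(\rho^{\infty}, r)$ in $(\Ti, b_n R_n, c_n \nu_n)$ converge in the GHP topology to $B_{\Rphi}(\rho, r)$ in $(\Tg, \Rphi, \nu_\phi)$, jointly with the convergence already established on the underlying trees. So the first step is to recall that $b_n^{-1} d_{\Ti} \Ti \to \Tg$ in the GHv topology (this is the analogue of \eqref{eqn:stable tree scaling limit def}, obtained by following the strategy of \cite{archer2020infinite}, as indicated in the paragraph preceding the proposition), and that one may realise this convergence via a coupling of the coding \Levy excursions. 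Since $\Tg$ is a sin-tree consisting of a semi-infinite spine with compact stable trees grafted along it, a ball $B(\rho, r)$ in any of the relevant metrics is a \emph{compact} rooted mm-space.

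Next I would localise. Fix $r$ at a continuity point of $u \mapsto \nu_\phi(B_{\Rphi}(\rho, u))$ (almost every $r$ works since this is a monotone function) and such that $\Rphi$-spheres of radius $r$ carry no $\nu_\phi$-mass; these conditions hold for a.e.\ $r$. Because the resistance metric $R_n$ is dominated by a constant times $d_{\Ti}$ on bounded regions and, conversely, $b_n R_n$ converges to $\Rphi$ which is bi-comparable to a continuous function of $d$ on compacta, the $b_n R_n$-ball of radius $r$ is contained in a $d_{\Ti}$-ball of radius $O(1)$ with high probability, and vice versa. Hence the restriction of the environment $(\rho_v)$, the potential $V^{(n)}$, the resistance $R_n$ and the measure $\nu_n$ to this ball only depends on the part of $\Ti$ within bounded graph distance of the root. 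On that bounded part one runs \emph{verbatim} the arguments of \cref{sctn:potential convergence} and \cref{sctn:GHP convergence}: the key inputs — the beta-prime mean/variance expansions \eqref{meanvarerror2}--\eqref{meanvarerror5}, the moment generating function bounds of Claim \ref{claim:log mgf}, the generation-size bound Lemma \ref{lem:generation prog UB} (which holds for Kesten's tree with the same exponent, since the spine contributes only one extra vertex per level and the bushes are ordinary critical GW trees), and the FMCLT Theorem \ref{thm:EK MG CLT 2} — are all local in nature and do not use compactness of $T_n$ in any essential way beyond needing a correspondence on a bounded region. This yields that $(W^{(n,\alpha)})$ restricted to the ball converges to the claimed Gaussian-plus-drift limit, and then the same change-of-variables computation as in \eqref{changeofvar1}--\eqref{weakconv2} gives GH convergence of the rescaled balls via the natural correspondence, together with Prokhorov convergence of the rescaled measures exactly as in Part 2 of the proof of \cref{prop:mm space limit}.

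The main obstacle is controlling the interface between ``inside the ball'' and ``outside the ball'': one must ensure that the ball $B_{b_n R_n}(\rho^\infty, r)$ stabilises, i.e.\ that no vertex far out along the spine or deep in a bush suddenly has small resistance-distance to the root due to an atypical string of Dirichlet weights, and symmetrically that the limiting ball $B_{\Rphi}(\rho, r)$ is not touched by such degeneracies. This is where the choice of $r$ as a continuity point matters, and where one needs the tightness estimates: a union bound over the (at most $O(k^{1/(\gamma-1)+\epsilon})$, by Lemma \ref{lem:generation prog UB}) vertices at level $k$, combined with the exponential moment control of Claim \ref{claim:log mgf}, shows that $\min_{|x| = k} V^{(n)}(x) \to +\infty$ fast enough that $b_n R_n(\rho^\infty, x) \to \Rphi(\rho, \cdot) \geq $ some increasing function of $k$, uniformly, so that the ball of radius $r$ is contained in $\{|x| \leq K\}$ for a deterministic $K = K(r)$ with probability tending to one. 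Given this containment, everything reduces to the already-proven compact statement applied to the truncated tree, and the proof concludes. As noted in the text, these arguments are routine given \cite[Theorem 1.2]{archer2020infinite} and \cref{prop:mm space limit}, so I would present them compactly rather than in full detail.
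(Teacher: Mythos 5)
Your proposal follows essentially the same route as the paper, which itself only sketches this step: it invokes the equivalence of GHv convergence with GHP convergence of metric balls for almost every $r$, asserts that the arguments of Proposition \ref{prop:mm space limit} localise to such balls, and defers the remaining (non-illuminating) details to the strategy of \cite[Theorem 1.2]{archer2020infinite}. Your additional discussion of the ball-stabilisation issue is a sensible fleshing-out of exactly the point the paper leaves implicit, so the proposal is correct and matches the intended argument.
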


Exactly as in Corollary \ref{cor:final RW convergence}, it has the following immediate corollary, by \cite[Theorem 7.2]{croydon2016scaling}. Note that condition (40) of \cite[Theorem 7.2]{croydon2016scaling} is satisfied as a consequence of the unique spine to infinity.

\begin{corollary}\label{cor:LERRW scaling lim infinite}
Let $X^{(n)}$ be the discrete-time LERRW on $\Ti$, with initial weights as in \eqref{scheme2} and reinforcement parameter $\Delta$. Let $X$ be the diffusion on $\Tg$ defined analogously to \eqref{eqn:annealed X}. Then
\[
\left(n^{-1}a_n X^{(n)}_{\lfloor 2n^2 a_n^{-1} t \rfloor}\right)_{t\ge 0}\xrightarrow{(d)} \left(X_t\right)_{t\ge 0}.
\]
\end{corollary}

\cref{cor:LERRW scaling lim infinite} shows that the quenched scaling limit of LERRW, which we denote $(X_t)_{t \geq 0}$, can be represented as a mixture of diffusions in random environments parametrised by different realisations of the functional $\phib$. In particular, for a fixed realisation of $\Tg$ we do not have a pointwise correspondence between realisations of the law of $\phib$ and realisations of $(X_t)_{t \geq 0}$, but instead we must average over $\phib$ to get the equality in distribution. Nevertheless, we can transfer almost sure results for $\phib$ directly to $(X_t)_{t \geq 0}$ to prove Theorems \ref{thm:almost sure limsup} and \ref{thm:almost sure limsup critical}. In this section we establish some almost sure properties of $\Tg$ and $\phib$. For some of the following propositions, the stated results are well-known but we could not find them explicitly written in the literature (for example Proposition \ref{prop:CRT vol growth annulus}). Therefore, we have provided an outline of the proofs but omitted some details (which would be long to justify and somewhat tangential to the main purpose of this paper). 

Throughout this section, the notation $d$ and $\mu$ refer to the measure and metric on the non-compact tree $\Tg$.

\begin{prop}\label{prop:CRT vol growth}
$\bPb$-almost surely, for any $\epsilon>0$,
\[
\limsup_{r \to \infty} \left( \frac{\mu(B(O, r))}{r^{\frac{\gamma}{\gamma -1}} (\log r)^{\frac{1+\epsilon}{\gamma -1}}} \right) = 0, \hspace{5mm} \liminf_{r \downarrow 0} \left( \frac{\mu(B(\rho, r))}{r^{\frac{\gamma}{\gamma -1}} (\log \log r^{-1})^{\frac{1}{\gamma -1}}} \right) = \gamma - 1.
\]
\end{prop}
\begin{proof}
Set $r_m = 2^m$. By monotone convergence (applied twice), scaling invariance of $\Tg$, then monotone convergence again,
\begin{align*}
\prb{\limsup_{m \to \infty} \left( \frac{\mu(B(O, r_m))}{(r_m)^{\frac{\gamma}{\gamma -1}} (\log r_m)^{\frac{1+\epsilon}{\gamma -1}}} \right)\leq 1} &= 
\prb{\lim_{k \to \infty} \lim_{K \to \infty}\sup_{k \leq m \leq K} \left( \frac{\mu(B(O, r_m))}{(r_m)^{\frac{\gamma}{\gamma -1}} (\log r_m)^{\frac{1+\epsilon}{\gamma -1}}} \right)\leq 1} \\
&= \lim_{k \to \infty} \lim_{K \to \infty}\prb{\sup_{k \leq m \leq K} \left( \frac{\mu(B(O, r_m))}{(r_m)^{\frac{\gamma}{\gamma -1}} (\log r_m)^{\frac{1+\epsilon}{\gamma -1}}} \right) \leq 1} \\
&= \lim_{k \to \infty} \lim_{K \to \infty} \prb{\sup_{k \leq m \leq K} \left( \frac{\mu(B(O, r_m^{-1}))}{(r_m^{-1})^{\frac{\gamma}{\gamma -1}} (\log r_m)^{\frac{1+\epsilon}{\gamma -1}}} \right) \leq 1} \\
&=\prb{\limsup_{m \to \infty} \left( \frac{\mu(B(O, r_m^{-1}))}{(r_m^{-1})^{\frac{\gamma}{\gamma -1}} (\log r_m)^{\frac{1+\epsilon}{\gamma -1}}} \right) \leq 1}.
\end{align*}

By \cite[Theorem 1.4]{duquesnelegallhausdorff2006}, the final probability on the RHS is 1 when considered on the compact stable tree $\Tgc$ instead of $\Tg$. Combining again with scaling invariance, it therefore follows from \cite[Theorem 1.3]{DuqSinTree} (which shows that $\Tg$ is a local limit of compact stable trees) that this probability is also $1$ on $\Tg$. Finally, since $\epsilon > 0$ was arbitrary we can replace $1$ with and $\delta>0$ and then with $0$, and extend more generally to $r \to 0$ since $\mu (B(O, r)) \leq \mu (B(O, 2^{\lceil \log_2 r \rceil})$.

The second statement follows by the same proof using \cite[Proposition 1.1]{duquesne2011small} for $\Tgc$.
\end{proof}

It follows from \cite[Proposition 1.1]{DuqSinTree} that we can define a local time measure $(L^{(r)})_{r > 0}$ on $\Tg$ such that for any compactly supported continuous function $g:(0, \infty) \to \R$,
\begin{equation}\label{eqn:local time cts function def}
\int_{\Tg} g(\dt(O, x)) \mu (dx) = \int_0^{\infty} L^{(r)}g(r) dr.
\end{equation}

The following result is a direct consequence of \cite[Theorem 1.5]{duquesnelegallhausdorff2006} and \cite[Theorem 8.8]{mattila1999geometry}. (In fact the result of \cite[Theorem 1.5]{duquesnelegallhausdorff2006} is for the limit $r \downarrow 0$ on the compact $\Tgc$, but transfers to the limit $r \to \infty$ on $\Tg$ exactly as in the proof of Proposition \ref{prop:CRT vol growth}).

\begin{prop}\label{prop:local time limsup}
For any $\epsilon>0$,
\begin{equation*}
\limsup_{r \to \infty} \left( \frac{L^{(r)}}{r^{\frac{1}{\gamma -1}} (\log r)^{\frac{1+\epsilon}{\gamma-1}}} \right) = 0.
\end{equation*}
\end{prop}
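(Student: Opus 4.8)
The plan is to mimic the proof of Proposition \ref{prop:CRT vol growth} line by line, feeding in the level-set analogue of the volume input. First I would observe that, by the coarea formula \eqref{eqn:local time cts function def} and the self-similarity of $\Tg$ (rescaling the metric by $\lambda$ rescales $\mu$ by $\lambda^{\gamma/(\gamma-1)}$, hence $L^{(\lambda r)}$ by $\lambda^{1/(\gamma-1)}$), the process $r\mapsto L^{(r)}/r^{1/(\gamma-1)}$ is stationary in $\log r$. Running the same dyadic-discretisation and double monotone-convergence manipulation as in Proposition \ref{prop:CRT vol growth}, and using that $\Tg$ is a local limit of the compact stable trees \cite[Theorem 1.3]{DuqSinTree}, the statement for $r\to\infty$ on $\Tg$ reduces to the statement that, $\mathbf{P}$-a.s. on the compact stable tree $\Tgc$, $\limsup_{r\downarrow 0}L^{(r)}/\bigl(r^{1/(\gamma-1)}(\log(1/r))^{(1+\epsilon)/(\gamma-1)}\bigr)=0$ for every $\epsilon>0$, where now $L^{(r)}=\langle\ell^r,1\rangle$ denotes the total mass of the local time measure $\ell^r$ of $\Tgc$ at level $r$.

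Next I would quote \cite[Theorem 1.5]{duquesnelegallhausdorff2006}, which identifies $\ell^r$, up to a deterministic multiplicative constant, with the restriction to the level set $\{x\in\Tgc:\dt(\rho,x)=r\}$ of the Hausdorff measure associated with a gauge function that agrees with $t\mapsto t^{1/(\gamma-1)}$ up to a lower-order $(\log\log)$ factor; in particular $L^{(r)}$ is, up to that constant, the $\mathcal H$-measure of the level set at height $r$. To turn this identification into a quantitative upper bound as $r\downarrow 0$ I would invoke Mattila's comparison theorem \cite[Theorem 8.8]{mattila1999geometry}, which relates Hausdorff measures with the upper densities of auxiliary measures: combined with the volume growth estimate of Proposition \ref{prop:CRT vol growth} applied to a thin slab straddling level $r$, this bounds the $\mathcal H$-measure of the level set — equivalently $L^{(r)}$ — by a constant times $r^{1/(\gamma-1)}$ and the relevant power of $\log(1/r)$. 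Since $\epsilon>0$ is arbitrary the $\limsup$ is forced to $0$, and the passage from the dyadic sequence to all $r$ is by monotonicity, exactly as at the end of the proof of Proposition \ref{prop:CRT vol growth}.

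The step I expect to be the main obstacle, or at least the only one requiring care, is the bookkeeping of logarithmic corrections when combining the three inputs: reconciling the $\log\log$ gauge in \cite[Theorem 1.5]{duquesnelegallhausdorff2006} with the $\log$ correction in the statement, and ensuring that the slab argument and the application of \cite[Theorem 8.8]{mattila1999geometry} cost only a controlled power of $\log(1/r)$. This turns out to be harmless precisely because the statement only asks for the bound up to an arbitrary extra factor $(\log(1/r))^{\epsilon/(\gamma-1)}$, which absorbs every lower-order term; the genuine content lies entirely in the two cited theorems and in the transfer argument already performed for Proposition \ref{prop:CRT vol growth}.
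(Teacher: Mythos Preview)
Your proposal is correct and follows essentially the same route as the paper: the paper states that the result is a direct consequence of \cite[Theorem 1.5]{duquesnelegallhausdorff2006} and \cite[Theorem 8.8]{mattila1999geometry}, with the transfer from $r\downarrow 0$ on $\Tgc$ to $r\to\infty$ on $\Tg$ carried out exactly as in the proof of Proposition \ref{prop:CRT vol growth}. Your write-up simply unpacks how these ingredients fit together; the only extra step you insert (feeding in the slab volume estimate to apply Mattila's density comparison) is a reasonable way to make the citation concrete, though the paper does not spell this out.
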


Similarly, we have the following.

\begin{prop}\label{prop:CRT vol growth annulus}
$\bPb$-almost surely, for any $\epsilon>0$,
\[
\limsup_{r \to \infty} \left( \frac{\mu(B(O, r+1) \setminus B(O, r))}{r^{\frac{1}{\gamma -1}} (\log r)^{\frac{1+\epsilon}{\gamma - 1}}} \right) = 0. 
\]
\end{prop}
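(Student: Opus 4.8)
The plan is to derive Proposition~\ref{prop:CRT vol growth annulus} from Proposition~\ref{prop:local time limsup} by integrating the local time measure over the annulus. Recall from \eqref{eqn:local time cts function def} that for nice test functions $g$ we have $\int_{\Tg} g(\dt(\rho,x))\,\mu(dx) = \int_0^\infty L^{(u)} g(u)\,du$. Approximating the indicator $\mathbbm{1}_{[r,r+1]}$ from above by compactly supported continuous functions (and using that $L^{(u)}$ is known to have a version that is right-continuous, or at least locally bounded, in $u$), this identity extends to give
\[
\mu\big(B(\rho,r+1)\setminus B(\rho,r)\big) = \int_r^{r+1} L^{(u)}\,du.
\]

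Given this, the bound is immediate: for any $\epsilon>0$, Proposition~\ref{prop:local time limsup} tells us that $\bPb$-almost surely there is a (random) $R$ such that $L^{(u)} \leq u^{\frac{1}{\gamma-1}}(\log u)^{\frac{1+\epsilon}{\gamma-1}}$ for all $u\geq R$. Hence for $r\geq R$,
\[
\mu\big(B(\rho,r+1)\setminus B(\rho,r)\big) = \int_r^{r+1} L^{(u)}\,du \leq \sup_{u\in[r,r+1]} L^{(u)} \leq (r+1)^{\frac{1}{\gamma-1}}(\log(r+1))^{\frac{1+\epsilon}{\gamma-1}},
\]
and since $(r+1)^{\frac{1}{\gamma-1}}(\log(r+1))^{\frac{1+\epsilon}{\gamma-1}} \big/ \big(r^{\frac{1}{\gamma-1}}(\log r)^{\frac{1+\epsilon}{\gamma-1}}\big) \to 1$ as $r\to\infty$, we get that the $\limsup$ in the statement is at most $1$. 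As in the proof of Proposition~\ref{prop:CRT vol growth}, since $\epsilon>0$ was arbitrary we may replace the constant $1$ by $0$ (apply the bound with $\epsilon/2$ in place of $\epsilon$, so the ratio with the $\epsilon$-normalisation tends to $0$).

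The main obstacle I anticipate is the measure-theoretic justification that $\mu(B(\rho,r+1)\setminus B(\rho,r)) = \int_r^{r+1} L^{(u)}\,du$ holds for \emph{every} fixed $r$ simultaneously, almost surely, rather than for Lebesgue-a.e.\ $r$. The formula \eqref{eqn:local time cts function def} is stated only for continuous compactly supported $g$, so a priori replacing $g$ by a sharp indicator requires knowing that $r\mapsto L^{(r)}$ and $r\mapsto\mu(B(\rho,r))$ have no common discontinuities, or working with the monotone limit of continuous approximants; the cleanest route is to note that $r\mapsto \mu(B(\rho,r))$ is continuous (the canonical measure $\mu$ is non-atomic and, more to the point, $\mu$ of a sphere $\{x:\dt(\rho,x)=r\}$ is zero $\bPb$-a.s.\ for every fixed $r$, by Fubini and the fact that the height process spends zero Lebesgue time at any fixed level), so monotone approximation of $\mathbbm{1}_{[r,r+1]}$ by continuous functions gives the identity for each fixed $r$. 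Consistent with the stated convention in the excerpt for Propositions~\ref{prop:CRT vol growth} and \ref{prop:local time limsup}, I would present this argument in outline, noting that the transfer from the compact tree $\Tgc$ (where the relevant local-time statements of \cite{duquesnelegallhausdorff2006} are proved) to $\Tg$ proceeds via the local-limit result \cite[Theorem~1.3]{DuqSinTree} and scaling invariance, exactly as in the proof of Proposition~\ref{prop:CRT vol growth}.
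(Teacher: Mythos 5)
Your proposal is correct and follows essentially the same route as the paper: the paper's proof likewise writes $\mu(B(\rho,r+1)\setminus B(\rho,r))=\int_r^{r+1}L^{(s)}\,ds$ via \eqref{eqn:local time cts function def} and continuous approximation, and then concludes directly from Proposition \ref{prop:local time limsup}. Your additional remarks on why the identity holds for every fixed $r$ simultaneously just make explicit what the paper leaves to "(and continuous approximation)".
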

\begin{proof}

It follows from \eqref{eqn:local time cts function def} (and continuous approximation) that 
\[
\mu(B(O, r+1) \setminus B(O, r)) = \int_{r}^{r+1}L^{(s)} ds,
\]
for all $r$ almost surely. The result therefore follows directly from Proposition \ref{prop:local time limsup} on integrating $L^{(s)}$ over $s \in [r, r+1]$.

\end{proof}

The construction of $\Tg$ in \cite{DuqSinTree} codes $\Tg$ by two \Levy processes plus two stable subordinators that represent immigration. The result of \cite[Proposition 1.3]{DuqSinTree} shows that $\Tg$ is also the local limit of compact stable trees conditioned on their height going to infinity. Since compact stable trees satisfy the property of uniform re-rooting invariance, it is also possible to consider them to be coded by two halves of a \Levy bridge, rather than a \Levy excursion (by applying the Vervaat transform to the \Levy excursion that codes them). By taking a limit using this bridge representation, it follows that $\Tg$ can in fact be constructed solely by two \Levy processes, by imagining that the time $0$ corresponds to the ``tip'' rather than the ``base'' of $\Tg$. The argument proceeds exactly the same as in the proof in \cite[Section 5.1]{archer2020infinite}; we do not repeat it here, but just give the construction.

\vspace{.5cm}

\begin{tcolorbox}[colback=white]\label{box:Tg two sided construction}
\textbf{Construction of $\Tg$}
\begin{enumerate}
\item Let $X$ and $X'$ be independent $\gamma$-stable, spectrally positive \Levy processes.
\item Define a function $\Xi: \R \rightarrow \R$ by
\[
\Xi_t =
\begin{cases} X_{t} & \text{ if } t \geq 0, \\
-X'_{-t^-} & \text{ if } t<0.
\end{cases}
\]
\item Given $s < t$, let $I_{s,t} = \inf_{r \in [s,t)} \Xi_r$. Say that $s \preceq t$ if $\Xi_{s-} \leq I_{s,t}$ and that $s \prec t$ if $s \preceq t$ and $s \neq t$. Also set $s \wedge t = \sup \{r \in \R: r \preceq t \text{ and } r \preceq s\}$.
\item If $s \preceq t$ set
\begin{align*}
\dt(s,t) &= \lim_{\epsilon \downarrow 0} \epsilon^{-1} \int_{s}^{t} \mathbbm{1}\{X_r \leq I_{s,t} + \epsilon\} dr. 
\end{align*}
For all $s,t$ this limit exists in probability, see \cite[Lemma 1.1.3]{duquesne2002asterisque}.
Then, for general $s, t \in \R$, set 
\[
\dt(s,t)= \dt(s \wedge t, s) + \dt(s \wedge t, t).
\]
Finally, define an equivalence relation $\sim$ on $\R$ by setting $s \sim t$ if and only if $\dt(s,t)=0$. We define
\begin{align*}
\Tg &= (\R / \sim, \dt)
\end{align*}
and denote by $\pi$ the canonical projection $\R \to \Tg$, and set the root to be equal to $\pi (0)$.
\end{enumerate}
\end{tcolorbox}

\vspace{.5cm}

We will use this construction and in particular excursion theory for the \Levy processes to prove some results about $\Tg$. (This is more convenient than using the construction of \cite{DuqSinTree} since we don't have to deal with the immigration). In what follows we will let $N(\cdot)$ denote the \Ito excursion measure for $X$. This can be thought of as the ``law'' of an excursion of $X$, though it is not normalisable (see \cite[Section 3.1.1]{curien2014loop} for a concise explanation).

For a subset $A \subset \Tg$ and a pseudometric $\tilde{d}$ on $A$, let $D(A,\epsilon, \tilde{d})$ denote the $\epsilon$-packing number of $(A, \tilde{d})$; in other words, $D(A,\epsilon, \tilde{d})$ is the maximal size of a collection of points $(x_{i})_{i \leq D(A, \epsilon, \tilde{d})}$ contained in $A$ such that $\tilde{d}(x_i, x_j) \geq \epsilon$ for all $i \neq j$.

\begin{prop}\label{prop:sqrt packing number}
For any $p>\frac{2\gamma}{\gamma - 1}$, there exists a constant $C_p < \infty$ such that for any $r, t > 0$,
\begin{align*}
    \Eb{D(B(O, r), t, \dt)^{\frac{1}{p}}} \leq C_p(r t^{-1})^{\frac{\gamma}{\gamma - 1} (1+ \frac{1}{p})} \vee 1.
\end{align*}
\end{prop}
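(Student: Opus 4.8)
The plan is to bound the packing number of $B(\rho, r)$ in $\Tg$ by a sum of contributions from the backbone (spine to infinity, or in the bridge construction the two-sided \Levy process) plus the subtrees grafted along it, and then use the scaling properties of the \Ito excursion measure $N(\cdot)$ together with Hölder's inequality (because of the $p$-th root). First I would decompose: points of $B(\rho, r)$ lie within distance $r$ of $\rho$, so (using the construction in the box) they are distributed among the finitely many excursions of $\Xi$ that reach height $\le r$ before time $r$ along the backbone, together with the backbone skeleton itself. By the Markov property / excursion theory for $X$, conditionally on the backbone these subtrees are (the trees coded by) a Poisson process of \Ito excursions indexed by backbone local time, each conditioned to have height at most $r$ contributing a packing number, plus possibly one excursion conditioned to be tall.

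The key estimates I would assemble are: (a) a one-excursion bound of the form $N\bigl[D(\T_e, t, \dt)^{1/p}\bigr] \lesssim (\text{height scale}/t)^{\frac{\gamma}{\gamma-1}(1+1/p)}$ with the right integrability against $N$, obtained from the scaling relation $N[f(aX)] = a^{-(\gamma-1)} N[f(X)]$ under the natural rescaling of \Levy excursions (heights scale like $a^{(\gamma-1)/\gamma}$ and masses like $a$), combined with a crude deterministic bound $D(\T_e, t, \dt) \le \mathrm{mass}(\T_e)/t$ type inequality or, more robustly, a chaining/box-counting bound $D(\T_e, t, \dt) \lesssim \dt$-diameter times local time profile divided by $t$; (b) control of the amount of backbone local time accumulated up to height $r$, which by scaling is of order $r^{1/(\gamma-1)}$ (this is exactly the local-time heuristic underlying Propositions~\ref{prop:local time limsup} and \ref{prop:CRT vol growth annulus}); (c) the elementary superadditivity inequality $\bigl(\sum_i a_i\bigr)^{1/p} \le \sum_i a_i^{1/p}$ for nonnegative $a_i$ and $p \ge 1$, which lets me pass the $1/p$-th power inside the sum over subtrees before taking expectations, turning the Poisson sum into (intensity) $\times$ (per-excursion expectation) via Campbell's formula.

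So the skeleton of the argument is: $D(B(\rho,r), t, \dt) \le D(\text{backbone segment of length} \le r, t) + \sum_{\text{excursions } e \text{ reaching } \le r} D(\T_e, t, \dt)$; raise to the $1/p$, use subadditivity of $x\mapsto x^{1/p}$, take $\Eb{\cdot}$; the backbone term is at most $(r/t)^{1/p} \le (r/t)^{\frac{\gamma}{\gamma-1}(1+1/p)}\vee 1$; the Poisson sum becomes $\int (\text{local time intensity over heights} \le r)\, N\bigl[D(\cdot, t, \dt)^{1/p}\bigr]$, and after the change of variables that rescales an excursion of height $h$ to a standard one, this integral evaluates to a constant times $r^{1/(\gamma-1)} \cdot t^{-\frac{\gamma}{\gamma-1}\cdot\frac1p}\cdot r^{(\ldots)}$; bookkeeping of exponents should produce exactly $(r/t)^{\frac{\gamma}{\gamma-1}(1+1/p)}$, with the constraint $p > \frac{2\gamma}{\gamma-1}$ entering precisely to make the relevant $N$-integral (equivalently, the moment of the height or mass of an \Ito excursion) finite — recall $N(\mathrm{height} > h) \asymp h^{-1/(\gamma-1)}$ and $N(\mathrm{mass} > m) \asymp m^{-1/\gamma}$, so a $1/p$-moment of a packing number that grows polynomially in mass/diameter is integrable exactly in this range. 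The $\vee 1$ handles the regime $t \gtrsim r$ where $B(\rho,r)$ has $O(1)$ $t$-separated points.

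The main obstacle I expect is establishing the sharp per-excursion bound (a) with the correct exponent and with the $1/p$-moment finite: one needs a packing-number estimate for the random real tree coded by an \Ito excursion that is tight enough to give the exponent $\frac{\gamma}{\gamma-1}(1+1/p)$ rather than something lossier. A clean route is to bound $D(\T_e, t, \dt)$ by the number of $t$-level "descendant subtrees" plus a contribution from the length measure on the part of the skeleton within $\dt$-distance $t$ of the top, both of which can be analyzed via the local time / length measure of the \Levy excursion and the known tail bounds on excursion height and mass; iterating or chaining at dyadic scales $t, 2t, 4t, \dots$ up to the excursion height and summing a geometric-type series should yield the stated bound. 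Everything else — the Poisson decomposition, Campbell's formula, the scaling change of variables, and the arithmetic of exponents — is routine once (a) is in place.
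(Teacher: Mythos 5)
Your route — spinal decomposition of $B(\rho,r)$ into the backbone plus a Poisson family of grafted subtrees, subadditivity of $x\mapsto x^{1/p}$, and Campbell's formula against the \Ito measure — is genuinely different from what the paper does, but as written it has two real gaps. First, the sum $\sum_e D(\T_e,t,\dt)^{1/p}$ over \emph{all} excursions grafted within distance $r$ of the root is almost surely infinite: there are infinitely many such excursions and each contributes at least $1$ to the packing number. You must first discard excursions of diameter smaller than (say) $t/2$ — any packing point inside such an excursion can be moved to its graft point on the spine at the cost of halving the separation — and only then is the remaining Poisson sum finite, with intensity $N(\diam>t/2)$ per unit spine length. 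Second, and more seriously, your key estimate (a), the per-excursion bound on $N\bigl[D(\T_e,t,\dt)^{1/p}\bigr]$, is not a routine input: under $N$ the tree $\T_e$ is itself a (rescaled) stable tree, so bounding its $t$-packing number is a problem of exactly the same type as the proposition, and your proposed resolution (decompose $\T_e$ along its own spine and iterate over dyadic scales) restarts the same argument one level down. The recursion can in principle be closed, but that closure is the entire content of the proof and is missing. A smaller point: the crude bound $D(\T_e,t,\dt)\le \mathrm{mass}(\T_e)/t$ has the wrong scaling — mass behaves like $(\text{length})^{\gamma/(\gamma-1)}$, so the correct heuristic is $\mathrm{mass}/t^{\gamma/(\gamma-1)}$, and turning it into a genuine inequality requires a uniform lower bound on ball volumes, which only holds with logarithmic corrections.

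The paper avoids the recursion entirely by working with the coding function from the two-sided construction of $\Tg$: after reducing to $r=1$ by scaling, it shows (i) that $B(\rho,1)\subset\pi([-\lambda,\lambda])$ except on an event of probability $O(\lambda^{-(1-\delta)(\gamma-1)/\gamma})$, using excursion theory for the running supremum of the left process, and (ii) that an interval of Lebesgue length $t^{\gamma/(\gamma-1)}\lambda^{-1}$ has $\pi$-image of $\dt$-diameter at most $t$ except with stretched-exponentially small probability, by exhibiting a long subtree grafted near the top of the corresponding path. Covering $[-\lambda,\lambda]$ by $O(t^{-\gamma/(\gamma-1)}\lambda^{2})$ such intervals bounds the packing number on a high-probability event, and integrating the resulting tail in $\lambda$ produces the stated moment bound; the threshold $p>\frac{2\gamma}{\gamma-1}$ is exactly what makes $\Eb{\lambda^{2/p}}$ finite against the polynomial tail in (i). This one-shot covering of the coding interval is what handles all scales of subtrees simultaneously and is the step your proposal still needs a substitute for.
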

\begin{remark}
This is not an optimal result, but is sufficient for our purposes.
\end{remark}
\begin{proof}[Proof of Proposition \ref{prop:sqrt packing number}]
Without loss of generality we can assume that $r>t$; otherwise $D(B(O, r), t, \dt)=1$. By scaling invariance, it is sufficient to show the result when $r=1$.

We claim the following: for any $x \in \R$, $\delta > 0$, there exist $C<\infty$, $c>0$ such that
\begin{align}\label{eqn:diam prob bound}
\begin{split}
\prb{B(O, 1) \not\subset \pi ([-\lambda, \lambda])} &\leq C\lambda^{-\frac{(1-\delta)(\gamma -1)}{\gamma}},\\
\prb{\diam (\pi([x - t^{\frac{\gamma}{\gamma - 1}}\lambda^{-1}, x])) > t} &\leq e^{-c \lambda^{\frac{\gamma - 1}{\gamma^2}}}.
\end{split}
\end{align}
The result then follows since if $B(O, 1) \subset \pi([-\lambda, \lambda])$, we can divide the interval $[-\lambda, \lambda]$ up into $\lceil 2t^{\frac{-\gamma}{\gamma - 1}}\lambda^{2} \rceil$ covering intervals of length $t^{\frac{\gamma}{\gamma - 1}}\lambda^{-1}$, and with high probability the diameter of each of these intervals is at most $t$. In particular, the probability that this does not happen is upper bounded by 
\[
C\lambda^{-\frac{(1-\delta)(\gamma -1)}{\gamma}} + 4t^{\frac{-\gamma}{\gamma - 1}}\lambda^{2}e^{-c \lambda^{\frac{\gamma - 1}{\gamma^2}}}.
\]
Any $t$-packing can have at most one point in each of these intervals (in fact, we have bounded the covering number), so we deduce that 
\begin{align}\label{eqn:packing number tail bound}
\prb{D(B(O, 1), t, d) \geq 4t^{-\frac{\gamma}{\gamma - 1}}\lambda^{2}} \leq C\lambda^{-\frac{(1-\delta)(\gamma -1)}{\gamma}} + 4t^{\frac{-\gamma}{\gamma - 1}}\lambda^{2}e^{-c \lambda^{\frac{\gamma - 1}{\gamma^2}}}.
\end{align}
The result then follows by writing
\begin{align*}
    \Eb{D(B(O, r), t, \dt)^{\frac{1}{p}}} = \int_0^{\infty} \prb{D(B(O, r), t, \dt) \geq x^p} dx
\end{align*}
and performing the appropriate change of variables to apply the tail bound of \eqref{eqn:packing number tail bound}.

By scaling invariance, for the general claim we just need to replace $t$ with $r^{-1}t$, so we see that the claim holds for any $p > \frac{2\gamma}{\gamma - 1}$.

We therefore just need to prove \eqref{eqn:diam prob bound}. We will use excursion theory for the \Levy process $X'$ coding the left side of $\Tg$ (i.e. on the negative real line). We therefore let $\overline{X'}_t = \sup_{s \leq t} X'_s$ denote the running supremum process of $X'$, and let $(L(t))_{t \geq 0}$ denote the local time of $\overline{X'}-X'$ at zero, normalised so that $\Eb{\exp \{-\lambda \overline{X'}_{L^{-1}(t)}\}} = e^{-t\lambda^{\gamma - 1}}$ (this is well-defined, e.g. \cite[Section VIII]{BertoinLevy}). Moreover, we have by \cite[Section VIII, Lemma 1]{BertoinLevy} that $L^{-1}$ is a stable subordinator of index $1-\frac{1}{\gamma}$, and \cite[Proposition 3.1$(ii)$]{curien2014loop} that the measure $\sum_{\overline{X'}_s > \overline{X'}_{s^-}} \delta (L(s), \Delta_s(X'))$ is a Poisson point measure with intensity $dl \cdot C_{\gamma} x^{-\gamma} dx$.

\textbf{First bound.} To prove the first statement, first note that $X^{\infty}_{-t} = -X_{t^-}'$ for $t \geq 0$. It follows that new suprema of $X'$ correspond to backwards minima of $X^{\infty}$ from $t=0$. Therefore, $L = \inf \{s \geq 0: L(s) \geq 1\}$ corresponds to the ancestor of $0$ on the infinite backbone with $d(O, \pi(L)) = 1$, and setting $R= \inf\{t \geq 0: X_t \leq -X'_{L}\}$ it follows that 
\[
B(O, 1) \subset \pi([-L,R]).
\]
Let $S$ denote a stable subordinator with \Levy measure $C_{\gamma} x^{-\gamma} dx$. We then have that
\begin{enumerate}
\item $\prb{L > \lambda} = \prb{L(\lambda) \leq 1} = \prb{L^{-1}(1) \geq \lambda} \leq c\lambda^{\frac{-(\gamma - 1)}{\gamma}}$.
\item  By \cite[Section VIII, Propositions 3 and 4]{BertoinLevy},
\begin{align*}
\prb{R> \lambda} \leq \prb{X_{L}' \geq \lambda^{\frac{1-\delta}{\gamma}}} + \prb{\inf_{s \leq \lambda} X_s \geq -\lambda^{\frac{1-\delta}{\gamma}}} &\leq \prb{S_1 \geq \lambda^{\frac{1-\delta}{\gamma}}} + \prb{\inf_{s \leq 1} X_s \geq -\lambda^{\frac{-\delta}{\gamma}}} \\
&\leq C\lambda^{-\frac{(1-\delta)(\gamma -1)}{\gamma}} + Ce^{-c\lambda^{\delta}}.
\end{align*}
\end{enumerate}
This establishes the first bound in \eqref{eqn:diam prob bound}.

\textbf{Second bound.} We will prove the diameter bound for $x=0$; the proof is the same for arbitrary $x$. Set $x_t = \inf\{s \geq 0: d(0, -s) > t\}$. $\bPb$-almost surely, there is a unique path  $\Gamma_t$ from $O$ to $\pi(x_t)$ in $\Tg$ of length exactly $t$. Moreover, the interval $[-x_t, 0]$ codes all the subtrees grafted to one side of this path. Each of these complete subtrees are coded by the \Ito excursion measure $N$, and moreover, $x_t$ is equal to the sum of the lengths of all of these \Ito excursions. In fact, the subtrees grafted to this side of the path $\Gamma_t$ form a Poisson point process on this path. Let $M_{t, \lambda}$ denote the number of subtrees of lifetime at least $t^{\frac{\gamma}{\gamma - 1}}\lambda^{-1}$ grafted to $\Gamma_t$ at a point within distance $\frac{t}{2}$ of $O$. The subtrees grafted to $\Gamma_t$ are concentrated in groups at certain hubs along $\Gamma_t$, and it follows from \cite[Corollary 1]{BertoinPitmanExt} that $M_{t, \lambda}$ stochastically dominates a Poisson random variable with parameter $S_{\frac{t}{2}}N(\sigma > t^{\frac{\gamma}{\gamma - 1}}\lambda^{-1}, H<\frac{t}{2})$, where $S$ is a $(\gamma - 1)$-stable subordinator by \cite[Proposition 5.6]{goldschmidt2010extinctionstable} (here $\sigma$ denotes the lifetime of a \Levy excursion, and $H$ the height of a tree it codes). Moreover, since
\[
\prb{S_{t/2} \leq ct^{\frac{1}{\gamma - 1}}\lambda^{-p}} \leq e\Eb{e^{-c^{-1}\lambda^{p}S_1}} \leq e^{1-c'\lambda^{p(\gamma - 1)}} \text{   and    } N(\sigma > t^{\frac{\gamma}{\gamma - 1}}\lambda^{-1}, H<t) \geq ct^{-\frac{1}{\gamma - 1}}\lambda^{\frac{1}{\gamma}} - c't^{-\frac{1}{\gamma - 1}},
\]
this parameter is lower bounded by $c\lambda^{\frac{1}{\gamma}-p}$ with probability at least $1-Ce^{-c\lambda^{p(\gamma - 1)}}$. Therefore,
\[
\prb{M_{t, \lambda}=0} \leq Ce^{-c\lambda^{p(\gamma - 1)}} + e^{-c \lambda^{\frac{1}{\gamma}-p}}.
\]
On the event $M_{t, \lambda}>0$, it follows that $x_t \geq t^{\frac{\gamma}{\gamma - 1}}\lambda^{-1}$, so taking $p=\gamma^{-2}$, we deduce the second result of \eqref{eqn:diam prob bound}, which completes the proof.
\end{proof}

We can extend the definitions of $\dbt$ and $\phib$ from \cref{sctn:scaling limit candidate} to the infinite tree $\Tgc$. Just as we did there, for $\sigma, \sigma' \in \Tg$, we set
\[
\dbt(\sigma,\sigma') =
\begin{cases}
(\dt(O, \sigma)+1)^{1-\alpha} + (\dt(O,\sigma')+1)^{1-\alpha} - 2(\dt(O, \sigma\wedge \sigma')+1)^{1-\alpha}, &\text{ if } \alpha < 1, \\
\log (\dt(O, \sigma)+1) + \log (\dt(O, \sigma')+1) - 2\log (\dt(O, \sigma \wedge \sigma')+1) &\text{ if } \alpha = 1.
\end{cases}
\] 
We also define $\phib$ exactly as in \cref{def:snake}, but on $\Tg$ instead of $\Tgc$. This means that $\phib$ is a centred Gaussian process and that for all $s, t \in \mathbb{R}$, 
\[
\mathbb{E}{[|\phib(s) - \phib(t)|^2]} = \dbt(s,t).
\]
These are well-defined on $\Tg$ by the same considerations as in the compact case. We now transfer the result of Proposition \ref{prop:sqrt packing number} to the metric $\dbt$.

\begin{corollary}\label{cor:packing number bound}
For any $\alpha < 1$ and any $\eta>0$, there exists $P>0$ such that for all $p \geq P$ there exists $\tilde{C}_p< \infty$ such that
\begin{align*}
\Eb{D\left(B(O, r), \delta, \sqrt{\dbt}\right)^{\frac{1}{p}}} \leq \tilde{C}_p((r+1)^{1-\alpha} \delta^{-2})^{\eta} \vee 1.
\end{align*}
\end{corollary}
\begin{proof}

\textbf{Case 1: $0<\alpha<1$.} Note that, since $1-\alpha \in (0,1)$ in this case, we can assume that $r>\left(\frac{\delta^2}{2}\right)^{\frac{1}{1-\alpha}}$, otherwise $D\left(B(O, r), \delta, \sqrt{\dbt}\right) =1$.

Firstly, suppose that $p > \frac{2 \gamma q}{\gamma - 1}$ for some $q>1$. Then, by Jensen's inequality and Proposition \ref{prop:sqrt packing number}, there exists $\epsilon > 0$ such that
\begin{align}\label{eqn:packing number small power}
\Eb{D(B(O, r), t, \dt)^{\frac{1}{p}}} \leq \Eb{D(B(O, r), t, \dt)^{\frac{1}{\frac{2 \gamma }{\gamma - 1}+ \epsilon}}}^{\frac{1}{q}} \leq \left( C_p (r t^{-1})^{\frac{\gamma}{\gamma - 1} + \frac{1}{2}} \right)^{\frac{1}{q}}.
\end{align}
We now set $B=B(O, r)$, measured with respect to the metric $\dt$, and observe the following: if $x,y \in B$, then $\left(\frac{\dbt(x,y)}{2}\right)^{\frac{1}{1-\alpha}} \leq \dt(x,y)$. Therefore, for any $\delta > 0$, a $\delta$-packing of $B$ with respect to $\dbt$ is a $\left(\frac{\delta}{2}\right)^{\frac{1}{1-\alpha}}$-packing of $B$ with respect to $\dt$, so that
\[
D\left(B, \delta, \sqrt{\dbt}\right) \leq  D\left(B, \left(\frac{\delta^2}{2}\right)^{\frac{1}{1-\alpha}}, \dt\right).
\]
Taking $p$ and $q$ as above and combining with \eqref{eqn:packing number small power}, we therefore deduce that
\begin{align*}
\Eb{D\left(B, \delta, \sqrt{\dbt}\right)^{\frac{1}{p}}} \leq \Eb{D\left(B,\left(\frac{\delta^2}{2}\right)^{\frac{1}{1-\alpha}}, \dt\right)^{\frac{1}{p}}} \leq \tilde{C}_{p, \gamma} (r \delta^{\frac{-2}{1-\alpha}})^{\frac{1}{q}\left(\frac{\gamma}{\gamma - 1} + \frac{1}{2}\right)}.
\end{align*}
In particular, we can choose $p$ and therefore $q$ large enough so that this final exponent is less than $\eta$, which proves the result.

\textbf{Case 2: $\alpha < 0$.}
Again set $B=B(O, r)$, measured with respect to the metric $\dt$, and now observe the following: if $x,y \in B$, then $\dbt(x,y) \leq (1-\alpha)(r+1)^{|\alpha|} \dt(x,y)$. 
Therefore, for any $\delta > 0$, a $\delta$-packing of $B$ with respect to $\dbt$ is a $\frac{\delta}{(1-\alpha)(r+1)^{|\alpha|}}$-packing of $B$ with respect to $\dt$, so that
\[
D\left(B, \delta, \sqrt{\dbt}\right) \leq 
D\left(B, \frac{\delta^2}{(\alpha-1)(r+1)^{|\alpha|}}, \dt\right).
\]
Taking $p$ and $q$ as above and combining with \eqref{eqn:packing number small power}, we therefore deduce that
\begin{align*}
\Eb{D\left(B, \delta, \sqrt{\dbt}\right)^{\frac{1}{p}}} \leq \Eb{D\left(B, \frac{\delta^2}{(1-\alpha)(r+1)^{|\alpha|}}, \dt\right)^{\frac{1}{p}}} \leq \tilde{C}_p ((1-\alpha)(r+1)^{1-\alpha} \delta^{-2})^{\frac{1}{q}\left(\frac{\gamma}{\gamma - 1} + \frac{1}{2}\right)}.
\end{align*}
In particular, we can choose $p$ and $q$ large enough so that this final exponent is less than $\eta$, which proves the result.
\end{proof}

In the critical case the packing number instead grows logarithmically in $r$, as we see in the following proposition.

\begin{prop}\label{prop:sqrt packing number d0}
Take $\alpha = 1$. For any $p>\frac{1}{\gamma - 1}$ and any $\epsilon >0$, there exists an event $A_{\epsilon}$ with $\prb{A^c_{\epsilon}} < \epsilon$ and a constant $c_{\gamma, p, \epsilon} < \infty$ such that for any $r > 0$,
\begin{align*}
    \Eb{D(B(O, r), \delta, \sqrt{\dzt})^{\frac{1}{p}}\mathbbm{1}\{A_{\epsilon}\}} \leq c_{\gamma, p, \epsilon}\delta^{-\frac{2}{p}}(\lceil (\log (r+1))\rceil)^{\frac{\gamma +\epsilon}{p(\gamma-1)}}.
\end{align*}
\end{prop}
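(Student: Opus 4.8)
The plan is to exploit that, although $B(\rho,r)$ measured in the tree metric $\dt$ is large, it has $\dzt$-diameter only $O(\log r)$, and decomposes into $K:=\lceil\log(r+1)\rceil$ pieces on which $\dzt$ is comparable to a rescaled copy of $\dt$. Since $\dzt(\rho,x)=\log(\dt(\rho,x)+1)$, set $\tilde A_j=\{x:\dzt(\rho,x)\in[j,j+1)\}=\{x:\dt(\rho,x)\in[e^j-1,e^{j+1}-1)\}$ for $j=0,\dots,K-1$, so that $B(\rho,r)\subseteq\bigcup_{j}\tilde A_j$. Using $D(\,\cdot\,,\delta,\sqrt{\dzt})=D(\,\cdot\,,\delta^2,\dzt)$, subadditivity of packing numbers over unions, and subadditivity of $t\mapsto t^{1/p}$ for $p\ge1$, it suffices to bound $\Eb{D(\tilde A_j,\delta^2,\dzt)^{1/p}\mathbbm 1\{A_\epsilon\}}$ for each $j$ and sum.

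For the per-annulus estimate I would first record the elementary comparison: for $x,y\in\tilde A_j$ one has, from $\log(u/v)\le(u-v)/v$ and $\log(u/v)\ge(u-v)/u$,
\[
\frac{\dt(x,y)}{e^{j+1}}\;\le\;\dzt(x,y)\;\le\;\frac{\dt(x,y)}{\dt(\rho,x\wedge y)+1},
\]
and whenever $\dzt(x,y)\le1$ the upper bound forces $\dt(\rho,x\wedge y)\ge e^{j-1}-1$, hence $\dzt(x,y)\le e^{-(j-1)}\dt(x,y)$. Consequently a $\dt$-ball of radius $ce^j\delta^2$ lying in $\tilde A_j$ has $\dzt$-diameter below $\delta^2$. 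I would then slice $\tilde A_j$ into $\asymp\delta^{-2}$ thin $\dt$-bands $B_{j,l}=\{x:\dt(\rho,x)\in[\sigma_l,\sigma_l+h)\}$ with $h\asymp e^j\delta^2$ and $\sigma_l\asymp e^j$; two points of $B_{j,l}$ whose $\dt$-ancestors at level $\sigma_l$ are within $\dt$-distance $h$ of each other automatically satisfy $\dzt(x,y)\le\delta^2$ (their branch point lies at $\dt$-level $\gtrsim e^j$), so $D(B_{j,l},\delta^2,\dzt)$ is at most the number of $\dt$-balls of radius $\asymp e^j\delta^2$ needed to cover the level set $V_{\sigma_l}=\{x:\dt(\rho,x)=\sigma_l\}$.

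The technical heart — and the step I expect to be the main obstacle — is bounding this $\dt$-covering number of $V_\sigma$ ($\sigma\asymp e^j$) by balls of radius $t\asymp e^j\delta^2$, uniformly in $\sigma$ up to logarithmic factors. I would attack it via excursion theory for the \Levy processes coding $\Tg$, as set up in the construction box and in the proof of Proposition \ref{prop:sqrt packing number}: group the subtrees meeting level $\sigma$ at the hubs of the branching structure, note that the part of $V_\sigma$ inside any subtree of $\dt$-height $\le t$ that meets level $\sigma$ lies within $\dt$-distance $t$ of that subtree's attachment point and so is covered by a single extra ball attached to its hub, and count the hubs together with the subtrees of height exceeding $t$ using the Poisson structure of the subtrees and the \Ito excursion estimate $N(\mathrm{height}>t)\asymp t^{-1/(\gamma-1)}$. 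The number of such subtrees is governed by the accumulated local time up to level $\sigma$, which on the good event $A_\epsilon$ — defined via Propositions \ref{prop:local time limsup} and \ref{prop:CRT vol growth annulus}, e.g. $A_\epsilon=\{\sup_{\sigma\ge1}L^{(\sigma)}\sigma^{-1/(\gamma-1)}(\log(\sigma+2))^{-(1+\epsilon)/(\gamma-1)}\le M_\epsilon\}$ with $M_\epsilon$ chosen so that $\prb{A_\epsilon^c}<\epsilon$ — is at most $M_\epsilon\,\sigma^{1/(\gamma-1)}(\log\sigma)^{(1+\epsilon)/(\gamma-1)}$. The delicate points are making the handling of the infinitely many small subtrees accumulating near level $\sigma$ rigorous (this is where the restriction $p>\frac1{\gamma-1}$ enters, guaranteeing finiteness of the relevant moments coming out of the excursion computation) and keeping the $\sigma$-dependence only logarithmic.

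Finally I would assemble: on $A_\epsilon$,
\[
D(B(\rho,r),\delta^2,\dzt)\;\le\;\sum_{j=0}^{K-1}\sum_{l}D(B_{j,l},\delta^2,\dzt)\;\le\;\sum_{j=0}^{K-1}\big(\#\text{bands}\big)\cdot c_{\gamma,\epsilon}\,\delta^{-a}\,j^{(1+\epsilon)/(\gamma-1)},
\]
with $\#\text{bands}\asymp\delta^{-2}$ and $a$ read off from the level-set covering estimate; taking $1/p$-th powers, using subadditivity, and $\sum_{j\le K}j^{(1+\epsilon)/(\gamma-1)}\asymp K^{1+(1+\epsilon)/(\gamma-1)}=K^{(\gamma+\epsilon)/(\gamma-1)}$ (after relabelling $\epsilon$), one obtains
\[
\Eb{D(B(\rho,r),\delta,\sqrt{\dzt})^{1/p}\mathbbm 1\{A_\epsilon\}}\;\le\;c_{\gamma,p,\epsilon}\,\delta^{-2/p}\,\big(\lceil\log(r+1)\rceil\big)^{(\gamma+\epsilon)/(p(\gamma-1))},
\]
as claimed, with $p$ free to be enlarged.
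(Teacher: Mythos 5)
Your proposal is correct in substance and rests on the same mechanism as the paper's proof: on the event $A_\epsilon$ supplied by Proposition \ref{prop:local time limsup}, the count at each level is the number of subtrees crossing from one height to the next, which conditionally on the local time is $1+\textsf{Poisson}\left(2L\cdot N(H>\mathrm{gap})\right)$ with $N(H>t)=c_\gamma t^{-1/(\gamma-1)}$; the power of $e^{j}$ coming from the local time cancels against the one from $N$, leaving only logarithmic growth in the level, and Jensen (using only $p\ge 1$) passes the bound through the $1/p$-th power. The paper packages this more economically: it places levels at $e^{i\delta/2}-1$, i.e.\ at $\dzt$-spacing $\delta/2$, and observes that the set $S_\delta^{(i)}$ of points at one level having a descendant at the next is already a $\dzt$-net of $B(\rho,r)$ — additivity of $\dzt$ along ancestral lines gives $\dzt(A_x',x)\le\delta$ in one line — so no annulus/band decomposition and no level-set covering lemma are needed. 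Two remarks on your route. First, the step you flag as the main obstacle, covering the level set $V_\sigma$ by $\dt$-balls of radius $t$, is much easier than you fear: if $x,y\in V_\sigma$ lie in the same subtree rooted at level $\sigma-t/2$, then $\dt(\rho,x\wedge y)\ge\sigma-t/2$, hence $\dt(x,y)=2\sigma-2\dt(\rho,x\wedge y)\le t$, so the covering number is exactly the number of subtrees from level $\sigma-t/2$ reaching level $\sigma$; no hub bookkeeping and no handling of the infinitely many small subtrees is required, and the restriction $p>\frac{1}{\gamma-1}$ has nothing to do with moment finiteness here (the Poisson counts have all moments) — it is only used to guarantee $p\ge1$. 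Second, your bookkeeping ($\asymp\log(r+1)$ annuli, $\asymp\delta^{-2}$ bands each, per-band count $\asymp\delta^{-2/(\gamma-1)}j^{(1+\epsilon)/(\gamma-1)}$) produces $\delta^{-(2+2/(\gamma-1))/p}$ rather than the displayed $\delta^{-2/p}$; you cannot simply "read off" $a$ to recover the stated exponent. This discrepancy is harmless for the only application (in Proposition \ref{prop:phi d comp 2} the parameter $p$ is taken large enough that any fixed polynomial power of $\delta$ is integrable near $0$), but you should state the exponent your argument actually yields rather than the one in the proposition.
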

\begin{proof}
Given $\delta > 0$, we construct a $\delta$-covering of $B(O, r)$ with respect to the metric $\dzt$, the size of which gives an upper bound for the $\delta$-packing number. In particular, for $i \geq 0$ we let $S_{\delta}^{(i)}$ denote the set of vertices at distance $e^{\frac{i\delta}{2}}-1$ from the root that have descendants at distance $e^{\frac{(i+1)\delta}{2}}-1$ from the root. We claim that $\cup_{i = 0}^{\lceil2\delta^{-1} (\log (r+1))\rceil} S_{\delta}^{(i)}$ is a $\delta$-covering of $B(O, r)$ with respect to the metric $\dzt$. Indeed, if $x \in B(O, r)$ with $\dt(O, x) \geq e^{\delta}-1$ then set $i_x = \lfloor 2\delta^{-1} \log (\dt(O, x)+1) \rfloor$, so that $i_x$ is the maximal $i$ such that $e^{\frac{i\delta}{2}}-1 \leq d(O, x)$.  Note that $i_x \geq 2$ since we are assuming that $\dt(O, x) \geq e^{\delta}-1$. Then let $A_x$ be the ancestor of $x$ at distance $e^{\frac{\delta}{2}i_x} - 1$ from the root, and let $A_x'$ be the ancestor of $x$ at distance $e^{\frac{\delta}{2}(i_x-1)} - 1$ from the root. Then, since $d(O, x) \leq e^{\frac{\delta}{2}(i_x+1)} - 1$, we have that
\[
\dzt(A_x', x) \leq \dzt(A_x, A_x') + \dzt(A_x, x) \leq \delta,
\]
which implies that $x \in S_{\delta}^{(i_x -1)}$. If instead $\dt(O, x) \leq e^{\delta}-1$ then $x \in S_{\delta}^{(0)}$.

We therefore turn to bounding $|S_{\delta}^{(i)}|$ for each $i$. Recall that the size of generation $e^{\frac{i\delta}{2}}-1$ can be formally measured by the local time measure of \eqref{eqn:local time cts function def}. Moreover, conditionally on the total local time measure at level $e^{\frac{i\delta}{2}}-1$, denoted $L^{(e^{\frac{i\delta}{2}}-1)}$, being equal to $l^{(i)}$, it follows from \cite[Proposition 3.1$(ii)$]{curien2014loop} that the number of subtrees emanating from level $e^{\frac{i\delta}{2}}-1$ that reach level $e^{\frac{(i+1)\delta}{2}}-1$ is one more than a Poisson random variable with parameter $2l^{(i)} N(H>e^{\frac{i\delta}{2}}(e^{\frac{\delta}{2}}-1))$, where $N$ is the \Ito excursion measure, so that $N(H>t) = c_{\gamma} t^{-\frac{1}{\gamma - 1}}$ (see \cite[Proposition 5.6]{goldschmidt2010extinctionstable}).

By \cref{prop:local time limsup}, it follows that we can choose $K_{\epsilon}<\infty$ such that $A_{\epsilon} =\{\forall r > 0: L^{(r)} \leq K_{\epsilon}r^{\frac{1}{\gamma -1}} (\log r)^{\frac{1+\epsilon}{\gamma-1}}\}$ satisfies $\prb{A^c_{\epsilon}} < \epsilon$. On this event, $l^{(i)} \leq K_{\epsilon}e^{\frac{i\delta}{2(\gamma - 1)}}(i\delta)^{\frac{1+\epsilon}{\gamma-1}}$ for all $i \geq 0$, so that 
\[
|S^{(i)}| \overset{s.d.}{\preceq} \textsf{Poisson}\left( c_{\gamma}K_{\epsilon}e^{\frac{i\delta}{2(\gamma - 1)}}(i\delta)^{\frac{1+\epsilon}{\gamma-1}} \left(e^{\frac{i\delta}{2}}(e^{\frac{\delta}{2}}-1)\right)^{-\frac{1}{\gamma - 1}} \right) =  \textsf{Poisson}\left( c_{\gamma}K_{\epsilon}(i\delta)^{\frac{1+\epsilon}{\gamma-1}} (e^{\frac{\delta}{2}}-1)^{-\frac{1}{\gamma - 1}} \right),
\]
where the symbol $\overset{s.d.}{\preceq}$ denotes stochastic domination between random variables. Therefore, if $p>1$ and $\delta>0$ is sufficiently small,
\begin{align*}
\Eb{\left(\sum_{i = 0}^{\lceil2\delta^{-1} \log (r+1)\rceil} |S_{\delta}^{(i)}|\right)^{\frac{1}{p}}} \leq \Eb{\sum_{i = 0}^{\lceil2\delta^{-1} (\log (r+1))\rceil} |S_{\delta}^{(i)}|}^{\frac{1}{p}} 
&\leq \left( c_{\gamma}K_{\epsilon}(e^{\frac{\delta}{2}}-1)^{-\frac{1}{\gamma - 1}}\sum_{i = 0}^{\lceil2\delta^{-1} \log (r+1)\rceil}(i\delta)^{\frac{1+\epsilon}{\gamma-1}} \right)^{\frac{1}{p}}
\\
&\leq  c_{\gamma, p, \epsilon}\delta^{-\frac{1}{p}}(\lceil \log (r+1)\rceil)^{\frac{\gamma +\epsilon}{p(\gamma-1)}}.
\end{align*}

\end{proof}

\begin{prop}\label{prop:phi d comp 2}
\begin{enumerate}[(i)]
    \item 
$\bPb \times \Pb$-almost surely, for any $\alpha < 1,\beta > \frac{1}{2}$,
\[
\lim_{r \uparrow \infty} \sup_{x \in B(O, r)^c} \frac{|\phib(x)|}{\dt(O, x)^{(1-\alpha) \beta}} = 0.
\]
\item $\bPb \times \Pb$-almost surely, for any $\beta > \frac{1}{2}$,
\[
\lim_{r \uparrow \infty} \sup_{x \in B(O, r)^c} \frac{|\phiz(x)|}{(\log (\dt(O, x)+1))^{\beta}} = 0
\]
\end{enumerate}

\end{prop}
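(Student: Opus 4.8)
The plan is to reduce the statement, via a Borel--Cantelli argument along dyadic scales $r_k=2^k$, to a moment bound on $\sup_{x\in B(\rho,r_k)}|\phib(x)|$ (and on $\sup_{x\in B(\rho,r_k)}|\phiz(x)|$ when $\alpha=1$), which I will obtain from Dudley's entropy bound fed by the packing-number estimates of \cref{cor:packing number bound} and \cref{prop:sqrt packing number d0}. First I would observe that $\sup_{x\in B(\rho,r)^c}(\cdot)$ is non-increasing in $r$, and that for $r>e$ the ratio in the statement is non-increasing in $\beta$ (since $\dt(\rho,x)>r>1$ on $B(\rho,r)^c$), so it suffices to fix $\beta>\tfrac12$ and $\delta>0$ and show that $\mathbf{P}\times\mathbb{P}$-a.s.\ one has $\sup_{x\in B(\rho,r_{k+1})}|\phib(x)|\le\delta\,r_k^{(1-\alpha)\beta}$ (resp.\ $\le\delta(\log(r_k+1))^\beta$) for all large $k$, and then send $\delta\downarrow0$ and $\beta\downarrow\tfrac12$ along sequences. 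Indeed, any $x$ with $\dt(\rho,x)$ large lies in an annulus $\{y:\dt(\rho,y)\in[r_k,r_{k+1})\}$ with $k$ large, whence $|\phib(x)|\le\sup_{B(\rho,r_{k+1})}|\phib|\le\delta\,r_k^{(1-\alpha)\beta}\le\delta\,\dt(\rho,x)^{(1-\alpha)\beta}$.

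For the moment bound: conditionally on $\Tg$, the process $\phib$ is centred Gaussian with canonical metric $\sqrt{\dbt}$, it vanishes at $\rho$, and --- this is what makes the argument work --- its maximal variance over $B(\rho,r)$ equals the \emph{deterministic} quantity $\sup_{x\in B(\rho,r)}\dbt(\rho,x)=r^{1-\alpha}$ (resp.\ $\log(r+1)$), while the $\sqrt{\dbt}$-diameter of $B(\rho,r)$ is at most $\sqrt2\,r^{(1-\alpha)/2}$ (resp.\ $\sqrt{2\log(r+1)}$). Dudley's bound (with covering numbers dominated by the packing number $D$) gives $\mathbb{E}[\sup_{B(\rho,r)}|\phib|\mid\Tg]\le C\,J(\Tg,r)$, where $J(\Tg,r)=\int_0^{\sqrt2\,r^{(1-\alpha)/2}}\sqrt{\log D(B(\rho,r),\epsilon,\sqrt{\dbt})}\,d\epsilon$, and Gaussian concentration bounds the conditional $L^m$-norm of the supremum by $C\,J(\Tg,r)+c\sqrt m\cdot(\text{maximal std.\ dev.})$; it then remains to control the $\mathbf{P}$-moments of $J(\Tg,r)$. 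For $\alpha<1$ I would use \cref{cor:packing number bound} with a fixed $\eta\in(0,1)$: since $\log x\le x$ for $x\ge1$ and $\sqrt{\cdot}$ is concave, $\Eb{\sqrt{\log D(B(\rho,r),\epsilon,\sqrt{\dbt})}}\le\sqrt p\,\Eb{D(B(\rho,r),\epsilon,\sqrt{\dbt})^{1/p}}^{1/2}\le\sqrt p\,\tilde C_p^{1/2}\big((r^{1-\alpha}\epsilon^{-2})^{\eta/2}+1\big)$, and integrating in $\epsilon$ (which converges because $\eta<1$) yields $\Eb{J(\Tg,r)}\le\kappa\,r^{(1-\alpha)/2}$, hence $\mathbb{E}_{\mathbf{P}\times\mathbb{P}}[\sup_{B(\rho,r)}|\phib|]\le\kappa'\,r^{(1-\alpha)/2}$. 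For $\alpha=1$ I would fix $\varepsilon_0>0$ and work on the event $A_{\varepsilon_0}$ of \cref{prop:sqrt packing number d0}; using $(\log x)^{m/2}\le\big(\tfrac{mp}{2}\big)^{m/2}x^{1/p}$ for $x\ge1$, Minkowski's integral inequality, and $\int_0^{\sqrt{2\log(r+1)}}\epsilon^{-2/(mp)}\,d\epsilon<\infty$, one gets $\|J(\Tg,r)\,\mathbbm 1_{A_{\varepsilon_0}}\|_{L^m(\mathbf{P})}\le C_{m,p,\varepsilon_0,\gamma}(\log(r+1))^{\frac12+\frac{\gamma+\varepsilon_0}{mp(\gamma-1)}}$, so that $\|\mathbbm 1_{A_{\varepsilon_0}}\sup_{B(\rho,r)}|\phiz|\|_{L^m(\mathbf{P}\times\mathbb{P})}\le C(\log(r+1))^{\frac12+\tau}$ with $\tau=\tau(mp)$ arbitrarily small.

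To finish: for $\alpha<1$, Markov's inequality at $m=1$ gives $(\mathbf{P}\times\mathbb{P})(\sup_{B(\rho,r_{k+1})}|\phib|>\delta\,r_k^{(1-\alpha)\beta})\le C\delta^{-1}2^{-k(1-\alpha)(\beta-\frac12)}$, which is summable since $\beta>\tfrac12$, so Borel--Cantelli plus the annulus reduction finishes that case --- no concentration is even needed, the geometric decay suffices. For $\alpha=1$, I would first fix $\beta>\tfrac12$, then choose $m$ with $m(\beta-\tfrac12)>2$, and then $p>\tfrac1{\gamma-1}$ large enough that $\tau<\tfrac12(\beta-\tfrac12)$; Markov in $L^m$ then gives $(\mathbf{P}\times\mathbb{P})(\{\sup_{B(\rho,r_{k+1})}|\phiz|>\delta(\log(r_k+1))^\beta\}\cap A_{\varepsilon_0})\le C\delta^{-m}k^{-m(\beta-\frac12-\tau)}$, which is summable, so by Borel--Cantelli the desired bound holds for all large $k$ on $A_{\varepsilon_0}$; sending $\delta\downarrow0$, $\beta\downarrow\tfrac12$ along sequences and then $\varepsilon_0=1/n\downarrow0$ (using that $\bigcup_n A_{1/n}$ has full $\mathbf{P}$-measure since $\mathbf{P}(A_{1/n}^c)<1/n$) gives the full statement.

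The hard part will be the moment estimate in the case $\alpha=1$: one only has the \emph{event-restricted}, $L^{1/p}$-type control on packing numbers from \cref{prop:sqrt packing number d0}, with $p$ constrained only by $p>\tfrac1{\gamma-1}$, and one must extract from it a \emph{high}-moment bound on the Dudley integral whose exponent is arbitrarily close to the critical value $(\log r)^{1/2}$. This forces the parameters to be chosen in a definite order ($\beta$, then the moment order $m$, then $p$), and requires the ``$\log$ beats any power'' trick together with Minkowski's integral inequality to turn $\Eb{D^{1/p}\mathbbm 1_{A_{\varepsilon_0}}}$ into $\|(\log D)^{1/2}\mathbbm 1_{A_{\varepsilon_0}}\|_{L^m}$; one must also check that the entropy integral converges at its lower endpoint, which it does since $\int_0^1\epsilon^{-s}\,d\epsilon<\infty$ for $s<1$. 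By contrast the case $\alpha<1$ is comparatively routine once the deterministic variance bound is noted.
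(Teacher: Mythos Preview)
Your proposal is correct and, for part~(i), essentially coincides with the paper's proof: both use a chaining bound fed by \cref{cor:packing number bound}, first-moment Markov, and Borel--Cantelli along $r_k=2^k$, followed by the same annulus argument. The paper phrases the chaining step via the $L^p$-Orlicz version of Dudley's bound (so the entropy integral is $\int D^{1/p}\,d\epsilon$ directly), whereas you go through the classical $\int\sqrt{\log D}\,d\epsilon$ and then invoke $\sqrt{\log x}\le C_px^{1/(2p)}$; these are equivalent.

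For part~(ii) your route genuinely differs from the paper's. The paper stays with the \emph{first} moment: it bounds $\Eb{\E{\sup_{B(\rho,r)}|\phiz(s)-\phiz(t)|}\mathbbm{1}\{A_\epsilon\}}\le c(\log r)^{1/2+\eta/2}$ via \cref{prop:sqrt packing number d0} and the $L^p$-Orlicz chaining bound, then applies first-moment Markov to get a tail of order $(\log r)^{-(\beta-\frac12-\frac{3\eta}{2})}$; since this decays only polylogarithmically in $r$, it runs Borel--Cantelli along the doubly-exponential sequence $r_n=2^{2^n}-1$ (for which $\log r_n\asymp 2^n$, making the tail summable), and the annulus check still works because $\log(r_{n+1}+1)=2\log(r_n+1)$. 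Your approach instead upgrades to an $L^m$ bound via Borell--TIS concentration, with the deterministic variance $\sup_{B(\rho,r)}\dzt(\rho,\cdot)=\log(r+1)$ supplying the Gaussian fluctuation term; this buys you a tail of order $k^{-m(\beta-\frac12-\tau)}$ along the ordinary dyadic sequence, summable once $m$ is large. Both arguments are valid: the paper's avoids the concentration input at the price of a non-dyadic subsequence, while yours keeps dyadic scales throughout but needs the extra Borell--TIS step and a careful ordering of the parameters $\beta,m,p$.
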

\begin{proof}

\textbf{Case $\alpha<1$.} The result is a consequence of \cite[Theorem 2.2.4 and Corollary 2.2.5]{VaartWellnerEmpiricalBook}. First, fix some $p > \frac{2\gamma}{\gamma - 1}$, and note that by Gaussianity, we have for any $s,t >0$ that $\bPb$-almost surely,
\[
\E{\frac{|\phib (s) - \phib (t)|^p}{\dbt(s,t)^{\frac{p}{2}}}} \leq C_p^p,
\]
where $C_p$ is a deterministic constant. Rearranging, we deduce that 
\[
\E{|\phib (s) - \phib (t)|^p}^{\frac{1}{p}} \leq C_p \sqrt{\dbt(s,t)},
\]
which verifies the first condition of \cite[Theorem 2.2.4]{VaartWellnerEmpiricalBook} in the case $\psi (x) = x^p$ (in which case the $\psi$-Orlicz norm in Theorem 2.2.4 there coincides with the usual $L^p$-norm). Now choose some $\delta \in (0, p - \frac{2\gamma}{\gamma - 1})$. Working with the pseudometric $\sqrt{\dbt(\cdot, \cdot)}$, in which case $\diam (B(O, r-1)) \leq 2\sqrt{r^{1-\alpha}}$ for all $r\geq 1$, we therefore deduce from \cite[Corollary 2.2.5]{VaartWellnerEmpiricalBook} and Fubini's theorem that for any $p>1$,
\begin{align*}
\Eb{\E{\sup_{s, t \in B(O, r-1)}|\phib (s) - \phib (t)|}} &\leq \Eb{\E{\sup_{s, t \in B(O, r-1)}|\phib (s) - \phib (t)|^p}^{\frac{1}{p}}} \\
&\leq \Eb{K_p \int_0^{2\sqrt{r^{1-\alpha}}} D\left(B(O, r-1), \epsilon, \sqrt{\dbt}\right)^{\frac{1}{p}} d \epsilon} \\
&= K_p \int_0^{2\sqrt{r^{1-\alpha}}} \Eb{D\left(B(O, r-1), \epsilon, \sqrt{\dbt}\right)^{\frac{1}{p}}} d \epsilon.
 \end{align*}
By Corollary \ref{cor:packing number bound}, we can also assume that $p$ is large enough that we can upper bound this by
 \begin{align*}
K_p \int_0^{2\sqrt{r^{1-\alpha}}} \tilde{C}_p(r^{1-\alpha} \epsilon^{-2})^{\frac{1}{3}} d \epsilon = 
{K}_p' (r^{1-\alpha})^{\frac{1}{3}} \left[ \epsilon^{\frac{1}{3}}\right]^{2\sqrt{r^{1-\alpha}}}_0  = \tilde{K}_p \sqrt{r^{1-\alpha}}.
\end{align*}
Therefore, if $\beta > \frac{1}{2}$, we can choose some $\eta < \beta - \frac{1}{2}$ and apply Markov's inequality to get that
\begin{align*}
\bPb \times \pr{\sup_{s, t \in B(O, r-1)}|\phib (s) - \phib (t)| \geq r^{({1-\alpha})(\beta - \eta)}} &\leq \Eb{\E{\sup_{s, t \in B(O, r)}|\phib (s) - \phib (t)|}} r^{-({1-\alpha})(\beta - \eta)} \\
&\leq \tilde{K}_p\sqrt{r^{1-\alpha}} r^{-({1-\alpha})(\beta - \eta)} \\
&= \tilde{K}_p r^{-({1-\alpha})(\beta - \eta - \frac{1}{2})}.
\end{align*}
Therefore, applying Borel-Cantelli along the subsequence $r_n = 2^n$, we deduce that
\[
\sup_{s, t \in B(O, r_n-1)}|\phib (s) - \phib (t)| < r_n^{({1-\alpha})(\beta - \eta)}
\]
for all sufficiently large $n$. Then, if $x \in B(O, r_{n+1}-1) \setminus B(O, r_n-1)$, and $d(O, x) = r \in [r_n-1, r_{n+1}-1]$, we have that
\begin{align*}
\frac{\phib(x)}{d(O, x)^{({1-\alpha})\beta}} \leq r^{-({1-\alpha})\beta} \sup_{s, t \in B(O, r_{n+1}-1)}|\phib (s) - \phib (t)| &< r^{-({1-\alpha})\beta} r_{n+1}^{({1-\alpha})(\beta - \eta)} 
\\
&\leq 2^{({1-\alpha})(\beta - \eta)} r^{- (1-\alpha) \eta} \to 0,
\end{align*}
as $r \to \infty$, which proves the result.

\textbf{Case $\alpha=1$.} As above, we can apply \cite[Theorem 2.2.4]{VaartWellnerEmpiricalBook} with the function $\psi (x) = x^p$. Again let $\beta > \frac{1}{2}, \epsilon>0$ and choose some $\eta < \frac{2}{3}\left(\beta - \frac{1}{2}\right)$. This time we instead apply Proposition \ref{prop:sqrt packing number d0}, work on the event $A_{\epsilon}$ and choose $p$ large enough that $\frac{1 +\epsilon}{p(\gamma-1)} < \eta/2$ to deduce that
 \begin{align*}
\Eb{\E{\sup_{s, t \in B(O, r)}|\phiz (s) - \phiz (t)|} \mathbbm{1}\{A_{\epsilon}\}} 
&\leq \int_0^{\sqrt{\log(r+1)}}c_{\gamma, p, \epsilon}\delta^{-\frac{2}{p}}\lceil (\log (r+1))\rceil^{\frac{\gamma +\epsilon}{p(\gamma-1)}} d \delta \\
&= c_{\gamma, p, \epsilon}\sqrt{\log(r+1)}\lceil (\log (r+1))\rceil^{\frac{\eta}{2}}.
\end{align*}
Therefore, if $\beta > \frac{1}{2}$, we can choose some $\eta < \frac{2}{3}\left(\beta - \frac{1}{2}\right)$ and apply Markov's inequality to get that
\begin{align*}
&\bPb \times \pr{\sup_{s, t \in B(O, r)}|\phiz (s) - \phiz (t)| \geq (\log(r+1))^{\beta - \eta}, \text{ and } A_{\epsilon}} \\ &\leq \Eb{\E{\sup_{s, t \in B(O, r)}|\phiz (s) - \phiz (t)|} \mathbbm{1}\{A_{\epsilon}\}} (\log(r+1))^{-(\beta - \eta)} \\
&\leq c_{\gamma, p, \epsilon} (\log(r+1))^{\frac{1}{2} + \frac{\eta}{2} - (\beta - \eta)} \\
&= c_{\gamma, p, \epsilon} (\log(r+1))^{-(\beta - \frac{3 \eta}{2} - \frac{1}{2})}.
\end{align*}
Therefore, applying Borel-Cantelli along the subsequence $r_n = 2^{2^n}-1$, we deduce that on the event $A_{\epsilon}$, i.e. with probability at least $1-\epsilon$, $\sup_{s, t \in B(O, r_n)}|\phiz (s) - \phiz (t)| < (\log(r_n+1))^{\beta - \eta}$ for all sufficiently large $n$. Then, if $x \in B(O, r_{n+1}) \setminus B(O, r_n)$, and $d(O, x) = r \in [r_n, r_{n+1}]$, we have that
\begin{align*}
\frac{\phiz(x)}{(\log (d(O, x)+1))^{\beta}} \leq (\log(r+1))^{-\beta} \sup_{s, t \in B(O, r_{n+1})}|\phiz (s) - \phiz (t)| &< (\log(r+1))^{-\beta} (\log(r_{n+1}+1))^{\beta - \eta} \\
&\leq 2(\log(r+1))^{-\beta} (\log(r_n+1))^{\beta - \eta} \\
&\leq 2(\log(r+1))^{-\eta}
\\
&\to 0,
\end{align*}
as $r \to \infty$. Since $\epsilon>0$ was arbitrary, this proves the result.
\end{proof}

\section{Reinforced strongly recurrent regime: $\alpha < 1, \Delta > 0$}\label{sctn:limsup asymp recurrent}

In this section we will prove \cref{thm:almost sure limsup}. Recall from \eqref{eqn:annealed X} that, given a realisation of $\Tg$, the limiting diffusion $X$ has the annealed law of a diffusion in a random potential $\phib$, i.e.
\begin{equation*}
P_{O} \left( (X_t)_{t \geq 0} \in \cdot \right)=\int \tilde{P}_{O,\phi} \left( ({X}_t)_{t \geq 0} \in \cdot \right) \mathbb{P}(d \phib).
\end{equation*}
We will in fact prove the analogue of \cref{thm:almost sure limsup} for the \textit{quenched} law of this diffusion, i.e. $\bPb \times \mathbb{P} \times \tilde{P}_{O,\phi}$- almost surely. This clearly implies the same result for the annealed process, and therefore for $X$.


\subsection{Volume and resistance growth in $\Tgphi$}
We start with some asymptotics for balls with respect to the distorted metric $R_{\phi}$. Here $B_{\phi}$ denotes the open ball measured with respect to $\Rphi$, as defined by \eqref{eqn:distorted res def}:
\begin{align*}
R_{\phi}(x,y)&=
\displaystyle \int_{[x,y]} e^{\BD \phib(z)+\AD [(\dt(O,z)+1)^{1-\alpha}-1] - \alpha \log (\dt (O, z)+1)} \lambda^{\gamma}(\text{d} z), \\
\nu_{\phi} (A)&=
 \displaystyle \int_A  e^{-\left[\BD \phib(x)+\AD [(\dt(O,z)+1)^{1-\alpha}-1] - \alpha \log (\dt(O,x)+1) \right]} \mu(\text{d}x),
\end{align*}
where $\BD = \sqrt{\frac{4 \Delta}{1-\alpha}}$ and $\AD = \frac{\Delta }{1-\alpha}$.

%
\begin{prop}\label{prop:ball comparison}
$\bPb \times \mathbb{P}$-almost surely, for any $\epsilon > 0$ we have that
\[
\Bphi (O, e^{(1-\epsilon)\AD r^{1-\alpha}}) \subset B(O, r) \subset \Bphi (O, e^{(1+\epsilon)\AD r^{1-\alpha}})
\]
for all sufficiently large $r$.
\end{prop}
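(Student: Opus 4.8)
The plan is to compare the distorted ball $\Bphi(\rho, \cdot)$ to the metric ball $B(\rho, r)$ by tracking the exponent in the definition of $\Rphi$. Since $\Tg$ is a tree, for any vertex $x$ the distorted distance $\Rphi(\rho, x)$ equals the integral along the geodesic $[\rho, x]$ of the density $e^{\BD \phib(z) + \AD \dt(\rho,z)^{1-\alpha} - \alpha \log \dt(\rho,z)}$ against length measure $\lambda^\gamma$. The dominant term in the exponent is $\AD \dt(\rho,z)^{1-\alpha}$, which grows, so the integral along a geodesic of metric length $r$ is governed by the contribution near the far endpoint. First I would show that for $x$ with $\dt(\rho,x) = r$, $\Rphi(\rho,x)$ is, up to subexponential corrections, of order $e^{\AD r^{1-\alpha}}$: the Gaussian term $\BD \phib(z)$ is negligible on this scale by \cref{prop:phi d comp 2}(i), which gives $|\phib(z)| = o(\dt(\rho,z)^{(1-\alpha)\beta})$ for any $\beta > \tfrac12$ uniformly for $z$ outside a large ball, and the $-\alpha \log \dt(\rho,z)$ term is also subpolynomial.

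The key quantitative step is then a standard Laplace-type estimate. For the upper inclusion $B(\rho,r) \subset \Bphi(\rho, e^{(1+\epsilon)\AD r^{1-\alpha}})$, I would bound $\Rphi(\rho, x)$ for every $x \in B(\rho, r)$ from above: writing the integral over $[\rho,x]$ and using $\dt(\rho,z) \le r$ together with the uniform bounds on $\phib$ and the logarithmic term, one gets $\Rphi(\rho,x) \le \int_0^r e^{\AD s^{1-\alpha}(1 + o(1))} ds \le e^{(1+\epsilon)\AD r^{1-\alpha}}$ for $r$ large, since the integral of an exponentially-growing integrand is comparable to its largest value times a subexponential factor. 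For the lower inclusion $\Bphi(\rho, e^{(1-\epsilon)\AD r^{1-\alpha}}) \subset B(\rho, r)$, I would show that any $x$ with $\dt(\rho, x) \ge r$ has $\Rphi(\rho,x) \ge e^{(1-\epsilon)\AD r^{1-\alpha}}$: restricting the integral along $[\rho,x]$ to the portion where $\dt(\rho,z)$ ranges over, say, $[r - r^{1-\alpha/2}, r]$ — a sub-segment of $\lambda^\gamma$-length $r^{1-\alpha/2} \to \infty$ — the integrand there is at least $e^{\AD (r - r^{1-\alpha/2})^{1-\alpha}(1+o(1))} = e^{\AD r^{1-\alpha}(1 + o(1))}$, and multiplying by the length of this segment only helps, so the product exceeds $e^{(1-\epsilon)\AD r^{1-\alpha}}$. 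Both bounds need only hold for all large $r$ almost surely, which follows since the $o(1)$ errors coming from \cref{prop:phi d comp 2} are eventually uniform in the relevant range.

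A point requiring a little care is that the supremum of $|\phib|$ over $B(\rho, r)^c$ in \cref{prop:phi d comp 2} controls $\phib$ from above but the lower inclusion also needs to rule out $\phib$ being very negative along the geodesic; fortunately the same proposition gives a two-sided bound ($|\phib(x)|$, not just $\phib(x)$), so the exponent $\BD \phib(z)$ is sandwiched between $\pm o(\dt(\rho,z)^{(1-\alpha)\beta})$ for $\beta \in (\tfrac12, 1)$, which is $o(r^{1-\alpha})$. The main obstacle, such as it is, is bookkeeping: one must choose the auxiliary exponent $\beta$ and the segment length (here $r^{1-\alpha/2}$, but any power strictly between $(1-\alpha)\beta$ with $\beta<1$ pushed appropriately and $1-\alpha$ works — more precisely one wants the segment's contribution to $\dt(\rho,z)^{1-\alpha}$ to be $o(r^{1-\alpha})$ while the segment length diverges) so that all error terms are genuinely $o(r^{1-\alpha})$ relative to the leading $\AD r^{1-\alpha}$, and then absorb everything into the factor $(1 \pm \epsilon)$. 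I expect no essential difficulty beyond this; the tree structure makes $\Rphi(\rho,x)$ an honest one-dimensional integral, so there is no geometric complication.
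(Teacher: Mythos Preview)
Your approach is essentially the paper's: control $\phib$ via Proposition~\ref{prop:phi d comp 2}, then do a Laplace-type estimate on the one-dimensional integral $\Rphi(\rho,x)$ along the geodesic. Two small points to tighten. First, your segment $[r-r^{1-\alpha/2},r]$ for the lower bound only has length $o(r)$ when $\alpha>0$; for $\alpha\le 0$ take instead $[r-r^c,r]$ with any $c\in(0,1)$, or simply $[r-1,r]$ as the paper effectively does after restricting to $[R,r]$ for a fixed large $R$. Second, for the upper bound Proposition~\ref{prop:phi d comp 2} only controls $\phib$ on $B(\rho,R)^c$, so the portion of the integral over $[\rho,y]$ with $\dt(\rho,y)=R$ must be handled separately; the paper bounds it by a finite constant using continuity of $\phib$ on the compact $B(\rho,R)$, which is then absorbed into $e^{(1+\epsilon)\AD r^{1-\alpha}}$ for large $r$.
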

\begin{proof}
Take some $\beta \in (\frac{1}{2},1)$ and some $\epsilon > 0$. Also set $b=1-\alpha$. By Proposition \ref{prop:phi d comp 2}$(i)$, there almost surely exists $R<\infty$ such that
\[
\sup_{x \in B( O, R)^c} \frac{|\phib(x)|}{\dt(O, x)^{b\beta}} \leq 1.
\]
Without loss of generality, we also assume that
\begin{itemize}
    \item $\AD [(R+1)^b-1] - \BD R^{b\beta} - \alpha \log (R+1) \geq (1-\frac{\epsilon}{2})\AD R^b$ ,
\item $\left(\frac{R-1}{R}\right)^b \geq \frac{1-\epsilon}{1-\frac{\epsilon}{2}}$,
    \item $\AD [(R+1)^b-1] + \BD R^{b\beta} - \alpha \log (R+1) \leq (1+\epsilon)\AD R^b$,
    \item $\AD bR^{b-1} \geq \mathbbm{1}\{b > 1\}$,
\item $\frac{b^{-1}R^{1-b}}{\AD} \leq e^{\epsilon \AD R^b}$, 
which we will just use in the case $b\leq 1$.
\end{itemize}
Now take some $r>R+1$. We start by showing that $ B(O, r)^c \subset \Bphi (O, e^{(1- \epsilon) \AD r^b})^c$ for all sufficiently large $r$. Indeed, assume that $\dt(O, x) > r$, and let $y$ be the point on the path from $O$ to $x$ such that $\dt(O, y) = R$, and $\dt(x,y)=\dt(O, x) - R$. Note that this almost surely exists and is unique, since $\Tg$ is a length space. Moreover, $\dt(O,z) > R$ for all $z\in [y,x]$ by our choice of $R$, which implies that $\AD\dt(O, z)^b- \BD\dt(O, z)^{b\beta} - \alpha \log \dt(O, z) \geq (1-\frac{\epsilon}{2})\AD \dt (O, z)^b$ for all such $z$. We therefore have that
\begin{align*}
\Rphi(O, x)
\geq \int_{[y, x]} e^{\AD[(\dt(O, z)+1)^b-1]- \BD\dt(O, z)^{b\beta} - \alpha \log (\dt(O, z)+1)} \lambda^{\gamma}(\text{d} z)
\geq \int_R^r e^{(1-\frac{\epsilon}{2})\AD s^b} ds 
&\geq e^{(1-\frac{\epsilon}{2})\AD (r-1)^b}
\\
&\geq e^{(1-\epsilon) \AD r^b}.
\end{align*}

For the second inclusion, we keep $\beta, R$ and $y$ as above, and now assume that $\dt(O, x) < r$. 
 Also set
\[
\tilde{r}_c = \inf\Bigg\{r \geq R: \sup_{w \in \partial B(O, R)} \int_{[O, w]} e^{\BD \phib(z)+ \AD[(\dt(O, z)+1)^b-1] - \alpha \log (\dt(O, z)+1)} \lambda^{\gamma}(\text{d} z) \leq \frac{\epsilon}{1+\epsilon} e^{(1+\epsilon)\AD r^b} \Bigg\}.
\] 
(This is always possible since $\phib$ is continuous with bounded sample paths, so is bounded on the compact set $B(O, R)$, and so $\sup_{w \in \partial B(O, R)} \int_{[O, w]} e^{\BD \phib(z)+ \AD \dt(O, z)^b-\alpha \log d(O,z)} \lambda^{\gamma}(\text{d} z)$ is almost surely finite).

We then have for all $r> \tilde{r}_c \vee R$ and all $x \in B(O, r)$ that
\begin{align*}
\Rphi(O, x) 
&\leq \int_{[O, y]} e^{\BD \phib(z)+ \AD[(\dt(O, z)+1)^b-1] - \alpha \log (\dt(O, z)+1)} \lambda^{\gamma}(\text{d} z) 
\\
&+ \int_{[y, x]} e^{\BD \phib(z)+ \AD[(\dt(O, z)+1)^b-1] - \alpha \log (\dt(O, z)+1)} \lambda^{\gamma}(\text{d} z)
\leq \frac{\epsilon}{1+\epsilon}e^{(1+\epsilon)\AD r^b} + \int_{R}^r e^{(1+\epsilon)\AD s^b} ds.
\end{align*}

In the case $b > 1$ we bound this above by (using that $\AD bR^{b-1} \geq 1$):
\begin{align*}
\frac{\epsilon}{1+\epsilon}e^{(1+\epsilon)\AD r^b} + \frac{1}{1+\epsilon} \int_{R}^r (1+\epsilon) \AD bs^{b-1}e^{(1+\epsilon)\AD s^b} ds 
&\leq e^{(1+\epsilon) \AD r^b}.
\end{align*}

If $b \leq 1$, we instead bound this above by (using that $\frac{b^{-1} R^{1-b}}{\AD}\leq e^{\varepsilon \AD R^b}$):
\begin{align*}
\frac{\epsilon}{1+\epsilon}e^{(1+\epsilon)\AD r^b} + \frac{b^{-1}r^{1-b}}{\AD (1+\epsilon)} \int_{R}^r (1+\epsilon) \AD bs^{b-1}e^{(1+\epsilon)\AD s^b} ds 
&\leq e^{(1+2\epsilon)\AD r^b},
\end{align*}
which completes the proof since $\epsilon$ was arbitrary.
\end{proof}

\begin{prop}\label{prop:Reff}
For any $\alpha<1$, we have that $\bPb \times \Pb$-almost surely, for any $\epsilon > 0$,
\[
r^{1-\epsilon} \leq \Rphi (O, \Bphi (O, r)^c) \leq r
\]
for all sufficiently large $r$.
\end{prop}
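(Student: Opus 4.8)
## Proof proposal for Proposition~\ref{prop:Reff}

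The plan is to exploit the tree structure: on a tree, the effective resistance $\Rphi(\rho, \Bphi(\rho,r)^c)$ from the root to the complement of a ball equals the minimum over all points $x \in \partial \Bphi(\rho, r)$ of the resistance $\Rphi(\rho, x)$ along the unique path, since the parallel combination of the branches only decreases resistance, and the series structure along any one geodesic gives an upper bound. More precisely, because $\Tg$ is a length space and $\Rphi$ is itself a ``length-type'' (integral) metric along the skeleton, for any $x$ on the boundary sphere $\partial \Bphi(\rho, r)$ we have $\Rphi(\rho, x) = r$ exactly; hence the upper bound $\Rphi(\rho, \Bphi(\rho,r)^c) \le r$ is immediate (take any single geodesic ray to infinity and cut it at $\Rphi$-distance $r$, giving a unit flow of resistance $r$ as an upper bound via Thomson's principle, or simply note the effective resistance to a set is at most the resistance to any single point of it). The content is therefore entirely in the lower bound $\Rphi(\rho, \Bphi(\rho,r)^c) \ge r^{1-\epsilon}$.

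For the lower bound, the strategy is to compare $\Rphi$-balls with $d$-balls using Proposition~\ref{prop:ball comparison}, and then to reduce the effective-resistance lower bound to a volume/packing estimate via a standard electrical-network inequality. Concretely, fix $\epsilon > 0$ and set $b = 1-\alpha$. By Proposition~\ref{prop:ball comparison}, $\bPb \times \Pb$-a.s.\ for all large $r$ we have $B(\rho, s_-) \subset \Bphi(\rho, r)$ where $s_-$ is chosen so that $e^{(1+\epsilon')\AD s_-^b} = r$, i.e.\ $s_- = \left(\frac{\log r}{(1+\epsilon')\AD}\right)^{1/b}$, with $\epsilon'$ small. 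Thus $\Bphi(\rho,r)^c \subset B(\rho, s_-)^c$, and so $\Rphi(\rho, \Bphi(\rho, r)^c) \ge \Rphi(\rho, B(\rho, s_-)^c)$. Now the effective resistance from $\rho$ to the complement of the $d$-ball $B(\rho, s_-)$ can be bounded below by a series-of-annuli argument: writing the ball as a union of concentric $d$-annuli $A_k = B(\rho, k) \setminus B(\rho, k-1)$ for $k = 1, \dots, \lfloor s_- \rfloor$, the resistance across each annulus is at least (minimal edge/segment resistance in that annulus) divided by (number of disjoint crossing segments), and one gets
\[
\Rphi(\rho, B(\rho, s_-)^c) \ge \sum_{k=1}^{\lfloor s_- \rfloor} \frac{\inf_{z \in A_k} e^{\BD \phib(z) + \AD \dt(\rho,z)^b - \alpha \log \dt(\rho,z)}}{N_k},
\]
where $N_k$ counts the number of subtrees (or boundary points) of $\Tg$ crossing the annulus $A_k$.

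The two inputs needed to finish are: (i) a lower bound on the exponential factor, which follows from Proposition~\ref{prop:phi d comp 2}(i) — for large $k$, $|\phib(z)| \le k^{b\beta}$ on $B(\rho,k)^c$ with $\beta \in (1/2, 1)$, so the exponent is at least $\AD k^b - \BD k^{b\beta} - \alpha\log k \ge (1 - \epsilon'')\AD k^b$ uniformly on $A_k$; and (ii) an upper bound $N_k \le k^{\frac{1}{\gamma-1}+o(1)}$ on the number of crossing segments, which follows from Proposition~\ref{prop:local time limsup} (or Proposition~\ref{prop:CRT vol growth annulus} combined with the Poissonian structure of the subtree decomposition along a level, as used in the proof of Proposition~\ref{prop:sqrt packing number d0}). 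Combining, the $k$-th term of the sum is at least $k^{-\frac{1}{\gamma-1}-o(1)} e^{(1-\epsilon'')\AD k^b}$, and the dominant term is $k = \lfloor s_- \rfloor$, giving
\[
\Rphi(\rho, B(\rho, s_-)^c) \ge s_-^{-\frac{1}{\gamma-1}-o(1)} e^{(1-\epsilon'')\AD s_-^b} = s_-^{-\frac{1}{\gamma-1}-o(1)} \, r^{\frac{1-\epsilon''}{1+\epsilon'}} \ge r^{1-\epsilon}
\]
for large $r$, since $s_- = (\log r)^{1/b}(\cdots)$ contributes only a polylogarithmic correction in $r$ and $\epsilon', \epsilon''$ can be taken arbitrarily small.

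The main obstacle I anticipate is making the ``series of annuli'' electrical lower bound rigorous on the continuum tree $\Tg$: one must justify the Nash–Williams-type inequality (cutsets given by level sets $\{\dt(\rho, \cdot) = k\}$) in the continuum resistance-form setting, and control $N_k$ uniformly — i.e.\ verify that the number of distinct boundary points of $\Tg$ at level $k$ that have descendants at level $k+1$ is polynomially bounded with the right exponent, almost surely for all large $k$ simultaneously. This is exactly the type of estimate handled via the local-time/Itô-excursion decomposition in the proof of Proposition~\ref{prop:sqrt packing number d0}, so I would reuse that machinery, taking care that the almost-sure control (Borel–Cantelli along $k \in \N$, then interpolation) holds uniformly. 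The exponential growth of the integrand dominates so decisively that the crude polynomial bound on $N_k$ is more than enough, which is why the statement only claims $r^{1-\epsilon}$ rather than a sharp constant.
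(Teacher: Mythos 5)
Your argument is correct, and it reaches the lower bound by a genuinely different decomposition from the paper's. The paper uses a \emph{single} cutset: it takes the smallest set of points in the $\Rphi$-annulus $\Bphi(\rho,r)\setminus\Bphi(\rho,r^{1-\epsilon})$ separating $\rho$ from $\Bphi(\rho,r)^c$, applies the parallel law once to get $\Rphi(\rho,\Bphi(\rho,r)^c)\ge r^{1-\epsilon}/\Mphi(r)$, converts the $\Rphi$-annulus into a $d$-annulus via Proposition \ref{prop:ball comparison}, and then — crucially — invokes the \emph{exact scaling invariance} of the stable tree to reduce the cutset count to the number of subtrees crossing the fixed annulus from level $1-\delta$ to level $1$, for which a distributional tail bound (local time tail plus a Poisson Chernoff bound) and Borel–Cantelli along $r_n=2^n$ suffice; monotonicity then interpolates. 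You instead run a full Nash–Williams sum over unit $d$-annuli and keep only the top term, which trades the scaling-invariance trick for an \emph{almost-sure uniform} bound $N_k\le k^{1/(\gamma-1)+o(1)}$ on the crossing numbers for all $k$ simultaneously; that bound is indeed available from Proposition \ref{prop:local time limsup} together with the Poissonian subtree decomposition (the same mechanism as in the proof of Proposition \ref{prop:sqrt packing number d0}), and, as you note, the exponential growth of the integrand makes any polynomial bound on $N_k$ sufficient. The two points you flag as needing care — the continuum Nash–Williams/shorting inequality and the uniform control of $N_k$ — are both legitimately handled by machinery already in the paper (the paper itself uses the continuum parallel law, in the spirit of \cite[Lemma 4.5]{barlow2006cluster}), so I see no genuine gap. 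One small caution: when $\alpha<0$, so $b=1-\alpha>1$, the discretisation error $s_-^b-\lfloor s_-\rfloor^b$ is of order $s_-^{b-1}=(\log r)^{(b-1)/b}$, which costs a factor $e^{O((\log r)^{(b-1)/b})}=r^{o(1)}$ in the exponential; this is harmless but should be said explicitly rather than absorbed silently.
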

\begin{proof}
Fix $\epsilon > 0$, and let $\Mphi(r)$ be the smallest cardinality of a set of points in 
\[
\Bphi(O, r) \setminus \Bphi(O, r^{1-\epsilon})
\]
such that any path passing from $O$ to $\Bphi(O, r)^c$ must pass through one of the points in the set. Similarly to \cite[Lemma 4.5]{barlow2006cluster}, we note that since any such set is a cutset, it follows from the parallel law for resistance that
\begin{equation}\label{eqn:res cutset}
\Rphi (O, \Bphi (O, r)^c) \geq \frac{r^{1-\epsilon}}{\Mphi (r)}.
\end{equation}

Now choose $\he$ small enough that $(1+\he)(1-\epsilon) < 1-\he$. By Proposition \ref{prop:ball comparison}, 
it follows that any cutset separating 
\[
B\left( O, \left( \frac{(1-\epsilon) \log r}{\AD(1-\he)} \right)^{\frac{1}{1-\alpha}}\right) \text{ from } B\left( O, \left( \frac{\log r}{\AD(1+\he)} \right)^{\frac{1}{1-\alpha}}\right)^c
\]
is also a cutset separating $\Bphi(O, r^{1-\epsilon})$ from $\Bphi(O, r)^c$, so that $\Mphi(r) \leq M(r)$, where $M(r)$ is the smallest cardinality of a set of points separating 
\[
B\left( O, \left( \frac{(1-\epsilon) \log r}{\AD (1-\he)} \right)^{\frac{1}{1-\alpha}}\right) \text{ from } B\left( O, \left( \frac{\log r}{\AD(1+\he)} \right)^{\frac{1}{1-\alpha}}\right)^c
.
\]
By scaling invariance of the stable tree, $M(r) \overset{(d)}{=} M(e^{\AD (1+\he)})$ for all $r > 0$, so we bound the latter quantity, and set 
\[
\delta = 1 - \left(\frac{(1+\he)(1-\epsilon)}{(1-\he)}\right)^{\frac{1}{1-\alpha}},
\]
so that we are in fact counting subtrees from level $1-\delta$ to level $1$.

The ``size'' of generation $1-\delta$ can be formally measured by the local time measure of Proposition \ref{prop:local time limsup}. Moreover, conditional on the total local time measure at level $1-\delta$ being equal to $L$, it follows from the construction on page \pageref{box:Tg two sided construction} that the number of subtrees emanating from level $1-\delta$ that reach level $1$ is one more than a Poisson random variable with parameter $2L N(H>\delta)$, where $N$ is the \Ito excursion measure, so that $N(H>\delta) = c_{\gamma} \delta^{-\frac{1}{\gamma - 1}}$ for a deterministic $c_{\gamma} \in (0,\infty)$ \cite[Proposition 5.6]{goldschmidt2010extinctionstable}. Moreover, $M(e^{\AD (1+\he)})$ is equal to the number of such subtrees. Again letting $L$ denote the total local time measure at level $1-\delta$, we have from (a minor adaptation of) \cite[Proposition 5.2]{duquesnelegallhausdorff2006} that there exists $c>0$ such that $\prb{L > \lambda} \leq c\lambda^{-(\gamma - 1)}$, uniformly over $\delta \in [0,1]$. From a Chernoff bound, we therefore deduce that
\begin{equation}\label{eqn:1-delta M cutset bound}
\prb{M(e^{\AD (1+\he)}) > \lambda} \leq \prb{L>\frac{\delta^{\frac{1}{\gamma - 1}}}{2c_{\gamma}e}\lambda} + \prb{\textsf{Poisson}\left(\frac{\lambda}{e}\right) \geq \lambda} \leq c_{\delta} \lambda^{-(\gamma - 1)} + e^{-\lambda}.
\end{equation}

We therefore deduce from Borel-Cantelli and \eqref{eqn:res cutset} that if we take $r_n = 2^n$ and $\lambda_n=(\log r_n)^{\frac{1+\epsilon}{\gamma - 1}}$, then $\bPb \times \Pb$-almost surely, we have that
\[
\Rphi (O, \Bphi (O, r_n)^c) \geq \frac{r_n^{1-\epsilon}}{(\log r_n)^{\frac{1+\epsilon}{\gamma - 1}}} \geq r_n^{1-2 \epsilon}
\]
for all sufficiently large $n$. By monotonicity, this implies that 
$
\Rphi (O, \Bphi (O, r)^c) \geq r^{1-3\epsilon}
$
for all sufficiently large $r$. Since $\epsilon$ was arbitrary and the upper bound 
$
\Rphi (O, \Bphi (O, r)^c) \leq r
$
holds trivially, this proves the result.
\end{proof}

\begin{prop}\label{prop:tree vol bound}
$\bPb \times \Pb$-almost surely, $\nuphi (\Tg) < \infty$.
\end{prop}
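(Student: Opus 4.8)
The plan is to exploit the fact that the density of $\nuphi$ with respect to $\mu$, namely $\dt(\rho,x)^{\alpha}e^{-\BD\phib(x)-\AD\dt(\rho,x)^{1-\alpha}}$ with $\AD=\Delta/(1-\alpha)>0$, decays stretched-exponentially in $\dt(\rho,x)$, whereas the $\mu$-mass of unit annuli around $\rho$ grows only polynomially (up to logarithmic corrections). First I would peel $\Tg$ into the unit annuli $A_k=B(\rho,k+1)\setminus B(\rho,k)$, $k\ge 0$, and write $\nuphi(\Tg)=\nuphi(B(\rho,R))+\sum_{k\ge R}\nuphi(A_k)$ for a suitable (random) radius $R$.

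For the tail sum, on $A_k$ I would bound $\dt(\rho,x)^{\alpha}\le (k+1)^{\alpha\vee 0}$ and $e^{-\AD\dt(\rho,x)^{1-\alpha}}\le e^{-\AD k^{1-\alpha}}$ (using $1-\alpha>0$), and control the Gaussian factor via \cref{prop:phi d comp 2}(i): fixing any $\beta\in(\tfrac12,1)$, $\bPb\times\Pb$-almost surely $|\phib(x)|\le\dt(\rho,x)^{(1-\alpha)\beta}$ once $\dt(\rho,x)$ is large, so $e^{-\BD\phib(x)}\le e^{\BD(k+1)^{(1-\alpha)\beta}}$ on $A_k$ for $k\ge R$. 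Combining with the annulus volume bound $\mu(A_k)\le C_\epsilon\, k^{1/(\gamma-1)}(\log k)^{(1+\epsilon)/(\gamma-1)}$ from \cref{prop:CRT vol growth annulus} gives $\nuphi(A_k)\le (\text{polynomial in }k)\cdot\exp\big(-\AD k^{1-\alpha}+\BD(k+1)^{(1-\alpha)\beta}\big)$. Since $\beta<1$ the fluctuation exponent is $o(k^{1-\alpha})$, so the whole exponent is $\le-\tfrac12\AD k^{1-\alpha}$ eventually, which kills every polynomial factor, and the series converges.

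The remaining piece $\nuphi(B(\rho,R))$ is finite by a routine argument: $\phib$ has bounded continuous sample paths (recorded after \cref{def:snake}), so $e^{-\BD\phib}$ is bounded on the compact ball, $e^{-\AD\dt^{1-\alpha}}\le 1$, and $\int_{B(\rho,R)}\dt(\rho,x)^{\alpha}\,\mu(dx)$ is finite — trivially for $\alpha\ge 0$ (it is at most $R^{\alpha}\mu(B(\rho,R))$), and for $\alpha<0$ by rewriting it as $\int_0^R L^{(s)}s^{\alpha}\,ds$ via \eqref{eqn:local time cts function def} and using the near-root behaviour of the local time (equivalently the near-root volume growth of \cref{prop:CRT vol growth}). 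Finally I would observe that the volume-growth input holds $\bPb$-a.s.\ and the $\phib$-input holds $\bPb\times\Pb$-a.s., so the intersection of the relevant events still has full $\bPb\times\Pb$-measure, yielding $\nuphi(\Tg)<\infty$ $\bPb\times\Pb$-almost surely.

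The main obstacle — really the only non-bookkeeping point — is ensuring the Gaussian fluctuations of $\phib$ do not overwhelm the confining drift $\AD\dt(\rho,x)^{1-\alpha}$ in the exponent. This is exactly why the sharp form of \cref{prop:phi d comp 2}(i) is needed, giving fluctuations of order $\dt^{(1-\alpha)\beta}$ with $\beta$ strictly below $1$; if the fluctuations were merely $O(\dt^{1-\alpha})$ the argument would collapse and the finiteness of $\nuphi(\Tg)$ would become a genuinely delicate balance rather than an easy consequence of the two growth estimates.
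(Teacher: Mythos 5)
Your argument is correct and is essentially the paper's own proof: the same unit-annulus decomposition, the same use of \cref{prop:phi d comp 2}(i) with $\beta<1$ to show the Gaussian fluctuations $\BD|\phib|$ are eventually dominated by the drift $\AD \dt(\rho,\cdot)^{1-\alpha}$, and the same polynomial volume input (the paper bounds each annulus by the full ball via \cref{prop:CRT vol growth}, whereas you invoke \cref{prop:CRT vol growth annulus}; this changes nothing). Your separate treatment of the near-root integrability of $\dt(\rho,x)^{\alpha}$ when $\alpha<0$ is, if anything, more careful than the paper's, which simply absorbs that term into a constant on a compact ball.
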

\begin{proof}
Fix $\epsilon > 0$ and $\beta \in (\frac{1}{2}, 1)$. By Propositions \ref{prop:CRT vol growth}, \ref{prop:phi d comp 2}$(i)$ and since $\phib$ is continuous, we can $\bPb \times \Pb$-almost surely choose $R \geq 1$ and $V, \epsilon, p < \infty$ so that
\begin{itemize}
    \item $\sup_{x \in B(O, R)} \big\{ e^{-\left[\BD \phib(x)+\AD [(\dt(O,x)+1)^{1-\alpha}-1] - \alpha \log (\dt(O,x)+1) \right]}\big\} \leq p$,
    \item $\sup_{x \in B( O, R)^c} \frac{\BD |\phib(x)|+\alpha \log (\dt(O,x)+1)}{\dt(O, x)^{(1-\alpha)\beta}} < \epsilon$,
    \item $\sup_{r \geq 1} \Bigg\{ \frac{\mu \left(B(O, r+1)\right)}{r^{\frac{\gamma}{\gamma -1}} (\log r)^{\frac{1+\epsilon}{\gamma -1}}}\Bigg\} \leq V$.
\end{itemize}
We then have that
\begin{align*}
\nuphi (\Tg) &\leq p \mu \big(B(O, R)\big) 
\\
&+ \sum_{r=R}^{\infty} \sup_{x \in B(O, r+1) \setminus B(O, r)} \big\{ e^{-\left[\BD \phib(x)+\AD [(\dt(O,x)+1)^{1-\alpha}-1] - \alpha \log (\dt(O,x)+1) \right]}\big\} \mu \big(B(O, r+1) \setminus B(O, r)\big) \\
&\leq p V R^{\frac{\gamma}{\gamma -1}} (\log R)^{\frac{1+\epsilon}{\gamma -1}} + V\sum_{r=R}^{\infty} e^{\epsilon (r+1)^{(1-\alpha)\beta} - \AD (r^{1-\alpha}-1)} r^{\frac{\gamma}{\gamma -1}} (\log r)^{\frac{1+\epsilon}{\gamma -1}} =: C(p, V, R, \epsilon),
\end{align*}
where $C(p, V, R, \epsilon)$ is a finite constant.
\end{proof}

\subsection{Proof of Theorem \ref{thm:almost sure limsup}}

We now have all the tools to prove Theorem \ref{thm:almost sure limsup}.

\begin{proof}[Proof of Theorem \ref{thm:almost sure limsup}]
Again set $b = 1-\alpha$. We work pointwise on the probability space $\Omega'$ on which $\Tg$ and the environment $\phib$ are defined. Recall from \eqref{eqn:annealed X} that given $\Tgc$ and $\phib$, $\tilde{P}_{O,\phi}$ denotes the (quenched) law of the corresponding RWRE. We will show that the statement of Theorem \ref{thm:almost sure limsup} holds $\tilde{P}_{O,\phi}$ almost surely on $\Omega'$. This then transfers to the annealed law via \eqref{eqn:annealed X}. Since the quenched law of the LERRW limit is equal to the annealed law of the RWRE, this proves the result.

Throughout we assume that $\Tgc$ and $\phib$ are fixed and therefore just write $\tilde{P}$ in place of $\tilde{P}_{O, \phi}$.

We first show that, $\tilde{P}$-almost surely,
\[
\limsup_{t \rightarrow \infty} \frac{\dt(O, X_t)}{(\log t)^{{1}/{b}}} \leq \AD^{-1/b}.
\]
Letting $T_{B(O, r)}$ denote the exit time of $X$ from $B(O, r)$, we will in fact show that, almost surely,
\[
T_{B(O, r(1+\epsilon))} \geq e^{(1-\epsilon)\AD r^b},
\]
for all sufficiently large $r$. For notational convenience, set $t = e^{\AD r^b}$ and $\tau_r=T_{B(O, r(1+\epsilon))}$. Also let $T_r$ be a geometric random variable, with parameter $p_r$ given by
\[
p_r = \sup_{x \in \partial B (O, 1)} \ptstart{\tau_r < \tau_{O}}{x}{},
\]
where $\tau_{O}$ is the hitting time of $O$ ($p_r$ is well-defined by \cite[Proposition 1.9]{athreya2013brownian}).
We can then write
\begin{align*}
\pt{\tau_r \leq t} \leq \pt{\sum_{i=1}^{T_r-1} \xi_i \leq t},
\end{align*}
where $(\xi_i)_{i = 1}^{T_r-1}$ are the times to travel from $\partial B (O, 1)$ to $O$ and back to $\partial B (O, 1)$, conditional on not hitting $\partial B (O, r(1+\epsilon))$ first, from a ``best case" starting point on $\partial B (O, 1)$. Letting $L(t) = \log t$, we then have that
\begin{align}\label{eqn:sup UB geometric prob decomp}
\pt{\tau_r \leq t} \leq \pt{T_r \leq tL(t)} + \pt{\sum_{i=1}^{tL(t)-1} \xi_i \leq t}.
\end{align}
To bound the first term, note that since $\partial B (O, 1)$ is a cutset separating $O$ from $\partial B (O, r(1+\epsilon))$, we can apply Propositions \ref{prop:ball comparison} and \ref{prop:Reff} to deduce that $\bPb \times \Pb$-almost surely for any $\epsilon > 0$, we have for all sufficiently large $r$ that
\begin{align*}
p_r \leq \frac{1}{1 + \Rphi( \partial B (O, 1), \partial B (O, (1+\epsilon)r))} \leq \frac{1}{\Rphi(O, B (O, (1+\epsilon)r)^c)} &\leq \frac{1}{\Rphi(O, \Bphi (O, e^{\AD r^b(1+\frac{1}{2}\epsilon)})^c)} 
\\
&\leq e^{-\AD r^b(1+\frac{1}{4}\epsilon)}.
\end{align*}
Recalling that $t=e^{\AD r^b}$, we therefore have for all sufficiently large $r$ that $p_rtL(t) \leq \AD r^b e^{-\frac{1}{4} \AD \epsilon r^b}$, and hence converges to zero as $r \rightarrow \infty$. It follows that for all sufficiently large $r$,
\begin{align} \label{geombound}
\pt{T_r \leq tL(t)} = 1 - (1-p_r)^{tL(t)} \leq 1 - e^{-2 p_r tL(t)} \leq (1+ \delta) p_r  t L(t) \leq 2\AD r^b e^{-\frac{\varepsilon}{4}\AD r^b}.
\end{align}

Now take any $s>0$. To bound the second term of \eqref{eqn:sup UB geometric prob decomp}, we follow the approach of \cite[Section 4]{kumagai2004harnack} and use the Markov property to write
\begin{equation}\label{eqn:exit expectation relation}
\estartpt{T_{B(O, 1)}}{O}{} \leq s + \estartpt{T_{B(O, 1)} \mathbbm{1}\{T_{B(O, 1) > s}\}}{X_s}{} \leq s + \sup_{x \in B(O,1)}\estartpt{T_{B(O, 1)}}{x}{}\pt{T_{B(O, 1)} > s}.
\end{equation}
Letting $g_B(\cdot, \cdot)$ denote the Green's function for the diffusion killed on exiting the ball $B(O, 1)$ (e.g. as in \cite[Equation (4.7)]{kumagai2004harnack}), we can then write (also using \cite[Equations (4.5) and (4.6)]{kumagai2004harnack})
\begin{align*}
\sup_{x \in B(O,1)}\estartpt{T_{B(O, 1)}}{x}{} = \sup_{x \in B(O, 1)} \int_{B(O, 1)} g_B(x,y) \nuphi(dy) &\leq \sup_{x \in B(O, 1)} \int_{B(O, 1)} R_{\phi} (x, B(O, 1)^c) \nuphi(dy) 
\\
&\leq \sup_{x \in B(O, 1)} R_{\phi} (x, B(O, 1)^c) \cdot \nuphi (B(O, 1)).
\end{align*}
Note that $\nuphi (B(O, 1))$ is $\bPb \times \Pb$-almost surely finite and non-zero by Proposition \ref{prop:tree vol bound}, and the same holds for $\sup_{x \in B(O, 1)} R_{\phi} (x, B(O, 1)^c)$ by Proposition \ref{prop:ball comparison}. It therefore follows from \eqref{eqn:exit expectation relation} that there $\bPb \times \Pb$-almost surely exist constants $c_1 < c_2 \in (0, \infty)$ such that for all $s \geq 0$,
\[
c_1 \leq s + c_2\pt{T_{B(O, 1)} > s}. 
\]
Rearranging, we see that 
\[
\pt{T_{B(O, 1)} \leq s} \leq \frac{1}{c_2}s + \frac{c_2 - c_1}{c_2}.
\]
To return to \eqref{eqn:sup UB geometric prob decomp}, note that the sequence $(\xi_i)_{i=1}^{tL(t)}$ in the sum there stochastically dominates a sequence of independent copies of $T_{B(O,1)}$ (since the above bounds hold $\bPb \times \Pb$-almost surely, we can work pointwise on the probability space so that the tree and the environment are fixed, so we are only considering randomness under $P(\cdot)$). Therefore, letting $\left(\tilde{\xi_i}\right)_{i=1}^{tL(t)}$ denote independent copies of $T_{B(O, 1)}$ (conditional on our particular realisations of $\Tg$ and $\phib$), and recalling again that $t=e^{\AD r^b}$, we have from \cite[Lemma 3.14]{barlow1998fractals} that $\bPb \times \Pb$-almost surely there exist $c_3, c_4 \in (0, \infty)$ such that
\begin{align} \label{barlowbound}
\pt{\sum_{i=1}^{tL(t)-1} \xi_i \leq t} \leq \pt{\sum_{i=1}^{tL(t)-1} \tilde{\xi}_i \leq t} \leq \exp \left\{- tL(t)\left( c_3  -c_4 L(t)^{-\frac{1}{2}}\right)\right\} \leq e^{-c_3 e^{\AD r^b}}
\end{align}
for all sufficiently large $r$. In particular, substituting the bounds in \eqref{geombound} and \eqref{barlowbound} back into \eqref{eqn:sup UB geometric prob decomp} we have that, $\bPb \times \Pb$-almost surely,
\begin{align*}
\pt{\tau_r \leq e^{\AD r^b}} \leq 2 e^{-\frac{\varepsilon}{4}\AD r^b} + e^{-c_3 e^{\AD r^b}}
\end{align*}
for all sufficiently large $r$. Therefore, applying Borel-Cantelli along the sequence of integer $r$, we deduce that $\bPb \times \Pb \times \tilde{P}$-almost surely,
\[
T_{B(O, r(1+\epsilon))} \geq e^{\AD \lfloor r^b \rfloor} \geq e^{(1-\epsilon)\AD r^b}
\]
for all sufficiently large $r$, or in other words, 
\[
\dt (O,X_t) \leq \frac{1+\epsilon}{\AD^{1/b}(1-\epsilon)^{{1}/{b}}} (\log t)^{\frac{1}{b}}
\]
for all sufficiently large $t$. Since $\epsilon$ was arbitrary, this proves the upper bound. To prove the lower bound, i.e. that 
\[
\limsup_{t \rightarrow \infty} \frac{\dt(O, X_t)}{(\log t)^{\frac{1}{b}}} \geq \AD^{-1/b},
\]
we take $\epsilon > 0$ and set $f(t) = \AD^{-1/b}(1 - \epsilon)(\log t)^{\frac{1}{b}}$. Then, again using \cite[Equations (4.5), (4.6) and (4.7)]{kumagai2004harnack} and Markov's inequality, we have that
\begin{align*}
\tilde{P}_{O}(T_{B(O, f(t))} \geq t) \leq \tilde{E}_{O}[T_{B(O, f(t))}] t^{-1} &\leq \Rphi (O, B(O, f(t))^c) \nuphi (B(O, f(t))) t^{-1} 
\\
&\leq \Rphi (O, B(O, f(t))^c) \nuphi (\Tg) t^{-1}.
\end{align*}
By Proposition \ref{prop:ball comparison}, $\nuphi (\Tg)$ is almost surely finite. Now choose some $\delta = \delta (\epsilon) > 0$ small enough that $(1+\delta)(1-\epsilon)^b < 1$. By Propositions \ref{prop:ball comparison} and \ref{prop:Reff}, we therefore have that, almost surely for all sufficiently large $t$,
\begin{align*}
\tilde{P}_{O}(T_{B(O, f(t))} \geq t) \leq \nuphi (\Tg) e^{(1+\delta)\AD f(t)^b} t^{-1} = \nuphi (\Tg) e^{((1+\delta)(1-\epsilon)^b - 1)\log t},
\end{align*}
which vanishes as $t \rightarrow \infty$.
Applying this along with Fatou's Lemma, we therefore deduce that
\begin{align}\label{eqn:Fatou} \ptstart{\frac{d(O,X_t)}{(\log t)^{1/b}} > \AD^{-1/b} (1 - \epsilon) \text{ i.o.}}{O}{} = \ptstart{T_{B(O, f(t))} < t \text{ i.o.}}{O}{} \geq \limsup_{t \rightarrow \infty} \ptstart{T_{B(O, f(t))} < t}{O}{} = 1.
\end{align}
Since $\epsilon$ was arbitrary, this completes the proof.
\end{proof}

\section{Reinforced critical regime: $\alpha = 1, \Delta>0$}
In this section we prove \cref{thm:almost sure limsup critical}. The strategy is the same as in \cref{sctn:limsup asymp recurrent}, but due to the logarithmic factors we obtain different estimates.

In contrast to the previous section, this time
\begin{align*}
R_{\phi}(x,y)&=
\displaystyle \int_{[x,y]} e^{\BD \phiz(z)+\AD \log (\dt(O,z)+1)} \lambda^{\gamma}(\text{d} z), \\
\nu_{\phi} (A)&=
 \displaystyle \int_A  e^{-\left[\BD \phiz(z)+\AD \log (\dt(O,z)+1) \right]} \mu(\text{d}x),
\end{align*}
where $\BD = \sqrt{4 \Delta}$ and $\AD = \Delta-1$.

\subsection{Volume and resistance growth in $\Tgphiz$}
Again it is natural to start with some volume and resistance estimates.
\begin{prop}\label{prop:ball comparison 0}
$\bPb \times \Pb$-almost surely, for any $\epsilon > 0$ we have that
\[
\Bphi (O, r^{\AD+1} e^{-(\log (r+1))^{\frac{1}{2}+ \epsilon}}) \subset B(O, r) \subset \Bphi (O, r^{\AD+1} e^{(\log (r+1))^{\frac{1}{2}+ \epsilon}})
\]
for all sufficiently large $r$.
\end{prop}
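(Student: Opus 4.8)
The plan is to mimic the proof of Proposition~\ref{prop:ball comparison}, with the polynomial potential $\AD\dt(\rho,\cdot)^{1-\alpha}$ replaced by the logarithmic one $\AD\log(\dt(\rho,\cdot)+1)$ and with Proposition~\ref{prop:phi d comp 2}$(ii)$ used in place of $(i)$. Fix $\epsilon>0$ and choose $\beta\in(\tfrac12,\tfrac12+\epsilon)$. By Proposition~\ref{prop:phi d comp 2}$(ii)$ there is, $\bPb\times\Pb$-almost surely, a finite $R$ with $|\phiz(x)|\le(\log(\dt(\rho,x)+1))^{\beta}$ for every $x$ with $\dt(\rho,x)\ge R$; I will enlarge $R$ so that $u\mapsto u\Delta-\BD u^{\beta}$ is increasing for $u\ge\log(R+1)$ (recall $\AD+1=\Delta>0$, which is what makes the substitution below uniform in the sign of $\AD$). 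Throughout I will integrate along the arc $[\rho,x]$ parametrised by distance to $\rho$, so that $\lambda^{\gamma}$ becomes Lebesgue measure by \eqref{eqn:length measure prop}, and substitute $u=\log(s+1)$ to convert $\int(s+1)^{\AD}e^{\pm\BD(\log(s+1))^{\beta}}\,ds$ into $\int e^{u\Delta\pm\BD u^{\beta}}\,du$.

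First I would prove the left inclusion. If $\dt(\rho,x)\ge r$, let $y\in[\rho,x]$ be the ($\bPb\times\Pb$-a.s.\ unique) point with $\dt(\rho,y)=R$; on the sub-arc from distance $R$ to distance $r$ the bound on $\phiz$ gives
\[
\Rphi(\rho,x)\ \ge\ \int_{R}^{r}(s+1)^{\AD}e^{-\BD(\log(s+1))^{\beta}}\,ds\ =\ \int_{\log(R+1)}^{\log(r+1)}e^{u\Delta-\BD u^{\beta}}\,du\ \ge\ e^{-\Delta}(r+1)^{\Delta}e^{-\BD(\log(r+1))^{\beta}}
\]
for large $r$, by restricting to $u\in[\log(r+1)-1,\log(r+1)]$ and using monotonicity of the exponent there. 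Since $\beta<\tfrac12+\epsilon$, the factor $e^{-\Delta-\BD(\log(r+1))^{\beta}}$ dominates $e^{-(\log(r+1))^{1/2+\epsilon}}$ for $r$ large, hence $\Rphi(\rho,x)\ge r^{\Delta}e^{-(\log(r+1))^{1/2+\epsilon}}$; equivalently $\Bphi(\rho,r^{\Delta}e^{-(\log(r+1))^{1/2+\epsilon}})\subset B(\rho,r)$.

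Then I would prove the right inclusion. Take $\dt(\rho,x)<r$ and split $[\rho,x]=[\rho,y]\cup[y,x]$ with $\dt(\rho,y)=R$. The contribution of $[\rho,y]$ is at most $\sup_{w\in\partial B(\rho,R)}\Rphi(\rho,w)$, which is $\bPb\times\Pb$-a.s.\ finite since $\phiz$ is bounded on the compact ball $B(\rho,R)$; as in Proposition~\ref{prop:ball comparison} I would let $\tilde r_c$ be the first $r$ for which this quantity is $\le\tfrac12\,r^{\Delta}e^{(\log(r+1))^{1/2+\epsilon}}$. The contribution of $[y,x]$ is, by the same substitution and since $u\mapsto u\Delta+\BD u^{\beta}$ is increasing,
\[
\int_{[y,x]}(\dt(\rho,z)+1)^{\AD}e^{\BD\phiz(z)}\,\lambda^{\gamma}(\mathrm{d}z)\ \le\ \int_{\log(R+1)}^{\log(r+1)}e^{u\Delta+\BD u^{\beta}}\,du\ \le\ \log(r+1)\,(r+1)^{\Delta}e^{\BD(\log(r+1))^{\beta}}.
\]
Because $\beta<\tfrac12+\epsilon$, the extra factors $\log(r+1)$ and $((r+1)/r)^{\Delta}\to1$ are absorbed, so for $r$ large this is $\le\tfrac12\,r^{\Delta}e^{(\log(r+1))^{1/2+\epsilon}}$; adding the two pieces yields $\Rphi(\rho,x)<r^{\Delta}e^{(\log(r+1))^{1/2+\epsilon}}$, i.e.\ $B(\rho,r)\subset\Bphi(\rho,r^{\Delta}e^{(\log(r+1))^{1/2+\epsilon}})$. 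Since $\epsilon>0$ is arbitrary this finishes the proof.

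I do not expect a serious obstacle: the argument is structurally identical to Proposition~\ref{prop:ball comparison}. The only care needed is bookkeeping of the slowly varying corrections — the window length $\log(r+1)$ in the upper bound, the ratio $((r+1)/r)^{\Delta}$, and the almost sure constant from the bounded region near $\rho$ — together with the verification that each of these is swallowed by the gap between $(\log(r+1))^{\beta}$ with $\beta<\tfrac12+\epsilon$ and $(\log(r+1))^{1/2+\epsilon}$; the substitution $u=\log(s+1)$ is what makes everything go through uniformly regardless of whether $\AD=\Delta-1$ is positive or negative.
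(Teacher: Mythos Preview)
Your proposal is correct and follows essentially the same route as the paper: both arguments use Proposition~\ref{prop:phi d comp 2}$(ii)$ to control $|\phiz|$ by $(\log(\dt(\rho,\cdot)+1))^{\beta}$ outside a compact ball, split $[\rho,x]$ at the boundary of that ball, and absorb the near-root contribution into the $(\log(r+1))^{1/2+\epsilon}$ slack. Your substitution $u=\log(s+1)$ and the crude length$\times$max estimate for the upper integral are cosmetic variations on the paper's direct integration of $(s+1)^{\AD}$; in fact your choice of the factor $\tfrac12$ in defining $\tilde r_c$ is slightly cleaner than the paper's $1-\tfrac{1}{\AD+1}$, which is only positive when $\Delta>1$.
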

\begin{proof}
Take some $\beta \in (\frac{1}{2},1), \beta' \in (\frac{1}{2}, \beta)$ and some $\epsilon > 0$. By Proposition \ref{prop:phi d comp 2}$(ii)$, there $\bPb \times \Pb$-almost surely exists $R<\infty$ such that $(\log R)^{\beta} < \frac{\epsilon}{2} \log R$ and
\[
\sup_{x \in B( O, R)^c} \frac{\BD|\phiz(x)|}{(\log (\dt(O, x)+1)^{\beta'}} \leq 1.
\]
Take some $r>2R+1$. We start by proving the first inclusion by showing the reverse inclusion for the complements. Indeed, assume that $\dt(O, x) > r$, and let $y$ be the point on the path from $O$ to $x$ such that $\dt(O, y) = R$, and $\dt(x,y)=\dt(O, x) - R$. Note that this almost surely exists and is unique. Moreover, $\dt(O,z) > R$ for all $z\in [y,x]$ by our choice of $R$, which implies that $\log (\dt(O, z)+1)-(\log (\dt(O, z)+1))^{\beta} \geq (1-\frac{\epsilon}{2})\log (\dt(O, z)+1)$ for all such $z$.

Similarly to \cref{prop:ball comparison}, provided that $r$ is sufficiently large (depending only on $R$ and $\Delta$) we have that
\begin{align*}
\Rphi(O, x) 
\geq \int_{[y, x]} e^{\BD\phiz(z)+\AD \log (\dt(O, z)+1)} \lambda^{\gamma}(\text{d} z) 
&\geq e^{-(\log (r+1))^{\beta'}} \int_{[y, x]} e^{\AD\log (\dt(O, z)+1)} \lambda^{\gamma}(\text{d} z) \\
&\geq e^{-(\log (r+1))^{\beta}} r^{\AD+1}.
\end{align*}

For the second inclusion, we keep $R, y, \beta$ and $\beta'$ as above, and now assume that $\dt(O, x) < r$. 
Also set
\[
\tilde{r}_c = \inf\Bigg\{r \geq R: \sup_{w \in \partial B(O, R)} \int_{[O, w]} e^{\BD\phiz(z)+\AD \log (\dt(O, z)+1)} \lambda^{\gamma}(\text{d} z) \leq \left(1 - \frac{1}{\AD + 1}\right)r^{\AD+1}e^{(\log (r+1))^{\frac{1}{2}}} \Bigg\}.
\] 
(This is always possible since $\phiz$ is continuous with bounded sample paths, so for any fixed $R > 0$, $\phiz$ is bounded on the compact set $B(O, R)$, and so the supremum is almost surely finite).

We then have for all sufficiently large (depending on $R$ and $\Delta$) $r$ and all $x \in B(O, r)$ that 
\begin{align*}
\Rphi(O, x) 
&\leq \sup_{w \in \partial B(O, R)} \int_{[O, w]} e^{\BD\phiz(z)+\AD \log (\dt(O, z)+1)} \lambda^{\gamma}(\text{d} z) + \int_{[y, x]} e^{\BD\phiz(z)+\AD \log (\dt(O, z)+1)} \lambda^{\gamma}(\text{d} z) \\
&\leq \left(1 - \frac{1}{\AD + 1}\right)r^{\AD+1}e^{(\log (r+1))^{\frac{1}{2}}} + e^{(\log (r+1))^{\beta'}} \int_{[y, x]} e^{\AD \log (\dt(O, z)+1)} \lambda^{\gamma}(\text{d} z) \\
&\leq r^{\AD+1}e^{(\log (r+1))^{\beta}},
\end{align*}
which completes the proof.
\end{proof}

\begin{prop}\label{prop:Reff 0}
For $\alpha=1$, we have for any $\epsilon > 0$ that $\bPb \times \Pb$-almost surely,
\[
re^{-(\log r)^{\frac{1}{2}+\epsilon}} \leq \Rphi (O, \Bphi (O, r)^c) \leq r
\]
for all sufficiently large $r$.
\end{prop}

\begin{proof}
The proof is similar to that of \cref{prop:Reff}. Fix $\epsilon > 0$, and let $\Mphi(r)$ be the smallest cardinality of a set of points in 
\[
\Bphi(O, r) \setminus \Bphi(O, re^{-3(\log r)^{\frac{1}{2}+\epsilon}}),
\]
such that any path passing from $O$ to $\Bphi(O, r)^c$ must pass through one of the points in the set. Since any such set is a cutset, it follows from the parallel law for resistance that
\[
\Rphi (O, \Bphi (O, r)^c) \geq \frac{re^{-3(\log r)^{\frac{1}{2}+\epsilon}}}{\Mphi (r)}.
\]
It follows from Proposition \ref{prop:ball comparison 0} that $\Mphi(r) \leq M(r)$ almost surely for all sufficiently large $r$, where $M(r)$ is the smallest cardinality of a set of points separating 
\[
B\left( O, r^{\frac{1}{\AD+1}}e^{-2(\log (r+1))^{\frac{1}{2}+\epsilon}}\right)
\text{ from }
B\left( O, r^{\frac{1}{\AD+1}} e^{-(\log (r+1))^{\frac{1}{2}+ \epsilon}} \right)^c,
\]
almost surely for all sufficiently large $r$.


Let $(L^{(s)})_{s \geq 0}$ be as in \eqref{eqn:local time cts function def}. The same arguments that led to \eqref{eqn:1-delta M cutset bound} entail that
\begin{align*}
\prb{M(r) > 1} &\leq \prb{L^{(r^{\frac{1}{\AD+1}}e^{-2(\log (r+1))^{\frac{1}{2}+\epsilon}})}>\left( r^{\frac{1}{\AD+1}}e^{-2(\log (r+1))^{\frac{1}{2}+\varepsilon}} \right)^{\frac{1}{\gamma-1}}e^{\frac{k}{(\gamma - 1)}(\log (r+1))^{\frac{1}{2}+\varepsilon}}} \\
& + \prb{\textsf{Poisson}\left(c_{\gamma}e^{\frac{-(\log (r+1))^{\frac{1}{2}+ \varepsilon}}{2(\gamma - 1)}}\right) \geq 1} \\
&\leq c_{\gamma}' e^{-(\log r)^{\frac{1}{2}+\varepsilon}/2(\gamma - 1)}.
\end{align*}

We therefore deduce from Borel-Cantelli that if we take $r_n = 2^n$, then $\bPb \times \Pb$-almost surely, we have that
\[
\Rphi (O, \Bphi (O, r_n)^c) \geq r_ne^{-3(\log r_n)^{\frac{1}{2}+\epsilon}}
\]
for all sufficiently large $n$. This then implies that $\Rphi (O, \Bphi (O, r)^c) \geq re^{-(\log r)^{\frac{1}{2}+2\epsilon}}$ for all sufficiently large $r$ by monotonicity. Since $\epsilon$ was arbitrary and the upper bound $ \Rphi (O, \Bphi (O, r)^c) \leq r$ holds trivially, this is enough to prove the result.
\end{proof}

\begin{prop}\label{prop:tree vol bound 0}
Take any $\beta > \frac{1}{2}$. Then $\bPb \times \Pb$-almost surely,
\begin{itemize}
    \item If $\AD \leq \frac{\gamma}{\gamma - 1}$, then $e^{ - (\log (r+1))^{\beta}} r^{\frac{\gamma}{\gamma - 1}-\AD} \leq \nuphi (B(O, r)) \leq r^{\frac{\gamma}{\gamma - 1}-\AD} e^{(\log (r+1))^{\beta}}$ for all sufficiently large $r$.
    \item If $\AD > \frac{\gamma}{\gamma - 1}$ then $\nuphi (\Tg) < \infty$.
\end{itemize}
\end{prop}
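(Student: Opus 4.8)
The plan is to mirror exactly the structure of the proof of Proposition \ref{prop:tree vol bound} from the previous section, replacing the exponential drift $\AD \dt(\rho, \cdot)^{1-\alpha}$ by the polynomial/logarithmic drift $\AD \log(\dt(\rho, \cdot)+1)$, and being careful that in the critical case the volume of $\Tg$ may or may not be finite depending on how $\AD$ compares to the volume-growth exponent $\frac{\gamma}{\gamma-1}$. The starting point in both cases is to fix $\beta > \frac{1}{2}$, choose $\beta' \in (\tfrac12, \beta)$, and invoke Proposition \ref{prop:phi d comp 2}(ii) together with Proposition \ref{prop:CRT vol growth annulus} and the continuity of $\phiz$ to find, $\bPb \times \Pb$-almost surely, a radius $R \geq 1$ and finite constants $V, p < \infty$ such that: (a) $\sup_{x \in B(\rho, R)} e^{-[\BD \phiz(x)+\AD \log(\dt(\rho,x)+1)]} \leq p$; (b) $\sup_{x \in B(\rho,R)^c} \BD|\phiz(x)| / (\log(\dt(\rho,x)+1))^{\beta'} < 1$; and (c) $\sup_{r \geq 1} \mu(B(\rho, r+1)\setminus B(\rho, r)) / (r^{\frac{1}{\gamma-1}}(\log r)^{\frac{1+\epsilon}{\gamma-1}}) \leq V$ for a fixed small $\epsilon > 0$ (here I use Proposition \ref{prop:CRT vol growth annulus} rather than Proposition \ref{prop:CRT vol growth} since I need the annulus bound to get a convergent sum).

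For the case $\AD > \frac{\gamma}{\gamma - 1}$, I would then decompose $\nuphi(\Tg)$ over dyadic-free annuli as in the earlier proof: write
\[
\nuphi(\Tg) \leq p\,\mu(B(\rho, R)) + \sum_{r = R}^{\infty} \sup_{x \in B(\rho, r+1)\setminus B(\rho, r)} e^{-[\BD \phiz(x)+\AD \log(\dt(\rho,x)+1)]}\,\mu(B(\rho, r+1)\setminus B(\rho, r)).
\]
On the $r$-th annulus the density is bounded by $e^{(\log(r+2))^{\beta'}} (r+1)^{-\AD}$, so each summand is at most $V e^{(\log(r+2))^{\beta'}} (r+1)^{-\AD} r^{\frac{1}{\gamma-1}}(\log r)^{\frac{1+\epsilon}{\gamma-1}}$. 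The summand behaves like $r^{\frac{1}{\gamma-1} - \AD + o(1)}$, which is summable precisely when $\AD - \frac{1}{\gamma-1} > 1$, i.e. $\AD > \frac{\gamma}{\gamma-1}$; the sub-polynomial factor $e^{(\log r)^{\beta'}}$ is harmless. This gives $\nuphi(\Tg) < \infty$.

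For the case $\AD \leq \frac{\gamma}{\gamma-1}$ I instead want two-sided control of $\nuphi(B(\rho, r))$. The upper bound follows by the same annulus decomposition truncated at level $r$: $\nuphi(B(\rho, r)) \leq p\mu(B(\rho, R)) + V\sum_{s=R}^{r} e^{(\log(s+2))^{\beta'}}(s+1)^{-\AD} s^{\frac{1}{\gamma-1}}(\log s)^{\frac{1+\epsilon}{\gamma-1}}$, and since the summand is $s^{\frac{1}{\gamma-1}-\AD+o(1)}$ the partial sum is $O(r^{\frac{\gamma}{\gamma-1}-\AD} e^{(\log(r+1))^{\beta'}})$, which for $r$ large is $\leq r^{\frac{\gamma}{\gamma-1}-\AD}e^{(\log(r+1))^{\beta}}$ as desired (when $\AD = \frac{\gamma}{\gamma-1}$ the exponent is $0$ and the sum is logarithmic, still absorbed into $e^{(\log(r+1))^{\beta}}$). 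For the lower bound, I would restrict attention to the single annulus $B(\rho, r)\setminus B(\rho, r-1)$, or better a fixed unit annulus near radius $r$: on it the density is at least $e^{-(\log(r+1))^{\beta'}}(r+1)^{-\AD}$, and by Proposition \ref{prop:CRT vol growth} (a matching lower bound on $\mu(B(\rho,r))$ — or rather a lower bound on $\mu(B(\rho,r)\setminus B(\rho, r/2))$ obtained from the same local-limit / local-time considerations) the $\mu$-mass of a suitable region at scale $r$ is at least $r^{\frac{\gamma}{\gamma-1}}e^{-(\log(r+1))^{\beta - \beta'}}$ along a dyadic subsequence, hence $\nuphi(B(\rho, r)) \geq r^{\frac{\gamma}{\gamma-1}-\AD} e^{-(\log(r+1))^{\beta}}$ for all large $r$ by monotonicity in $r$.

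The main obstacle I anticipate is the \emph{lower} bound on $\nuphi(B(\rho,r))$ in the regime $\AD \leq \frac{\gamma}{\gamma-1}$: unlike the upper bound, it cannot come from a crude union/summation argument but needs a genuine lower bound on the mass $\mu$ of a macroscopic piece of $\Tg$ at scale $r$ together with a simultaneous upper bound on the potential $\phiz$ there. The cleanest route is probably to use the local time measure $L^{(r)}$ from \eqref{eqn:local time cts function def}, obtain a lower bound of the form $L^{(r)} \geq r^{\frac{1}{\gamma-1}} e^{-(\log r)^{\beta''}}$ along $r_n = 2^n$ via a second-moment or Borel–Cantelli argument analogous to (but in the opposite direction from) Proposition \ref{prop:local time limsup}, integrate over an $O(1)$ range of levels, and combine with the density lower bound from item (b) above; one then transfers to all large $r$ by monotonicity of $r \mapsto \nuphi(B(\rho,r))$. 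The sub-polynomial slack $e^{(\log(r+1))^{\beta}}$ in the statement is exactly what makes this feasible without sharp constants.
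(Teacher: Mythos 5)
Your treatment of the finiteness claim for $\AD > \frac{\gamma}{\gamma-1}$ and of the upper bound for $\AD \leq \frac{\gamma}{\gamma-1}$ is exactly the paper's argument: fix $\beta' \in (\frac12,\beta)$, control the potential via Proposition \ref{prop:phi d comp 2}(ii) outside a random compact ball and by continuity inside it, bound each unit annulus's $\mu$-mass by Proposition \ref{prop:CRT vol growth annulus}, and sum; the summand $s^{\frac{1}{\gamma-1}-\AD+o(1)}$ gives convergence precisely when $\AD>\frac{\gamma}{\gamma-1}$ and gives the $r^{\frac{\gamma}{\gamma-1}-\AD}e^{(\log r)^{\beta}}$ partial sum otherwise. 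No issues there.

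The lower bound is where you diverge, and your most concrete version of it has a quantitative gap. If you take a \emph{unit} annulus near radius $r$, or equivalently "integrate $L^{(s)}$ over an $O(1)$ range of levels" as in your last paragraph, the $\mu$-mass you control is of order $r^{\frac{1}{\gamma-1}}$, not $r^{\frac{\gamma}{\gamma-1}}$; the resulting bound $r^{\frac{1}{\gamma-1}-\AD}$ is a full factor of $r$ short of the statement. You need the mass of a macroscopic region: either the whole ball $B(\rho,r)$ or the annulus $B(\rho,r)\setminus B(\rho,r/2)$, i.e. local time integrated over a range of levels of length of order $r$. Once you use a macroscopic region, no new second-moment or Borel--Cantelli estimate on $L^{(r)}$ is needed: the liminf counterpart to Proposition \ref{prop:CRT vol growth} (the standard two-sided ball-volume asymptotics for stable trees, giving $\mu(B(\rho,r)) \geq r^{\frac{\gamma}{\gamma-1}}/\log r$ eventually) suffices, and this is exactly what the paper does — it simply writes
\[
\nuphi(B(\rho,r)) \;\geq\; \inf_{x\in B(\rho,r)} e^{-\BD\phiz(x)-\AD\log(\dt(\rho,x)+1)}\cdot \mu(B(\rho,r)),
\]
bounds the infimum by $(r+1)^{-\AD}e^{-\epsilon(\log(r+1))^{\beta'}}$, and absorbs the slack into $e^{-(\log(r+1))^{\beta}}$. (One caveat in favour of your annulus instinct: for $\AD<0$, i.e. $\Delta<1$, the density infimum over the \emph{whole} ball is attained near the root and is only $O(1)$, so there one genuinely should restrict to $B(\rho,r)\setminus B(\rho,r/2)$, where both the density and the $\mu$-mass have the right order. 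Your localisation is therefore the robust version of the argument — just make sure the region you localise to is macroscopic, not of unit width.)
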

\begin{proof}
Again the proof is similar to that of \cref{prop:tree vol bound}. We treat the case $\AD< \frac{\gamma}{\gamma - 1}$ first, starting with the upper bound. Take some $\beta \in (\frac{1}{2}, 1)$. By Proposition \ref{prop:phi d comp 2}$(ii)$ and since $\phiz$ is continuous, we can $\bPb \times \Pb$-almost surely choose $R \geq 1$ and $\epsilon, p < \infty$ so that
\begin{itemize}
    \item $\sup_{x \in B(O, R)} \big\{ e^{-\BD \phiz(x) - \AD \log (\dt(O, x)+1)}\big\} \leq p$,
    \item $\sup_{x \in B( O, R)^c} \frac{\BD |\phiz(x)|}{(\log (\dt(O, x)+1)^{\beta}} < \epsilon$.
\end{itemize}
We then have from Proposition \ref{prop:CRT vol growth annulus} that for any $\delta > 0$, almost surely for all sufficiently large $r$,
\begin{align}\label{eqn:volume calc 0}
\begin{split}
\nuphi (B(O, r)) &\leq \sup_{x \in B(O, R)} \big\{ e^{-\BD\phiz(x) - \AD \log ( \dt(O, x)+1)}\big\} \mu\big(B(O, R)\big) \\
&\qquad + \sum_{n=R}^{r} \sup_{x \in B(O, n) \setminus B(O, n-1)} \big\{ e^{-\BD\phiz(x) - \AD \log ( \dt(O, x)+1)}\big\} \mu\big(B(O, n) \setminus B(O, n-1)\big) \\
&\leq p \mu\big(B(O, R)\big) + \sum_{n=R}^{r} n^{-\AD}e^{\varepsilon (\log n)^{\beta}} n^{\frac{1}{\gamma -1}} (\log n)^{\frac{1+\epsilon}{\gamma - 1}} \\
&\leq C_{p,R,\varepsilon} + r^{\frac{\gamma}{\gamma - 1}-\AD}e^{(\log r)^{\beta + \delta}}.
\end{split}
\end{align}
Since $\beta, \delta > 0$ were arbitrary this implies the upper bound. If $\AD=\frac{\gamma}{\gamma - 1}$ we obtain a factor of $\log r$ from the sum, but this is dominated by the $e^{(\log r)^{\beta + \delta}}$ term, so the result still holds.

The lower bound is simpler since
\[
\nuphi (B(O, r)) \geq \inf_{x \in B(O, r)} \big\{e^{-\BD\phiz(x) - \AD\log (\dt(O, x)+1)}\big\} \mu(B(O, r)) \geq  (r+1)^{-\AD} e^{- \varepsilon (\log (r+1))^{\beta}} \frac{r^{\frac{\gamma}{\gamma - 1}}}{\log r}
\]
almost surely for all sufficiently large $r$ by \cref{prop:CRT vol growth}. This proves the result since $\beta$ was arbitrary. Clearly the result holds trivially if $A=\frac{\gamma}{\gamma - 1}$.

If $\AD > \frac{\gamma}{\gamma - 1}$ the calculation in \eqref{eqn:volume calc 0} shows that $\nuphi (\Tg) < \infty$ almost surely.
\end{proof}

\subsection{Proof of Theorem \ref{thm:almost sure limsup critical}}

We now have all the tools to prove Theorem \ref{thm:almost sure limsup critical}.

\begin{proof}[Proof of Theorem \ref{thm:almost sure limsup critical}]
As in the proof of Theorem \ref{thm:almost sure limsup},  we assume that $\Tgc$ and $\phiz$ are fixed and therefore just write $\tilde{P}$ or $\tilde{P}_O$ in place of $\tilde{P}_{O, \phi}$.

Take $\beta ' \in (\frac{1}{2}, 1)$ and assume first that $\AD \leq \frac{\gamma}{\gamma - 1}$. Letting $g_B(\cdot, \cdot)$ denote the Green's function for the diffusion killed on exiting the ball $B(O, r)$ we firstly have from Propositions \ref{prop:ball comparison 0} and \ref{prop:tree vol bound 0} that, $\bPb \times \Pb$-almost surely for all sufficiently large $r$,
\begin{align*}
\estartpt{T_{B(O, r)}}{O}{} =  \int_{B(O, r)} g_B(O,y) \nuphi(dy) \leq \int_{B(O, r)} \Rphi (O, B(O, r)^c) \nuphi(dy) 
\leq r^{\frac{2\gamma - 1}{\gamma - 1}} e^{2(\log (r+1))^{\beta'}}.
\end{align*}
Therefore, for any $\beta > \frac{1}{2}$ we can choose $\beta' \in (\frac{1}{2}, \beta)$ so that Markov's inequality gives
\begin{align*}
    \ptstart{T_{B(O, r)} \geq r^{\frac{2\gamma - 1}{\gamma - 1}} e^{(\log (r+1))^{\beta}}}{O}{} \leq e^{-(\log (r+1))^{\beta}/2}
\end{align*}
for all sufficiently large $r$. 
In particular, Fatou's Lemma gives that
\begin{align*}
\ptstart{d(O, X_t) \geq t^{\frac{\gamma - 1}{2\gamma - 1}}e^{-(\log (t+1))^{\beta}} \text{ i.o.}}{O}{}
\geq \limsup_{r \to \infty} \ptstart{T_{B(O, r)} \leq r^{\frac{2\gamma - 1}{\gamma - 1}} e^{(\log (r+1))^{\beta}}}{O}{}=1.
\end{align*}
If instead $\AD > \frac{\gamma}{\gamma - 1}$, letting $V=\nuphi (\Tg)$ the same calculation gives
\begin{align*}
\estartpt{T_{B(O, r)}}{O}{} &\leq V r^{\AD+1} e^{(\log (r+1))^{\beta'}},
\end{align*}
so we similarly deduce from Markov's inequality and Fatou's Lemma that
\begin{align*}
\ptstart{d(O, X_t) \geq t^{\frac{1}{\AD+1}}e^{-(\log (t+1))^{\beta}} \text{ i.o.}}{O}{}=1.
\end{align*}

For the upper bound, we use \cite[Lemma 4.2(b)]{croydon2016scaling} which states that for any $t > 0$ and any $\delta \in (0, \Rphi (O, \Bphi(O, r)^c))$,
\begin{equation}\label{eqn:Croydon exit tail bound}
\ptstart{T_{\Bphi(O, r)} \leq t}{O}{} \leq 4\left[\frac{\delta}{\Rphi (O, \Bphi(O, r)^c)} + \frac{t}{\nuphi \left(\Bphi (O, \delta)\right) (\Rphi (O, \Bphi(O, r)^c) - \delta)}\right].
\end{equation}
Set $\delta = re^{-(\log r)^{\beta}}$ for some $\beta > \frac{1}{2}$, and choose $\beta' \in (\frac{1}{2}, \beta)$. Note that
\begin{itemize}
\item $\Bphi (O, r) \subset B(O, r^{\frac{1}{\AD+1}}e^{(\log (r+1))^{\beta'}})$ eventually almost surely by Proposition \ref{prop:ball comparison 0},
\item $\Rphi (O, \Bphi (O, r)^c) \geq re^{-(\log r)^{\beta '}}$ eventually almost surely by Proposition \ref{prop:Reff 0},
\item If $\AD \leq \frac{\gamma}{\gamma - 1}$, then almost surely, $\nuphi (\Bphi (O, \delta)) \geq  \delta^{\frac{1}{\AD+1}\left(\frac{\gamma}{\gamma-1}-\AD\right)} e^{ - (\log (\delta+1))^{\beta}}$ for all sufficiently large $\delta$ by Propositions \ref{prop:ball comparison 0} and \ref{prop:tree vol bound 0}. If $\AD \geq \frac{\gamma}{\gamma - 1}$, then there exists (a random) $V>0$ such that $\nuphi (\Bphi (O, \delta)) \geq V$ for all $\delta \geq 1$.
\end{itemize}
In the case $\AD \leq \frac{\gamma}{\gamma - 1}$, we take some $\eta>0$ and set $t=r^{\frac{2\gamma - 1}{(\gamma - 1)(\AD+1)}}e^{-(\log r)^{\beta+2 \eta}}$. We then deduce from \eqref{eqn:Croydon exit tail bound} that, on the almost sure events above and provided $r$ is sufficiently large,
\begin{align*}
\ptstart{T_{B\left(O, r^{\frac{1}{\AD+1}}e^{(\log (r+1))^{\beta}}\right)} \leq t}{O}{} &\leq \ptstart{T_{\Bphi(O, r)} \leq t}{O}{} \\
&\leq 4\left[\frac{re^{-(\log r)^{\beta}}}{re^{-(\log r)^{\beta'}}} + \frac{r^{\frac{2\gamma - 1}{(\gamma - 1) (\AD+1)}}e^{-(\log r)^{\beta + 2\eta}}}{r^{\frac{\gamma}{(\gamma - 1)(\AD+1)}-\frac{\AD}{\AD+1}}e^{-(\log r)^{\beta + \eta}} re^{-(\log r)^{\beta '}}}\right] \\
&\leq 8e^{-(\log r)^{\beta'}}.
\end{align*}

In particular, if $r_n = 2^n$ we have from Borel-Cantelli that
\[
\ptstart{T_{B\left(O, r_n^{\frac{1}{\AD+1}}e^{(\log (r_n+1))^{\beta}}\right)} \leq r_n^{\frac{2\gamma - 1}{(\gamma - 1) (\AD+1)}}e^{-(\log r_n)^{\beta + 2\eta}} \text{ i.o. }}{O}{}=0.
\]
If $r \in [r_n, r_{n+1}]$, this implies that 
\[
T_{B\left(O, r^{\frac{1}{\AD+1}}e^{(\log (r+1))^{\beta}}\right)} \geq T_{B\left(O, r_n^{\frac{1}{\AD+1}}e^{(\log (r_n+1))^{\beta}}\right)} \geq r^{\frac{2\gamma - 1}{(\gamma - 1) (\AD+1)}}e^{-(\log r)^{\beta + 3\eta}}
\]
for all sufficiently large $r$. By inverting this relation we deduce the result. If instead $\AD \geq \frac{\gamma}{\gamma - 1}$ we instead take $t=re^{-(\log r)^{\beta + 2\eta}}$ and \eqref{eqn:Croydon exit tail bound} instead gives that \begin{align*}
\ptstart{T_{B\left(O, r^{\frac{1}{\AD+1}}e^{(\log (r+1))^{\beta}}\right)} \leq r e^{-(\log r)^{\beta + 2\eta}}}{O}{} &\leq \ptstart{T_{\Bphi(O, r)} \leq re^{-(\log r)^{\beta + 2\eta}}}{O}{} \\
&\leq 4\left[\frac{re^{-(\log r)^{\beta}}}{re^{-(\log r)^{\beta'}}} + \frac{re^{-(\log r)^{\beta + 2\eta}}}{Ve^{-(\log r)^{\beta + \eta}} re^{-(\log r)^{\beta '}}}\right] \\
&\leq C_{V} e^{-(\log r)^{\beta'}},
\end{align*}
where $C_{V}>0$ is a (random) constant that depends on $V$. The final result can be deduced from Borel-Cantelli exactly as above.
\end{proof}

\section{Reinforced transient regime: $\alpha>1, \Delta>0$} \label{sctn:trans results}

In the transient regime resistance does not provide the right framework to characterise the scaling limit of LERRW, since the resistance between pairs of points in the correspondence collapses to zero under any rescaling. However, we can still use resistance when working directly with the unrescaled process to understand its exit times from a ball of radius $r$.

Accordingly, we let $(Y_t)_{t \geq 0}$ denote a constant speed continuous-time LERRW on $\Ti$ as given by Definition \ref{def:Kesten's tree} with offspring distribution satisfying \eqref{eqn:dom of att def} (the final result transfers directly to a discrete-time LERRW by the strong law of large numbers applied to the time index). For convenience, we assume that the function $a_n$ appearing in $\eqref{eqn:dom of att def}$ is of the form $a_n = cn^{1/\gamma}$; however one can also incorporate a slowly-varying correction and ``pull it through'' in all the proofs that follow. More precisely, $(Y_t)_{t \geq 0}$ has the annealed law of a continuous-time random walk in the Dirichlet random environment of \eqref{eqn:rescaled Dirichlet parameters} on the infinite tree $T_{\infty}$ with $\textsf{exp}(1)$ holding time at each vertex and initial weights given by
\begin{equation}\label{eqn:initial weights trans}
\alpha_{\{\overleftarrow{x},x\}}=
d_{\Ti}(O_{\infty}, x)^{\alpha}
\end{equation}
(so we just take $\alpha_{\{\overleftarrow{x},x\}}$ instead of $\alpha^{(n)}_{\{\overleftarrow{x},x\}}$ in \eqref{eqn:rescaled Dirichlet parameters}). We will work primarily with the RWRE in this section and denote the law of the environment by $\bP$. The discrete setup of Section \ref{sctn:GHP convergence} still holds, so that for $y \in \Ti$, cf. \eqref{eqn:discrete V, res, mu def}, we can write
\begin{align}\label{eqn:trans mm def}
V(y) = \sum_{O_{\infty} \prec v \preceq y} \log \rho_{v}, \hspace{1cm}
R(\overleftarrow{y},y) = e^{V(y)}, \hspace{1cm}
\nu(y) &= e^{-V(y)}\mathbbm{1}\{y \neq O_{\infty}\} + \sum_{z: \overleftarrow{z}=y} e^{-V(z)}.
\end{align}

Let $O_{\infty} = b_0, b_1, \ldots$ denote the backbone vertices of $\Ti$ (these are the special vertices of Definition \ref{def:Kesten's tree}), ordered by their distance from the root. Given $i \geq 1$, also let $T^i$ denote the (infinite) subtree of $\Ti$ rooted at $b_i$, and for any integer $r>0$ let $B_{T^i}(b_i, r)$ denote the ball of radius $r$ around $b_i$ \textit{in this subtree}, defined with respect to the graph metric. We use the notation $B^R$ to denote a ball defined with respect to the metric $R$ defined above. 

We start with a brief lemma on the structure of $\Ti \setminus T^{r}$.

\begin{prop}\label{prop:A_r diam bound}
$\bPb$-almost surely, for any $\epsilon > 0$,
\[
\limsup_{r \to \infty} \frac{\diam (\Ti \setminus T^{r})}{r (\log r)^{\gamma + \epsilon}}< \infty.
\]
\end{prop}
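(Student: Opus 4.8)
The plan is to bound $\diam(\Ti\setminus T^r)$ by the length of the backbone segment up to level $r$ plus twice the maximal height of the finite bushes hanging off the first $r$ backbone vertices, and then to control those heights by a Borel--Cantelli argument. Write $\rho = b_0, b_1, b_2, \ldots$ for the backbone of $\Ti$, so that $T^r = \{v \in \Ti : b_r \preceq v\}$ and $\Ti \setminus T^r$ is the subtree consisting of $b_0, \ldots, b_{r-1}$ together with the finite $\xi$-Galton--Watson bushes grafted to these vertices: a special vertex $b_i$ has $N_i \sim \xi^*$ children (with $\xi^*$ the size-biased law), one of which is $b_{i+1}$, the remaining $N_i-1$ being independent roots of ordinary $\xi$-Galton--Watson trees. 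Let $H_i$ denote the largest height among the bushes at $b_i$ (with $H_i=0$ if $N_i=1$). Since $\Ti\setminus T^r$ is a subtree, every path between two of its points runs through the backbone, giving $d_{\Ti}(x,y) \le H_i + (j-i) + H_j$ when $x,y$ lie in bushes at $b_i,b_j$ with $i\le j\le r-1$; hence $\diam(\Ti\setminus T^r)\le (r-1) + 2\max_{0\le i<r}H_i$. As $(r-1)=o(r(\log r)^{\gamma+\epsilon})$, it suffices to show $\max_{0\le i<r}H_i = O(r(\log r)^{1+\epsilon})$ $\bPb$-almost surely, which is stronger than what is claimed since $\gamma>1$.

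The key estimate is the uniform tail bound $\prb{H_i \ge h} \le C h^{-1}$ for all $h\ge 1$. Conditioning on $N_i$, and writing $q_h := \prb{\Height(\tau)\ge h}$ for an ordinary critical $\xi$-Galton--Watson tree $\tau$, we have $\prb{H_i\ge h} = \E{1-(1-q_h)^{N_i-1}} \le \E{1\wedge (N_i q_h)}$. It is standard (Slack for $\gamma<2$, Kolmogorov for $\gamma=2$; this is the discrete counterpart of $N(H>t)=c_\gamma t^{-1/(\gamma-1)}$ used earlier) that $q_h \le C_1 h^{-1/(\gamma-1)}$, and here the assumption $a_n = cn^{1/\gamma}$ is what ensures there is no slowly-varying correction. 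When $\gamma=2$ the size-biased law $\xi^*$ has finite mean, so $\prb{H_i\ge h}\le \E{N_i}q_h \le C h^{-1}$ immediately. When $\gamma\in(1,2)$, $\xi^*$ has infinite mean but tail $\prb{N_i\ge k}\le C_2 k^{-(\gamma-1)}$, hence $\E{N_i\mathbbm{1}\{N_i\le K\}} \le \sum_{k=1}^K \prb{N_i\ge k} \le C_3 K^{2-\gamma}$; splitting at $1/q_h$ gives
\[
\E{1\wedge (N_i q_h)} \le \prb{N_i > 1/q_h} + q_h\,\E{N_i\mathbbm{1}\{N_i\le 1/q_h\}} \le C_2 q_h^{\gamma-1} + C_3 q_h^{\gamma-1} = (C_2+C_3)\,q_h^{\gamma-1},
\]
and since $q_h^{\gamma-1}\le C_1^{\gamma-1}h^{-1}$ this yields $\prb{H_i\ge h}\le Ch^{-1}$ in all cases.

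With this in hand, a union bound gives $\prb{\max_{0\le i<r}H_i\ge h}\le Crh^{-1}$. Taking $r_n = 2^n$ and $h = r_n(\log r_n)^{1+\epsilon}$ gives $\prb{\max_{i<r_n}H_i \ge r_n(\log r_n)^{1+\epsilon}} \le C(\log r_n)^{-(1+\epsilon)} = C(n\log 2)^{-(1+\epsilon)}$, which is summable in $n$. By Borel--Cantelli, $\max_{i<r_n}H_i < r_n(\log r_n)^{1+\epsilon}$ for all large $n$ $\bPb$-a.s., and for $r\in[r_n,r_{n+1}]$ monotonicity of $r\mapsto\max_{i<r}H_i$ gives $\max_{i<r}H_i\le \max_{i<r_{n+1}}H_i < r_{n+1}(\log r_{n+1})^{1+\epsilon}\le 4r(\log r)^{1+\epsilon}$ for $r$ large. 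Combined with the first paragraph, $\diam(\Ti\setminus T^r)\le C'' r(\log r)^{1+\epsilon}\le C'' r(\log r)^{\gamma+\epsilon}$ for all sufficiently large $r$, which proves the proposition.

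I expect the main obstacle to be precisely the heavy tail of the size-biased offspring $N_i$: since $\E{N_i}=\infty$ for $\gamma<2$, one cannot bound $\max_{i<r}H_i$ by a direct union bound over all bushes (the expected number of bushes at any fixed level is infinite), and one is forced to the per-backbone-vertex estimate $\prb{H_i\ge h}\le Ch^{-1}$ obtained by the truncation at $1/q_h$ above. A secondary point to verify carefully is that under $a_n = cn^{1/\gamma}$ the tails of $\Height(\tau)$ and of $N_i$ are genuine power laws, so that the Borel--Cantelli series along $r_n=2^n$ converges without an extra slowly-varying factor interfering.
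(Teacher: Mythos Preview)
Your proof is correct and in fact yields a sharper exponent ($(\log r)^{1+\epsilon}$ rather than $(\log r)^{\gamma+\epsilon}$). The paper takes a different decomposition: instead of bounding the per-backbone-vertex bush height $H_i$ directly, it first controls the \emph{total} number of bushes $\sum_{v\prec b_r}\deg v$ --- a sum of $r$ i.i.d.\ copies of the size-biased law $\xi^*$ --- by $r^{1/(\gamma-1)}\lambda^p$ via a tail bound for heavy-tailed sums, and only then union-bounds over all individual bushes using Slack's estimate $\prb{\Height(T)\ge r\lambda}\le c(r\lambda)^{-1/(\gamma-1)}$. Optimising over $p$ yields probability at most $c\lambda^{-1/\gamma}$, whence $\lambda=(\log r)^{\gamma+\hat\epsilon}$ is needed for summability along $r_n=2^n$. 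Your route avoids the heavy-tailed-sum lemma entirely: the truncation at $1/q_h$ integrates out $N_i$ and gives the clean bound $\prb{H_i\ge h}\le Ch^{-1}$, after which a union bound over only the $r$ backbone vertices (rather than over $\sim r^{1/(\gamma-1)}$ individual bushes) suffices. This is more elementary, more self-contained, and loses less in the union bound.
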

\begin{proof}
Let $T$ denote an \textit{unconditioned} Galton-Watson tree with the same offspring distribution as $\Ti$. By conditioning on the heights of the subtrees grafted to the infinite backbone, it follows from Definition \ref{def:Kesten's tree}, \cite[Lemma A.2]{archer2020random}, \cite[Theorem 2]{SlackInfVar} and a union bound that there exists $c<\infty$ such that for any $p>0, r \geq 1, \lambda \geq 1$,
\begin{align*}
    \prb{\diam (\Ti \setminus T^{r}) \geq 2 r \lambda} &\leq \prb{ \sum_{v \prec b_r} \deg v \geq r^{\frac{1}{\gamma-1}} \lambda^p} + r^{\frac{1}{\gamma-1}} \lambda^p\prb{\Height (T) \geq r\lambda} \\
    &\leq c\lambda^{-p(\gamma - 1)} + cr^{\frac{1}{\gamma-1}} \lambda^p (r\lambda)^{-\frac{1}{\gamma - 1}}.
\end{align*}
Taking $p=\frac{1}{\gamma(\gamma - 1)}$ gives an upper bound of order $\lambda^{-\frac{1}{\gamma}}$. Now set $\hat{\epsilon} = \frac{\epsilon}{2}$, apply Borel-Cantelli with $\lambda_r = (\log r)^{\gamma + \hat{\epsilon}}$ along the subsequence $r_n=2^n$, then use monotonicity and divide through by $(\log r)^{\hat{\epsilon}}$ for the result. 
\end{proof}

The difference with the recurrent regime is that typical terms of the form $\log \rho_{v}$ will now be negative. In particular, we show in the Appendix in Lemmas \ref{lem:log sum exp app 2} and \ref{cor:log var app 2} that, as $d_{\Ti}(O_{\infty}, y) \to \infty$,
\begin{align}\label{eqn:transient var and exp sum}
\begin{split}
    \E{\sum_{O_{\infty}\prec v \preceq y} \log \rho_v} & = -\alpha \log (d_{\Ti}(O_{\infty}, y)) + O(1), \hspace{1cm} 
    \Var{\sum_{O_{\infty}\prec v \preceq y} \log \rho_v} = O(1).
\end{split}
\end{align}

We start with the following bound on the behaviour of the potential on $\Ti$.

\begin{lemma}\label{lem:transient potential control}
For any $\delta > 0$, $\bPb$-almost surely,
\begin{enumerate}[(i)]
    \item 
    \[
    \pr{\left|\sum_{O_{\infty}\prec v \preceq b_n} \log \rho_v + \alpha \log (d_{\Ti}(O_{\infty}, b_n))\right| \geq (\log d_{\Ti}(O_{\infty}, b_n))^{\frac{1}{2}+\delta} \text{ i.o.}} = 0.
    \]
    \item As $m\to \infty$, 
    \[
    \inf_{y: b_m \preceq y} \pr{\left|\sum_{O_{\infty}\prec u \preceq v} \log \rho_u + \alpha \log (d_{\Ti}(O_{\infty}, v))\right| < (\log d_{\Ti}(O_{\infty}, v))^{\frac{1}{2}+\delta} \text{ for all } b_m \preceq v \prec y} \to 1.
    \]
\end{enumerate}

\end{lemma}
\begin{proof}
\begin{enumerate}[(i)]
    \item For $y \in \Ti$, let $W_y = \sum_{O_{\infty}\prec v \preceq y} \log \rho_v + \alpha \log (d_{\Ti}(O_{\infty}, y))$. It follows from Chebyshev's inequality and the bounds of \eqref{eqn:transient var and exp sum} that there exists a constant $c$ such that for any $y \in T_{\infty}$ and any $A>0$,
\begin{equation}\label{eqn:Cheb trans 2}
\pr{\left|W_y\right| \geq (\log d_{\Ti}(O_{\infty}, y))^{A}} \leq c(\log d_{\Ti}(O_{\infty}, y))^{-2A}.
\end{equation}
Applying Borel-Cantelli, we therefore deduce that for any $\delta > 0$,
\begin{equation} \label{eqn:transient Borel cantelli calc}
\pr{\left|W_{b_{2^m}}\right| \geq (\log d_{\Ti}(O_{\infty}, b_{2^m}))^{\frac{1+2\delta}{2}} \text{ i.o.}}=0.
\end{equation}
Moreover, if $r \in [2^m, 2^{m+1}]$ and $\delta < 1$, then $|(\log d_{\Ti}(O_{\infty}, b_{2^{m+1}}))^{\frac{1+\delta}{2}} - (\log d_{\Ti}(O_{\infty}, b_{r}))^{\frac{1+\delta}{2}}| \leq \log 2$, so that
\begin{align*}
&\prcond{ \left|W_{b_{2^{m+1}}}\right| \geq (\log (d_{\Ti}(O_{\infty}, b_{2^{m+1}})))^{\frac{1+\delta}{2}} }{\left|W_{b_{r}}\right| \geq (\log (d_{\Ti}(O_{\infty}, b_{r})))^{\frac{1+2\delta}{2}}}{} \\
&\geq \pr{\sum_{b_r \prec v \preceq b_{2^{m+1}}} \log \rho_v \geq -\frac{1}{2}(\log (d_{\Ti}(O_{\infty}, b_{r})))^{\frac{1+2\delta}{2}}}
\to 1,
\end{align*}
as $m\to \infty$ by \eqref{eqn:transient var and exp sum} and Chebyshev's inequality. It therefore follows that
\begin{align*}
&\frac{1}{2}\pr{\left|W_{b_{r}}\right| \geq (\log (d_{\Ti}(O_{\infty}, b_{r})))^{\frac{1+2\delta}{2}} \text{ i.o.}} \leq \pr{\left|W_{b_{2^{m+1}}}\right| \geq (\log (d_{\Ti}(O_{\infty}, b_{2^{m+1}})))^{\frac{1+\delta}{2}} \text{ i.o.}}=0.
\end{align*}
The last equality holds from \eqref{eqn:transient Borel cantelli calc}. This proves $(i)$ since $\delta > 0$ was arbitrary.
\item We first prove $(ii)$ 
when $y$ is on the backbone. First take $m \geq 1$ and note that 
\[
\pr{\left|\sum_{O_{\infty}\prec v \preceq b_k} \log \rho_v + \alpha \log (d_{\Ti}(O_{\infty}, b_k))\right| < (\log d_{\Ti}(O_{\infty}, b_k))^{\frac{1}{2}+\delta} \text{ for all } k \geq m} \to 1,
\]
as $m \to \infty$ by part $(i)$. 
Now note that if $y \in T_{\infty}$ with $b_k \preceq y$ for some $k \geq m$ and $y$ is not on the backbone, then it follows from the Dirichlet construction of Section \ref{sctn:LERRW and RWRE connection trees} that $R(b_k, y)$ and $(\sum_{u \prec v} \log \rho_u)_{v\in [b_m,y]}$ are distributed as $R(b_k, b_{d_{\Ti}(O_{\infty}, y)})$ and $(\sum_{u \prec v} \log \rho_u)_{v\in [b_m, b_{d_{\Ti}(O_{\infty},y)}]}$ respectively. Part $(ii)$ follows.
\end{enumerate}
\end{proof}

Recall that our aim in this section is to prove Theorem \ref{thm:almost sure limsup transient}. Our strategy to do this will be to divide the ball $B(O_{\infty}, r)$ up into smaller sets which must all be traversed by the RWRE of \eqref{eqn:rescaled Dirichlet parameters} before it can exit $B(O_{\infty}, r)$. We then apply a well-known chaining technique to bound the sum of all the exit times from these smaller sets. We then transfer this back to the LERRW using \eqref{lerrwannealed}.

Before doing this, we give some definitions of good and typical vertices and establish some of their properties.

\begin{defn}\label{def:good typical}
Fix some $\delta > 0$ and then fix some $A' > A > \frac{1}{2} + \delta$. To ease notation in the rest of this section, we view these parameters as fixed and do not record them as indices.
\begin{enumerate}
\item  Given $r, \lambda>1$, we define a sequence $(r_i)_{i=0}^{\infty}$ by $r_i = r_i(r,\lambda) = \frac{r}{2}(1+i \lambda^{-1})$ and set $\tilde{r} = \tilde{r} (r,\lambda) = \frac{r}{2}e^{-(\log r)^{A'}}\lambda^{-1}$.
\item We say that a vertex $y \in \Ti$ is \textbf{\mgood}if $b_m \preceq y$ and 
\[
\left|\sum_{O_{\infty}\prec u \preceq v} \log \rho_u + \alpha \log (d_{\Ti}(O_{\infty}, v))\right| < (\log d_{\Ti}(O_{\infty}, v))^{\frac{1}{2}+\delta} \ \ \ \ \ \mathrm{ for \ all }  \ b_m \preceq v \prec y,
\]
and that $y$ is \textbf{\mbad}otherwise.
\item For $c, C, m \in [0, \infty), r>2m$ and $\lambda >1$, we say that an index $0 \leq i < \lambda$ is \textbf{\typical}if 
\[
\frac{c}{2} r^{\frac{\gamma}{\gamma -1}} \lambda^{\frac{-\gamma}{\gamma -1}}e^{-\frac{\gamma}{\gamma -1}(\log r)^{A'}} \leq |\{y \in B_{T^{r_i}}(b_{r_i},\tilde{r}): y \ \text{\mgood}\}| < C r^{\frac{\gamma}{\gamma -1}} \lambda^{\frac{-\gamma}{\gamma -1}}e^{-\frac{\gamma}{\gamma -1}(\log r)^{A'}}.
\]
\end{enumerate}
\end{defn}

We will use the fact that for typical indices $i$, we can control (in probability) the time for the RWRE to exit a ball of the form $B_{T^{r_i}}(b_{r_i},\tilde{r})$. Moreover, it is highly likely that most indices are typical. We make this precise in the following lemma.

\begin{lemma}\label{lem:trans good props}
\begin{enumerate}[(i)]
Take $A'>A$ as in Definition \ref{def:good typical}, and $A'' > A'$.
    \item $\bPb \times \bP$-almost surely, there exist deterministic $C_{\alpha}>0$, $\tilde{C}_{\alpha}<\infty$ such that 
    \begin{align*}
C_{\alpha} \lambda^{-1} r^{1-\alpha} e^{-(\log r)^{A}} \leq R(b_{r_i}, b_{r_{i+1}}) &\leq \tilde{C}_{\alpha} \lambda^{-1} r^{1-\alpha} e^{(\log r)^{A}},
\end{align*}
for all $\lambda>1$, all $1 \leq i < \lambda$ and all sufficiently large $r$.
\item Moreover, let $B^R$ denote the ball with respect to the metric $R$ defined in \eqref{eqn:trans mm def}. Then for any $\epsilon >0$, it holds for all sufficiently large $r$, any $\lambda > 1$ and any $0 \leq i < \lambda$ that with $\bPb \times \bP$-probability at least $1-\epsilon$,
\begin{align*}
\nu\left(B_{T^{r_i}}(b_{r_i},\tilde{r}) \cap B^R(b_{r_i},\tilde{r}e^{(\log r)^A})\right) &\geq r^{\alpha} e^{-(\log r)^{A}} r^{\frac{\gamma}{\gamma -1}} \lambda^{\frac{-\gamma}{\gamma -1}}e^{-\frac{\gamma}{\gamma -1}(\log r)^{A''}}.
\end{align*}
\item Let $N=N(m,r,c,C,\lambda)$ denote the number of \typical $i$ in $\{0, \ldots, \lambda -1\}$. Then for any $\epsilon>0$, there exist deterministic $c > 0, C< \infty, m< \infty$ and an event $A_{\epsilon}$ such that, $\bPb$-almost surely, $\pr{A_{\epsilon}^c}<\epsilon$ and
$
\pr{N \leq \frac{\lambda}{2} \text{ and } A_{\epsilon}} \leq e^{-\frac{\lambda}{6}}.
$
\end{enumerate}
\end{lemma}
\begin{proof}
\begin{enumerate}[(i)]
\item To prove point $(i)$, note from Lemma \ref{lem:transient potential control}$(i)$ that we almost surely have for all sufficiently large $r$ and all $\lambda>1$ that
\begin{align}\label{eqn:trans case res calc}
\begin{split}
R(b_{r_i}, b_{r_{i+1}}) = \sum_{b_{r_i} \prec y \preceq b_{r_{i+1}}} e^{V(y)} &\geq \sum_{b_{r_i} \prec y \preceq b_{r_{i+1}}} e^{-\alpha \log (d_{\Ti}(O_{\infty}, y)) - (\log d_{\Ti}(O_{\infty}, y))^{A}} \\
&\geq \frac{{r_i}^{1-\alpha} - r_{i+1}^{1-\alpha}}{\alpha - 1} e^{-(\log r_{i+1} )^{A}} \\
&\geq C_{\alpha} r^{1-\alpha} \lambda^{-1} e^{-(\log r)^{A}}.
\end{split}
\end{align}
The calculation for the upper bound follows similarly.

\item For the second point, take some $i\geq 1$ and some $y \in B_{T^{r_i}}(b_{r_i},\tilde{r})$ and choose some $0 < \epsilon \ll \frac{1}{96}$. Note that, if $y$ is $m$-good, then similarly to \eqref{eqn:trans case res calc}, it holds that
\begin{align}\label{eqn:res in ball mgood}
\begin{split}
&R(b_{r_i}, y) \leq \frac{r_i^{1-\alpha}-d_{\Ti}(O_{\infty},y)^{1-\alpha}}{\alpha - 1} e^{(\log d_{\Ti}(O_{\infty},y) )^{A}} \leq C_{\alpha}'  r^{1-\alpha} \lambda^{-1} e^{-(\log r)^{A'}}e^{(\log d_{\Ti}(O_{\infty},y) )^{A}} \\
& \hspace{7.2cm} \leq r^{1-\alpha}\lambda^{-1}e^{-(\log r)^{\frac{1}{2}(A+A')}}, \\
&\nu (y) \geq e^{-V(y)} \geq d_{\Ti}(O_{\infty},y)^{\alpha}e^{-(\log d_{\Ti}(O_{\infty}, y)^{\frac{1}{2}+\delta}} \geq r^{\alpha}e^{-(\log r)^A}.
\end{split}
\end{align} 
Therefore, by Lemma \ref{lem:transient potential control}$(ii)$, which says that $y$ is $m$-good whp provided $d_{\Ti}(O_{\infty}, y)$ and $m$ are sufficiently large, we $\bPb$-almost surely have that for all sufficiently large $m$, if we fix some $r>2m, i \geq 1$ and $y \in B_{T^{r_i}}(b_{r_i},\tilde{r})$, then
\begin{align*}
\pr{\left|\sum_{O_{\infty}\prec u \preceq v} \log \rho_u + \alpha \log (d_{\Ti}(O_{\infty}, v))\right| \geq (\log d_{\Ti}(O_{\infty}, v))^{\frac{1}{2}+\delta} \text{ for some } b_m \preceq v \prec y} &\leq \epsilon, \\
    \pr{ R(b_{r_i}, y) \geq r^{1-\alpha}\lambda^{-1}e^{-(\log r)^{\frac{1}{2}(A+A')}}} &\leq \epsilon \\
    \pr{ \nu (y) \leq r^{\alpha}e^{-(\log r)^A}} &\leq \epsilon.
\end{align*}
Now fix $m$ large enough satisfying the above, and take any $r>2m$. By Markov's inequality and the inequalities above, it $\bPb$-almost surely holds for all sufficiently large $r$ and any $i \geq 1$ that
\begin{align}\label{eqn:transient proportion good}
 \pr{|\{y \in B_{T^{r_i}}(b_{r_i},\tilde{r}): y \text{ is \mbad}\}| \geq \frac{1}{2}|B_{T^{r_i}}(b_{r_i},\tilde{r})|} \leq 6\epsilon.
\end{align}
Moreover, since $B_{T^{r_i}}(b_{r_i},\tilde{r}) \overset{(d)}{=} B(O_{\infty},\tilde{r})$ and $A'' > A'$, we have from \cite[Lemmas 2.5 and 2.7]{croydon2008random} that
\begin{align*}
\begin{split}
    &\pr{|B_{T^{r_i}}(b_{r_i},\tilde{r})| \notin \left[r^{\frac{\gamma}{\gamma -1}} \lambda^{\frac{-\gamma}{\gamma -1}}e^{-\frac{\gamma}{\gamma -1}(\log r)^{A''}}, r^{\frac{\gamma}{\gamma -1}} \lambda^{\frac{-\gamma}{\gamma -1}}e^{-\frac{\gamma}{\gamma -1}(\log r)^{A''}}\right]}  \leq \epsilon.
\end{split}
\end{align*}
Therefore the result follows by a union bound over the relevant events above.
\item As above, we have from \cite[Lemmas 2.5 and 2.7]{croydon2008random} that there exist deterministic $c>0$, $C< \infty$ such that, for any $r>1, \lambda > 1, i \geq 1$,
\begin{align}\label{eqn:transient vol LB}
\begin{split}
    &\pr{|B_{T^{r_i}}(b_{r_i},\tilde{r})| \notin \left[c r^{\frac{\gamma}{\gamma -1}} \lambda^{\frac{-\gamma}{\gamma -1}}e^{-\frac{\gamma}{\gamma -1}(\log r)^{A'}}, C r^{\frac{\gamma}{\gamma -1}} \lambda^{\frac{-\gamma}{\gamma -1}}e^{-\frac{\gamma}{\gamma -1}(\log r)^{A'}}\right]} \\
    &\leq \pr{|B_{T^{r_i}}(b_{r_i},\tilde{r})| \notin \left[2^{-\frac{\gamma}{\gamma - 1}}c \tilde{r}^{\frac{\gamma}{\gamma -1}}, 2^{\frac{\gamma}{\gamma - 1}}C \tilde{r}^{\frac{\gamma}{\gamma -1}}\right]} \leq \frac{1}{8} - 6\epsilon.
\end{split}
\end{align}
 We deduce from a union bound over the events in \eqref{eqn:transient proportion good} and \eqref{eqn:transient vol LB} that, provided $m$ is sufficiently large,
\begin{align*}
    \pr{i \text{ is not \typical}} \leq \frac{1}{8}.
\end{align*}
These events are not independent for distinct $i$. However, by Lemma \ref{lem:transient potential control} we can choose $m$ so that the probability appearing in Lemma \ref{lem:transient potential control}$(ii)$ is at least $1-\epsilon$, and define the event $A_{\epsilon}$ to be the corresponding high probability event. Then on the event $A_{\epsilon}$ we can control $V(b_{r_i})$ and $R(O_{\infty}, b_{r_i})$ for all $i$ (for sufficiently large $r$), conditionally on which the tail bounds above hold independently for each $i$, so if $N = \left|\left\{ i:  i \text{ is \typical}\right\}\right|$, then $N$ stochastically dominates a $\textsf{Bin}$($2\lambda, \frac{7}{8}$) random variable. In particular this establishes the claim.
\end{enumerate}
\end{proof}

In the proof of Theorem \ref{thm:almost sure limsup transient} we assume that $\epsilon>0$ has been fixed and $c, C$ are as in Lemma \ref{lem:trans good props}$(iii)$.

\begin{proof}[Proof of Theorem \ref{thm:almost sure limsup transient}]
We proceed in three steps:
\begin{enumerate}
    \item Firstly we couple $T_{\infty}$ and its Dirichlet weights with a modified model $\hat{T}_{\infty} = \hat{T}_{\infty}(r, \lambda)$ such that (using a hat to denote analogous quantities in this model), letting $\hat{\tau}_i$ denote the time to hit $b_{r_{i+1}}$ starting from $b_{r_i}$, each $\hat{\tau}_i$ is stochastically dominated by its analogous quantity in $\Ti$.
    We then show that, with probability at least $1-\epsilon$, we can obtain comparable upper and lower bounds for $\E{\hat{\tau}_i}$ in the modified model for all typical indices $i$. We use these bounds to show that there $\bPb$-almost surely exists a constant $\kappa_{\alpha} \in (0, \infty)$ and an event $A_{\epsilon}$ such that $\pr{A_{\epsilon}^c}<\epsilon$ (the same event as in Lemma \ref{lem:trans good props}$(iii)$) and such that, provided $ r^{\frac{\gamma}{\gamma -1}} \lambda^{\frac{-\gamma}{\gamma -1}}e^{-\frac{\gamma}{\gamma -1}(\log r)^{A'}} \geq r\lambda^{-1}$, we have $\bPb$-almost surely have for all $0 \leq i < \lambda$ that
    \begin{equation}\label{eqn:exit time trans prob bound}
\ptstart{\hat{\tau_i} \leq s}{b_{r_i}, \omega}{}\mathbbm{1}\{A_{\epsilon}\} \leq 1-\kappa_{\alpha} e^{-\frac{3\gamma-2}{\gamma -1}(\log r)^{A'}} + \kappa_{\alpha} \frac{s}{r^{\frac{2\gamma - 1}{\gamma -1}} \lambda^{\frac{-(2\gamma - 1)}{\gamma -1}}e^{-\frac{\gamma}{\gamma -1}(\log r)^{A'}}}
    \end{equation}
for each \typical $i$.
\item We then apply a chaining result of \cite{barlow1998fractals} to deduce that, $\bPb$-almost surely,
\begin{align*}
 \ptstart{\sum_{i=1}^N \hat{\tau}_i < t, N \geq \frac{\lambda}{2}}{\omega}{}\mathbbm{1}\{A_{\epsilon}\} &\leq \exp \left\{ -e^{(1-\frac{\delta}{4})(\log r)^{A' + \delta}} \right\}.
\end{align*}
    \item Using the stochastic domination, we then transfer this result to the analogous sum $\sum_{i=1}^N {\tau}_i$ on the original tree. Using this tail bound, we bound the hitting time of $b_{r}$ with a Borel-Cantelli argument.
\end{enumerate}
\textbf{Step 1.} Fix $\epsilon>0$ and $\delta>0$, and choose $m'$ large enough that the probability in Lemma \ref{lem:transient potential control} is at least $1-\epsilon$. Let $A_{\epsilon}$ be the corresponding high probability event. For any $r>2m'$, we can proceed as follows.

Set $m = \lfloor \frac{r}{2}\rfloor$. The bound of Lemma \ref{lem:transient potential control}$(ii)$ still holds since this can only increase the value of $m$, and for the rest of the proof we work on the event $A_{\epsilon}$, which in particular implies that
\[
\left|\sum_{O_{\infty}\prec u \preceq v} \log \rho_u + \alpha \log (d_{\Ti}(O_{\infty}, v))\right| < (\log d_{\Ti}(O_{\infty}, v))^{\frac{1}{2}+\delta} \text{ for all } b_m \preceq v \prec b_r.
\]
Given $r>1, \lambda > 1$ and $T_{\infty}$, we define $\hat{T}_{\infty} = \hat{T}_{\infty}(r, \lambda)$ from $\Ti$ as follows:
\begin{itemize}
    \item First remove all \mbad vertices and their incident edges from $T_{\infty}$.
    \item Then also remove all vertices in $\bigcup_{i=0}^{\lambda-1} \left(T^{r_i} \setminus T^{r_{i+1}} \right) \setminus \left(\bigcup_{i=0}^{\lambda-1} B_{T^{r_i}}(b_{r_i},\tilde{r}) \cup \bigcup_{j=r_i}^{r_{i+1}} b_j \right)$.
\end{itemize}  
(For the vertices and edges that are retained in $\hat{T}_{\infty}$, they retain the measures and resistances that they previously had in $T_{\infty}$). The resulting structure is $\hat{T}_{\infty}$. Note that the definition of \mgood and Lemma \ref{lem:transient potential control} ensures that $\hat{T}_{\infty}$ is really a connected tree for all sufficiently large $r$.

We use hat notation to denote all analogous quantities in $\hat{T}_{\infty}$. For each $i < \lambda$, we consider the subtree $\hat{T}^{r_i}$ (rooted at $b_{r_i}$) and let $\hat{\tau}_i$ denote the exit time from $\hat{T}^{r_i} \setminus \hat{T}^{r_{i+1}}$ \emph{in the subtree} $\hat{T}^{r_i}$ for a random walk started at $b_{r_i}$ (this means that the random walk can only exit $\hat{T}^{r_i} \setminus \hat{T}^{r_{i+1}}$ through $b_{r_{i+1}}$; since the random walk eventually has to pass through $b_{r_i}$ to exit $\hat{T}_{\infty}\setminus \hat{T}^{r_i}$ and further excursions back towards the root and in subtrees can only slow the random walk down, it is clearly sufficient to restrict to this set, and clearly $\hat{\tau}_i$ is stochastically dominated by $\tau_i$ for all $i$).

If $r$ is sufficiently large and $y \in B_{\hat{T}^{r_i}}(b_{r_i},\tilde{r})$ is \mgoodd, then by \eqref{eqn:res in ball mgood} and Lemma \ref{lem:trans good props}$(i)$, we have that

\begin{equation}\label{eqn:transient res ratio good}
\frac{R(b_{r_i}, y)}{R(b_{r_i}, b_{r_{i+1}})} \leq \frac{r^{1-\alpha} \lambda^{-1} e^{-(\log r)^{\frac{1}{2}(A+A')}} }{C_{\alpha}  r^{1-\alpha} \lambda^{-1} e^{-(\log r)^{A}}} \leq \frac{1}{2},
\end{equation}
since $A' > A$.  Now let $\hat{A}= \{y \in B_{T^{r_i}}(b_{r_i},\tilde{r}):y \ \mgood\}$, let $\hat{B}=\hat{T}^{r_i}\setminus \hat{T}^{r_{i+1}}$ and let $g_{\hat{A}}(\cdot, \cdot)$, $g_{\hat{B}}(\cdot, \cdot)$ respectively denote the Green's function for the diffusion killed on exiting ${\hat{A}}$ and $\hat{B}$ (e.g. as in \cite[Equation (4.7)]{kumagai2004harnack}). It then follows as in \cite[Equation (4.8)]{kumagai2004harnack} (also using \eqref{eqn:transient res ratio good}) that
\[
\frac{g_{\hat{B}}(b_{r_i}, y)}{g_{\hat{B}}(b_{r_i}, b_{r_i})} \geq 1 - \sqrt{2}.
\]
Recall that $\hat{\tau}_{i}$ is the exit time of $\hat{B}$. Consequently, since $g_{\hat{B}}(b_{r_i}, b_{r_i}) = R(b_{r_i}, b_{r_{i+1}})$ by \cite[Equation (4.5)]{kumagai2004harnack}, if $i$ is $(m,r,c,C,\lambda)$-typical, we have from Definition \ref{def:good typical}, Lemma \ref{lem:trans good props}$(i)$ and  \eqref{eqn:res in ball mgood} that
\begin{align}\label{eqn:Etaui UB}
\begin{split}
    \estartpt{\hat{\tau}_{i}}{b_{r_i}, \omega}\mathbbm{1}\{A_{\epsilon}\} &\geq \sum_{y \in \hat{A}} g_{\hat{B}}(b_{r_i}, y) \nu(y) \\
    &\geq (1 - \sqrt{2}) \sum_{y \in \hat{A}} R(b_{r_i}, b_{r_{i+1}}) \nu(y) \\
    &\geq (1 - \sqrt{2}) \frac{c}{2} r^{\frac{\gamma}{\gamma -1}} \lambda^{\frac{-\gamma}{\gamma -1}}e^{-\frac{\gamma}{\gamma -1}(\log r)^{A'}} \cdot C_{\alpha} r^{1-\alpha} \lambda^{-1}  e^{-(\log r)^{A}} \cdot r^{\alpha}e^{-(\log r)^A} \\
    &\geq r^{\frac{2\gamma - 1}{\gamma -1}} \lambda^{\frac{-(2\gamma-1)}{\gamma -1}}e^{-\frac{3\gamma-2}{\gamma -1}(\log r)^{A'}}.
\end{split}
\end{align}
We would like to obtain a similar upper bound. First note that for any $z \in \cup_{i=0}^{\lambda-1} B_{\hat{T}^{r_i}}(b_{r_i},\tilde{r})$, it also follows from \eqref{eqn:transient res ratio good} and Lemma \ref{lem:trans good props}$(i)$ that $R(z, b_{r_{i+1}}) \leq \frac{3}{2} R(b_{r_{i}}, b_{r_{i+1}}) \leq \frac{3}{2} \tilde{C}_{\alpha} r^{1-\alpha} \lambda^{-1} e^{(\log r)^{A}}$. Similarly if $z \in \cup_{j=r_i}^{r_{i+1}} b_j$, we have that $R(z, b_{r_{i+1}}) \leq R(b_{r_{i}}, b_{r_{i+1}})$ and therefore the same upper bound holds. Therefore, since $g_{\hat{B}}(z,y)\le g_{\hat{B}}(y,y)$ for all such $z$ and $g_{\hat{B}}(y,y) = R(y, b_{r_{i+1}})$ by \cite[Equation (4.6)]{kumagai2004harnack} and by \cite[Equation (4.5)]{kumagai2004harnack} respectively, if $i$ is \typical we can use Definition \ref{def:good typical} and \eqref{eqn:res in ball mgood} to deduce that there exists a constant $K_{\alpha}<\infty$ such that, whenever $ r^{\frac{\gamma}{\gamma -1}} \lambda^{\frac{-\gamma}{\gamma -1}}e^{-\frac{\gamma}{\gamma -1}(\log r)^{A'}} \geq r\lambda^{-1}$,
\begin{align}\label{eqn:Etaui LB}
\begin{split}
  \estartpt{\hat{\tau}_{i}}{z, \omega}\mathbbm{1}\{A_{\epsilon}\} &= \sum_{y \in \hat{B}}g_{\hat{B}}(z,y) \nu(y) 
  \\
 &\leq  \sum_{y\in \hat{B}}g_{\hat{B}}(y,y) \nu(y) \\
  &\leq (|\hat{A}|+(r_{i+1} - r_i)) \sup_{y \in \hat{B}} \{ R(y, b_{r_{i+1}}) \cdot \nu(y): y \in \hat{A}\} \\
  &\leq C( r^{\frac{\gamma}{\gamma -1}} \lambda^{\frac{-\gamma}{\gamma -1}}e^{-\frac{\gamma}{\gamma -1}(\log r)^{A'}} + r\lambda^{-1}) \cdot \frac{3}{2} \tilde{C}_{\alpha} r^{1-\alpha} \lambda^{-1} e^{(\log r)^{A}} \cdot r^{\alpha}e^{(\log r)^A} \\
  &\leq K_{\alpha}r^{\frac{2\gamma - 1}{\gamma -1}} \lambda^{\frac{-(2\gamma - 1)}{\gamma -1}}e^{-\frac{\gamma}{\gamma -1}(\log r)^{A'}},
  \end{split}
\end{align}
By the arguments of \cite[Lemma 4.2]{kumagai2004harnack} and applying \eqref{eqn:Etaui UB} and \eqref{eqn:Etaui LB}, we therefore have on the event $A_{\epsilon}$ that for any $s>0$,
\begin{align*}
r^{\frac{2\gamma - 1}{\gamma -1}} \lambda^{\frac{-(2\gamma-1)}{\gamma -1}}e^{-\frac{3\gamma-2}{\gamma -1}(\log r)^{A'}} \leq  \estartpt{\hat{\tau}_{i}}{b_{r_i},\omega} &\leq s + \ptstart{\hat{\tau_i} > s}{b_{r_i}, \omega}{} \sup_{z \in B_{T^{r_i}}(b_{r_i},\tilde{r})} \estartpt{\hat{\tau}_{i}}{z, \omega} \\
&\leq s + \ptstart{\hat{\tau_i} > s}{b_{r_i},\omega}{} K_{\alpha} r^{\frac{2\gamma - 1}{\gamma -1}} \lambda^{\frac{-(2\gamma - 1)}{\gamma -1}}e^{-\frac{\gamma}{\gamma -1}(\log r)^{A'}}.
\end{align*}
Rearranging we deduce \eqref{eqn:exit time trans prob bound}.

\textbf{Step 2.} Since \eqref{eqn:exit time trans prob bound} holds for all \typical indices $i$, and recalling that $N = N(m,r,c,C,\lambda)$ denotes the number of \typical indices, we therefore have from \cite[Lemma 3.14]{barlow1998fractals} that on the event $A_{\epsilon}$, it holds for all $t>0$ that
{\scriptsize \[
\log \left( \ptstart{\sum_{i=1}^N \hat{\tau}_i \leq t}{\omega}{} \right) \leq 2\left( \frac{Nt\kappa_{\alpha}}{r^{\frac{2\gamma - 1}{\gamma -1}} \lambda^{\frac{-(2\gamma - 1)}{\gamma -1}}e^{-\frac{\gamma}{\gamma -1}(\log r)^{A'}}(1-\kappa_{\alpha} e^{-\frac{3\gamma-2}{\gamma -1}(\log r)^{A'}})} \right)^{\frac{1}{2}} - N\log \left( (1-\kappa_{\alpha} e^{-\frac{3\gamma-2}{\gamma -1}(\log r)^{A'}})^{-1} \right).
\]}

In particular, taking $\lambda = e^{(\log r)^{A' + \delta}}$ (which satisfies the conditions required in step 1 provided that $r$ is sufficiently large) and $t = r^{\frac{2\gamma - 1}{\gamma -1}} \lambda^{\frac{-(2\gamma - 1)}{\gamma -1}}e^{-\frac{\gamma}{\gamma -1}(\log r)^{A'}}(1- e^{-\frac{3\gamma-2}{\gamma -1}(\log r)^{A'}})\lambda^{1-\delta}$ gives that, for all sufficiently large $r$,
\begin{align*}
 \ptstart{\sum_{i=1}^N \hat{\tau}_i < t, N \geq \frac{\lambda}{2}}{\omega}{}\mathbbm{1}\{A_{\epsilon}\} \leq \exp \left\{ 4\kappa_{\alpha} \lambda^{1-\frac{\delta}{2}} - \frac{\kappa_{\alpha}}{2}\lambda e^{-\frac{3\gamma-2}{\gamma -1}(\log r)^{A'}} \right\} 
 &\leq \exp \left\{ -e^{(1-\frac{\delta}{4})(\log r)^{A' + \delta}} \right\}.
\end{align*}
Consequently, combining with Lemma \ref{lem:trans good props}$(iii)$ and a union bound we deduce that, on the event $A_{\epsilon}$,
\begin{align*}
 \ptstart{\sum_{i=1}^N \hat{\tau}_i < t}{\omega}{} &\leq \ptstart{N < \frac{\lambda}{2}}{\omega}{} +  \ptstart{\sum_{i=1}^N \hat{\tau}_i < t, N \geq \frac{\lambda}{2}}{\omega}{} \\
 &\leq \exp\left\{\frac{-e^{(\log r)^{A' + \delta}}}{12}\right\} + \exp \left\{ -e^{(1-\frac{\delta}{4})(\log r)^{A' + \delta}} \right\}.
\end{align*}
\textbf{Step 3.} Since adding back the \mbad vertices and then adding the times to traverse balls corresponding to non-typical $i$ can only increase the total exit time, the same tail bound clearly holds for the exit time of $T_{\infty}\setminus T^{r_i}$. By Proposition \ref{prop:A_r diam bound}, we can also choose $R$ large enough that the event
\[
B_{\epsilon} := \left\{\sup_{r \geq R}\frac{\diam (\Ti \setminus T^{r})}{r (\log r)^{\gamma + \epsilon}} < 1\right\}
\]
satisfies $\prb{B_{\epsilon}} \geq 1-\epsilon$.

We deduce from steps 1 and 2 that, on the events $A_{\epsilon}$ and $B_{\epsilon}$,
\begin{align*}
    \ptstart{T_{B(O_{\infty},r( \log r)^{\gamma+\epsilon})} \leq r^{\frac{2\gamma}{2\gamma - 1}} e^{-(\log r)^{A' + 2 \delta}}}{\omega}{} \leq \ptstart{\sum_{i=1}^N \hat{\tau}_i \leq r^{\frac{2\gamma}{2\gamma - 1}} e^{-(\log r)^{A' + 2 \delta}}}{\omega}{} \leq 2e^{-e^{(1-\frac{\delta}{4})(\log r)^{A' + \delta}}}.
\end{align*}
Consequently, it follows from Borel-Cantelli along the subsequence $r_n = 2^n$ and monotonicity of $T_r$ that almost surely,
\[
d_{\Ti}\left(O_{\infty}, Y_{r^{\frac{2\gamma}{2\gamma - 1}} e^{-(\log r)^{A' + 2 \delta}}}\right)\mathbbm{1}\{A_{\epsilon}, B_{\epsilon}\} \leq r( \log r)^{\gamma +\epsilon},
\]
for all sufficiently large $r$. Since $A_{\epsilon}$ and $B_{\epsilon}$ both have $\bPb \times \Pb$-probability at least $1-\epsilon$, this implies that with overall probability at least $1-2\epsilon$, we have that
\[
d_{\Ti}\left(O_{\infty}, Y_{t}\right) \leq t^{\frac{2\gamma - 1}{2\gamma}} e^{(\log t)^{A' + 3 \delta}}
\]
for all sufficiently large $t$. Since $A' > \frac{1}{2}$, $\delta > 0$ and $\epsilon>0$ were arbitrary, this implies the result for the annealed law $\Pbt$, and therefore for the LERRW.
\end{proof}

\begin{remark}[Application to LERRW on $\Z_{+}$]\label{remark:result on Z}
We can also apply these results to LERRW on the infinite half line with these initial weights. Note that Lemma \ref{lem:transient potential control}$(i)$ also applies in this setting, to give that, for any $\delta > 0$,
    \[
    \pr{\left|\sum_{0 < i \leq r} \log \rho_i + \alpha \log r \right| \geq (\log r)^{\frac{1}{2}+\delta} \text{ i.o.}} = 0.
    \]
    In particular, this implies that $\bPb \times
    \Pb$-almost surely, there exist constants $c>0, C<\infty$ such that for all sufficiently large $r$,
    \begin{align*}
    r^{\alpha + 1} e^{-(\log r)^{\frac{1}{2} + \delta}} \leq \nu ([0,r]) &\leq r^{\alpha + 1} e^{(\log r)^{\frac{1}{2} + \delta}} \\
    c \leq R (0, [0,r]^c) &\leq C.
    \end{align*} 
    Moreover, again using \cite[Equation (4.5)]{kumagai2004harnack}, we deduce that, if $T_r$ is the exit time from $[0,r]$, then on these events it holds that
\begin{align*}
cr^{\alpha+1} e^{-(\log r)^{\frac{1}{2} + \delta}} \leq \sum_{y \in [0,\frac{r}{2}]}R(y, [0,r]^c) \nu(y)  \leq \estartpt{T_r}{0,\omega} &\leq \sum_{y \in [0,r]}R(0, [0,r]^c) \nu(y) \leq Cr^{\alpha+1} e^{(\log r)^{\frac{1}{2} + \delta}}.
\end{align*}
By Markov's inequality, Borel-Cantelli along the sequence $r_n=2^n$ and monotonicity we therefore get that $T_r \leq r^{2} e^{(\log r)^{\frac{1}{\alpha+1} + \delta}}$ eventually almost surely, i.e. that $\sup_{s \leq t} d(O_{\infty}, Y_s) \geq t^{\frac{1}{2}} e^{-(\log t)^{\frac{1}{\alpha+1} + \delta}}$ eventually almost surely.\end{remark}

\begin{appendix}
\section{Appendix}
In the appendix we prove Claims \ref{claim:log sum exp}, \ref{claim:log var} and \ref{claim:log mgf}, regarding the expectation, variance and moment generating function of the random variables $(\log \rho_x)_{x \in T_n}$.

In the appendix we always work conditionally on $T_n$. In particular, we will work pointwise on the probability space $(\mathbf{\Omega}, \mathbf{\F}, \bPb)$ (on which we defined $(T_n, d_n, \mu_n)$), and on which the convergence of \eqref{eqn:stable tree scaling limit def} holds almost surely. This means that most of the statements that follow should really be written conditionally on $T_n$. To make the arguments clearer to follow, we have not written this explicitly in the statements or the proofs, and instead ask the reader to keep this in mind throughout.

Moreover, a statement of the form $a_n = a +o(1)$ almost surely on $\Omega$ means that $a_n \to a$ almost surely and therefore that the $o(1)$ term is not necessarily bounded uniformly on $\Omega$. However, it will always be true that for any $\epsilon > 0$, the $o(1)$ term can be bounded uniformly on a set of probability at least $1-\epsilon$. The same holds for $O(\cdot)$ terms.

Recall that we simplified notation by writing
\[
\Delta_n=
\begin{cases}
\Delta (n a_n^{-1})^{-(1-\alpha)}, &\text{ if } \alpha<1,
\\
\Delta, &\text{ if } \alpha=1,
\end{cases}
\qquad |x|=d_n(O_n,x).
\]

\subsection{Properties of the digamma function}

The mean and variance of terms of the form $\log \rho_x$ can be expressed in terms of the digamma function, defined for $z>0$ by $\psi(z)=\Gamma'(z)/\Gamma(z)$, where $\Gamma$ is the gamma function.

Note that, for each $n \geq 1$ and each $x \in T_n$, we have that $\rho_x = \frac{1-p_x}{p_x}$, where $p_x$ has the beta distribution with some positive parameters $(a_x, b_x)$. This entails that $\rho_x \sim \beta'(a_x,b_x)$ and that $\frac{1}{\rho_x} \sim \beta'(b_x, a_x)$, where $\beta'$ is the beta prime distribution.

For a random variable $\rho$ with a beta prime distribution with positive parameters $(a, b)$, the following formulae were respectively established in \cite[Equations (4.1) and (4.5)]{takeshima2000behavior}:
\begin{align} \label{psimeanvar}
\E{\log \rho}&=\psi(b)-\psi(a),
\hspace{1cm}
\text{Var}\left(\log \rho \right) = \psi'(a)+\psi'(b).
\end{align}

In our case, we have for each $n \geq 1$ and each $x \in T_n$, that 
\[
\rho_{x} 
\sim 
\displaystyle \beta' \left( \frac{\left((|x|-1) + n a_n^{-1} \right)^{\alpha} + \Delta_n}{2\Delta_n},  \frac{(|x|+n a_n^{-1})^{\alpha}}{2\Delta_n} \right).
\]
and $(\rho_{x})_{x \in T_n\setminus \{O_n\}}$ is a sequence of independent random variables. Next, we record three properties of the digamma function that we will use throughout this section.

\begin{itemize}
    \item Proved in \cite[Equation (6.6)]{takei2020almost}:
    \begin{equation} \label{asymp1}
    \psi\left(z+\frac{1}{2}\right)-\psi(z)\sim 
    \displaystyle \frac{1}{2z} \text{ as } z\to \infty.
    \end{equation}

    \item Proved in \cite[Equation (4.11)]{takeshima2000behavior}: for $s, t>0$, 
    \begin{equation} \label{asymp2}
    -\frac{1}{t} \le [\psi(t)-\psi(s)]-\log\left(\frac{t}{s}\right)\le \frac{1}{s}.
    \end{equation}
    
    \item Proved in \cite[Equation (4.12)]{takeshima2000behavior} and \cite[Equation (6.5)]{takei2020almost}:
    \begin{equation} \label{asymp3}
    \psi'(z)\sim 
    \displaystyle \frac{1}{z} \text{ as } z\to \infty.
    \end{equation}
\end{itemize}

\subsection{Claim \ref{claim:log sum exp}: expectation of the potential when $\alpha \leq 1$}

\begin{lemma}\label{lem:log sum exp app}
\begin{enumerate}[(i)]
    \item For all $x \in T_n$, we have that
    \[
    |\E{\log \rho_x}| \leq \frac{2\alpha}{\left(|x| + n a_n^{-1} \right)} + \frac{4\Delta_n}{\left(|x| + n a_n^{-1} \right)^{\alpha}}.
    \]
    \item For almost every $\omega\in \mathbf{\Omega}$, it holds uniformly over $t \in [0,1]$ as $n \to \infty$ that:
    \begin{align*}
\sum_{O_n\prec x \preceq x_{\lfloor 2nt \rfloor}} &\E{\log \rho_{x} }\to \frac{ \Delta [(\dt(O,t)+1)^{1-\alpha}-1]}{1-\alpha} - \alpha \log \left(\dt(O, t) + 1\right) \hspace{1cm} &\text{if }  \alpha<1,
\\
\sum_{O_n\prec x \preceq x_{\lfloor 2nt \rfloor}} &\E{\log \rho_{x}}\to (\Delta-1) \log(\dt(O,t)+1),  \hspace{1cm} &\text{if } \alpha = 1.
\end{align*}
\end{enumerate}
\end{lemma}
\begin{proof}
\begin{enumerate}[(i)]
    \item By \eqref{psimeanvar} and \eqref{asymp2}, we have that 
    \begin{align*}
        |\E{\log \rho_x}| &= \left|\psi\left(\frac{(|x|+n a_n^{-1})^{\alpha}}{2\Delta_n}\right) - \psi\left( \frac{\left((|x|-1) + n a_n^{-1} \right)^{\alpha} + \Delta_n}{2\Delta_n} \right) \right| \\
        &\leq \log \left( \left| \frac{(|x|+n a_n^{-1})^{\alpha}}{\left((|x|-1) + n a_n^{-1} \right)^{\alpha} + \Delta_n} \right| \right) + \frac{2\Delta_n}{\left((|x|-1) + n a_n^{-1} \right)^{\alpha} + \Delta_n} \\
        &\leq \frac{2\left(|x| + n a_n^{-1} \right)^{\alpha} - \left((|x|-1) + n a_n^{-1} \right)^{\alpha} +  4\Delta_n}{\left(|x| + n a_n^{-1} \right)^{\alpha} + \Delta_n} \leq \frac{2\alpha}{\left(|x| + n a_n^{-1} \right)} + \frac{4\Delta_n}{\left(|x| + n a_n^{-1} \right)^{\alpha}}.
    \end{align*}
    \item First note that it follows from \eqref{psimeanvar} that for all $t \in [0,1]$,
\begin{align} \label{takeimean}
\sum_{O_n\prec x\preceq x_{\lfloor 2nt \rfloor}} \E{\log(\rho_x)} = &\sum_{O_n\prec x\preceq x_{\lfloor 2nt \rfloor}} \left\{ \psi\left(\frac{\left((|x|-1) + n a_n^{-1} \right)^{\alpha} + \Delta_n}{2\Delta_n}\right)-\psi\left(\frac{(|x|+n a_n^{-1})^{\alpha}}{2\Delta_n}\right)\right\}
 \nonumber \\
 =& 
 \psi\left(\frac{\left(n a_n^{-1} \right)^{\alpha} + \Delta_n }{2\Delta_n}\right)-\psi\left(\frac{(|x_{\lfloor 2 n t\rfloor}|+n a_n^{-1})^{\alpha}}{2\Delta_n}\right)
 \nonumber \\
 &\qquad +
 \sum_{i=1}^{|x_{\lfloor 2n t\rfloor}|-1} \left\{ \psi\left(\frac{\left(i + n a_n^{-1} \right)^{\alpha}}{2\Delta_n}+\frac{1}{2}\right)-\psi\left(\frac{(i+n a_n^{-1})^{\alpha}}{2\Delta_n}\right)\right\}.
\end{align}

First, we assume that $\alpha\le 1$. For the first term in \eqref{takeimean}, we use \eqref{asymp2} and note that for almost every $\omega \in \Omega$,
\begin{align*}
\log\left(\frac{\left(n a_n^{-1} \right)^{\alpha}+\Delta_n}{2\Delta_n}\right)-\log\left(\frac{(|x_{\lfloor 2 n t\rfloor}|+n a_n^{-1})^{\alpha}}{2\Delta_n}\right)
&=\log\left(\frac{(n a_n^{-1})^{\alpha} + \Delta_n}{(|x_{\lfloor 2 n t\rfloor}|+n a_n^{-1})^{\alpha}}\right)
\\
&= -\alpha \log(d(O,t)+1) + o(1),
\end{align*}
as $n\to \infty$ by \eqref{eqn:stable tree scaling limit def 1}. Thus, by \eqref{asymp2}, for almost every $\omega \in \mathbf{\Omega}$ the first term in \eqref{takeimean} satisfies
\[
\left|\psi\left(\frac{\left(n a_n^{-1} \right)^{\alpha}+\Delta_n}{2\Delta_n}\right)-\psi\left(\frac{(|x_{\lfloor 2 n t\rfloor}|+n a_n^{-1})^{\alpha}}{2\Delta_n}\right)+\alpha \log(d(O,t)+1)\right| \le \frac{2\Delta_n}{(n a_n^{-1})^{\alpha}} + o(1),
\]
as $n \to \infty$, and therefore
\[
\psi\left(\frac{\left(n a_n^{-1} \right)^{\alpha}+\Delta_n}{2\Delta_n}\right)-\psi\left(\frac{(|x_{\lfloor 2 n t\rfloor}|+n a_n^{-1})^{\alpha}}{2\Delta_n}\right) = -\alpha \log(d(O,t)+1) + o(1),
\]
as $n\to \infty$. 

For the second term, since $\inf_{i \geq 0} \frac{(n^{-1} a_n  + i)^{\alpha}}{2 \Delta_n}\to \infty$ as $n\to \infty$, it follows from \eqref{asymp1} that for all $i \geq 0$,
\[
\psi\left(\frac{\left(i + n a_n^{-1} \right)^{\alpha}}{2\Delta_n}+\frac{1}{2}\right)-\psi\left(\frac{(i+n a_n^{-1})^{\alpha}}{2\Delta_n}\right) =  \frac{\Delta_n}{(i + n a_n^{-1})^{\alpha}} (1+o(1)),
\]
where the $o(1)$ term is uniform over all $i \geq 0$ (but may depend on $\omega$).

This implies that for almost every $\omega \in \mathbf{\Omega}$ it holds for all $t \in [0,1]$ that
\begin{align*}
& \sum_{i=n a_n^{-1}+1}^{|x_{\lfloor 2 n t\rfloor}|-1+n a_n^{-1}} \left\{\psi\left(\frac{i^{\alpha}}{2\Delta_n}+\frac{1}{2}\right)-\psi\left(\frac{i^{\alpha}}{2\Delta_n}\right)\right\}
\\ \\
&\quad \sim 
\sum_{i=n a_n^{-1}+1}^{|x_{\lfloor 2 n t\rfloor}|-1+n a_n^{-1}} \frac{\Delta_n}{i^{\alpha}}
\\ \\
&\quad \sim 
\begin{cases}
\displaystyle \frac{\Delta_n}{1-\alpha} \left[(|x_{\lfloor 2 n t\rfloor}|-1+n a_n^{-1})^{1-\alpha}-(n a_n^{-1})^{1-\alpha}\right] &\text{if } \alpha<1,
\\
\displaystyle \Delta \left[\log(|x_{\lfloor 2 n t\rfloor}|-1+n a_n^{-1})-\log(n a_n^{-1})\right] &\text{if } \alpha=1,
\end{cases}
\\ \\
& \quad \sim  
\begin{cases}
\displaystyle \frac{\Delta}{1-\alpha} \left[\left[\frac{(|x_{\lfloor 2 n t\rfloor}|-1)+n a_n^{-1}}{n a_n^{-1}}\right]^{1-\alpha}-1\right] &\text{ if } \alpha<1,
\\
\displaystyle \Delta \log(n^{-1} a_n  |x_{\lfloor 2 n t\rfloor}|-n^{-1} a_n+1) &\text{ if } \alpha=1,
\end{cases}
\\ \\
& \quad \sim  
\begin{cases}
\displaystyle \frac{\Delta\left[(d(0,t)+1)^{1-\alpha}-1\right]}{1-\alpha} &\text{ if } \alpha <1,
\\
\displaystyle \Delta \log(d(0,t)+1) &\text{ if } \alpha=1.
\end{cases}
\end{align*}
Since $\sup_{t \in [0,1]} d(0,t)$ is bounded for almost every $\omega \in \mathbf{\Omega}$, this implies the result.
\end{enumerate}
\end{proof}

\subsection{Claim \ref{claim:log var}: variance of the potential when $\alpha \leq 1$}
\begin{prop}\label{cor:log var app}
For almost every $\omega\in \mathbf{\Omega}$, it holds uniformly over $t \in [0,1]$ as $n \to \infty$ that:
\begin{align*}
\sum_{O_n\prec x \preceq x_{\lfloor 2nt \rfloor}} &\textnormal{Var}[\log (\rho_{x}) ]\to \frac{4 \Delta [(\dt(O,t)+1)^{1-\alpha}-1]}{1-\alpha} \hspace{1cm} &\text{if }  \alpha<1,
\\
\sum_{O_n\prec x \preceq x_{\lfloor 2nt \rfloor}} &\textnormal{Var}[\log (\rho_{x})]\to 4 \Delta \log(\dt(O,t)+1),  \hspace{1cm} &\text{if } \alpha = 1.
\end{align*}
\end{prop}
\begin{proof}
It follows from \eqref{psimeanvar} that for all $t \in [0,1]$,
\[
\sum_{O_n\prec x\preceq x_{\lfloor 2nt \rfloor}} \text{Var}(\log \rho_x)
 =
 \sum_{i=1}^{|x_{\lfloor 2n t\rfloor}|} \left\{ \psi'\left(\frac{\left((i-1) + n a_n^{-1} \right)^{\alpha}}{2\Delta_n}+\frac{1}{2}\right)+\psi'\left(\frac{(i+n a_n^{-1})^{\alpha}}{2\Delta_n}\right)\right\}.
\]
First we assume that $\alpha\le 1$. Since both arguments in the derivatives of the digamma function blow up uniformly over $i \geq 0$ as $n \to \infty$, it follows from \eqref{asymp3} that for all $i \geq 1$,
\begin{align*}
   \psi'\left(\frac{(i+n a_n^{-1})^{\alpha}}{2\Delta_n}\right) + \psi'\left(\frac{\left((i-1) + n a_n^{-1} \right)^{\alpha}}{2\Delta_n}+\frac{1}{2}\right) 
   &= \frac{4 \Delta_n}{(i + n a_n^{-1})^{\alpha}}(1+o(1)),
\end{align*}
as $n \to \infty$, where the $o(1)$ term is uniform over all $i \geq 1$. This implies that 
\begin{align*}
\sum_{O_n\prec x\preceq x_{\lfloor 2nt \rfloor}} \text{Var}(\log \rho_x) &\sim \sum_{i=1+n a_n^{-1}}^{|x_{\lfloor 2n t\rfloor}|+n a_n^{-1}} \frac{4 \Delta_n}{i^{\alpha}}
\\ \\
&\sim 
\begin{cases}
\displaystyle \frac{4 \Delta_n}{1-\alpha} \left[(|x_{\lfloor 2 n t\rfloor}|+n a_n^{-1})^{1-\alpha}-(1+n a_n^{-1})^{1-\alpha}\right] &\text{if } \alpha<1,
\\
\displaystyle 4\Delta \left[\log(|x_{\lfloor 2 n t\rfloor}|+n a_n^{-1})-\log(1+n a_n^{-1})\right] &\text{if } \alpha=1,
\end{cases}
\\ \\
&\sim  
\begin{cases}
\displaystyle \frac{4 \Delta}{1-\alpha} \left[\left[\frac{|x_{\lfloor 2 n t\rfloor}|+n a_n^{-1}}{n a_n^{-1}}\right]^{1-\alpha}-\left[\frac{1+n a_n^{-1}}{n a_n^{-1}}\right]^{1-\alpha}\right] &\text{if } \alpha<1,
\\
\displaystyle 4\Delta \left[\log(n^{-1} a_n  |x_{\lfloor 2 n t\rfloor}|+1)-\log (n^{-1}a_n+1)\right] &\text{if } \alpha=1,
\end{cases}
\\ \\
&\sim  
\begin{cases}
\displaystyle \frac{4 \Delta \left[(d(0,t)+1)^{1-\alpha}-1\right]}{1-\alpha} &\text{if } \alpha <1,
\\
\displaystyle 4\Delta \log(d(0,t)+1) &\text{if } \alpha=1.
\end{cases} 
\end{align*}
Again, since $\sup_{t \in [0,1]} d(0,t)$ is bounded for almost every $\omega \in \mathbf{\Omega}$, this implies the result.
\end{proof}

\subsection{Claim \ref{claim:log mgf}: exponential moments when $\alpha \leq 1$}

\begin{lemma}\label{claim:log mgf app}
For almost every $\omega \in \mathbf{\Omega}$, it holds for all $x \in T_n$ and all $1 \leq k \leq (na_n^{-1})^{1/2}$ that
\begin{align*}
    \E{e^{k\log \rho_{x}}}  
    &\leq \exp \left\{\left(1+3\Delta\right) \left( \frac{2k\alpha}{|x|+n a_n^{-1}} + \frac{3k^2\Delta_n}{(|x|+n a_n^{-1})^{\alpha}} \right)  \right\},
\end{align*}
\begin{align*}
    \E{e^{k\log (\rho_{x}^{-1})}}  
    \leq \exp \left\{\left(1+3\Delta\right) \left( \frac{2k\alpha}{|x|+n a_n^{-1}} + \frac{3k^2\Delta_n}{(|x|+n a_n^{-1})^{\alpha}} \right)  \right\}.
\end{align*}
\end{lemma}
\begin{proof}
First recall that, since 
\[
\rho_x\sim \beta ' \left(  \frac{(|x|-1 + n a_n^{-1})^{\alpha} + \Delta_n}{2\Delta_n},  \frac{(|x|+na_n^{-1})^{\alpha}}{2\Delta_n} \right),
\]
we also have that 
\[
\frac{1}{\rho_x}\sim \beta ' \left(  \frac{(|x|+na_n^{-1})^{\alpha}}{2\Delta_n} ,\frac{(|x|-1+na_n^{-1})^{\alpha} + \Delta_n}{2\Delta_n}\right).
\]
Consequently (using the formula for the $k^{th}$ moment of the beta prime distribution), it holds for any $k \geq 1$ that
\begin{equation}\label{eqn:moment formula}
    \E{\rho_x^k} = \prod_{j=0}^{k-1} \frac{(|x|-1+n a_n^{-1})^{\alpha} + (2j+1)\Delta_n}{(|x|+na_n^{-1})^{\alpha} - (2j+2)\Delta_n},
\end{equation}
\begin{equation} \label{eqn:negative moment formula}
\E{\rho_x^{-k}} = \prod_{j=0}^{k-1} \frac{(|x|+na_n^{-1})^{\alpha} + 2j\Delta_n}{(|x|-1+n a_n^{-1})^{\alpha} - (2j+1)\Delta_n}.
\end{equation}
Now note from \eqref{eqn:moment formula} that
\begin{align*}
    \E{e^{k\log \rho_{x}}} &= \E{\rho_{x}^k} \\
    &= \prod_{j=0}^{k-1} \frac{(|x|-1+n a_n^{-1})^{\alpha} + (2j+1)\Delta_n}{(|x|+n a_n^{-1})^{\alpha} - (2j+2)\Delta_n} \\
    &\leq \prod_{j=0}^{k-1} \left( 1 + \left[ \frac{|(|x|-1+n a_n^{-1})^{\alpha} - (|x|+n a_n^{-1})^{\alpha}|}{(|x|+n a_n^{-1})^{\alpha}} + \frac{(4j+3) \Delta_n}{(|x|+n a_n^{-1})^{\alpha}}\right] \left[1+\frac{2(2j+2)\Delta_n}{(|x|+n a_n^{-1})^{\alpha}}\right]
    \right\} \\
    &\leq \exp \left\{\left(1+3\Delta\right) \left( \frac{2k\alpha}{|x|+n a_n^{-1}} + \frac{3k^2\Delta_n}{(|x|+n a_n^{-1})^{\alpha}} \right)  \right\}.
\end{align*}
Similarly, note from \eqref{eqn:negative moment formula} that 
\begin{align*}
    \E{e^{k\log (\rho_{x}^{-1})}} &= \E{(\rho_{x})^{-k}} \\
    &= \prod_{j=0}^{k-1} \frac{(|x|+na_n^{-1})^{\alpha} + 2j\Delta_n}{(|x|-1+n a_n^{-1})^{\alpha} - (2j+1)\Delta_n} \\
    &\leq \prod_{j=0}^{k-1} \left( 1 + \left[\frac{|(|x|+n a_n^{-1})^{\alpha} - (|x|-1+n a_n^{-1})^{\alpha}|}{(|x|-1+n a_n^{-1})^{\alpha}} +\frac{(4 j+1) \Delta_n}{(|x|-1+n a_n^{-1})^{\alpha}}\right]\left[ 1+\frac{2(2j+1)\Delta_n}{(|x|+n a_n^{-1})^{\alpha}}\right] \right) \\
    &\leq \exp \left\{\left(1+3\Delta\right) \left( \frac{2k\alpha}{|x|+n a_n^{-1}} + \frac{3k^2\Delta_n}{(|x|+n a_n^{-1})^{\alpha}} \right)  \right\}.
\end{align*}
\end{proof}

\subsection{Expectation of the potential when $\alpha > 1$}

In the transient regime $\alpha > 1$ considered in Section \ref{sctn:trans results}, we do not rescale the initial weights so for each $x \in \Ti$ we have
\[
\rho_{x} 
\sim 
\displaystyle \beta' \left( \frac{\left((|x|-1) \right)^{\alpha} + \Delta}{2\Delta},  \frac{|x|^{\alpha}}{2\Delta} \right).
\]

\begin{lemma}\label{lem:log sum exp app 2}
Assume $\alpha >1$. Then for almost every $\omega\in \mathbf{\Omega}$, the following holds.
As $|y| \to \infty$, we have that
    \begin{align*}
\sum_{O_{\infty}\prec x \preceq y} &\E{\log \rho_{x} } = - \alpha \log |y| + O(1).
\end{align*}
\end{lemma}
\begin{proof}
First note that it follows from \eqref{psimeanvar} that for any $y \in \Ti$,
\begin{align} \label{takeimean2}
\sum_{O_{\infty}\prec x\preceq y} \E{\log(\rho_x)} = &\sum_{O_{\infty}\prec x\preceq y} \left\{ \psi\left(\frac{\left(|x|-1\right)^{\alpha} + \Delta}{2\Delta}\right)-\psi\left(\frac{|x|^{\alpha}}{2\Delta}\right)\right\}
 \nonumber \\
 =& 
 \psi\left(\frac{1}{2}\right)-\psi\left(\frac{|y|^{\alpha}}{2\Delta}\right)
 \nonumber \\
 &\qquad +
 \sum_{i=1}^{|y|-1} \left\{ \psi\left(\frac{i^{\alpha}}{2\Delta}+\frac{1}{2}\right)-\psi\left(\frac{i^{\alpha}}{2\Delta}\right)\right\}.
\end{align}

For the first term in \eqref{takeimean2}, we use \eqref{asymp2} which implies that, as $|y| \to \infty$,
\[
 \psi\left(\frac{1}{2}\right)-\psi\left(\frac{|y|^{\alpha}}{2\Delta}\right) = \log \left( \frac{\Delta}{|y|^{\alpha}} \right) + O(1) = -\alpha \log |y| + O(1).
\]

For the second term, since $\frac{ i^{\alpha}}{2 \Delta }\to \infty$ as $i\to \infty$, it follows from \eqref{asymp1} that as $i \to \infty$,
\[
\psi\left(\frac{i^{\alpha}}{2\Delta}+\frac{1}{2}\right)-\psi\left(\frac{i^{\alpha}}{2\Delta}\right) =  \frac{\Delta}{i^{\alpha}} (1+o(1)).
\]
This implies that
\begin{align*}
\sum_{i=1}^{|y|-1} \left\{\psi\left(\frac{i^{\alpha}}{2\Delta}+\frac{1}{2}\right)-\psi\left(\frac{i^{\alpha}}{2\Delta}\right)\right\}
&\sim 
\sum_{i=1}^{|y|-1} \frac{\Delta}{i^{\alpha}} = O(1)
\end{align*}
as $|y| \to \infty$.
\end{proof}

\subsection{Variance of the potential when $\alpha > 1$}
\begin{lemma}\label{cor:log var app 2}
Assume $\alpha > 1$. Then for almost every $\omega\in \mathbf{\Omega}$, as $|y| \to \infty$ we have that
\begin{align*}
\sum_{O_n\prec x \preceq y} &\textnormal{Var}[\log (\rho_{x}) ] = O(1).
\end{align*}
\end{lemma}
\begin{proof}
It follows from \eqref{psimeanvar} that for all $t \in [0,1]$,
\[
\sum_{O_{\infty}\prec x\preceq y} \text{Var}(\log \rho_x)
 =
 \sum_{i=1}^{|y|} \left\{ \psi'\left(\frac{(i-1)^{\alpha}}{2\Delta}+\frac{1}{2}\right)+\psi'\left(\frac{i^{\alpha}}{2\Delta}\right)\right\}.
\]
Since both arguments in the derivatives of the digamma function blow up as $i \to \infty$, it follows from \eqref{asymp3} that as $i \to \infty$,
\begin{align*}
    \psi'\left(\frac{(i-1)^{\alpha}}{2\Delta}+\frac{1}{2}\right)+\psi'\left(\frac{i^{\alpha}}{2\Delta}\right)
   &= \frac{4 \Delta}{i^{\alpha}}(1+o(1)).
\end{align*}
This implies that 
\begin{align*}
\sum_{O_{\infty}\prec x\preceq y} \text{Var}(\log \rho_x) &\sim \sum_{i=1}^{|y|} \frac{4 \Delta}{i^{\alpha}} = O(1)
\end{align*}
as $|y| \to \infty$.
\end{proof}

\end{appendix}

\bibliographystyle{alpha}
\bibliography{references}

\begin{thebibliography}{vdVW96}

\bibitem[ACT19]{akahori2019phase}
J.~Akahori, A.~Collevecchio, and M.~Takei.
\newblock Phase transitions for edge-reinforced random walks on the half-line.
\newblock {\em Electronic Communications in Probability}, 24, 2019.

\bibitem[ADH13]{abraham2013note}
R.~Abraham, J-F. Delmas, and P.~Hoscheit.
\newblock A note on the {G}romov-{H}ausdorff-{P}rokhorov distance between
  (locally) compact metric measure spaces.
\newblock {\em Electron. J. Probab.}, 18(14):1--21, 2013.

\bibitem[AEW13]{athreya2013brownian}
S.~Athreya, M.~Eckhoff, and A.~Winter.
\newblock Brownian motion on $\mathbb{R}$-trees.
\newblock {\em Transactions of the American Mathematical Society},
  365(6):3115--3150, 2013.

\bibitem[Aid08]{Aidekon2008TransientRW}
E.~Aid{\'e}kon.
\newblock Transient random walks in random environment on a galton–watson
  tree.
\newblock {\em Probability Theory and Related Fields}, 142:525--559, 2008.

\bibitem[Ald91a]{AldousFringeSinTree}
D.~Aldous.
\newblock Asymptotic fringe distributions for general families of random trees.
\newblock {\em Ann. Appl. Probab.}, 1(2):228--266, 1991.

\bibitem[Ald91b]{aldous1991tree}
D.~Aldous.
\newblock The continuum random tree. {I}.
\newblock {\em Ann. Probab.}, 19(1):1--28, 1991.

\bibitem[Ald91c]{aldous1991continuum}
D.~Aldous.
\newblock The continuum random tree. {II}. {A}n overview.
\newblock In {\em Stochastic analysis ({D}urham, 1990)}, volume 167 of {\em
  London Math. Soc. Lecture Note Ser.}, pages 23--70. Cambridge Univ. Press,
  Cambridge, 1991.

\bibitem[Ald93]{aldous1993continuum}
D.~Aldous.
\newblock The continuum random tree. {III}.
\newblock {\em Ann. Probab.}, 21(1):248--289, 1993.

\bibitem[ALW16]{athreya2016gap}
S.~Athreya, W.~L{\"o}hr, and A.~Winter.
\newblock The gap between gromov-vague and gromov--hausdorff-vague topology.
\newblock {\em Stochastic Processes and their Applications}, 126(9):2527--2553,
  2016.

\bibitem[ALW17]{athreya2017invariance}
S.~Athreya, W.~L\"{o}hr, and A.~Winter.
\newblock Invariance principle for variable speed random walks on trees.
\newblock {\em Ann. Probab.}, 45(2):625--667, 2017.

\bibitem[And21]{andriopoulos2021invariance}
G.~Andriopoulos.
\newblock Invariance principles for random walks in random environment on
  trees.
\newblock {\em Electron. J. Probab.}, 26:Paper No. 115, 38, 2021.

\bibitem[Arc20a]{archer2020infinite}
E.~Archer.
\newblock Infinite stable looptrees.
\newblock {\em Electronic Journal of Probability}, 25, 2020.

\bibitem[Arc20b]{archer2020random}
E.~Archer.
\newblock Random walks on decorated {G}alton-{W}atson trees, 2020.

\bibitem[Bac11]{bacallado2011bayesian}
S.~Bacallado.
\newblock Bayesian analysis of variable-order, reversible {M}arkov chains.
\newblock {\em Ann. Statist.}, 39(2):838--864, 2011.

\bibitem[Bar98]{barlow1998fractals}
M.~Barlow.
\newblock Diffusions on fractals.
\newblock In {\em Lectures on probability theory and statistics
  ({S}aint-{F}lour, 1995)}, volume 1690 of {\em Lecture Notes in Math.}, pages
  1--121. Springer, Berlin, 1998.

\bibitem[BB99]{bass1999flow}
R.~Bass and K.~Burdzy.
\newblock Stochastic bifurcation models.
\newblock {\em Ann. Probab.}, 27(1):50--108, 1999.

\bibitem[BBI01]{burago2001course}
D.~Burago, Y.~Burago, and S.~Ivanov.
\newblock {\em A course in metric geometry}, volume~33 of {\em Graduate Studies
  in Mathematics}.
\newblock American Mathematical Society, Providence, RI, 2001.

\bibitem[BCP09]{bacallado2009bayesian}
S.~Bacallado, J.~Chodera, and V.~Pande.
\newblock Bayesian comparison of markov models of molecular dynamics with
  detailed balance constraint.
\newblock {\em The Journal of chemical physics}, 131(4):07B622, 2009.

\bibitem[Ber92]{BertoinPitmanExt}
J.~Bertoin.
\newblock An extension of {P}itman's theorem for spectrally positive {L}\'{e}vy
  processes.
\newblock {\em Ann. Probab.}, 20(3):1464--1483, 1992.

\bibitem[Ber96]{BertoinLevy}
J.~Bertoin.
\newblock {\em L\'evy processes}, volume 121 of {\em Cambridge Tracts in
  Mathematics}.
\newblock Cambridge University Press, Cambridge, 1996.

\bibitem[BGT89]{bingham1989variation}
N.~Bingham, C.~Goldie, and J.~Teugels.
\newblock {\em Regular variation}, volume~27 of {\em Encyclopedia of
  Mathematics and its Applications}.
\newblock Cambridge University Press, Cambridge, 1989.

\bibitem[Bil99]{billingsley2013convergence}
P.~Billingsley.
\newblock {\em Convergence of probability measures}.
\newblock Wiley Series in Probability and Statistics: Probability and
  Statistics. John Wiley \& Sons, Inc., New York, second edition, 1999.
\newblock A Wiley-Interscience Publication.

\bibitem[BK06]{barlow2006cluster}
M.~Barlow and T.~Kumagai.
\newblock Random walk on the incipient infinite cluster on trees.
\newblock {\em Illinois J. Math.}, 50(1-4):33--65, 2006.

\bibitem[BPFT16]{bacallado2016bayesian}
S.~Bacallado, V.~Pande, S.~Favaro, and L.~Trippa.
\newblock Bayesian regularization of the length of memory in reversible
  sequences.
\newblock {\em J. R. Stat. Soc. Ser. B. Stat. Methodol.}, 78(4):933--946, 2016.

\bibitem[Bro86]{brox1986one}
T.~Brox.
\newblock A one-dimensional diffusion process in a {W}iener medium.
\newblock {\em Ann. Probab.}, 14(4):1206--1218, 1986.

\bibitem[BST21]{bacallado2021edge}
S.~Bacallado, C.~Sabot, and P.~Tarr{\`e}s.
\newblock The*-edge-reinforced random walk.
\newblock {\em arXiv preprint arXiv:2102.08984}, 2021.

\bibitem[Car97]{carmona1997mean}
P.~Carmona.
\newblock The mean velocity of a brownian motion in a random l{\'e}vy
  potential.
\newblock {\em The Annals of Probability}, pages 1774--1788, 1997.

\bibitem[CD86]{CoppersmithDi87}
D.~Coppersmith and P.~Diaconis.
\newblock Random walk with reinforcement.
\newblock {\em Unpublished manuscript}, 1986.

\bibitem[CK08]{croydon2008random}
D.~Croydon and T.~Kumagai.
\newblock Random walks on galton-watson trees with infinite variance offspring
  distribution conditioned to survive.
\newblock {\em Electronic Journal of Probability}, 13:1419--1441, 2008.

\bibitem[CK14]{curien2014loop}
N.~Curien and I.~Kortchemski.
\newblock Random stable looptrees.
\newblock {\em Electron. J. Probab.}, 19(108):1--35, 2014.

\bibitem[Col06]{collevecchio2006limit}
A.~Collevecchio.
\newblock Limit theorems for reinforced random walks on certain trees.
\newblock {\em Probab. Theory Related Fields}, 136(1):81--101, 2006.

\bibitem[Cro17]{croydon2018introduction}
D.~Croydon.
\newblock An introduction to stochastic processes associated with resistance
  forms and their scaling limits.
\newblock {\em RIMS Kokyuroku 2030, no. 1}, 2017.

\bibitem[{Cro}18]{croydon2016scaling}
D.~{Croydon}.
\newblock Scaling limits of stochastic processes associated with resistance
  forms.
\newblock {\em Ann. Inst. Henri Poincar\'{e} Probab. Stat.}, 54(4):1939--1968,
  2018.

\bibitem[DF80]{DiaconisFreedman1980}
P.~Diaconis and D.~Freedman.
\newblock de finetti's theorem for markov chains.
\newblock {\em The Annals of Probability}, pages 115--130, 1980.

\bibitem[DLG02]{duquesne2002asterisque}
T.~Duquesne and J-F. Le~Gall.
\newblock Random trees, {L}\'{e}vy processes and spatial branching processes.
\newblock {\em Ast\'{e}risque}, (281):vi+147, 2002.

\bibitem[DLG05]{duquesne2005probabilistic}
T.~Duquesne and J-F. Le~Gall.
\newblock Probabilistic and fractal aspects of {L}\'{e}vy trees.
\newblock {\em Probab. Theory Related Fields}, 131(4):553--603, 2005.

\bibitem[DLG06]{duquesnelegallhausdorff2006}
T.~Duquesne and J-F. Le~Gall.
\newblock The {H}ausdorff measure of stable trees.
\newblock {\em ALEA Lat. Am. J. Probab. Math. Stat.}, 1:393--415, 2006.

\bibitem[DR06]{diaconis2006bayes}
P.~Diaconis and S.~Rolles.
\newblock Bayesian analysis for reversible {M}arkov chains.
\newblock {\em Ann. Statist.}, 34(3):1270--1292, 2006.

\bibitem[Duq03a]{duq2003limit}
T.~Duquesne.
\newblock A limit theorem for the contour process of conditioned
  {G}alton-{W}atson trees.
\newblock {\em Ann. Probab.}, 31(2):996--1027, 2003.

\bibitem[Duq03b]{duquesne2003cont}
T.~Duquesne.
\newblock A limit theorem for the contour process of conditioned
  {G}alton-{W}atson trees.
\newblock {\em Ann. Probab.}, 31(2):996--1027, 2003.

\bibitem[Duq09]{DuqSinTree}
T.~Duquesne.
\newblock Continuum random trees and branching processes with immigration.
\newblock {\em Stochastic Process. Appl.}, 119(1):99--129, 2009.

\bibitem[DV02]{davis2002}
B.~Davis and S.~Volkov.
\newblock Continuous time vertex-reinforced jump processes.
\newblock {\em Probability theory and related fields}, 123(2):281--300, 2002.

\bibitem[DW11]{duquesne2011small}
T.~Duquesne and G.~Wang.
\newblock Exceptionally small balls in stable trees.
\newblock {\em arXiv preprint arXiv:1111.3465}, 2011.

\bibitem[EPW06]{evans2006rayleigh}
S.~Evans, J.~Pitman, and A.~Winter.
\newblock Rayleigh processes, real trees, and root growth with re-grafting.
\newblock {\em Probab. Theory Related Fields}, 134(1):81--126, 2006.

\bibitem[GH10]{goldschmidt2010extinctionstable}
C.~Goldschmidt and B.~Haas.
\newblock Behavior near the extinction time in self-similar fragmentations.
  {I}. {T}he stable case.
\newblock {\em Ann. Inst. Henri Poincar\'{e} Probab. Stat.}, 46(2):338--368,
  2010.

\bibitem[HH14]{hallheyde2014book}
P.~Hall and C.~Heyde.
\newblock {\em Martingale limit theory and its application}.
\newblock Academic press, 2014.

\bibitem[JM05]{janson2005convergence}
S.~Janson and J-F. Marckert.
\newblock Convergence of discrete snakes.
\newblock {\em J. Theoret. Probab.}, 18(3):615--647, 2005.

\bibitem[Kes86a]{kesten1986limit}
H.~Kesten.
\newblock The limit distribution of {S}ina\u{\i}'s random walk in random
  environment.
\newblock {\em Phys. A}, 138(1-2):299--309, 1986.

\bibitem[Kes86b]{KestenIICtree}
H.~Kesten.
\newblock Subdiffusive behavior of random walk on a random cluster.
\newblock {\em Ann. Inst. H. Poincar\'e Probab. Statist.}, 22(4):425--487,
  1986.

\bibitem[Kor17]{Kortchemski2017sub}
I.~Kortchemski.
\newblock Sub-exponential tail bounds for conditioned stable
  {B}ienaym{\'e}--{G}alton--{W}atson trees.
\newblock {\em Probability Theory and Related Fields}, 168(1):1--40, 2017.

\bibitem[KR19]{KortCondensation}
I.~Kortchemski and L.~Richier.
\newblock {Condensation in critical Cauchy Bienaymé–Galton–Watson trees}.
\newblock {\em The Annals of Applied Probability}, 29(3):1837 -- 1877, 2019.

\bibitem[Kum04]{kumagai2004harnack}
T.~Kumagai.
\newblock Heat kernel estimates and parabolic {H}arnack inequalities on graphs
  and resistance forms.
\newblock {\em Publ. Res. Inst. Math. Sci.}, 40(3):793--818, 2004.

\bibitem[LG06]{gall2006trees}
J-F. Le~Gall.
\newblock Random real trees.
\newblock {\em Ann. Fac. Sci. Toulouse Math. (6)}, 15(1):35--62, 2006.

\bibitem[LGLJ98]{galljan1998branching}
J-F. Le~Gall and Y.~Le~Jan.
\newblock Branching processes in {L}\'{e}vy processes: the exploration process.
\newblock {\em Ann. Probab.}, 26(1):213--252, 1998.

\bibitem[LPW17]{levin2017markov}
D.~Levin, Y.~Peres, and E.~Wilmer.
\newblock {\em Markov chains and mixing times}.
\newblock American Mathematical Society, Providence, RI, 2017.
\newblock Second edition of [ MR2466937], With contributions by Elizabeth L.
  Wilmer, With a chapter on ``Coupling from the past'' by James G. Propp and
  David B. Wilson.

\bibitem[LST20]{lupu2018scaling}
T.~Lupu, C.~Sabot, and P.~Tarr{\`e}s.
\newblock Fine mesh limit of the vrjp in dimension one and bass--burdzy flow.
\newblock {\em Probability Theory and Related Fields}, 177(1):55--90, 2020.

\bibitem[Mar20]{marzouk2018snake}
C.~Marzouk.
\newblock Scaling limits of discrete snakes with stable branching.
\newblock {\em Annales de l'Institut Henri Poincar{\'e}, Probabilit{\'e}s et
  Statistiques}, 56(1):502--523, 2020.

\bibitem[Mat95]{mattila1999geometry}
Pertti Mattila.
\newblock {\em Geometry of sets and measures in {E}uclidean spaces}, volume~44
  of {\em Cambridge Studies in Advanced Mathematics}.
\newblock Cambridge University Press, Cambridge, 1995.
\newblock Fractals and rectifiability.

\bibitem[MR07]{merklrolles2007}
F.~Merkl and S.~Rolles.
\newblock A random environment for linearly edge-reinforced random walks on
  infinite graphs.
\newblock {\em Probability theory and related fields}, 138(1-2):157--176, 2007.

\bibitem[Nev86]{Neveu}
J.~Neveu.
\newblock Arbres et processus de {G}alton-{W}atson.
\newblock {\em Ann. Inst. H. Poincar\'e Probab. Statist.}, 22(2):199--207,
  1986.

\bibitem[Pac16]{pacheco2016sinai}
C.~Pacheco.
\newblock From the sinai's walk to the brox diffusion using bilinear forms.
\newblock {\em arXiv preprint arXiv:1605.02826}, 2016.

\bibitem[Pem88]{robin1988phase}
R.~Pemantle.
\newblock Phase transition in reinforced random walk and {RWRE} on trees.
\newblock {\em Ann. Probab.}, 16(3):1229--1241, 1988.

\bibitem[Pit06]{pitman2006combinatorial}
J.~Pitman.
\newblock {\em Combinatorial stochastic processes: Ecole d'et{\'e} de
  probabilit{\'e}s de saint-flour xxxii-2002}.
\newblock Springer, 2006.

\bibitem[Sei00]{seignourel2000discrete}
P.~Seignourel.
\newblock Discrete schemes for processes in random media.
\newblock {\em Probability theory and related fields}, 118(3):293--322, 2000.

\bibitem[Sin82]{sinai1982limit}
Y.~Sina\u{\i}.
\newblock The limit behavior of a one-dimensional random walk in a random
  environment.
\newblock {\em Teor. Veroyatnost. i Primenen.}, 27(2):247--258, 1982.

\bibitem[Sla68]{SlackInfVar}
R.~Slack.
\newblock A branching process with mean one and possibly infinite variance.
\newblock {\em Z. Wahrscheinlichkeitstheorie und Verw. Gebiete}, 9:139--145,
  1968.

\bibitem[ST15]{sabot2015edge}
C.~Sabot and P.~Tarr\`es.
\newblock Edge-reinforced random walk, vertex-reinforced jump process and the
  supersymmetric hyperbolic sigma model.
\newblock {\em J. Eur. Math. Soc. (JEMS)}, 17(9):2353--2378, 2015.

\bibitem[ST17]{sabot2017overview}
C.~Sabot and L.~Tournier.
\newblock Random walks in {D}irichlet environment: an overview.
\newblock {\em Ann. Fac. Sci. Toulouse Math. (6)}, 26(2):463--509, 2017.

\bibitem[Tak00]{takeshima2000behavior}
M.~Takeshima.
\newblock Behavior of 1-dimensional reinforced random walk.
\newblock {\em Osaka Journal of Mathematics}, 37(2):355--372, 2000.

\bibitem[Tak21]{takei2020almost}
M.~Takei.
\newblock Almost sure behavior of linearly edge-reinforced random walks on the
  half-line.
\newblock {\em Electron. J. Probab.}, 26:Paper No. 104, 18, 2021.

\bibitem[Tet91]{Tetali}
P.~Tetali.
\newblock Random walks and the effective resistance of networks.
\newblock {\em J. Theoret. Probab.}, 4(1):101--109, 1991.

\bibitem[vdVW96]{VaartWellnerEmpiricalBook}
A.~van~der Vaart and J.~Wellner.
\newblock {\em Weak convergence and empirical processes}.
\newblock Springer Series in Statistics. Springer-Verlag, New York, 1996.
\newblock With applications to statistics.

\end{thebibliography}

\end{document}